\newenvironment{customlemma}[1]
  {\innercustomlemma}
  {\endinnercustomlemma}
\newenvironment{customthm}[1]
  {\innercustomthm}
  {\endinnercustomthm}
\definecolor{aliceblue}{rgb}{0.94, 0.97, 1.0}
\definecolor{applegreen}{rgb}{0.55, 0.71, 0.0}
\crefname{hypothesis}{Hypothesis}{Hypotheses}
\title{Neural Parametric Fokker-Planck Equation}
\author{Shu liu\thanks{School of Mathematics, Georgia Institute of Technology, Atlanta, GA (\email{sliu459@gatech.edu}),}
\and Wuchen Li\thanks{Department of Mathematics, University of South Carolina, Columbia, SC 29208 USA (\email{wuchen@
mailbox.sc.edu}),}
\and Hongyuan Zha \thanks{School of Data Science, Shenzhen Research Institute of Big Data, The Chinese University of
Hong Kong, Shenzhen, China, 518172 (\email{zhahy@cuhk.edu.cn}),}
\and Haomin Zhou \thanks{School of Mathematics, Georgia Institute of Technology, Atlanta, GA (\email{hmzhou@math.gatech.edu}).}}
\begin{document}

\maketitle

\begin{abstract}
In this paper, we develop and analyze numerical methods for high dimensional Fokker--Planck equations by leveraging generative models from deep learning. Our starting point is a formulation of the Fokker--Planck equation as a system of ordinary differential equations (ODEs) on finite-dimensional parameter space with the parameters inherited from generative models such as normalizing flows. We call such ODEs {\it neural parametric Fokker--Planck equations}. The fact that the Fokker--Planck equation can be viewed as the $L^2$-Wasserstein gradient flow of Kullback-Leibler (KL) divergence allows us to derive the ODEs as the constrained $L^2$-Wasserstein gradient flow of KL divergence on the set of probability densities generated by neural networks. For numerical computation, we design a variational semi-implicit scheme for the time discretization of the proposed ODE. Such an algorithm is sampling-based, which can readily handle the Fokker--Planck equations in higher dimensional spaces. Moreover, we also establish bounds for the asymptotic convergence analysis of the neural parametric Fokker--Planck equation as well as the error analysis for both the continuous and discrete versions. Several numerical examples are provided to illustrate the performance of the proposed algorithms and analysis.
\end{abstract}

% REQUIRED
\begin{keywords}
 Optimal transport; Transport information geometry; Deep learning; Neural parametric Fokker--Planck equation; Implicit Euler scheme; Numerical analysis.
\end{keywords}

\section{Introduction}
The Fokker--Planck equation is a parabolic partial differential equation (PDE) that plays a crucial role in stochastic calculus, statistical physics, biology and many other disciplines \cite{Nelson2,QiMajda2017lowdimensional,Risken1989fokkerplanck}. Recently, it has seen many applications in machine learning as well \cite{LiuWang2016stein,PavonTabakTrigila2018datadriven,sirignano2018mean}. The Fokker--Planck equation describes the evolution of probability density of a stochastic differential equation (SDE). In this paper, we mainly focus on the following linear Fokker--Planck equation
\begin{equation}
  \begin{split}
  \frac{\partial\rho(t,x)}{\partial t} =& \nabla\cdot(\rho(t,x)\nabla V(x)) + {D} \Delta \rho(t,x), \quad \rho(0,x) = p(x),
  \end{split}
  \label{introduction fpe}
\end{equation}
where $x\in \mathbb{R}^d$, $V\colon \mathbb{R}^d\rightarrow \mathbb{R}$ is a given potential function, ${D}>0$ is a diffusion coefficient, and $p(x)$ is the initial (or reference) density function.
In numerical algorithms, there exist several classical methods \cite{pichler2013numerical} such as finite difference \cite{chang1970practical} or finite element \cite{kumar2006solution} for solving the Fokker Planck equation. Most of the existing methods are grid based, which may be able to approximate the solution accurately if the grid sizes become small. However, they find limited usage in high dimensional problems, especially for $d > 3$, because the number of unknowns grows exponentially fast as the dimension increases. This is known as the curse of dimensionality. The main goal of this 
paper is providing an alternative strategy, with provable error estimates, to solve high dimensional Fokker--Planck equations.  

\subsection{Neural parametric Fokker--Planck equation}
To overcome the challenges imposed by high dimensionality, we leverage the generative models in machine learning \cite{rezende2015variational} and a new interpretation of the Fokker--Planck equation in the theory of optimal transport \cite{villani2008optimal}. We first introduce the KL divergence, also known as relative entropy, defined by
\begin{equation*}
\mathcal{D}_{\textrm{KL}}(\rho||\rho_*) = \int_{\mathbb{R}^d} \rho(x) \log\left( \frac{\rho(x)}{\rho_*(x)} \right) dx \quad \rho_*(x)=\frac{1}{Z_{D}}e^{-\frac{V(x)}{{D}}},~\textrm{with}~ Z_{D}=\int_{\mathbb{R}^d} e^{-\frac{V(x)}{{D}}}~dx.
\end{equation*}
Here $\rho_*(x)$ is the Gibbs distribution. A well-known fact is that the Fokker--Planck equation \eqref{introduction fpe} can be viewed as the gradient flow of the functional ${D}~ \mathcal{D}_{\textrm{KL}}(\rho||\rho_*) $ 
on the probability space $\mathcal{P}$ equipped with Wasserstein metric $g^W$ \cite{JKO,otto2001}.
Recently, this line of research has been extended to parameter space in the field of information geometry \cite{NG,IG,IG2}, leading to an emergent area called transport information geometry \cite{Li2018geometrya, lin2019wasserstein, LiM,LiMontufar2018ricci}.

Inspired by aforementioned work, we study the Fokker--Planck equation defined on parameter manifold (space) $\Theta\subset\mathbb{R}^m$ equipped with metric tensor $G$ which is obtained by pulling back the Wasserstein metric $g^W$ to $\Theta$. Here the metric tensor $G$ can be viewed as an $m\times m$ matrix that contains all the metric information on $\Theta$. In this paper,  we focus on the parameter space from generative models using neural networks. Our line of thoughts can be summarized as following. We start with a given reference distribution $p$, and consider a suitable family of parametric maps $\{T_\theta\}_{\theta \in \Theta}$. Such $T_\theta:\mathbb{R}^d\rightarrow \mathbb{R}^d$ is also called parametric pushforward map since it generates a family of parametric distributions $\{T_{\theta\sharp}p\}$ by pushing forward $p$ using $T_\theta$ (see Definition \ref{def pushforward map}). Then we consider the map $T_{(\cdot)\sharp}:\Theta\rightarrow \mathcal{P},\theta \mapsto T_{\theta\sharp}p$, which can be treated as an immersion from parameter manifold $\Theta$ to probability manifold $\mathcal{P}$. We derive the metric tensor $G(\theta)$ by pulling back the Wasserstein metric via $T_{(\cdot)\sharp}$. Once establishing $(\Theta, G)$, we can compute the $G$-gradient flow of function $H(\theta) = {D}~\mathcal{D}_{\textrm{KL}}(T_{\theta\sharp}p~||~\rho_*)$
defined on the parameter manifold. This leads to an ODE system that can be viewed as a parametric version of Fokker--Planck equation:
\begin{equation}
  \dot\theta_t = - G(\theta_t)^{-1}\nabla_\theta H(\theta_t). \label{introduction paraFPE}
\end{equation}
Here (and for the rest of the paper) dot symbol $\dot\theta$ stands for time derivative $\frac{d\theta_t}{dt}$. Using the pushforward  $\rho_\theta={T_\theta}_{\sharp}p$, in which $\theta$ is the solution of \eqref{introduction paraFPE},  we can approximate the solution $\rho_t$ in \eqref{introduction fpe}.

There are many potential applications for the parameteric Fokker Planck equation. For example, the solution of \eqref{introduction paraFPE} can be immediately used for sampling, which is a crucial task in statistics and machine learning. To be more precise, if the goal is drawing a large number of samples from $\rho_t$ at $N$ different time instances $\{t_1, t_2, ..., t_N\}$ along the solution of \eqref{introduction fpe}, we can acquire $N$ sets of parameters $\theta_{t_1},...,\theta_{t_N}$ from the solution of \eqref{introduction paraFPE}, which provide $N$ pushforward maps $T_{\theta_{t_1}},...,T_{\theta_{t_N}}$. Thus the desired samples at time $t_k$ are $\{T_{\theta_{t_k}}(\boldsymbol{Z}_1), ..., T_{\theta_{t_k}}(\boldsymbol{Z}_M)\}$, in which $\{\boldsymbol{Z}_1,...,\boldsymbol{Z}_M\}$ are samples drawn from the reference distribution $p$. If needed, the pushforward maps can be conveniently reused to generate more samples with negligible additional cost.

\subsection{Computational method}
For the computation of \eqref{introduction paraFPE}, we want to point out that metric tensor $G(\theta)$ doesn't have an explicit form and thus the direct computation of $G(\theta)^{-1}\nabla_\theta H(\theta ) $ is not tractable. To deal with this issue, we design a numerical algorithm based on the semi-implicit Euler scheme of \eqref{introduction paraFPE} with time step size $h$. To be more precise, at each time step, the algorithm seeks to solve the following double-minimization problem:
\begin{align}
\label{introduction scheme}
\begin{split}
  & \min_\theta \left\{\left(\int \left(2 ~\nabla\phi(x)\cdot((T_\theta - T_{\theta_k})\circ T_{\theta_k}^{-1}(x))-|\nabla\phi(x)|^2 \right)\rho_{\theta_k}(x)~dx\right) + 2hH(\theta)\right\} \\ 
  & \textrm{with} ~ \phi ~\textrm{solves:}~~\min_\phi\left\{\int |\nabla\phi(x)-((T_\theta-T_{\theta_k})\circ T_{\theta_k}^{-1}(x))|^2\rho_{\theta_k}(x)~dx\right\}.
\end{split}
\end{align}
Here $\rho_{\theta_k}$ is the density of the pushforwarded distribution $T_{\theta_k\sharp}p$ (cf. Definition \ref{def pushforward map}). And $\phi\colon \mathbb{R}^d\rightarrow \mathbb{R}$ is the Kantorovich dual potential variable for constrained probability models in optimal transport theory. Hence \eqref{introduction scheme} is derived following the semi-implicit Euler scheme in the dual variable. The advantage of using this formulation is that it allows us to design an efficient implementation, purely based on sampling techniques which are computational friendly in high dimensional problems, to compute the solution of the parameteric Fokker--Planck equation \eqref{introduction paraFPE}.
In our implementation,
we endow the pushforward map $T_\theta$ with certain kinds of deep neural network known as Normalizing Flow \cite{rezende2015variational},
because it is friendly to our scheme evaluations. The dual variable $\phi$ in the inner maximization is parametrized by the deep Rectified Linear Unit (ReLU) networks \cite{petersen2018optimal}. Once the network structures for $T_\theta$ and $\phi$ are chosen, the optimizations are carried out by stochastic gradient descent method \cite{ruder2016overview}, in which all terms involved can be computed using samples from the reference distribution $p$. We stress that this is critical in scaling up the computation in high dimensions.
It is worth mentioning that we use neural network as a computational tool without any actual data. Such ``data-poor" computation is in significant contrast to the mainstream of deep learning research. 

\subsection{Major innovations of the proposed method}
There are two main innovative points regarding our proposed method:
\begin{itemize}
    \item (Dimension reduction) Reducing the high dimensional evolution PDE to a finite dimensional ODE system on parameter space. Equivalently, we use the dynamics in a finite dimensional  to approximate the density evolution of particles that follow the Vlasov-type SDE 
    \begin{equation*}
       \dot{\boldsymbol{X}_t}=-\nabla V(\boldsymbol{X}_t)-{D}\nabla\log\rho_t(\boldsymbol{X}_t), \quad \rho_t ~ \textrm{is the density function of distribution of }~\boldsymbol{X}_t.
    \end{equation*}
    Here $D$ is the diffusion coefficient as mentioned in \eqref{introduction fpe}. The density function $\rho_t$ corresponds to the Fokker--Planck equation \eqref{introduction fpe}.
    \item (Sampling-friendly) We distill the information of $\rho_t$ into parameters $\{\theta_t\}$ by solving the parametric Fokker--Planck equation \eqref{introduction paraFPE}.
    By doing so, we are able to obtain an efficient sampling technique to generate samples from $\rho_t$ for any time step $t$. To be more precise, once we have applied our algorithm to solve \eqref{introduction paraFPE} for the time-dependent parameters $\{\theta_t\}$, we can then generate samples from $\rho_t$ by pushing forward the samples drawn from a reference distribution $p$ using the pushforward map $T_{\theta_t}$ with very little computational cost. Such ``implementing once for free future uses" mechanism is one of the significant advantages of our proposed algorithm. It is worth mentioning that in the view of both theoretical derivation and numerical implementation, our method is very different from Langevin Monte Carlo (LMC, MALA) methods \cite{grenander1994representations, roberts1996exponential}, which aims at targeting the stationary distribution of the SDE associated to \eqref{introduction fpe}; or moment methods \cite{QiMajda2017lowdimensional}
    , which focuses on keeping track of certain statistical information of the density $\rho_t$.
    
\end{itemize}

\subsection{Sketch of numerical analysis}
In addition to the method proposed for solving \eqref{introduction fpe}, we also conducted a mathematical analysis on \eqref{introduction paraFPE} and our algorithm. We established asymptotic convergence and error estimates for the parametric Fokker--Planck equation \eqref{introduction paraFPE}, which are summarized in the following two theorems:
\begin{customthm}{5.1}[Asymptotic convergence]\label{introduction convergence}
Consider the Fokker--Planck equation \eqref{introduction fpe} with potential $V$ and diffusion coefficient $D$. Suppose $V$ can be decomposed as $V=U+\phi$ with $U\in \mathcal{C}^2(\mathbb{R}^d)$, $\nabla^2 U\succeq K I\footnote{The matrix $\nabla^2 U(x)-K I_{d\times d}$ is non-negative definite for any $x\in\mathbb{R}^d$.}$ with $K>0$ and $\phi\in L^\infty(\mathbb{R}^d)$, and $\{\theta_t\}$ solves \eqref{introduction paraFPE}.  Then the following inequality holds,
\begin{equation*}
 \mathcal{D}_{\textrm{KL}}(\rho_{\theta_t}\|\rho_*)\leq \frac{\delta_0}{\tilde{\lambda}_{D} {D}^2}(1-e^{-{D}\tilde{\lambda}_{D} t}) +  \mathcal{D}_{\textrm{KL}}(\rho_{\theta_0}\|\rho_*) e^{-{D}\tilde{\lambda}_{D} t},
\end{equation*}
where $\rho_*$ is the Gibbs distribution, $\tilde{\lambda}_{D}>0$ is a constant related to the potential function $V$ and ${D}$. $\delta_0$ is a constant depending on the approximation power of pushforward map $T_\theta$.
\end{customthm}

\begin{customthm}{5.11}[Approximation error]\label{introduction theorem 1}
Consider the Fokker--Planck equation \eqref{introduction fpe} with potential $V$, diffusion coefficient $D$ and initial density $\rho_0$. Assume that $\lambda$ is a lower bound of Hessian of potential $V$, i.e. $\nabla^2 V\succeq \lambda I$, $\delta_0$ is defined in Theorem \ref{introduction convergence},  $E_0=W_2(\rho_{\theta_0},\rho_0)$, and  $\delta_0,E_0\ll 1$, then the following uniform bounds for the $L^2$-Wasserstein error $W_2(\rho_{\theta_t}, \rho_t)$ hold:
\begin{itemize}
 \item When $\lambda>0$, $W_2(\rho_{\theta_t},\rho_t)\leq \max\{\sqrt{\delta_0}/\lambda, E_0\}\sim O(\sqrt{\delta_0}+E_0)$, %$O(E_0+\sqrt{\delta_0})$,
 \item When $\lambda = 0$, $W_2(\rho_{\theta_t}, \rho_t) \leq \frac{\sqrt{\delta_0}}{\mu_{D} }\log\frac{B}{\sqrt{\delta_0}+ E_0}+E_0\sim O(\sqrt{\delta_0}\log \frac{1}{\sqrt{\delta_0}+E_0}+E_0)$,
 \item When $\lambda<0$,  $W_2(\rho_{\theta_t},\rho_t) \leq  A\sqrt{\delta_0} + C\left( E_0+\sqrt{\delta_0}/|\lambda| \right)^\alpha\sim O((E_0 +\sqrt{\delta_0})^\alpha)$.
\end{itemize}
Here $\delta_0$ is a constant depending on the approximation power of pushforward map $T_\theta$. $\mu_{D},A,B,C>0$ are constants only depending on $V,{D},\rho_0,\theta_0$. And $\alpha=\frac{\mu_{D}}{|\lambda|+\mu_{D}}$ is a certain exponent between $0$ and $1$.
\end{customthm}

\noindent
This result reveals that the difference between the solutions of the parametric Fokker--Planck equation \eqref{introduction paraFPE} and the original equation \eqref{introduction fpe}, measured by their Wasserstein distance $W_2(\rho_{\theta_t},\rho_t)$, has a {\it uniformly} small upper bound if both the initial error $E_0$ and ${\delta_0}$ are small enough. 
Most of the techniques used in our analysis for establishing such a result rely on the theory of optimal transport and Wasserstein manifold, which are still not commonly used for numerical analysis in relevant literature.
%\noindent
Besides error analysis for the continuous version of \eqref{introduction paraFPE}, we are able to provide the order of $W_2$-error for the numerical scheme when \eqref{introduction paraFPE} is computed at discrete time by numerical schemes. To be more precise, if we apply forward-Euler scheme to \eqref{introduction paraFPE} and compute $\{\theta_k\}$ at different time nodes $\{t_k\}$, we can show that error at $t_k$: $W_2(\rho_{\theta_k},\rho_{t_k})$ is of order $O(\sqrt{\delta_0})+O(C h)+O(E_0)$ for finite time $t$. This is summarized in the following theorem:
\begin{customthm}{5.14}[Error for discrete scheme]\label{introduction theorem 2}
Assume that $\{\rho_t\}_{t\geq 0}$ is the solution of \eqref{introduction fpe} with potential satisfying $\lambda I \preceq \nabla^2 V\preceq\Lambda I$, $\{\theta_k\}_{k=0}^N$ is the numerical solution of \eqref{introduction paraFPE} at time nodes $t_k = kh$ for $k=0,1,...,N$ computed by forward Euler scheme with time step $h$. Recall $\delta_0$ as mentioned in Theorem \ref{introduction convergence} and we denote $E_0=W_2(\rho_{\theta_0}, \rho_0)$, then we have:
\begin{equation*}
W_2 (\rho_{\theta_k}, \rho_{ t_k  })\leq (\sqrt{\delta_0} h + C h^2)\frac{(1-e^{-\lambda t_k})}{1-e^{-\lambda h }}+e^{-\lambda t_k }E_0\sim O(\sqrt{\delta_0})+O(Ch)+O(E_0),\quad 0\leq k\leq N,
\end{equation*}
where $C$ is a constant depending on $N$ and $h$. 
\end{customthm}
This indicates that the $W_2$-error is dominated by three different terms: $O(\sqrt{\delta_0})$ is the intrinsic error originated from the approximation mechanism of the parametric Fokker--Planck equation; $O(Ch)$ term is induced by the time discretization; and $O(E_0)$ term is the initial error. We further prove that the difference between the forward Euler scheme and our semi-implicit Euler scheme is of order $O(h^2)$, which implies that the proposed semi-implicit Euler scheme can achieve a similar error bounds as the one presented in Theorem \ref{introduction theorem 2}.

It is worth mentioning that we establish Theorem \ref{introduction theorem 2} based on totally different techniques than those used for Theorem \ref{introduction theorem 1}. Since the ODE \eqref{introduction paraFPE} contains the term $G(\theta)^{-1}$, which is hard to handle by the traditional strategies, we interpret it as a particle system governed by a stochastic differential equations (SDEs) of Vlasov type, and obtain the analysis results shown in  Theorem \ref{introduction theorem 2}. 

\subsection{Literature review}

Numerous works exist for solving the Fokker--Planck equations. A finite difference scheme is proposed in \cite{chang1970practical} so that it preserves  the equilibrium of the original equation. A more general class of equations possessing Wasserstein gradient flow structures is solved in \cite{carrillo2019primal}. in which the method is based on a space discretization of a proximal-typed scheme (also known as JKO method \cite{jordan1998variational}). Besides direct solutions, particle simulation techniques also serve as an efficient way of solving the equation. The so-called ``Blob" method is proposed in \cite{carrillo2019blob} and solves the equations by evolving a certain interacting particle systems. Related swarming system is also studied in \cite{leverentz2009asymptotic, carrillo2011global, klar2014multiscale, jabin2014review, carrillo2014derivation}. In \cite{maoutsa2020interacting}, the authors propose another type of interacting systems in order to approximate $\nabla\log\rho$, which plays the role of the diffusion term in the Fokker--Planck equation, with higher accuracy and less fluctuation. In \cite{pathiraja2019discrete, reich2020fokkerplanck}, the authors mainly focus on exploiting the gradient flow structure, i.e. a particle discretization of the Fokker–Planck equation, to deal with Bayesian inference problems.

In addition to the literature focusing on solving the Fokker--Planck equations, There are existing works on applying neural networks to solve PDE of various types in high dimensional spaces \cite{weinan2017deep, raissi2019physics, khoo2017solving, khoo2019solving, zang2019weak, nusken2020solving}. Among the listed works, algorithms for general types of high dimensional PDEs are provided in \cite{raissi2019physics, khoo2017solving}; a sampling friendly method is proposed in \cite{nusken2020solving} to deal with the general optimal control problem of diffusion processes. This is equivalent to solving an associated Hamilton-Jacobi-Bellman equation and such technique can also be applied to importance sampling and rare event simulation. Moreover, numerical methods for high dimensional parabolic PDEs, to which the Fokker--Planck equation belongs, are studied in \cite{weinan2017deep} and \cite{khoo2019solving}. Our approach differs from these existing works in many aspects, including motivations, strategies, and the associated numerical analysis.

For example, in \cite{weinan2017deep}, the authors propose to use the non-linear Feynmann-Kac formula to re-write certain parabolic PDEs as the Backward Stochastic Differential Equation (BSDE), which is then reformulated as a stochastic control problem (also known as reinforcement learning in machine learning community). By applying deep neural network as the control function and optimizing over network parameters, the solution at any given space-time location can be evaluated. Another example is \cite{khoo2019solving}, which mainly focuses on computing the committor function that solves a steady-state (time-independent) Fokker--Planck equation with specific boundary conditions. This committor function can be treated as the solution to a variational problem associated with an energy functional. A neural network is used to replace the solution in the variational problem. When optimizing over network parameters, the neural network can be used to approximate the committor function.

In this paper, we focus on designing a sampling-friendly method for the time dependent Fokker--Planck equation. There are two main reasons that motivate us for this investigation. One, as mentioned before, is to design sample based algorithm to solve PDEs in high dimensions. The other is to provide an alternative sampling strategy that can be potentially faster than LMC.  
Our approaches are different in terms of how deep networks are leveraged to approximate the solution of the PDE. We use pushforward of a given reference measure by neural networks to create a generative model. This is to approximate the stream of probability distributions, which can be used to generate samples not only at the terminal time, but also any time in between.  
More importantly, we prove results, obtained by using newly developed techniques based on Wasserstein metric on probability manifold, on the asymptotic convergence and error control of our numerical schemes. To the best of our knowledge, similar results are still lacking in existing studies. 

\subsection{Organization of this paper}
We organize the paper as follows. In section \ref{section2}, we briefly introduce some background knowledge of the Fokker--Planck equation, including its relation with SDE and its Wasserstein gradient flow structure. In section \ref{Para_FPE}, we introduce the Wasserstein statistical manifold $(\Theta, G)$ and derive our parametric Fokker--Planck equation as the manifold gradient flow of relative entropy on $\Theta$. We study the geometric property of this equation, including an insightful particle motion based interpretation of the parametric Fokker--Planck equation. In section \ref{section4}, we design a numerical scheme that is tractable for computing our parametric Fokker--Planck equation using deep learning framework. Some important details of implementation will be discussed. We present asymptotic convergence  and error estimates for the parametric Fokker--Planck equation in section \ref{Analysis}, and provide some numerical examples in section \ref{section 6}.

\section{Background on the Fokker--Planck equation}\label{section2}
In this section, we present two different perspectives regarding the Fokker--Planck equations, More discussion can be found in \cite{liliuzhouzha}.

\subsection{As the density evolution of stochastic differential equation}\label{background}
The general form of the Fokker--Planck equation is \cite{pavliotis2014stochastic,lelievre_stoltz_2016}:
\begin{align}
    \frac{\partial \rho(x,t)}{\partial t}& =-\nabla\cdot(\rho(x,t)\boldsymbol{\mu}(x,t))+\frac{1}{2}\nabla^2:(\boldsymbol{D}(x,t) \rho(x,t)) \label{generalFPE}   \\
    &=-\sum_{i=1}^d\frac{\partial}{\partial x_i}(\rho(x,t)\mu_i(x,t))+\frac{1}{2}\sum_{i,j=1}^d \frac{\partial^2}{\partial x_i\partial x_j}(D_{ij}(x,t)\rho(x,t)), \quad \rho(x,0)=\rho_0(x). \nonumber
\end{align}
Here $\boldsymbol{\mu}=(\mu_1,...,\mu_d)^{\textrm{T}}$ is the drift function and $\boldsymbol{D}=\{D_{ij}\}$ is the $d\times d$ diffusion tensor. Furthermore, $\boldsymbol{D}$ can be written as $\boldsymbol{D}=\boldsymbol{\sigma}\boldsymbol{\sigma}^{\textrm{T}}$, where $\boldsymbol{\sigma}(x,t)$ is a $d\times \tilde{d}$ matrix.
One derivation of the Fokker--Planck equation originates from the following stochastic differential equation (SDE) \cite{pavliotis2014stochastic, lelievre_stoltz_2016},
\begin{equation*}
    d\boldsymbol{X}_t = \boldsymbol{\mu}(\boldsymbol{X}_t,t)~dt+\boldsymbol{\sigma}(\boldsymbol{X}_t, t)~d\boldsymbol{B}_t,\quad  \boldsymbol{X}_0\sim\rho_0, \label{generalSDE}
\end{equation*}
where $\{\boldsymbol{B}_t\}_{t\geq 0}$ is the standard Brownian motion in $\mathbb{R}^{\tilde{d}}$, and $\rho_0$ is the distribution of the initial state. 
It is well known that the evolution of the density $\rho(x,t)$ of the stochastic process $\{\boldsymbol{X}_t\}_{t\geq 0}$ is described by the above the Fokker--Planck equation.

In this paper, we consider a more specific type of \eqref{generalFPE} by setting $\boldsymbol{\mu}(x,t)= - \nabla V(x)$, $\boldsymbol{\sigma}(x,t)=\sqrt{2{D}}~I_{d\times d}$ (${D} > 0 $), where $I_{d\times d}$ is the $d$ by $d$ identity matrix, and so $\boldsymbol{D}=2{D}~I_{d\times d}$. Then \eqref{generalSDE} is,
\begin{equation}
    d\boldsymbol{X}_t = -\nabla V(\boldsymbol{X}_t)~dt+\sqrt{2{D}}~d\boldsymbol{B}_t\quad \boldsymbol{X}_0 \sim \rho_0. \label{Over-damped_Langevin} 
\end{equation}
This equation is also called over-damped Langevin dynamics which has broad applications in computational physics, computational biology, Bayesian statistics \cite{grenander1994representations, schlick2010molecular, welling2011bayesian}. The corresponding Fokker--Planck equation is simplified to
\begin{equation}
    \frac{\partial\rho(x,t)}{\partial t}=\nabla\cdot(\rho(x,t)\nabla V(x))+{D}\Delta\rho(x,t), \quad \rho(x,0) = \rho_0(x). \label{FPE}
\end{equation}
In addition, we would like to mention that there is a Vlasov-type SDE corresponding to the Fokker--Planck equation \eqref{FPE}:
\begin{equation}
  \frac{d \boldsymbol{X}_t}{dt} = -\nabla V(\boldsymbol{X}_t)-{D}~\nabla\log\rho(\boldsymbol{X}_t,t),\quad \boldsymbol{X}_0\sim\rho_0,   \label{Vlasov-type SDE }
\end{equation}
in which $\rho(\cdot,t)$ is the density of $\boldsymbol{X}_t$. 
This Vlasov-type SDE \eqref{Vlasov-type SDE } will be very useful in our proofs for the error 
estimates of our proposed numerical algorithms.

%%%%%%%%%%%%%%%%%%%%%%%%%%%%%%%%%%%%%%%%%%%%%%%%%%%%%%%%%%%%%%%%%%%%%%%
\subsection{As the Wasserstein gradient flow of relative entropy}\label{2.2 }
Another useful viewpoint states that \eqref{FPE} is the Wasserstein gradient flow of relative entropy. We briefly present some of the notations and basic results in this regard. We only provide in sections \ref{wass_mfld} and \ref{background wass_grad} an informal discussion on Wasserstein manifold and Wasserstein gradient flow. More rigorous treatments on the topics can be found in \cite{ambrosio2008gradient}.

\subsubsection{Wasserstein manifold}\label{wass_mfld}
Denote the probability space supported on $\mathbb{R}^d$ with densities having finite second order moments as
\begin{equation*}
    \mathcal{P}=\left\{\rho\colon\int \rho(x)dx=1,~\rho(x)\geq 0,~\int |x|^2\rho(x)~ dx<\infty\right\}.
\end{equation*}
Here the integral is computed over the sample space $\mathbb{R}^d$. In the following discussion, if not specified, we always write $\int_{\mathbb{R}^d}$ as $\int$ for simplicity.

The so-called Wasserstein distance (also known as $L^2$-Wasserstein distance) on $\mathcal{P}$ is defined as \cite{villani2008optimal}
\begin{equation}
  W_2(\rho_1,\rho_2)= \left(\inf_{\pi\in\Pi(\rho_1,\rho_2)}\iint |x-y|^2 ~d\pi(x,y)\right)^{1/2}, \label{def_wass_dist}
\end{equation}
where $\Pi(\rho_1,\rho_2)$ is the set of joint distributions defined on $\mathbb{R}^d\times \mathbb{R}^d$ with fixed marginal distributions whose densities are $\rho_1, \rho_2$.
If we treat $\mathcal{P}$ as an infinite dimensional manifold,  the Wasserstein distance $W_2$ can induce a metric $g^W$ on the tangent bundle $\mathcal{T}\mathcal{P}$, with which $\mathcal{P}$ becomes a Riemannian manifold. For simplicity, here we directly give the definition of $g^W$. 
One can identify the tangent space at $\rho$ as:
\begin{equation*}
\mathcal{T}_\rho\mathcal{P}=\left\{f \colon \int f(x)dx=0\right\}.
\end{equation*}
For a specific $\rho\in\mathcal{P}$ and $f_i\in \mathcal{T}_\rho\mathcal{P}$, $i=1,2$, we define the Wasserstein metric tensor $g^W$ as \cite{Lafferty,otto2001}
\begin{equation}
g^W(\rho)(f_1,f_2)=\int  \nabla\psi_1(x)\cdot\nabla\psi_2(x)\rho(x) ~dx,\label{def_metric}
\end{equation}
where $\psi_1,\psi_2$ satisfies 
\begin{equation}
f_i=-\nabla\cdot(\rho\nabla\psi_i) \quad i=1,2,\label{hodge}
\end{equation}
with boundary conditions
$$\lim_{x\rightarrow\infty}\rho(x)\nabla\psi_i(x)=0\quad i=1,2.$$
Use the above definition, we can also write
\begin{equation*}
    g^W(\rho)(f_1, f_2) = \int\psi_1(-\nabla\cdot(\rho\nabla\psi_2))~dx = \int (-\nabla\cdot(\rho\nabla))^{-1}(f_1)\cdot f_2~dx.
\end{equation*}
Thus, we can identify $g^W(\rho)$ as $(-\nabla\cdot(\rho\nabla))^{-1}$. When $\textrm{supp}(\rho)=\mathbb{R}^d$, $g^W(\rho)$ is a positive definite bilinear form defined on tangent bundle $\mathcal{T}\mathcal{P}=\{(\rho, f)\colon \rho\in \mathcal{P},~f\in \mathcal{T}_\rho\mathcal{P}\}$. Hence we can treat $\mathcal{P}$ as a Riemannian manifold, which we call {\it Wasserstein manifold}, denoted as $(\mathcal{P},g^W)$ \cite{otto2001}. In order to keep our notations concise, in the sequel, we denote $g^W(\rho)$ as $g^W$ if no confusion is caused.

%%%%%%%%%%%%%%%%%%%%%%%%%%%%%%%%%%%%%%%%%%%%%%%%%%%%%%%%%%%%%%%%%%
\subsubsection{Wasserstein gradient}\label{background wass_grad}
We denote the Wasserstein gradient $\textrm{grad}_W$ as the manifold gradient on $(\mathcal{P},g^W)$.
In Riemannian geometry, the manifold gradient must be compatible with the metric, implying that for any smooth functional $\mathcal{F}$ defined on $\mathcal{P}$ and any $\rho\in\mathcal{P}$, considering an arbitrary differentiable curve $\{\rho_t\}_{t\in(-\delta,\delta)}$ with $\rho_0=\rho$, we have
\begin{equation*}
 \frac{d}{dt}\mathcal{F}(\rho_t)\Big\vert_{t=0} = g^W(\rho)(\textrm{grad}_W\mathcal{F}(\rho) , ~ \dot\rho_0).
\end{equation*}
Since we can write 
\[ \frac{d}{dt}\mathcal{F}(\rho_t)\Big\vert_{t=0}=\int \frac{\delta \mathcal{F}(\rho)}{\delta\rho(x)}(x) \cdot \dot\rho_0(x)~dx =\left\langle \frac{\delta\mathcal{F}(\rho)}{\delta\rho} , \dot\rho_0\right\rangle_{L^2},\]
here $ \frac{\delta\mathcal{F}(\rho)}{\delta\rho(x)}(x)$ is the $L^2$ variation of $\mathcal{F}$ at point $x\in\mathbb{R}^d$, we then have
\begin{equation*}
 \left\langle \frac{\delta\mathcal{F}(\rho)}{\delta\rho},\dot\rho_0\right\rangle_{L^2} = g^W(\rho)(\textrm{grad}_W\mathcal{F}(\rho) , ~ \dot\rho_0)\quad \forall~\dot\rho_0\in \mathcal{T}_\rho\mathcal{P}.
\end{equation*}
This leads to the following useful formula for computing Wasserstein gradient of functional $\mathcal{F}$
\begin{equation}
\begin{split}
\textrm{grad}_W\mathcal{F}(\rho)={g^{W}(\rho)}^{-1}\left(\frac{\delta\mathcal{F}}{\delta\rho}\right)(x)
  = -\nabla\cdot\left(\rho(x)\nabla~ \frac{\delta\mathcal{F}(\rho)}{\delta\rho(x)}(x)\right).
   \end{split}
   \label{gradflow}
\end{equation}
In particular, if $\mathcal{F}$ is taken as the relative entropy functional given by
\begin{equation}
  \mathcal{H}(\rho) = {D}~ \mathcal{D}_{\textrm{KL}}\left(\rho~ \Big|\Big|~ \rho_*\right) = \left( \int V(x)\rho(x)+{D}\rho(x)\log\rho(x)~dx \right) + {D}~\log Z_{D},
  \label{relative entropy}
\end{equation}
we have $\nabla\frac{\delta\mathcal{H}(\rho)}{\delta\rho}=\nabla V+{D}\nabla\log\rho$. 
Using \eqref{gradflow}, and noticing $\nabla\log\rho=\frac{\nabla\rho}{\rho}$, then $\nabla\cdot(\rho\nabla\log\rho)=\nabla\cdot(\nabla\rho)=\Delta\rho$, the Wasserstein gradient flow of $\mathcal{H}$ can be written as
\begin{equation*}
\frac{\partial \rho}{\partial t}=-\textrm{grad}_W\mathcal{H}(\rho)=\nabla\cdot(\rho\nabla V)+{D}\nabla\cdot(\rho\nabla\log\rho)),
\end{equation*}
which is exactly the Fokker--Planck equation \eqref{FPE}. 

%%%%%%%%%%%%%%%%%%%%%%%%%%%%%%%%%%%%%%%%%%%%%%%%%%%%%%%%%%%%%%%%%%%%%
\section{Parametric Fokker--Planck equation}\label{Para_FPE}
In this section, we provide detailed derivation for our parametric Fokker--Planck equation. 
\subsection{Wasserstein statistical manifold}\label{3.1}
Consider a parameter space  $\Theta$ as an open, convex set in $\mathbb{R}^m$, and assume the sample space is $\mathbb{R}^d$. Let $T_\theta$ be a map from $\mathbb{R}^d$ to $\mathbb{R}^d$  parametrized by $\theta$. In our discussion, we always assume the invertibility of $T_\theta(x)$, and it is second order differentiable with respect to $x$ and $\theta$, i.e. $T_\theta(x)\in C^2(\Theta\times \mathbb{R}^d)$.

\begin{remark}
   There are many different choices for $T_\theta$:
   \begin{itemize}
   \item We can set $T_\theta(x)=Ux+b$, with $\theta=(U,b)$, $U$ is a $d\times d$ invertible matrix, $b\in\mathbb{R}^d$; 
   \item We may also choose $T_\theta$ as the linear combination of basis functions $T_\theta(x) = \sum_{k=1}^{m} \theta_k\vec{\Phi}_k(x)$, where $\{\vec{\Phi}_k \}_{k=1}^m$ are the basis functions and the parameter $\theta$ will be the coefficients: $\theta = (\theta_1,...,\theta_m)$; 
   \item We can also treat $T_\theta$ as neural network. Its general structure can be written as the composition of $l$ affine and non-linear activation functions: $T_\theta(x)=\sigma_l(W_l(\sigma_{l-1}(...\sigma_1(W_1x+b_1)...))+b_l)$. In this case, the parameter $\theta$ will be the weight matrices and bias vectors of the neural network, i.e. $\theta = (W_1,b_1,...,W_l,b_l)$.
   
   \end{itemize}
\end{remark}
\begin{definition}\label{def pushforward map}
Suppose $X,Y$ are two measurable spaces, $\lambda$ is a probability measure defined on $X$; let $f:X\rightarrow Y$ be a measurable map. We define $f_{\sharp}\lambda$ as: $f_{\sharp}\lambda(E)=\lambda(f^{-1}(E))$ for all measurable $E\subset Y$. We call $ f_{\sharp}\lambda $ the pushforward of measure $\lambda$ by map $f$.
\end{definition}
Let $p\in \mathcal{P}$ be a reference probability measure with positive density defined on $\mathbb{R}^d$, such as the standard Gaussian. We denote $\rho_\theta$ as the density of ${T_\theta}_{\sharp}p$. Such kind of mechanism of producing parametric probability distributions is also known as \textbf{generative model}, which has broad applications in deep learning research \cite{goodfellow2014generative, arjovsky2017wasserstein, brock2018large}. We further assume our $T_\theta$ satisfy the following two conditions:
\begin{equation}
\textrm{Condition 1:} \quad \int |z|^2\rho_\theta(z)~dz = \int  |T_\theta(x)|^2~dp(x)<\infty \quad \forall ~ \theta\in\Theta.   \label{Theta_condition_A}
\end{equation}
This ensures that $\rho_\theta\in\mathcal{P}$ for each $\theta\in\Theta$. In order to introduce Wasserstein metric to the parameter space $\Theta$, we also assume that the Frobenius norm of the operator $\partial_\theta T_\theta(x):\mathbb{R}^d\rightarrow \mathbb{R}^{d\times m}$ is locally bounded in the following sense: for any fixed $\theta_*\in\Theta$, there exists $r(\theta_*)>0$ and two functions $L_1(\cdot|~\theta_*), L_2(\cdot|~\theta_*)$ satisfying 
\begin{align}
 \textrm{Condition 2:} \quad  \|\partial_\theta T_\theta(x)\|_F\leq L_1(x|~\theta_*), \|\partial_\theta T_\theta(x)\|_F^2\leq L_2(x|~\theta_*), & ~\forall ~\theta, |\theta-\theta_*|<r(\theta_*)~ \textrm{and}~ x\in \mathbb{R}^d, ~\textrm{and}  \nonumber \\
 \int L_1(x|~\theta_*)~dp(x)<\infty &\quad \int L_2(x|~\theta_*)~dp(x)<\infty. \label{Theta_condition_B}
\end{align}

We define the parametric submanifold $\mathcal{P}_\Theta\subset\mathcal{P}$ as:
$$\mathcal{P}_{\Theta}=\{\rho_\theta ~\textrm{is density function of }~ {T_\theta}_{\sharp}p~ |~\theta\in\Theta\}.$$

Clearly, the connection between $\mathcal{P}$ and $\Theta$ is through the pushforward operation $T_{\theta \sharp}:\Theta\rightarrow\mathcal{P}_\Theta,\theta\mapsto\rho_\theta$. Hence it is natural to define the Wasserstein metric $G(\theta)$ on parameter space $\Theta$ as the pullback of $g^W$ by $T_{\theta\sharp}$. To be specific, we define  $G(\theta) = (T_{\theta\sharp})^* g^W$. Using this definition, $T_{\theta\sharp}$ becomes an isometric immersion from $\Theta$ to $\mathcal{P}$. For each $\theta$, $G(\theta)$ is a bilinear form defined on $\mathcal{T}_\theta\Theta\simeq\mathbb{R}^m$, which can be identified as an $m\times m$ matrix.

Before computing $G(\theta)$, we introduce a lemma which can help us to better understand $G(\theta)$. 
\begin{restatable}{lemma}{lemmaA}
\label{lemma:local err analys}
Suppose $\vec{u},\vec{v}$ are two vector fields defined on $\mathbb{R}^d$, suppose $\varphi,\psi$ solves $-\nabla\cdot(\rho\nabla\varphi)=-\nabla\cdot(\rho\vec{u})$ and $-\nabla\cdot(\rho\nabla\psi) = -\nabla\cdot(\rho\vec{v})$, or equivalently, $\textrm{Proj}_{\rho}[\vec{u}]=\nabla\varphi$ and $\textrm{Proj}_{\rho}[\vec{v}]=\nabla\psi$ (cf. Definition \ref{Hodge Decomp}). Then:
\begin{align}
   & \int \vec{u}(x)\cdot \nabla\psi(x)\rho(x)~dx = \int  \nabla\varphi(x)\cdot \nabla\psi(x) \rho(x)~dx; \label{loc err analys lemma 1}\\
  &  \int |\nabla\psi(x)|^2\rho(x)~dx\leq \int |\vec{v}(x)|^2\rho(x)~dx. \label{loc err analys lemma 2}
\end{align}
\end{restatable}
We prove Lemma \ref{lemma:local err analys} in Appendix \ref{pf hodge lemma}. The metric tensor $G(\theta)$ is computed in the following theorem.

%%%%%%%%%%%%%%%%%%%%%%%%%%%%%%%%%%%%%%%%%%%%%%%%%%%%%%%%%%%%%%%%%%%%%%%%%
\begin{theorem}\label{thm_about_computing_metric_of_Theta}
Assume $\Theta$ satisfies \eqref{Theta_condition_A},\eqref{Theta_condition_B}. $T_\theta$ is invertible and $T_\theta(x)\in C^2(\Theta\times\mathbb{R}^d)$. Then $\Theta$ can be equipped with the metric tensor $G = (T_{\theta\sharp})^* g^W$, which is an $m\times m$ non-negative definite symmetric matrix of the form:
\begin{equation}
  G(\theta) = \int  \nabla \boldsymbol{\Psi}(T_\theta(x))\nabla \boldsymbol{\Psi}(T_\theta(x))^{\textrm{T}}~dp(x)\label{Metric tensor D dimension}
\end{equation}
at every $\theta\in\Theta$. More precisely, in entry-wise form,
\begin{equation}
  G_{ij}(\theta) =\int  \nabla\psi_i(T_\theta(x))\cdot\nabla\psi_j(T_\theta(x))~dp(x),~~1\leq i,j\leq m, \nonumber
\end{equation}
in which $\boldsymbol{\Psi}=(\psi_1,\cdots,\psi_m)^{\textrm{T}}$ and $\nabla\boldsymbol{\Psi}$ is an $m\times d$ Jacobian matrix of $\boldsymbol{\Psi}$. For each $j=1,2,\cdots,m$, $\psi_j$ solves the following equation:
\begin{equation}
  \nabla\cdot(\rho_\theta\nabla\psi_j(x)) = \nabla\cdot(\rho_\theta~\frac{\partial T_\theta}{\partial\theta_j}(T^{-1}_\theta(x))).\label{Hodge Dcom}
\end{equation}
with boundary conditions
$$\lim_{x\rightarrow\infty}\rho_\theta(x)\nabla\psi_j(x)=0.$$
\end{theorem}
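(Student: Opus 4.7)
The plan is to unfold the pullback $G=(T_\#)^*g^W$ by directly computing $dT_\#$ applied to coordinate tangent vectors $e_j\in T_\theta\Theta$ and then invoking the definition \eqref{def_metric} of the Wasserstein metric. First I would identify, for each $j=1,\dots,m$, the tangent vector $\dot\rho_j := \partial_{\theta_j}\rho_\theta \in T_{\rho_\theta}\mathcal P$ that arises from the immersion $T_\#$. For a one-parameter family $s\mapsto \theta+s\,e_j$, the corresponding pushforward densities $\rho_{\theta+s e_j}=(T_{\theta+s e_j})_\# p$ form a smooth curve in $\mathcal P$, and viewing $T_{\theta+s e_j}$ as a flow generated by the Eulerian velocity $v_j(x)=\partial_{\theta_j}T_\theta(T_\theta^{-1}(x))$, the standard continuity-equation argument (differentiating $\int f\,d\rho_{\theta+s e_j}=\int f\circ T_{\theta+s e_j}\,dp$ and integrating by parts) yields
\begin{equation*}
\dot\rho_j = -\nabla\!\cdot\!\Bigl(\rho_\theta\,\partial_{\theta_j}T_\theta(T_\theta^{-1}(\cdot))\Bigr).
\end{equation*}
The invertibility and smoothness of $T_\theta$ in $(\theta,x)$ together with assumption \eqref{Theta_condition_B} and the boundedness of $L$ in $L^1(p)$ guarantee that this derivative exists and that $\dot\rho_j$ lies in $T_{\rho_\theta}\mathcal P$ (zero mass).

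Next, I would use the Hodge-type decomposition \eqref{hodge} to find the unique gradient representative of $\dot\rho_j$. Matching the expression above with $\dot\rho_j = -\nabla\!\cdot\!(\rho_\theta \nabla\psi_j)$ produces exactly the elliptic equation \eqref{Hodge Dcom} for $\psi_j$, with the decay boundary condition $\rho_\theta\nabla\psi_j\to 0$ at infinity needed to make the Helmholtz-type decomposition unique up to additive constants. Solvability of this weighted elliptic PDE on $\mathbb R^d$ in the Hilbert space $\dot H^1(\rho_\theta)$ follows from a routine Lax-Milgram argument once one checks that the right-hand side defines a bounded linear functional on $\dot H^1(\rho_\theta)$, which is where conditions \eqref{Theta_condition_A} and \eqref{Theta_condition_B} enter (they control the $L^2(\rho_\theta)$ norm of the source vector field via the change-of-variables identity $\int |\partial_{\theta_j}T_\theta(T_\theta^{-1}(x))|^2\rho_\theta(x)\,dx=\int |\partial_{\theta_j}T_\theta(x)|^2\,dp(x)$).

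With the $\psi_j$ in hand, I would plug $\dot\rho_i,\dot\rho_j$ into the Wasserstein metric \eqref{def_metric} to obtain
\begin{equation*}
G_{ij}(\theta) = g^W(\rho_\theta)(\dot\rho_i,\dot\rho_j) = \int \nabla\psi_i(x)\cdot\nabla\psi_j(x)\,\rho_\theta(x)\,dx,
\end{equation*}
and then perform the change of variables $x=T_\theta(y)$ using $(T_\theta)_\# p=\rho_\theta$ to rewrite the integral against $dp(y)$, yielding the stated formula \eqref{Metric tensor D dimension}. Symmetry is immediate, and non-negative definiteness follows because for any $v\in\mathbb R^m$, $v^\top G(\theta) v = \int |\nabla(\sum_j v_j \psi_j)(T_\theta(x))|^2 dp(x)\ge 0$, with the vector version $\nabla\boldsymbol\Psi\nabla\boldsymbol\Psi^\top$ just collecting the entry-wise formula.

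I expect the main obstacle to be the analytic bookkeeping of step one (justifying that $\partial_{\theta_j}\rho_\theta$ really equals the distributional divergence above, with enough integrability so that $\psi_j$ sits in the correct weighted Sobolev space and the boundary term in the integration by parts vanishes). Once \eqref{Theta_condition_A}--\eqref{Theta_condition_B} are correctly used to dominate the relevant integrals and to legitimize differentiation under the integral sign, the remaining algebraic manipulations and change of variables are straightforward.
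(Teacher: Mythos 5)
Your proposal follows essentially the same route as the paper's proof: compute the differential of the immersion $T_{\#}$ as the weak divergence $-\nabla\cdot\bigl(\rho_\theta\,\partial_\theta T_\theta(T_\theta^{-1}(\cdot))\,\xi\bigr)$ by differentiating $\int \phi\, d\rho_{\theta_t}=\int \phi\circ T_{\theta_t}\,dp$ under the integral (justified by \eqref{Theta_condition_B}), identify the gradient representative $\psi_j$ through \eqref{Hodge Dcom}, substitute into \eqref{def_metric}, and change variables back to $dp$ to obtain \eqref{Metric tensor D dimension}. The only difference is cosmetic (coordinate directions $e_j$ instead of a general tangent vector $\xi$) plus an added well-posedness remark for the weighted elliptic problem that the paper does not attempt, so the argument is correct and matches the paper.
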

\begin{proof}
\noindent
Suppose $\xi\in \mathcal{T}\Theta$ is a vector field on $\Theta$, for a fixed $\theta\in\Theta$,
we first compute the pushforward $(T_{\theta\sharp})_*\xi(\theta)$ of $\xi$ at point $\theta$: We choose any smooth curve $\{\theta_t\}_{t\geq 0}$ on $\Theta$ with $\theta_0=\theta$ and $\dot{\theta}_0 = \xi(\theta)$. If we denote $\rho_{\theta_t}={T_{\theta_t}}_{\sharp}p$, we have $(T_{\theta \sharp})_*\xi(\theta) = \frac{\partial\rho_{\theta_t}}{\partial t}\Bigr|_{t=0}$.

To compute $\frac{\partial\rho_{\theta_t}}{\partial t}\Bigr|_{t=0}$, we consider an arbitrary $\phi\in C^{\infty}_0(M)$.

On one hand, $\frac{\rho_{\theta_{\Delta t}}(y)-\rho_{\theta_0}(y)}{\Delta t}=\frac{\partial}{\partial t}\rho(\theta_{\tilde{t}_1},y)$, where $\tilde{t}_1$ is some point between $0,\Delta t$, since $\phi\in C_0^\infty$ and $\rho(\theta_t, x)$ is at least $C^1$ with respect to $t,y$, we can show that the function $\varphi(x)=\sup_{s\in[0,\Delta t]}|\phi(x)\frac{\partial}{\partial t}\rho(\theta_s,y)|$ is continuous on a compact set and thus integrable on $\mathbb{R}^d$. Using dominated convergence theorem, we have:
\begin{equation}
\frac{\partial}{\partial t}\left(\int \phi(y)\rho_{\theta_t}(y)~dy\right)\Big\vert_{t=0} =  \int\phi(y)\frac{\partial\rho_{\theta_t}(y)}{\partial t}\Big\vert_{t=0}~dy.  \label{proof_1}
\end{equation}
On the other hand, we have:
\begin{equation}
\frac{\phi(T_{\theta_{\Delta t}}(y))-\phi(T_{\theta_0}(y))}{\Delta t} = \dot\theta^{\textrm{T}}_{\tilde{t}_2}~\partial_\theta T_{\theta_{\tilde{t}_2}}(x)^{\textrm{T}}~\nabla\phi(T_{\theta_{\tilde{t}_2}}(y)),  \label{time difference}
\end{equation}
in which $\tilde{t}_2$ is also between $0,\Delta t$. For any $\Delta t$ small enough and $\tilde{t} \in [0, \Delta t]$, we can easily find upper bounds for $\|\dot\theta_{\tilde{t}}\|\leq A$ and %since $\phi\in C_0^\infty$, we can bound 
$\|\nabla\phi(\cdot)\|_\infty\leq B$. Recall the condition \eqref{Theta_condition_B}, when $\Delta t$ is small enough, we have $|\theta_{\Delta t}-\theta_0|<r(\theta_0)$, thus we obtain the following upper bound for \eqref{time difference}
\[| \dot\theta^{\textrm{T}}_{\tilde{t}}~\partial_\theta T_{\theta_{\tilde{t}}}(x)^{\textrm{T}}~\nabla\phi(T_{\theta_{\tilde{t}}}(y)) | \leq A B \|\partial_\theta T_{\theta_{\tilde{t}}}(x)\|_F\leq ABL_1(x|\theta_0).\]
By \eqref{Theta_condition_B}, we know $L_1(\cdot|\theta_0)\in L^1(p)$, we can apply dominated convergence theorem to obtain: 
\begin{equation}
  \frac{\partial}{\partial t}\left(\int  \phi(T_{\theta_t}(x))dp\right)\Big\vert_{t=0} = \int  \dot{\theta_t}^{\textrm{T}} \partial_\theta T_{\theta_t}(x)^{\textrm{T}}\nabla\phi(T_{\theta_t}(x))\vert_{t=0}dp.   \label{proof_2}
\end{equation}
Since $\frac{\partial}{\partial t}\int\phi(y)\rho_{\theta_t}(y)~dy=\frac{\partial}{\partial t}\int\phi(T_{\theta_t}(x))~dp(x)$, we use \eqref{proof_1} and \eqref{proof_2} to get:
\begin{align}
    \int \phi(y)\frac{\partial \rho_{\theta_t}}{\partial t}(y)\Big\vert_{t=0}~dy & = \int  \dot{\theta_t}^{\textrm{T}} \partial_\theta T_{\theta_t}(x)^{\textrm{T}}\nabla\phi(T_{\theta_t}(x))\vert_{t=0}~dp(x)\nonumber \\ 
    & = \int \dot{\theta}_t^{\textrm{T}} \left( \frac{\partial T_{\theta_t}}{\partial \theta}(T^{-1}_{\theta_t}(x))\right)^{\textrm{T}}\nabla\phi(x)~\rho_{\theta_t}(x)\vert_{t=0}~dx\nonumber\\
    & = \int  \phi(x)\left(-\nabla\cdot\left(\rho_{\theta_t}(x) \frac{\partial  T_{\theta_t}}{\partial \theta}(T_{\theta_t}^{-1}(x))~\dot{\theta}_t\right)\right)\vert_{t=0}~dx. \nonumber
\end{align}
Because $\phi(x)$ is arbitrary, this weak formulation reveals that
\begin{equation}
    (T_{\theta\sharp})_*\xi(\theta)=\frac{\partial\rho_{\theta_t}}{\partial t}\Bigr|_{t=0}=-\nabla\cdot \left(  \rho_\theta(x)~ \frac{\partial T_\theta}{\partial\theta}(T_\theta^{-1}(x)) \xi(\theta)\right).  \label{compute_drhodt}
\end{equation}
Now let us compute the metric tensor $G$. Since $T_{\theta \sharp}$ is isometric immersion from $\Theta$ to $\mathcal{P}$, the pullback of $g^W$ by $T_{\theta \sharp}$ gives $G$, i.e. $(T_{\theta \sharp})^*g^W = G(\theta)$. By definition of pullback map, for any $\theta\in\Theta$ and $\xi(\theta)\in \mathcal{T}_\theta \Theta$, we have:
\begin{equation}
  G(\theta)(\xi(\theta), \xi(\theta)) = g^W(\rho_\theta)((T_{\theta \sharp})_*\xi(\theta), (T_{\theta \sharp})_*\xi(\theta)). \label{a}
\end{equation}
To compute the right hand side of (\ref{a}), recall \eqref{def_metric}, we need to solve for $\varphi$ from:
\begin{equation}
\frac{\partial\rho_{\theta_t}}{\partial t}\Bigr|_{t=0}=-\nabla\cdot(\rho_\theta(x)\nabla\varphi(x)). \label{b}
\end{equation}
By \eqref{compute_drhodt}, \eqref{b} is:
\begin{equation}
\nabla\cdot(\rho_\theta(x) \nabla\varphi(x)) = \nabla\cdot\left(\rho_\theta(x) \frac{ \partial_\theta T_\theta }{\partial \theta}(T_\theta^{-1}(\cdot))  \xi(\theta) \right). \label{equ}
\end{equation}
We can straightforwardly check that $\varphi(x) = \boldsymbol{\Psi}^{\textrm{T}}(x)\xi(\theta) $ is the solution of (\ref{equ}). Now by definition of $g^W$ as mentioned in \ref{wass_mfld}, we write the right hand side of \eqref{a} as
\begin{align}
  g^W(\rho_\theta)((T_{\theta \sharp})_*\xi(\theta), (T_{\theta\sharp})_*\xi(\theta)) = & \int |\nabla\varphi(y)|^2\rho_\theta(y)~dy = \xi(\theta)^{\textrm{T}}\left(\int \nabla\boldsymbol{\Psi}(y)\nabla\boldsymbol{\Psi}(y)^{\textrm{T}}\rho_\theta(y)~dy\right)\xi(\theta). \label{verify finite metric value g}\\
  = & \sum_{i,j=1}^{m} \left( \int \nabla\psi_i(y)\cdot\nabla\psi_j(y)\rho_\theta(y)~dy\right)\xi_i(\theta)\xi_j(\theta).  \nonumber
\end{align}
Here we assume components of $\xi(\theta)$ as $(\xi_1(\theta), ... ,\xi_m(\theta))^{\textrm{T}}$. Before we compute $G(\theta)$, we first verify that the inner product in \eqref{verify finite metric value g} is finite for any $\xi\in \mathcal{T}\Theta$. To show this, by Cauchy–Schwarz inequality we obtain
\begin{equation*}
  \int\nabla\psi_i(y)\cdot\nabla\psi_j(y)\rho_\theta(y)~dy\leq \left(\int|\nabla\psi_i(y)|^2\rho_\theta(y)~dy\right)^{\frac{1}{2}}\left(\int|\nabla\psi_j(y)|^2\rho_\theta(y)~dy\right)^{\frac{1}{2}}.
\end{equation*}
recall $\psi_j$ defined in \eqref{Hodge Dcom}, then applying \eqref{loc err analys lemma 2} of Lemma \eqref{lemma:local err analys} yields
\begin{equation*}
  \int|\nabla\psi_j(y)|^2\rho_\theta(y) ~dy  \leq \int \left|\frac{\partial T_{\theta}}{\partial \theta_j}(T_\theta^{-1}(y))\right|^2\rho_\theta(y)~dy = \int \left|\frac{\partial T_\theta}{\partial \theta_j}(x)\right|^2dp(x) \leq \int L_2(y|\theta)p(y)~dy<\infty.
\end{equation*}
The last two inequalities are due to condition \eqref{Theta_condition_B}. As a result, we proved the finiteness of \eqref{verify finite metric value g}.

Finally, let us compute:
\begin{align}
  G(\theta)(\xi(\theta),\xi(\theta)) = g^W(\rho_\theta)((T_{\theta\sharp})_*\xi(\theta),(T_{\theta\sharp})_*\xi(\theta))  
   = \xi(\theta)^{\textrm{T}}\left(\int\nabla\boldsymbol{\Psi}(T_\theta(x))\nabla\boldsymbol{\Psi}(T_\theta(x))^{\textrm{T}} dp(x)\right)\xi(\theta).\nonumber
\end{align}
Thus we can verify that 
\begin{equation}
G(\theta) = \int  \nabla\boldsymbol{\Psi}(T_\theta(x))\nabla\boldsymbol{\Psi}(T_\theta(x))^{\textrm{T}}~dp(x),  \nonumber
\end{equation}
which completes the proof.
\end{proof}

Generally speaking, the metric tensor $G$ does not have an explicit form when $d\geq 2$. It is worth mention that $G$ has an explicit form and can be computed directly when $d=1$ \cite{liliuzhouzha}.

\begin{remark}[Well-posedness of \eqref{Hodge Dcom}]\label{rmk wp}
  It is worth commenting on the existence and the regularity question to equations like  \eqref{Hodge Dcom}.  Determining what properties or conditions that  $T_\theta$ should have to guarantee the well-posedness of \eqref{Hodge Dcom} is an interesting and important problem on its own. In references such as \cite{pardoux2001poisson} and \cite{volpert2011elliptic}, there are sufficient conditions that guarantee the well-posedness of elliptic PDEs defined on $\mathbb{R}^d$. Most of the existing results require uniform lower bound on $\rho_\theta$, i.e. $\rho_\theta(x)>\epsilon>0$ for all $x\in\mathbb{R}^d$. Such coercive condition is not applicable in our case since $\int\rho_\theta(x)dx=1$ is finite. On the other hand, section 8.1.2 of \cite{villani2003topics} provides another sufficient condition on the well-posedness of \eqref{Hodge Dcom}: If there exists $\lambda>0$ such that the following Poincar\'{e} inequality \eqref{Poincare inequality} holds for any $\varphi\in C^\infty(\mathbb{R}^d)$ with compact support,
  \begin{equation}
     \int |\nabla \varphi(x)|^2\rho_\theta(x)~dx \geq \lambda \int \left(\varphi(x)-\int\varphi\rho_\theta~dx\right)^2\rho_\theta(x)~dx,  \label{Poincare inequality}
  \end{equation}
  and $-\nabla\cdot(\rho_\theta\frac{\partial T_\theta}{\partial \theta_j}(T^{-1}_{\theta}(\cdot)))\in L^2(\rho_\theta)$, Then \eqref{Hodge Dcom} admits a unique solution $\psi_j$ with $\nabla\psi_j\in L^2(\rho_\theta)$. To the best of our knowledge, it is still unclear that what kind of properties of $T_\theta$ may lead to \eqref{Poincare inequality}.  % {\color{red} If we can prove $\rho_\theta=T_{\theta\sharp}p$ satisfies Log-Sobolev (should be true, since when $x$ goes to infinity, $\log\rho_\theta(x)$ also goes to infinity, use Holley-Strock theory may prove it. Once Log-Sobolev is satisfied, Poincar\'{e} is also satisfied. (perturb $\mu=(1+\epsilon u)\rho_\theta$ with $\int u\rho_\theta~dx=0$ in $\mathrm{D}_{\textrm{KL}}(\mu|\rho_\theta)$), send $\epsilon \rightarrow 0$ leads to Poincar\'{e} inequality. One can directly compute the condition for $-\nabla\cdot(\rho_\theta\frac{\partial T_\theta}{\partial \theta_j}(T^{-1}_{\theta}(\cdot)))\in L^2(\rho_\theta)$. This can be made true on suitable assumptions on $T_\theta$. These further discussion can be added to dissertation.)}

 It is worth pointing out that under certain situations discussed in Section \ref{3.4 }, equation \eqref{Hodge Dcom} does have classical solutions. For example, if we select $T_\theta$ as an affine transform and consider the Fokker--Planck equation \eqref{FPE} with quadratic potential $V$ and Gaussian initial $\rho_0$, we can prove that \eqref{Hodge Dcom} is well-posed along the trajectory of the ODE \eqref{wass_grad_flow_on_para_spc}, i.e. the following elliptic equation
\begin{equation*}
  -\nabla\cdot(\rho_{\theta_t}\nabla\psi) = -\nabla\cdot(\rho_{\theta_t}\frac{\partial_\theta T_{\theta_t}}{\partial \theta}(T_{\theta_t}^{-1}(x))\dot\theta_t), \quad \textrm{where} ~ \{\theta_t\} ~\textrm{solves}~\eqref{wass_grad_flow_on_para_spc},
\end{equation*}
always admits a classical solution $\psi(x)=V(x)+D\log\rho_\theta(x)+\textrm{Const}$.

In general, The conditions imposed on $T_\theta$ to guarantee  well-posedness of \eqref{Hodge Dcom} is a fundamental and interesting topic subject to further investigation. A good reference related to the topic can be found in \cite{ambrosio2008gradient}. 
\end{remark}

Following theorem provides several criteria for examining whether $G$ is a Riemannian metric, i.e. whether $G(\theta)$ is positive definite.
\begin{theorem} \label{criteria of pos def G}
For $\theta\in\Theta$, $\{\psi_k\}_{k=1}^m$ satisfies \eqref{Hodge Dcom}, the following four statements are equivalent
\begin{enumerate}
  \item $G(\theta)$ is positive definite;
  \item For any $\xi\in \mathcal{T}_\theta\Theta~(\xi\neq 0)$, there exists $z\in M$ such that $\nabla\cdot(\rho_\theta(z)\frac{\partial  T_\theta}{\partial \theta}(T_\theta^{-1} (z))\xi)\neq 0$;
  \item $\{\nabla\psi_k\}_{k=1}^m$, as $m$ functions in the space $L^2(\mathbb{R}^d;\mathbb{R}^d,\rho_{\theta_k})$, are linearly independent;
  \item $\frac{d}{dt}(T_{\theta+t\xi\sharp}p)\vert_{t=0} \neq 0$ for any $\xi\in \mathbb{R}^m$.
\end{enumerate}
\end{theorem}
\begin{proof}
We first verify that 1 and 2 are equivalent. We need the following identity used in Theorem \ref{thm_about_computing_metric_of_Theta}: For any $\theta,\xi,x$, we have
\begin{equation}
 \nabla\cdot(\rho_\theta(x)\nabla(\xi^{\textrm{T}}\boldsymbol{\Psi}(x))) = \nabla\cdot(\rho_\theta(x) \frac{\partial T_\theta}{\partial\theta}(T_\theta^{-1}(x))\xi). \label{proof_pos_def_hodge_dcom}
\end{equation}
$(\Leftarrow)$: suppose for any $\theta\in\Theta$ and $\xi\in \mathcal{T}_\theta\Theta$, at certain $z\in\mathbb{R}^d$, $\nabla\cdot(\rho_\theta(z)\frac{\partial T_\theta}{\partial \theta}(T_\theta^{-1} (z)\xi)\neq 0$, then $\nabla\cdot(\rho_\theta(z)\nabla(\xi^{\textrm{T}}\boldsymbol{\Psi}(z)))\neq 0$, thus $\rho_\theta\nabla(\xi^{\textrm{T}}\boldsymbol{\Psi})$ is not identically $\boldsymbol{0}$. Using continuity of $\rho_\theta\nabla(\xi^{\textrm{T}}\boldsymbol{\Psi})$, we know that: $|\nabla(\xi^{\textrm{T}}\boldsymbol{\Psi}(x))|^2\rho_\theta(x)>0$ in some small neighbourhood of $z$. Thus we have:
\begin{equation}
\xi^{\textrm{T}} G(\theta)\xi = \int |\nabla\boldsymbol{\Psi}(x)^{\textrm{T}}\xi|^2\rho_\theta(x)~dx > 0,  \label{proof_pos_def}
\end{equation}
holds for any $\theta$ and $\xi$, this leads to the positive definiteness of $G$.

\noindent
$(\Rightarrow)$: Now suppose \eqref{proof_pos_def} holds for all $\theta,\xi$, then we have
\begin{equation*}
    \int-\nabla\cdot(\rho_\theta(x)\nabla(\xi^{\textrm{T}}\boldsymbol{\Psi}(x)))\cdot\xi^{\textrm{T}}\boldsymbol{\Psi}(x)~dx>0.
\end{equation*}
This leads to the existence of a $z\in\mathbb{R}^d$ such that $-\nabla\cdot(\rho_\theta(z)\nabla(\xi^{\textrm{T}}\boldsymbol{\Psi}(z)))\neq 0$. Combining \eqref{proof_pos_def_hodge_dcom}, we have verified the equivalence between 1 and 2.

We recall \eqref{compute_drhodt}, then $\frac{d}{dt}(T_{\theta+t\xi\sharp}p)\vert_{t=0} = (T_{\theta\sharp})_*\xi = -\nabla\cdot(\rho_\theta(x)\frac{\partial  T_\theta}{\partial \theta}(T_\theta^{-1}(x))\xi)$, this verifies the equivalence between 2 and 3.

Finally, as stated before, we can verify $\xi^{\textrm{T}}G(\theta)\xi = \|\sum_{k=1}\xi_k\nabla\psi_k\|^2_{L^2(\rho_\theta)}$, this formula will directly leads to the equivalence between 1 and 4 and we have proved the equivalence among statements 1,2,3 and 4.

\end{proof}

To keep our discussion concise in the following sections, we will always assume $G(\theta)$ is positive definite for every $\theta\in\Theta$.

%%%%%%%%%%%%%%%%%%%%%%%%%%%%%%%%%%%%%%%%%%%%%%%%%%%%%%%%%%%%%%%%%%%%%%%

\subsection{Parametric Fokker--Planck equation} \label{section 3.2}
We consider the pushforward $T_{(\cdot)\sharp}$ induced relative entropy functional $H=\mathcal{H}\circ T_{(\cdot)\sharp}:\Theta\rightarrow \mathbb{R}$, 
\begin{align}
H(\theta)=\mathcal{H}(\rho_\theta)&=\left(\int  V(x)\rho_\theta(x)+{D}\rho_\theta(x)\log\rho_\theta(x)~dx\right) +{D} \log Z_{D} \nonumber \\
& = \left(\int V(T_\theta(x))+{D}\log\rho_\theta(T_\theta(x))~dp(x)\right)+{D} \log Z_{D}.  \label{relative entropy parameter}
\end{align}
Following the theory in \cite{NG}, the gradient flow of $H$ on Wasserstein parameter manifold $(\Theta,G)$ satisfies 
\begin{equation}
\dot{\theta}=-G(\theta)^{-1}\nabla_\theta H(\theta).
\label{wass_grad_flow_on_para_spc}
\end{equation}
We call \eqref{wass_grad_flow_on_para_spc} {\em parametric Fokker--Planck equation}. The ODE \eqref{wass_grad_flow_on_para_spc} as the Wasserstein gradient flow on parameter space $(\Theta,G)$ is closely related to the Fokker--Planck equation on probability submanifold $\mathcal{P}_{\Theta}$. We have the following theorem, which is a natural result derived from submanifold geometry.

\begin{theorem}\label{theorem_submfld}
Suppose $\{\theta_t\}_{t\geq 0 }$ solves (\ref{wass_grad_flow_on_para_spc}). Then $\{\rho_{\theta_t}\}$ is the gradient flow of $\mathcal{H}$ on probability submanifold $\mathcal{P}_{\Theta}$. Furthermore, at any time $t$, $\dot\rho_{\theta_{t}}=\frac{d}{dt} \rho_{\theta_t} \in \mathcal{T}_{\rho_{\theta_{t}}}\mathcal{P}_\Theta$ is the orthogonal projection of $-\textrm{grad}_W\mathcal{H}(\rho_{\theta_t})\in \mathcal{T}_{\rho_{\theta_t}}\mathcal{P}$ onto the subspace $\mathcal{T}_{\rho_{\theta_t}}\mathcal{P}_\Theta$ with respect to the Wasserstein metric $g^W$. 
\end{theorem}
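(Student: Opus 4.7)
The plan is to recognize this as the standard Riemannian submanifold gradient-projection theorem, specialized to the immersion $T_\#:(\Theta,G)\hookrightarrow(\mathcal{P},g^W)$. Since by construction $G=(T_\#)^*g^W$, this immersion is isometric, so it should suffice to push the defining identities through by the chain rule.

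First I would compute $\dot\rho_{\theta_t}$ explicitly. By the chain rule and equation \eqref{compute_drhodt} from the proof of Theorem \ref{thm_about_computing_metric_of_Theta}, $\dot\rho_{\theta_t}=(T_\#|_{\theta_t})_*\dot\theta_t=-\nabla\!\cdot\!\bigl(\rho_{\theta_t}\,\partial_\theta T_{\theta_t}(T_{\theta_t}^{-1}(\cdot))\,\dot\theta_t\bigr)$, which lies in $T_{\rho_{\theta_t}}\mathcal{P}_\Theta$ by definition. This takes care of showing that $\{\rho_{\theta_t}\}$ is a curve in $\mathcal{P}_\Theta$.

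Next, for the projection statement, I would verify the characterizing identity: for every $v\in T_{\rho_{\theta_t}}\mathcal{P}_\Theta$,
\begin{equation*}
g^W(\rho_{\theta_t})\bigl(\dot\rho_{\theta_t},\,v\bigr)=g^W(\rho_{\theta_t})\bigl(-\mathrm{grad}_W\mathcal{H}(\rho_{\theta_t}),\,v\bigr),
\end{equation*}
which says exactly that $\dot\rho_{\theta_t}$ is the orthogonal projection of $-\mathrm{grad}_W\mathcal{H}(\rho_{\theta_t})$ onto $T_{\rho_{\theta_t}}\mathcal{P}_\Theta$. Any such $v$ can be written as $(T_\#|_{\theta_t})_*\xi$ for some $\xi\in T_{\theta_t}\Theta$. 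Using the isometric-immersion relation $G=(T_\#)^*g^W$, the left-hand side becomes $G(\theta_t)(\dot\theta_t,\xi)$, and substituting the ODE \eqref{wass_grad_flow_on_para_spc} gives $-\langle\nabla_\theta H(\theta_t),\xi\rangle=-dH(\theta_t)(\xi)$. The right-hand side equals $\langle d\mathcal{H}(\rho_{\theta_t}),v\rangle=d\mathcal{H}(\rho_{\theta_t})((T_\#|_{\theta_t})_*\xi)$, which by the chain rule applied to $H=\mathcal{H}\circ T_\#$ is also $dH(\theta_t)(\xi)$. The two sides match.

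Finally, the first assertion (that $\{\rho_{\theta_t}\}$ is the $\mathcal{H}$-gradient flow on $\mathcal{P}_\Theta$) is just the specialization of this projection identity to $v=\dot\rho_{\theta_t}$ combined with the fact that the intrinsic gradient of a function on a Riemannian submanifold is, by definition, the tangential projection of the ambient gradient. There is no real obstacle here; the only thing to be careful about is ensuring that the pushforward formula \eqref{compute_drhodt} is applied to the correct tangent vector $\dot\theta_t$ and that the identification $T_{\rho_{\theta_t}}\mathcal{P}_\Theta=\mathrm{Image}((T_\#|_{\theta_t})_*)$ is invoked cleanly so that "for every $v\in T_{\rho_{\theta_t}}\mathcal{P}_\Theta$" reduces to "for every $\xi\in T_{\theta_t}\Theta$."
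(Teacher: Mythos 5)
Your proposal is correct and rests on the same mechanism as the paper's own proof: the pullback relation $G=(T_{\#})^*g^W$ together with the chain rule for $H=\mathcal{H}\circ T_{\#}$, which is precisely what the paper packages into the two general lemmas, Theorem \ref{general_A} and Theorem \ref{general_B}. The only differences are organizational and harmless: you verify the defining projection identity directly and then deduce the gradient-flow claim from the submanifold-gradient fact (which is the content of Theorem \ref{general_B}, a standard result rather than a definition as you phrase it), whereas the paper proves the gradient-flow claim first via Theorem \ref{general_A} and then obtains the projection statement from Theorem \ref{general_B}.
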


We prove this theorem in the Appendix \ref{pf on subM}. 

The following theorem is an important new statement closely related to Theorem. \ref{theorem_submfld}. 
\begin{theorem}[Wasserstein gradient as solution to a least squares problem]\label{lemma_submfld_grad_err}
For a fixed $\theta\in\Theta$, $\boldsymbol{\Psi}\subset\mathbb{R}^m$ as defined in Theorem \ref{thm_about_computing_metric_of_Theta}, then
\begin{equation}
  G(\theta)^{-1}\nabla_\theta H(\theta) = \underset{\eta\in \mathcal{T}_{\theta}\Theta\cong\mathbb{R}^m}{\arg\min} \left\{ \int |(\nabla\boldsymbol{\Psi}( T_\theta(x)))^{\textrm{T}}\eta-\nabla\left( V+{D}\log\rho_\theta \right)\circ T_\theta(x)|^2 dp(x) \right\}. \label{important_lemma}
\end{equation} 
\end{theorem}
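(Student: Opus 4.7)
The functional to be minimized is quadratic in $\eta$, so my plan is to derive its first-order optimality condition and then verify it matches $G(\theta)\eta = \nabla_\theta H(\theta)$. Writing $F(\eta)$ for the integral in \eqref{important_lemma}, expanding the square yields
\begin{equation*}
F(\eta) = \eta^T\!\left(\int \nabla\boldsymbol{\Psi}(T_\theta(x))\nabla\boldsymbol{\Psi}(T_\theta(x))^T dp(x)\right)\!\eta - 2\eta^T\!\int \nabla\boldsymbol{\Psi}(T_\theta(x))\,\nabla(V+\beta\log\rho_\theta)(T_\theta(x))\, dp(x) + \textrm{const},
\end{equation*}
so by Theorem \ref{thm_about_computing_metric_of_Theta} the quadratic term is $\eta^T G(\theta)\eta$. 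Setting $\nabla_\eta F=0$ gives the normal equation $G(\theta)\eta = b(\theta)$ where $b(\theta)$ is the linear coefficient vector above. Since $G(\theta)$ is assumed positive definite, the unique minimizer is $\eta^\star = G(\theta)^{-1} b(\theta)$; thus the theorem reduces to verifying $b(\theta) = \nabla_\theta H(\theta)$.

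The core computation is a change of variables followed by integration by parts using the defining equation \eqref{Hodge Dcom} for $\psi_j$. For the $j$-th component, pushing forward $x\mapsto y=T_\theta(x)$ turns $dp(x)$ into $\rho_\theta(y)\,dy$, giving
\begin{equation*}
b_j(\theta) = \int \nabla\psi_j(y)\cdot \nabla(V+\beta\log\rho_\theta)(y)\,\rho_\theta(y)\,dy.
\end{equation*}
Integration by parts (using the boundary condition $\lim_{y\to\infty}\rho_\theta(y)\nabla\psi_j(y)=0$ stated after \eqref{Hodge Dcom}) rewrites this as $-\int (V+\beta\log\rho_\theta)\nabla\!\cdot\!(\rho_\theta\nabla\psi_j)\,dy$. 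I then substitute the right-hand side of \eqref{Hodge Dcom} and integrate by parts once more to obtain
\begin{equation*}
b_j(\theta) = \int \nabla(V+\beta\log\rho_\theta)(y)\cdot \rho_\theta(y)\,\frac{\partial T_\theta}{\partial\theta_j}(T_\theta^{-1}(y))\,dy.
\end{equation*}

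The remaining step is to identify this expression with $\partial_{\theta_j} H(\theta)$. Differentiating $H(\theta)=\mathcal{H}(\rho_\theta)$ via the chain rule in $\mathcal{P}$ yields $\partial_{\theta_j} H(\theta) = \int \frac{\delta\mathcal{H}}{\delta\rho}(\rho_\theta)(y)\,\partial_{\theta_j}\rho_\theta(y)\,dy$ with $\frac{\delta\mathcal{H}}{\delta\rho}(\rho_\theta) = V+\beta\log\rho_\theta+\beta$; the constant $\beta$ drops because $\partial_{\theta_j}\rho_\theta$ has zero mean. Using formula \eqref{compute_drhodt} for $\partial_{\theta_j}\rho_\theta$ (applied with $\xi(\theta)=e_j$) and one more integration by parts, I obtain exactly the expression for $b_j(\theta)$ above. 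Equating the two yields $b(\theta)=\nabla_\theta H(\theta)$, and hence $\eta^\star = G(\theta)^{-1}\nabla_\theta H(\theta)$.

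The main technical obstacle is bookkeeping: two change-of-variables maneuvers and two integrations by parts must interlock cleanly, and one must justify the vanishing boundary terms—this is where conditions \eqref{Theta_condition_A}--\eqref{Theta_condition_B} together with the stated boundary condition on $\rho_\theta\nabla\psi_j$ are invoked. Once these technicalities are handled, the identity $b(\theta)=\nabla_\theta H(\theta)$ is essentially the duality pairing $\langle \textrm{grad}_W\mathcal{H}(\rho_\theta), v\rangle_{g^W}$ pulled back through the immersion $T_{\#}$, which is conceptually just another incarnation of Theorem \ref{theorem_submfld}.
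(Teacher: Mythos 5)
Your proposal is correct and follows essentially the same route as the paper: expand the quadratic in $\eta$, recognize the quadratic form as $G(\theta)$ via Theorem \ref{thm_about_computing_metric_of_Theta}, and identify the linear coefficient with $\nabla_\theta H(\theta)$ by integration by parts against \eqref{Hodge Dcom} (the paper's direct identification of $-\nabla\cdot(\rho_\theta\,\partial_{\theta_k}T_\theta\circ T_\theta^{-1})$ with $\partial_{\theta_k}\rho_\theta$ is exactly your use of \eqref{compute_drhodt}). The only cosmetic difference is that you perform one extra integration by parts and match against the chain-rule expression for $\partial_{\theta_j}H$, whereas the paper stops one step earlier; the content is the same.
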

\begin{proof}
Direct computation shows that minimizing the function in \eqref{important_lemma} is equivalent to minimizing:
\begin{equation*}
 \eta^{\textrm{T}}\left(\int \nabla\boldsymbol{\Psi}(T_\theta(x))\nabla\boldsymbol{\Psi}(T_\theta(x))^{\textrm{T}}~dp(x)\right)\eta-2~\eta^{\textrm{T}}\left(\int \nabla\boldsymbol{\Psi}(x)\nabla(V(x)+{D}\log\rho_\theta(x))\rho_\theta(y)~dx\right).
\end{equation*}
For each entry in the second term, we have:
\begin{align*}
  & \int \nabla\psi_k(x)\cdot\nabla(V(x)+{D}\log\rho_\theta(x))\rho_\theta(x)~dx = \int -\nabla\cdot(\rho_\theta(x)\nabla\psi_k(x))\cdot(V(x)+{D}\log\rho_\theta(x))~dx\\
  &=\int-\nabla\cdot(\rho_\theta(x)\partial_{\theta_k}T_\theta(T^{-1}_\theta(x)))\cdot(V(x)+{D}\log\rho_\theta(x))~dx = \int (\nabla V(T_\theta(x))+D\nabla\log\rho_\theta(T_\theta(x)))\cdot \partial_{\theta_k}  T_\theta(x)~dp(x) \\
  &= \int \nabla V(T_\theta(x))\cdot\partial_{\theta_k}T_\theta(x)+\partial_{\theta_k}[D\log\rho_\theta(T_\theta(x))]~dp(x) - \underbrace{\int D~ \partial_{\theta_k}\log\rho_\theta(T_\theta(x))~dp(x)}_{=D\int\nabla_\theta \rho_\theta(x)dx = 0} \\
  &= \partial_{\theta_k}\left(\int (V(T_\theta(x))+{D}\log\rho_\theta(T_\theta(x)))~dp(x)\right)=\partial_{\theta_k}H(\theta).
\end{align*}
Recall the definition \eqref{Metric tensor D dimension} of $G(\theta)$, the target function to be minimized is $\eta^{\textrm{T}} G(\theta)\eta - 2\eta^{\textrm{T}}\nabla_\theta H(\theta)$. And the minimizer is clearly $G(\theta)^{-1}\nabla_\theta H(\theta)$.
\end{proof}

\noindent

In addition to the direct proof, the result in Theorem \ref{lemma_submfld_grad_err} can also be understood in a different way. Let us denote $\xi=G(\theta)^{-1}\nabla_\theta H(\theta)$, $\{\theta_t\}$ solves \eqref{wass_grad_flow_on_para_spc} with initial value  $\theta_0=\theta$. By Theorem \ref{theorem_submfld}, $\frac{d}{dt}\rho_{\theta_t}\Big\vert_{t=0}=(T_{\theta\sharp})_*\xi\in \mathcal{T}_{\rho_\theta}\mathcal{P}_\Theta$ is the orthogonal projection of  $\textrm{grad}_W\mathcal{H}(\rho_\theta)$ onto $\mathcal{T}_{\rho_\theta}\mathcal{P}_\Theta$ with respect to the metric $g^W$. This is equivalent to say that $\eta$ solves the following least square problem:
\begin{equation}
  \min_{\eta} g^W(\textrm{grad}_W\mathcal{H}(\rho_\theta)-(T_{\theta \sharp})_*\eta,~\textrm{grad}_W\mathcal{H}(\rho_\theta)-(T_{\theta\sharp})_*\eta).\label{important_lemma_equiv}
\end{equation}
Recall the definition of $g^W$ in section \ref{wass_mfld} and by \eqref{gradflow}, we have $\textrm{grad}_W\mathcal{H}(\rho_\theta)=-\nabla\cdot(\rho_\theta\nabla(V+{D}\log\rho_\theta))$. Because of  \eqref{compute_drhodt}, $(T_{\theta \sharp})_*\eta=-\nabla\cdot(\rho_\theta\partial_\theta T_\theta(T^{-1}_\theta(\cdot))\eta)$, solving $-\nabla\cdot(\rho_\theta\nabla \varphi)=\textrm{grad}_W\mathcal{H}(\rho_\theta)-(T_{\theta \sharp})_*\eta$ gives 
\begin{equation*}
\varphi = (V+{D}\log\rho_\theta) - \boldsymbol{\Psi}^{\textrm{T}}\eta,
\end{equation*}
and thus least squares problem \eqref{important_lemma_equiv} can be written as
\begin{equation*}
\min_\eta\left\{\int |\nabla\boldsymbol{\Psi}(x)^{\textrm{T}}\eta-\nabla(V(x)+{D}\log\rho_\theta(x))|^2\rho_\theta(x)~dx\right\} , 
\end{equation*}
which is exactly \eqref{important_lemma}.

%%%%%%%%%%%%%%%%%%%%%%%%%%%%%%%%%%%%%%%%%%%%%%%%%%%%%%%%%%%%%%%%%

\subsection{A particle viewpoint of the parametric Fokker Planck Equation  } \label{particle level explanation}
The motion of parameter $\theta_t$ solving \eqref{wass_grad_flow_on_para_spc} naturally induce a stochastic dynamics on $\mathbb{R}^d$ whose density evolution is exactly $\{\rho_{\theta_t}\}$. To see this, notice that $\{\theta_t\}$ directly leads to a time dependent map $\{T_{\theta_t}\}$. Let us denote a random variable $\boldsymbol{Z}\sim p$, i.e. $\boldsymbol{Z}$ is distributed according to the reference distribution $p$. We set $\boldsymbol{Y}_0=T_{\theta_0}(\boldsymbol{Z})\sim\rho_{\theta_0}$. At any time $t$, the map $T_{\theta_t}$ sends $\boldsymbol{Y}_0$ to $\boldsymbol{Y}_t = T_{\theta_t}(T_{\theta_0}^{-1}(\boldsymbol{Y}_0))\sim\rho_{\theta_t}$. Thus, we construct a sequence of random variables $\{\boldsymbol{Y}_t\}$ whose density evolution is exactly $\{\rho_{\theta_t}\}$. We can characterize the dynamical system satisfied by $\{\boldsymbol{Y}_t\}$ by taking time derivative: $\dot{\boldsymbol{Y}}_t = \partial_\theta T_{\theta_t}(\boldsymbol{Z})\dot\theta_t =  \partial_\theta T_{\theta_t}(T_{\theta_t}^{-1}(\boldsymbol{Y}_t))\dot\theta_t$. It is actually more insightful to consider the following dynamic:
\begin{equation}
   \dot{\boldsymbol{X}}_t = \nabla\boldsymbol{\Psi}_t(\boldsymbol{X}_t)^{\textrm{T}}~\dot\theta_t ,  \quad \boldsymbol{X}_0=T_{\theta_0}(\boldsymbol{Z})\sim\rho_{\theta_0}.  \label{X_t dym}
\end{equation}
Here $\boldsymbol{\Psi}_t$ is obtained from \eqref{Hodge Dcom} with parameter $\theta_t$. 
It is not hard to show that for any time $t$, $\boldsymbol{X}_t$ and $\boldsymbol{Y}_t$ has the same distribution. Thus $\boldsymbol{X}_t\sim\rho_{\theta_t}$ for all $t\geq 0$. Recall $\dot\theta_t = -G(\theta_t)^{-1}\nabla_\theta H(\theta_t)$, we are able to rewrite \eqref{X_t dym} as:
\begin{equation}
  \dot{\boldsymbol{X}_t} = \nabla\boldsymbol{\Psi}_t(\boldsymbol{X}_t)^{\textrm{T}} {\underbrace{ \left(
  \int \nabla\boldsymbol{\Psi}_t(x)\nabla\boldsymbol{\Psi}_t(x)^{\textrm{T}}~\rho_{\theta_t}(x)~dx \right)}_{G(\theta_t)}}^{-1} \underbrace{ \left(  \int  \nabla \boldsymbol{\Psi}_t(\eta)(-\nabla V(\eta)-{D}\nabla\log\rho_{\theta_t}(\eta))~\rho_{\theta_t}(\eta)~d \eta  \right)}_{-\nabla_\theta H(\theta_t)}.\label{corrsde}
\end{equation}
If we define the kernel function $K_\theta:\mathbb{R}^d\times\mathbb{R}^d\rightarrow \mathbb{R}^{d\times d}$ as
\begin{equation*}
  K_\theta(x,\eta)=\nabla\boldsymbol{\Psi}^{\textrm{T}}(x)\left(\int\nabla\boldsymbol{\Psi}(x)\nabla\boldsymbol{\Psi}(x)^{\textrm{T}}~\rho_\theta(x)~dx\right)^{-1}\nabla\boldsymbol{\Psi}(\eta).
\end{equation*}
This $K_\theta$ induces a linear operator $\mathcal{K}_\theta:L^2(\mathbb{R}^d;\mathbb{R}^d,\rho_\theta)\rightarrow L^2(\mathbb{R}^d;\mathbb{R}^d,\rho_\theta)$ by:
\begin{equation*}
  \mathcal{K}_\theta[\vec{v}] = (\mathcal{K}_\theta * \vec{v})(\cdot) = \int  K_\theta(\cdot, \eta)~\vec{v}(\eta)~\rho_\theta( \eta )~ d\eta. 
\end{equation*}
It can be verified that $\mathcal{K}_\theta$ is an orthogonal projection defined on the Hilbert space $L^2(\mathbb{R}^d;\mathbb{R}^d,\rho_\theta)$. The range of such projection is the subspace  $\textrm{span}\left\{\nabla\psi_1,...,\nabla\psi_m\right\}\subset L^2(\mathbb{R}^d; \mathbb{R}^d,\rho_\theta ) $. Here $\psi_1,...,\psi_m$ are the $m$ components of $\boldsymbol{\Psi}$ solved from \eqref{Hodge Dcom}.
Using the linear operator, we can rewrite \eqref{corrsde} as:
\begin{equation}
    \dot{\boldsymbol{X}}_t  = - \mathcal{K}_{\theta_t}[\nabla V + {D} \nabla\log\rho_{\theta_t}](\boldsymbol{X}_t), \quad \textrm{where} ~ \rho_{\theta_t} ~ \textrm{is the probability density of } \boldsymbol{X}_t \quad \boldsymbol{X}_0\sim \rho_{\theta_0}. \label{Vlasov SDE with proj}
\end{equation}
We can compare \eqref{Vlasov SDE with proj} with the following dynamic without projection:
\begin{equation}
  \dot{ \tilde{ \boldsymbol{X}_t}} = - (\nabla V + {D}\nabla\log\rho_t)(\tilde{\boldsymbol{X}_t}), \quad \textrm{where} ~ \rho_t ~\textrm{is the probability density of } \tilde{\boldsymbol{X}_t } \quad \tilde{\boldsymbol{X}_0}\sim\rho_0. \label{Vlasov SDE without proj}
\end{equation}
As discussed in section \ref{background}, \eqref{Vlasov SDE without proj} is the Vlasov-type SDE that involves the density of random particle. If assuming \eqref{Vlasov SDE without proj} admits a regular solution, we have $ \rho(x,t)=\rho_t(x)$, which solves the original Fokker Planck equation \eqref{FPE}. From orthogonal projection viewpoint,  we can treat that the approximate solution $\rho_{\theta_t}$ of \eqref{FPE} is actually originated from the projection of vector field that drives the SDE \eqref{Vlasov SDE without proj}. 

We would like to mention that the expectation of $\ell^2$ discrepancy between $\nabla V+{D}\nabla\log\rho$ and its $\mathcal{K}_\theta$ projection is:
\begin{equation}
  \mathbb{E}_{\boldsymbol{X}\sim\rho_\theta} |\mathcal{K}_\theta[\nabla V+{D}\nabla\log\rho_\theta](\boldsymbol{X})-(\nabla V+{D}\nabla\log\rho_\theta)(\boldsymbol{X})|^2=\int|\nabla\boldsymbol{\Psi}(x)^{\textrm{T}}\xi-(-\nabla V -{D}\nabla\log\rho_\theta)(x)|^2\rho_\theta(x)~dx,   \label{expectation of l2 discrepancy particle}
\end{equation}
in which $\xi = -G(\theta)^{-1}\nabla_\theta H(\theta)$. This is an essential term appeared in our error analysis part. %Further discussion is provided in section 5.

\begin{remark}
  We should mention the relationship between our kernel $K_{\theta_t}$ and the Neural Tangent Kernel (NTK) introduced in \cite{jacot2018neural}. Using our notation, Neural Tangent Kernel can be written as $K^{NTK}_\theta = \partial_\theta T_\theta(x)\partial_\theta T_\theta(\xi)^{\textrm{T}}$. If we consider the flat gradient flow $\dot{\theta}=-\nabla_\theta H(\theta)$ of relative entropy on $\Theta$, its corresponding particle dynamic is 
  \begin{equation*}
   \dot{\boldsymbol{X}}_t = \int K^{NTK}_{\theta_t}(T^{-1}_{\theta_t}(\boldsymbol{X}_t),T^{-1}_{\theta_t}(\eta))(-\nabla V(\eta)-{D}\nabla\log\rho_{\theta_t}(\eta))\rho_{\theta_t}(\eta)~d\eta
  \end{equation*}
  Different from our $K_\theta$, which introduces an orthogonal projection, Neural Tangent Kernel introduces an non-negative definite transform to the vector field $-\nabla V-{D}\nabla\log\rho_{\theta_t}$.
\end{remark}

%%%%%%%%%%%%%%%%%%%%%%%%%%%%%%%%%%%%%%%%%%%%%%%%%%%%%%%%%%%%%%%%%%%%%%%%%%

\begin{figure}
\centering
\begin{tikzpicture}[node distance=1.1cm, auto]
\centering
\tikzset{
    mynode/.style={rectangle,rounded corners,draw=black,  top color=white, bottom color=yellow!50, very thick, inner sep=0.5em, minimum size=1em, text centered},
    myarrow/.style={-, >=latex', shorten >=1pt, thick},
    mylabel/.style={text width=7em, text centered}
}

\node[] (dummy1) {};

\node[mynode, left=of dummy1] (upper left box) {$ \dot{\boldsymbol{X}_t }  =  - \mathcal{K}_{\theta_t}\left(\nabla \frac{\delta \mathcal{H}(\rho_{\theta_t})}{\delta\rho_{\theta_t}}\right)(\boldsymbol{X}_t) $ on $\mathbb{R}^d$};

\node[mynode, right=of dummy1] (upper right box) {$ \dot{\boldsymbol{X}_t }  =  - \nabla \frac{\delta \mathcal{H}(\rho_t)}{\delta\rho_t}(\boldsymbol{X}_t) $ on $\mathbb{R}^d$};

\node[mynode , below= 2.7cm of upper left box.west,anchor=west] (lower left box) {$ \dot\theta = -G(\theta)^{-1}\nabla_\theta H(\theta)$ on $\Theta $};

\node[mynode , below= 2.7cm of upper right box.east,anchor=east] (lower right box) {$\partial_t\rho = - \textrm{grad}_W \mathcal{H}(\rho)$ on $\mathcal{P}(\mathbb{R}^d)$};

\draw[->, >=latex', shorten >=2pt, shorten <=2pt, thick](upper right box.west) to node[auto, swap, below, text width=6em, text centered] {Projection of vector field} (upper left box.east);

\draw[<->, >=latex', shorten >=2pt, shorten <=2pt, thick](lower left box.north) to node[auto, swap, left, text width=10em, text centered] {How dynamics on $\Theta$ triggers dynamics on $\mathbb{R}^d$} (lower left box.north |- upper left box.south);

\draw[<->, >=latex', shorten >=2pt, shorten <=2pt, thick](upper right box.south) to node[auto, swap, right, text width=10em, text centered] {Density evolution of $\boldsymbol{X}_t$ solves Fokker Planck equation} (upper right box.south |- lower right box.north);

\draw[->, >=latex', shorten >=2pt, shorten <=2pt, thick](lower right box.west) to node[auto, swap, above, text width=11em, text centered] {Projection from $(\mathcal{P},g^W)$ onto $(\Theta,G)$} (lower left box.east);

\node[align=center, above, text width=25 em, text centered] at (0,0.6)  {[Particle point of view]
};

\node[below, text width=25em, text centered] at (0,-3.2) {[Probability manifold point of view]
};

\end{tikzpicture}
\caption{Illustrative diagram}
\label{illustration diagram}
\end{figure}
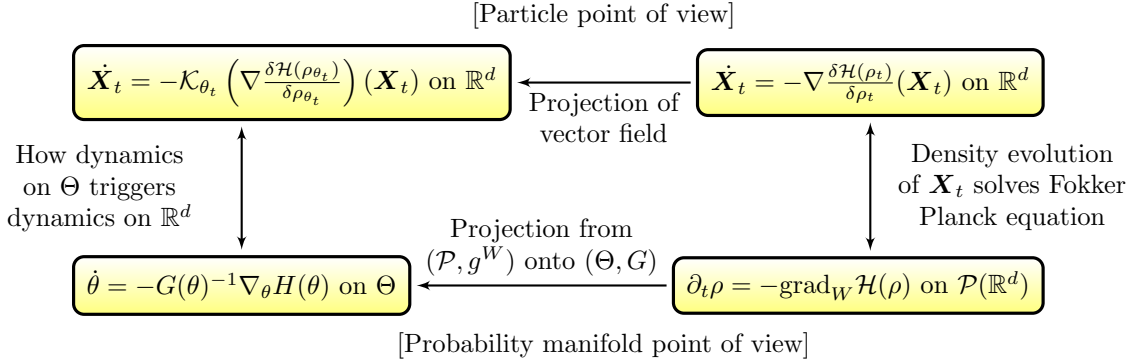

\begin{remark}
Figure \ref{illustration diagram} illustrates the relation between \eqref{FPE}, \eqref{wass_grad_flow_on_para_spc}, \eqref{Vlasov SDE without proj} and \eqref{Vlasov SDE with proj}. It is worth mentioning that the probability manifold point of view discussed in Theorem \ref{theorem_submfld} is useful for our analysis of the continuous dynamics \eqref{wass_grad_flow_on_para_spc}, while particle point of view helps us on establishing the numerical analysis for the time discrete scheme (i.e. forward-Euler) of \eqref{wass_grad_flow_on_para_spc}.
\end{remark}

%%%%%%%%%%%%%%%%%%%%%%%%%%%%%%%%%%%%%%%%%%%%%%%%%%%%%%%%%%%%%%%%%%

\subsection{An example of the parametric Fokker--Planck equation with quadratic potential}\label{3.4 }
The solution of the parametric Fokker--Planck equation (\ref{wass_grad_flow_on_para_spc}) can serve as an approximation to the solution of the original equation (\ref{FPE}).  In some special cases, $\rho_{\theta_t}$ exactly solves (\ref{FPE}). In this section, we provide such examples. %for a certain type of Fokker--Planck equations.

Let us consider the Fokker--Planck equations with quadratic potentials whose initial conditions are Gaussian:%, i.e. 
\begin{equation}
V(x)=\frac{1}{2}(x-\mu)^{\textrm{T}}\Sigma^{-1}(x-\mu) \quad\mathrm{and}\quad\rho_0\sim\mathcal{N}(\mu_0,\Sigma_0).\label{conditions_example}
\end{equation}
Here $\mathcal{N}(\mu,\Sigma)$ denotes Gaussian distribution with mean $\mu$ and covariance $\Sigma$. We consider parameter space $\Theta=(\Gamma,b)\subset\mathbb{R}^{m}$ ($m=\frac{1}{2}d(d+1)+d$), where $\Gamma$ is a $d\times d$ symmetric positive definite matrix and $b\in\mathbb{R}^d$. We define the parametric map as $T_\theta(x)=\Gamma x+b$, and choose the reference measure $p=\mathcal{N}(0,I)$.
\begin{lemma}\label{coro_b}
Let $\mathcal{H}$ be the relative entropy defined in (\ref{relative entropy}) and $H$ defined in (\ref{relative entropy parameter}). For $\theta\in\Theta$, if the vector function $\nabla\left(\frac{\delta\mathcal{H}}{\delta\rho}\right)\circ T_\theta$ can be written as the linear combination of $\{ \frac{\partial T_\theta}{\partial \theta_1},...,\frac{\partial T_\theta}{\partial \theta_{m}}\}$, i.e. there exists $\zeta\in\mathbb{R}^m$, such that $\nabla\left(\frac{\delta\mathcal{H}}{\delta\rho}\right)\circ T_\theta(x)=\partial_\theta T_\theta(x)\zeta$. Then:\\ 
1) $\zeta=G(\theta)^{-1}\nabla_\theta H(\theta)$, which is the Wasserstein gradient of $H$ at $\theta$.\\
2) $\mathcal{P}_{\Theta}$ as $\mathrm{grad}_W\mathcal{H}(\rho_\theta)|_{\mathcal{P}_{\Theta}}$, then 
$ \mathrm{grad}_W\mathcal{H}(\rho_\theta)|_{\mathcal{P}_{\Theta}} = \mathrm{grad}_W \mathcal{H}(\rho_\theta)$, where $\mathrm{grad}_W\mathcal{H}(\rho_\theta)|_{\mathcal{P}_{\Theta}}$ is the gradient of $\mathcal{H}$ on the submanifold  $\mathcal{P}_{\Theta}$.
\end{lemma}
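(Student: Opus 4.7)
The plan for part 1) is to compute $\nabla_\theta H(\theta)$ directly and identify it as $G(\theta)\zeta$; part 2) will then drop out of \eqref{compute_drhodt}. The main technical lubricant is a single identity obtained from the defining PDE \eqref{Hodge Dcom}: if $\phi$ is a scalar function with enough decay, then testing \eqref{Hodge Dcom} against $\phi$ (the boundary conditions on $\rho_\theta\nabla\psi_k$ and $\rho_\theta\,\partial_{\theta_k}T_\theta\!\circ\!T_\theta^{-1}$ kill the boundary terms) gives, after the change of variables $y=T_\theta(x)$,
\[
\int \nabla\psi_k(T_\theta(x))\cdot\nabla\phi(T_\theta(x))\,dp(x)
\;=\;
\int \partial_{\theta_k}T_\theta(x)\cdot\nabla\phi(T_\theta(x))\,dp(x).\qquad(\ast)
\]

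For part 1), I first compute $\partial_{\theta_k}H(\theta)=\int (\delta\mathcal{H}/\delta\rho)(y)\,\partial_{\theta_k}\rho_\theta(y)\,dy$. Using \eqref{compute_drhodt} to write $\partial_{\theta_k}\rho_\theta=-\nabla\!\cdot\!\bigl(\rho_\theta\,\partial_{\theta_k}T_\theta\!\circ\!T_\theta^{-1}\bigr)$, an integration by parts followed by $y=T_\theta(x)$ gives
\[
\partial_{\theta_k}H(\theta)=\int \partial_{\theta_k}T_\theta(x)\cdot\nabla\tfrac{\delta\mathcal{H}}{\delta\rho}(T_\theta(x))\,dp(x).
\]
Applying $(\ast)$ with $\phi=\delta\mathcal{H}/\delta\rho=V+\beta\log\rho_\theta+\text{const}$ rewrites this as $\int \nabla\psi_k(T_\theta(x))\cdot\nabla\tfrac{\delta\mathcal{H}}{\delta\rho}(T_\theta(x))\,dp(x)$. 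Now I substitute the hypothesis $\nabla(\delta\mathcal{H}/\delta\rho)\!\circ\!T_\theta=\partial_\theta T_\theta\,\zeta$ and reapply $(\ast)$ componentwise with $\phi=\psi_j$ to convert each inner integral $\int\nabla\psi_k(T_\theta(x))\cdot\partial_{\theta_j}T_\theta(x)\,dp(x)$ into $\int\nabla\psi_k(T_\theta(x))\cdot\nabla\psi_j(T_\theta(x))\,dp(x)=G_{kj}(\theta)$. Summing, $\nabla_\theta H(\theta)=G(\theta)\zeta$, and positive definiteness of $G(\theta)$ then yields $\zeta=G(\theta)^{-1}\nabla_\theta H(\theta)$.

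For part 2), the hypothesis together with \eqref{compute_drhodt} gives immediately
\[
\mathrm{grad}_W\mathcal{H}(\rho_\theta)
=-\nabla\!\cdot\!\bigl(\rho_\theta\nabla\tfrac{\delta\mathcal{H}}{\delta\rho}\bigr)
=-\nabla\!\cdot\!\bigl(\rho_\theta\,\partial_\theta T_\theta(T_\theta^{-1}(\cdot))\,\zeta\bigr)
=(T_{\#}|_\theta)_*\zeta\in T_{\rho_\theta}\mathcal{P}_\Theta.
\]
Theorem \ref{theorem_submfld} says that $\mathrm{grad}_W\mathcal{H}(\rho_\theta)|_{\mathcal{P}_\Theta}$ is the $g^W$-orthogonal projection of $\mathrm{grad}_W\mathcal{H}(\rho_\theta)$ onto $T_{\rho_\theta}\mathcal{P}_\Theta$; since the vector is already in this subspace, the projection acts as the identity, and claim 2) follows.

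The only sensitive point is justifying $(\ast)$ when $\phi=V+\beta\log\rho_\theta$, since $\phi$ itself need not decay at infinity. This is not a real obstacle: only the fluxes $\rho_\theta\nabla\psi_k$ and $\rho_\theta\,\partial_{\theta_k}T_\theta\!\circ\!T_\theta^{-1}$ need to vanish for the integration by parts to go through, and these are exactly the quantities controlled by the boundary conditions in \eqref{Hodge Dcom} together with the integrability assumption \eqref{Theta_condition_B}.
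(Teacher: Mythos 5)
Your proposal is correct, but it takes a genuinely different route from the paper's own proof. The paper treats the hypothesis as saying that the least-squares residual in Theorem \ref{lemma_submfld_grad_err} vanishes: it applies Lemma \ref{lemma:local err analys} to pass from the residual of $\partial_\theta T_\theta\,\zeta$ to the residual of $(\nabla\boldsymbol{\Psi})^T\zeta$, concludes that the minimum value in \eqref{important_lemma} is zero and attained at $\zeta$, and then reads off both conclusions at once: $\zeta=G(\theta)^{-1}\nabla_\theta H(\theta)$ from the minimizer characterization, and $\|(T_{\#}\vert_\theta)_*\zeta-\mathrm{grad}_W\mathcal{H}(\rho_\theta)\|_{g^W(\rho_\theta)}=0$ from the zero minimum value, which combined with Theorem \ref{theorem_submfld} gives part 2). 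You instead compute $\nabla_\theta H(\theta)$ directly and show it equals $G(\theta)\zeta$; in doing so your identity $(\ast)$ and the chain of integrations by parts essentially re-derive, inside your proof, the key computation the paper already carried out when proving Theorem \ref{lemma_submfld_grad_err} (namely $\int\nabla\psi_k\cdot\nabla(V+\beta\log\rho_\theta)\,\rho_\theta\,dy=\partial_{\theta_k}H(\theta)$ and the weak form of \eqref{Hodge Dcom}). Your part 2) is also argued differently and does not even use part 1): you observe via \eqref{gradflow} and \eqref{compute_drhodt} that $\mathrm{grad}_W\mathcal{H}(\rho_\theta)=(T_{\#}\vert_\theta)_*\zeta$ already lies in $T_{\rho_\theta}\mathcal{P}_\Theta$, so the orthogonal projection defining $\mathrm{grad}_W\mathcal{H}(\rho_\theta)\vert_{\mathcal{P}_\Theta}$ (Theorem \ref{theorem_submfld}, or Theorem \ref{general_B}) fixes it. What each route buys: the paper's argument is shorter given its earlier machinery and delivers the $g^W$-norm identity $\|(T_{\#}\vert_\theta)_*\zeta-\mathrm{grad}_W\mathcal{H}(\rho_\theta)\|=0$ as a byproduct, while yours is more self-contained and elementary, at the cost of needing the standing positive-definiteness (invertibility) of $G(\theta)$ explicitly to isolate $\zeta$ — an assumption the paper also relies on implicitly for uniqueness of the least-squares minimizer. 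The decay issue you flag for $\phi=V+\beta\log\rho_\theta$ (strictly, the boundary term is the flux multiplied by $\phi$, so flux decay alone is not quite enough if $\phi$ grows) is real but is exactly the level of rigor at which the paper itself performs the same integration by parts, so it is not a gap relative to the paper's standard.
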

\begin{proof}
Suppose that $\zeta\in\mathbb{R}^m$ satisfies $\nabla\left(\frac{\delta\mathcal{H}}{\delta\rho}\right)\circ T_\theta(x)=\partial_\theta T_\theta(x)\zeta$, then we have
\begin{equation*}
  \int |\partial_\theta T_\theta(x)\zeta-\nabla(\frac{\delta \mathcal{H}}{\delta\rho})\circ T_\theta(x)|^2~d p(x) = 0.
\end{equation*}
By definition of $\boldsymbol{\Psi}$ in Theorem \ref{thm_about_computing_metric_of_Theta}, one can verify
\begin{equation*}
    -\nabla\cdot\left(\rho_{\theta}\left((\nabla\boldsymbol{\Psi})^{\textrm{T}}\zeta-\nabla\left( \frac{\delta\mathcal{H}}{\delta\rho}\right)\right)\right) = -\nabla\cdot\left(\rho_\theta \left(\partial_\theta T_\theta\circ T^{-1}_\theta \zeta-\nabla\left(\frac{\delta\mathcal{H}}{\delta\rho}\right)\right)\right)
\end{equation*}
Now we apply 
\eqref{lemma:local err analys} of Lemma \ref{lemma:local err analys} to obtain:
\begin{equation*}
  \int |(\nabla\boldsymbol{\Psi}(T_\theta(x)))^{\textrm{T}}\zeta-\nabla\left( \frac{\delta\mathcal{H}}{\delta\rho}\right)\circ T_\theta(x)|^2~dp(x)\leq 0.
\end{equation*}
This implies,
\begin{equation*}
  \inf_\eta \int |(\nabla\boldsymbol{\Psi}(T_\theta(x)))^{\textrm{T}}\eta-\nabla\left( \frac{\delta\mathcal{H}}{\delta\rho}\right)\circ T_\theta(x)|^2~dp(x) = \int |(\nabla\boldsymbol{\Psi}(T_\theta(x)))^{\textrm{T}}\zeta-\nabla\left( \frac{\delta\mathcal{H}}{\delta\rho}\right)\circ T_\theta(x)|^2~dp(x) = 0.
\end{equation*}
By Theorem \ref{lemma_submfld_grad_err}, we get $\zeta = G(\theta)^{-1}\nabla_\theta H(\theta)$ and $\|(T_{\theta\sharp})_*\zeta-\textrm{grad}_W\mathcal{H}(\rho_\theta)\|_{g^W(\rho_\theta)}=0$. The latter leads to $(T_{\theta\sharp})_*\zeta=\textrm{grad}_W\mathcal{H}(\rho_\theta)$. According to Theorem \ref{theorem_submfld}, $(T_{\theta \sharp})_*\zeta = \textrm{grad}_W\mathcal{H}(\rho_\theta)\vert_{\mathcal{P}_\Theta}$. As a result, we have $\textrm{grad}_W\mathcal{H}(\rho_\theta)\vert_{\mathcal{P}_\Theta}=\textrm{grad}_W\mathcal{H}(\rho_\theta)$.
\end{proof}

Back to our example with quadratic potential \eqref{conditions_example} and $T_\theta(x)=\Gamma x+b$, we can compute
\begin{equation}
\rho_\theta(x)={T_\theta}_{\sharp}p(x)=\frac{f(T_\theta^{-1}(x))}{|\det(\Gamma)|}=\frac{f(\Gamma^{-1}(x-b))}{|\det(\Gamma)|},~ f(x)=\frac{\exp(-\frac{1}{2}|x|^2)}{(2\pi)^{\frac{d}{2}}}.\nonumber
\end{equation}
Then we have,
\begin{equation*}
\nabla\left(\frac{\delta\mathcal{H}(\rho_\theta)}{\delta\rho}\right)\circ T_\theta(x)=\nabla (V+{D}\log\rho_\theta)\circ T_\theta(x) = \Sigma^{-1}(\Gamma x+b-\mu)-{D}\Gamma^{-T}x,
\end{equation*}
which is affine with respect to $x$.

Notice that $\partial_{\Gamma_{ij}}T_\theta(x)=(\dots, 0, \dots,\underset{i-\mathrm{th}}{x_j},\dots, 0, \dots)^{\textrm{T}}$ and $\partial_{b_i}T_\theta=(\dots, 0, \dots,\underset{i-\mathrm{th}}{1},\dots, 0, \dots)^{\textrm{T}}$, we can verify that $\zeta=(\Sigma^{-1}\Gamma-{D}\Gamma^{-T},\Sigma^{-1}(b-\mu))$ solves $\nabla\left(\frac{\delta\mathcal{H}(\rho_\theta)}{\delta\rho}\right)\circ T_\theta(x)=\partial_\theta T_\theta(x)\zeta$. By 1) of Lemma \ref{coro_b}, $\zeta = G(\theta)^{-1}\nabla_\theta H(\theta)$. Thus ODE (\ref{wass_grad_flow_on_para_spc}) for our example is:
\begin{align}
\dot{\Gamma}&=-\Sigma^{-1}\Gamma+{D}\Gamma^{-T}\quad \Gamma_0=\sqrt{\Sigma_0},\label{wass_grad_flow_1_example}\\
\dot{b}&=\Sigma^{-1}(\mu-b)\quad b_0=\mu_0.\label{wass_grad_flow_2_example}
\end{align}
By 2) of Lemma \ref{coro_b}, we know $\mathrm{grad_W\mathcal{H}(\rho_\theta)|_{\mathcal{P}_{\Theta}}=\mathrm{grad}_W\mathcal{H}(\rho_\theta)}$ for all $\theta\in\Theta$, which indicates that there is no error between our parametric Fokker--Planck and the original equations.

Following the equations \eqref{wass_grad_flow_1_example} and \eqref{wass_grad_flow_2_example}, we have the following corollary,
\begin{corollary}
The solution of the Fokker--Planck equation (\ref{FPE}) with condition(\ref{conditions_example}) is a Gaussian distribution for all $t>0$.
\end{corollary}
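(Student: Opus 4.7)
The plan is to combine three ingredients already established in the excerpt: (i) the parametric family of affine pushforwards produces only Gaussian densities, (ii) Lemma~\ref{coro_b} part~2 guarantees that the projected Wasserstein gradient coincides with the full Wasserstein gradient on this family, and (iii) the Fokker--Planck equation admits a unique solution for a fixed initial condition.

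First I would note that the initial density $\rho_0=\mathcal{N}(\mu_0,\Sigma_0)$ lies in the parametric submanifold $\mathcal{P}_\Theta$: taking $\theta_0=(\sqrt{\Sigma_0},\mu_0)$ and $p=\mathcal{N}(0,I)$, the affine map $T_{\theta_0}(x)=\sqrt{\Sigma_0}\,x+\mu_0$ satisfies ${T_{\theta_0}}_{\#}p=\rho_0$. Next, let $\{\theta_t\}_{t\geq 0}$ solve the parametric Fokker--Planck ODE \eqref{wass_grad_flow_on_para_spc}, which in this example reduces to the closed-form system \eqref{wass_grad_flow_1_example}--\eqref{wass_grad_flow_2_example} for $(\Gamma_t,b_t)$. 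Define $\tilde{\rho}_t:={T_{\theta_t}}_{\#}p$; by construction each $\tilde{\rho}_t$ is Gaussian, namely $\mathcal{N}(b_t,\Gamma_t\Gamma_t^T)$, and $\tilde{\rho}_0=\rho_0$.

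The crucial step is to show that $\tilde{\rho}_t$ solves the original Fokker--Planck equation \eqref{FPE}. By Theorem \ref{theorem_submfld}, $\{\tilde{\rho}_t\}$ is the gradient flow of $\mathcal{H}$ on the submanifold $\mathcal{P}_\Theta$, and $\dot{\tilde{\rho}}_t$ is the orthogonal $g^W$-projection of $-\mathrm{grad}_W\mathcal{H}(\tilde{\rho}_t)$ onto $T_{\tilde{\rho}_t}\mathcal{P}_\Theta$. However, by part~2 of Lemma~\ref{coro_b} (whose hypothesis we have just verified for the quadratic potential, since $\nabla(\delta\mathcal{H}/\delta\rho)\circ T_{\theta_t}$ is affine in $x$ and thus in the span of $\{\partial_\theta T_{\theta_t}\}$), this projection is lossless: $\mathrm{grad}_W\mathcal{H}(\tilde{\rho}_t)|_{\mathcal{P}_\Theta}=\mathrm{grad}_W\mathcal{H}(\tilde{\rho}_t)$. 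Therefore $\partial_t\tilde{\rho}_t=-\mathrm{grad}_W\mathcal{H}(\tilde{\rho}_t)$, which is precisely \eqref{FPE}.

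Finally, I would invoke the uniqueness of solutions to the Fokker--Planck equation with initial data $\rho_0$ (standard, given the smooth strictly convex potential $V$ and Gaussian initial datum). Since $\rho_t$ and $\tilde{\rho}_t$ both solve \eqref{FPE} with the same initial condition, they coincide for all $t\geq 0$, and hence $\rho_t=\mathcal{N}(b_t,\Gamma_t\Gamma_t^T)$ is Gaussian. The main (very mild) obstacle is verifying the hypothesis of Lemma~\ref{coro_b} cleanly — namely exhibiting the explicit $\zeta=(\Sigma^{-1}\Gamma-\beta\Gamma^{-T},\Sigma^{-1}(b-\mu))$ that realizes $\nabla(\delta\mathcal{H}/\delta\rho)\circ T_\theta$ as $\partial_\theta T_\theta\cdot\zeta$ — but this has already been carried out in the paragraph preceding the corollary, so the corollary follows immediately.
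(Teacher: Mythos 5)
Your proposal is correct and follows essentially the same route as the paper: it uses the explicit ODE system \eqref{wass_grad_flow_1_example}--\eqref{wass_grad_flow_2_example}, part 2) of Lemma \ref{coro_b} to conclude that the parametric flow has no projection error and hence solves \eqref{FPE}, and the fact that an affine pushforward of a Gaussian is Gaussian. The only difference is that you make explicit the uniqueness-of-solutions step that the paper leaves implicit, which is a reasonable bit of added care rather than a different argument.
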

\begin{proof}
If we denote $\{\Gamma_t,b_t\}$ as the solutions to (\ref{wass_grad_flow_1_example}),(\ref{wass_grad_flow_2_example}), set $\theta_t=(\Gamma_t,b_t)$, then $\rho_t={T_{\theta_t}}_{\sharp}p$ solves the Fokker Planck Equation (\ref{FPE}) with conditions (\ref{conditions_example}). Since the pushforward of Gaussian distribution $p$ by an affine transform $T_\theta$ is still a Gaussian, we conclude that for any $t>0$, the solution  $\rho_t={T_{\theta_t}}_{\sharp}p$ is always Gaussian distribution.
\end{proof}
\begin{remark}
This is already a well known property for Ornstein–Uhlenbeck process \cite{doob1942brownian}. We provide an alternative proof using our framework.
\end{remark}

\section{Numerical methods}\label{section4}
In this section, we introduce our sampling efficient numerical method to compute the proposed parametric Fokker--Planck equations.

Before we start, we want to mention that as stated in \cite{liliuzhouzha}, when dimension $d=1$, $G(\theta)$ has explicit solution. Thus the push-forward approximation of 1D Fokker--Planck equation can be directly computed by solving the ODE system \eqref{wass_grad_flow_on_para_spc} with numerical methods, such as forward-Euler scheme. In this section, our focus is on numerical methods for \eqref{wass_grad_flow_on_para_spc} with dimension $d\geq 2$. It turns out to be very challenging to compute \eqref{wass_grad_flow_on_para_spc} by the forward-Euler scheme directly. There are two reasons. One is that there is no known explicit formula for $G(\theta)$, and direct computation based on \eqref{Metric tensor D dimension} can be expensive because it requires to solve multiple differential equations. The other is incurred by the high dimensionality, which is the main goal of this paper. To overcome the challenge of dimensionality, we choose to use deep neural networks to construct our $T(\theta)$. However, directly evaluating $G(\theta)^{-1} \nabla_{\theta} H(\theta)$ is difficult, alternative strategies must be sought.

There are a few papers investigating numerical methods for gradient flows on Riemannian manifolds, such as Fisher natural gradient \cite{martens2015optimizing} and Wasserstein gradient \cite{carrillo2019primal}. The well known JKO scheme \cite{jordan1998variational} calculates the time discrete approximation of
the Wasserstein gradient flow using an optimization formulation, 
\begin{equation}
 \partial_t\rho_t = -\textrm{grad}_W\mathcal{F}(\rho_t), \quad  \quad \rho_{k+1} = \underset{\rho\in\mathcal{P} }{\textrm{argmin} } \left\{\frac{W_2^2(\rho, \rho_{k})}{2h} + \mathcal{F}(\rho) \right\}, \label{JKO scheme}
\end{equation}
where $h$ is the time step size, $\mathcal{F}$ could be a suitable functional defined on $\mathcal{P}$. Along the line of JKO scheme, there are further developments in machine learning recently \cite{li2019affine}.

In our approach, we design schemes that computes the exact Wasserstein gradient flow directly with provable accuracy guarantee. Our algorithms are completely sample based so that they can be run efficiently under deep learning framework, and can scale up to high dimensional cases.

\subsection{Normalizing Flow as push forward maps}\label{4.1 }
We choose $T_\theta$ as the so-called normalizing flow \cite{rezende2015variational}. Here is a brief sketch of its structure:
$T_\theta$ is written as the composition of $K$ invertible nonlinear transforms:
\begin{equation*}
  T_\theta = f_K\circ f_{K-1}\circ...\circ f_2\circ f_1,
\end{equation*}
where each $f_k$ ($1\leq k\leq K$) takes the form
\begin{equation*}
f_k (x) = x + \sigma( w_k^{\textrm{T}} x +b_k)u_k.
\end{equation*}
Here $w_k,u_k\in \mathbb{R}^d$, $b_k\in\mathbb{R}$, and $\sigma$ is a nonlinear function, which can be chosen as $\textrm{tanh}$ for example. In \cite{rezende2015variational}, it has been shown that $f_k$ is invertible iff $w_k^{\textrm{T}}u_k\geq -1$.
Figure \ref{figure_pushforward gaussian} shows several snapshots of how a normalizing flow $T_\theta$ with length equal to 10 pushes forward standard Gaussian distribution to a target distribution.
\begin{figure}[!htb]
\minipage{0.09\textwidth}
  \includegraphics[width=\linewidth]{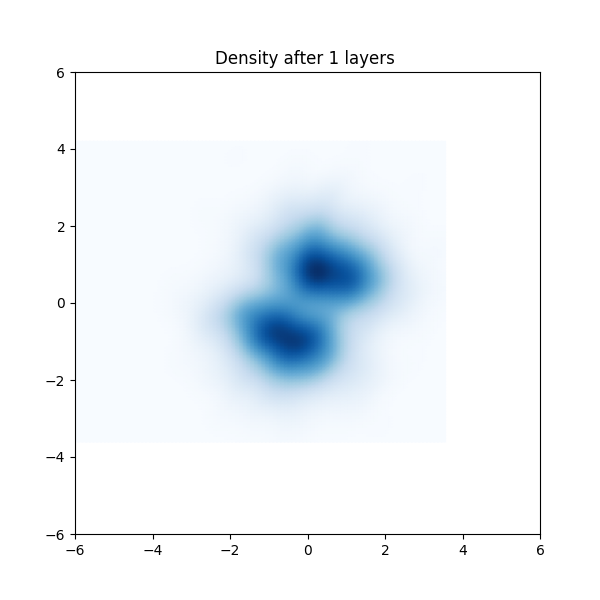}
\endminipage\hfill
\minipage{0.09\textwidth}
  \includegraphics[width=\linewidth]{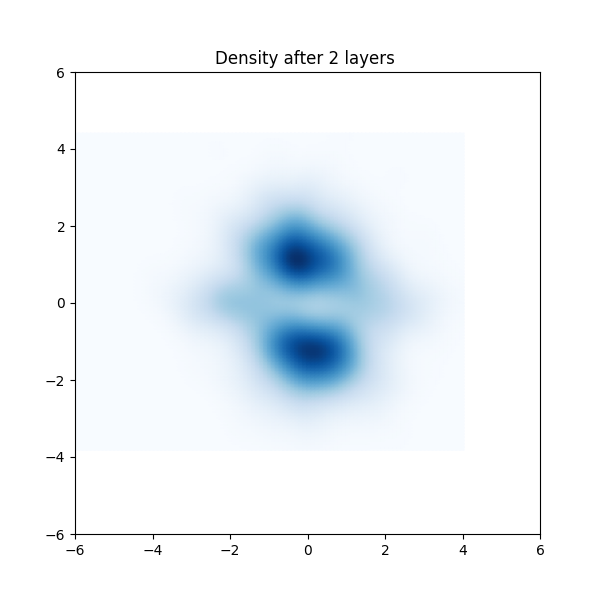}
\endminipage\hfill
\minipage{0.09\textwidth}
  \includegraphics[width=\linewidth]{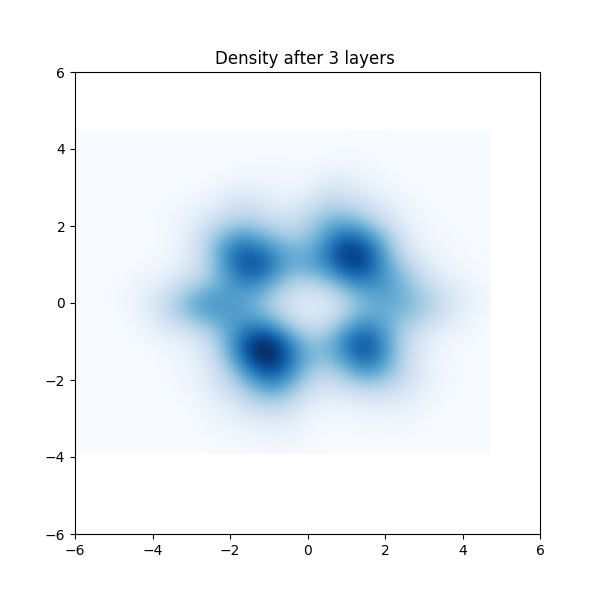}
\endminipage\hfill
\minipage{0.09\textwidth}
  \includegraphics[width=\linewidth]{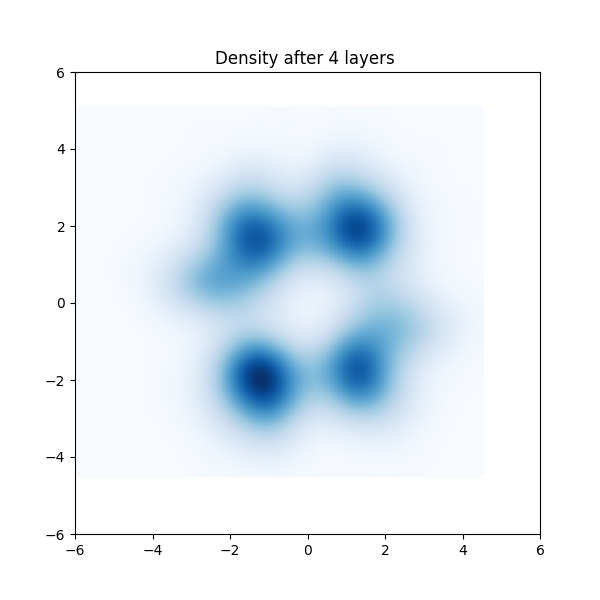}
\endminipage\hfill
\minipage{0.09\textwidth}
  \includegraphics[width=\linewidth]{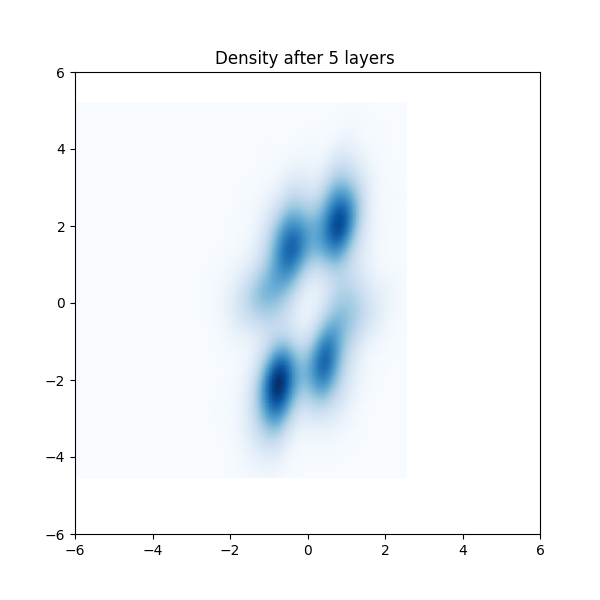}
\endminipage\hfill
\minipage{0.09\textwidth}
  \includegraphics[width=\linewidth]{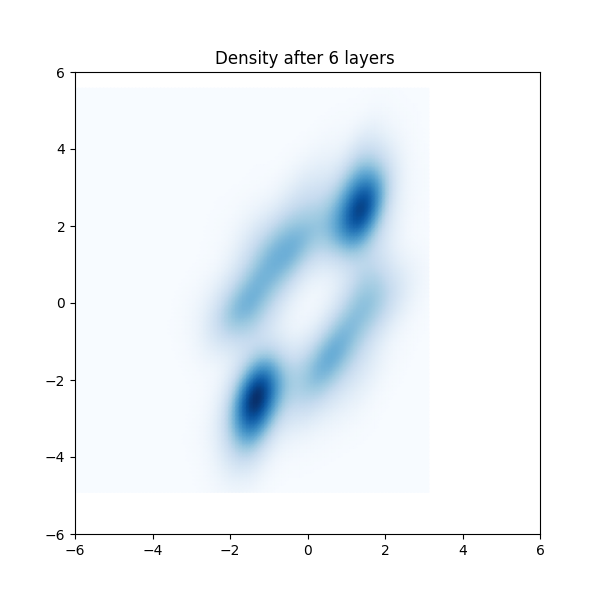}
\endminipage\hfill
\minipage{0.09\textwidth}
  \includegraphics[width=\linewidth]{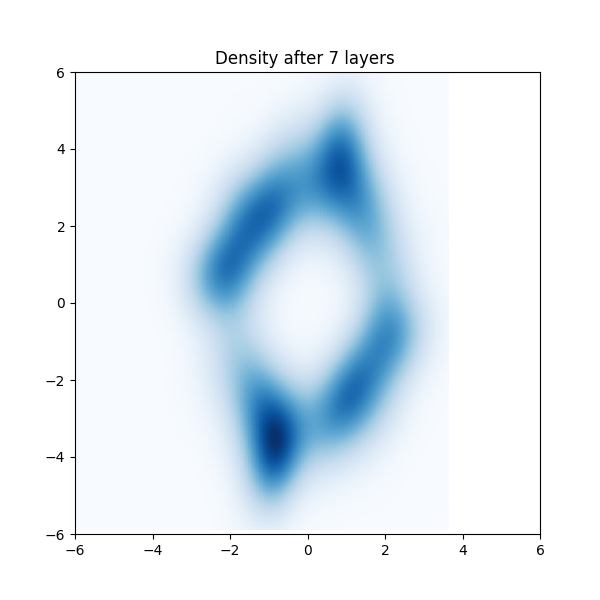}
\endminipage\hfill
\minipage{0.09\textwidth}
  \includegraphics[width=\linewidth]{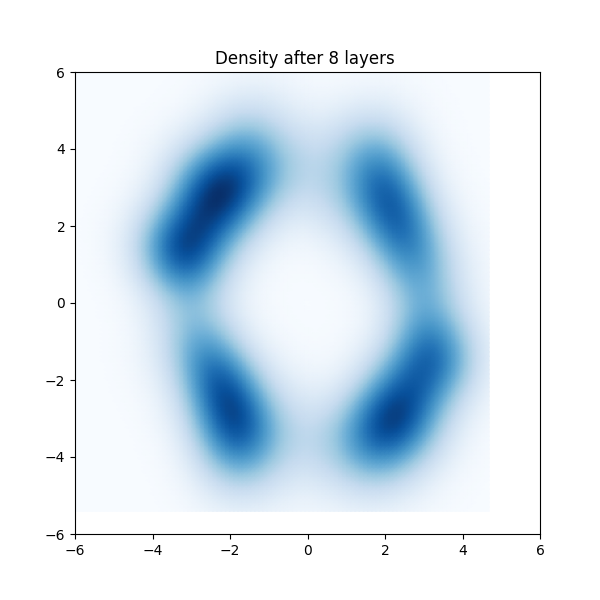}
\endminipage\hfill
\minipage{0.09\textwidth}
  \includegraphics[width=\linewidth]{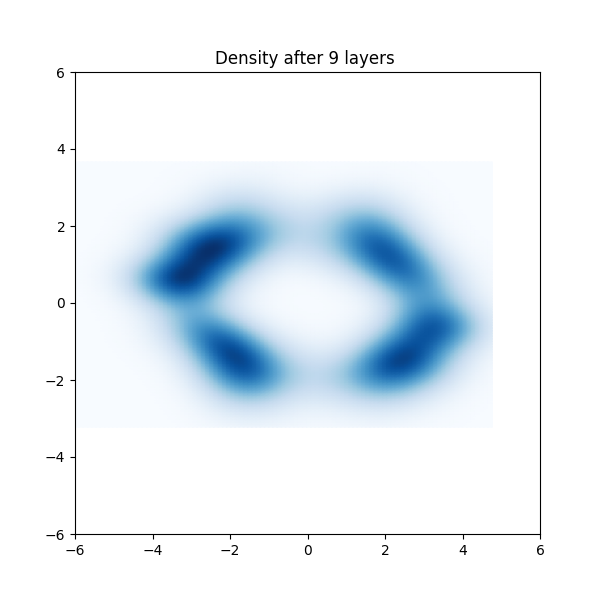}
\endminipage\hfill
\minipage{0.09\textwidth}
  \includegraphics[width=\linewidth]{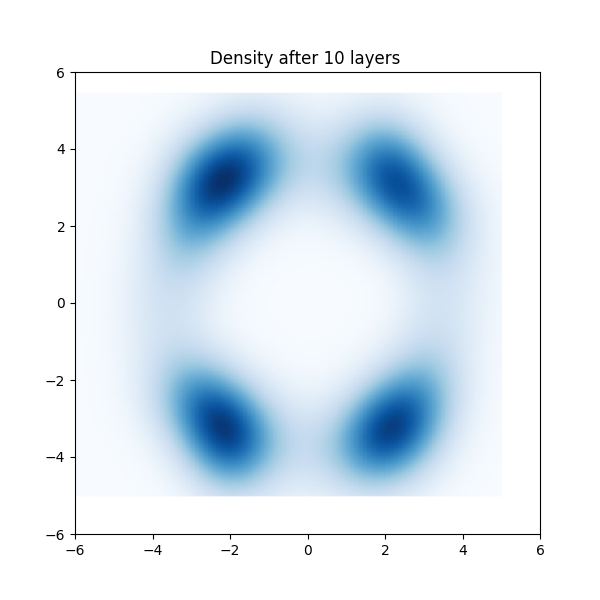}
\endminipage\hfill
%\end{figure}

%\begin{figure}[!htb]
\minipage{0.09\textwidth}
  \includegraphics[width=\linewidth]{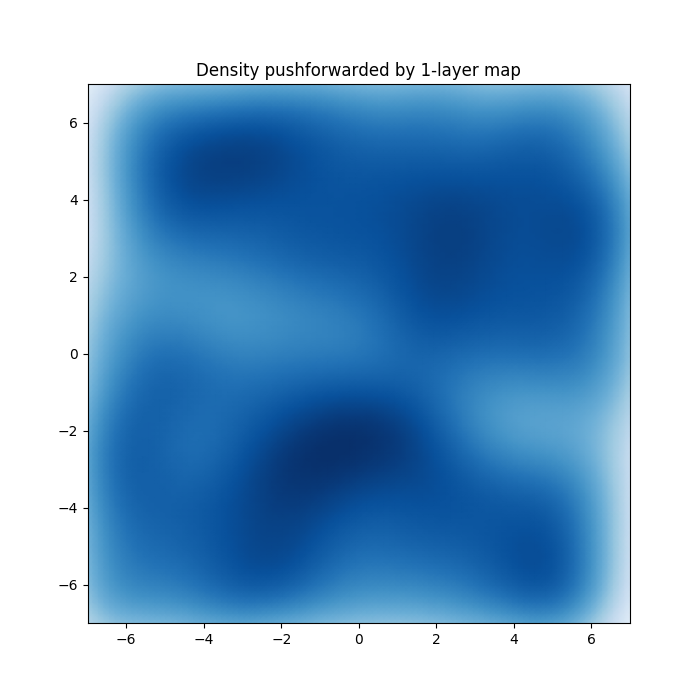}
\endminipage\hfill
\minipage{0.09\textwidth}
  \includegraphics[width=\linewidth]{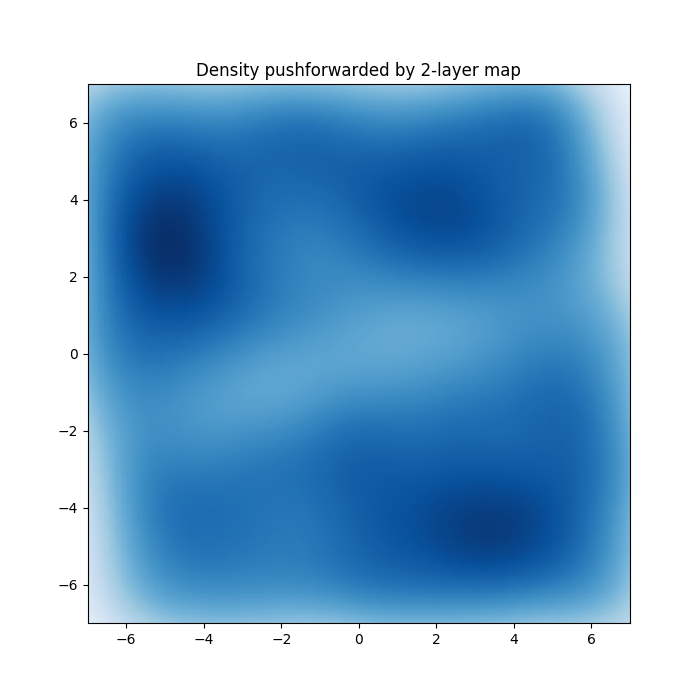}
\endminipage\hfill
\minipage{0.09\textwidth}
  \includegraphics[width=\linewidth]{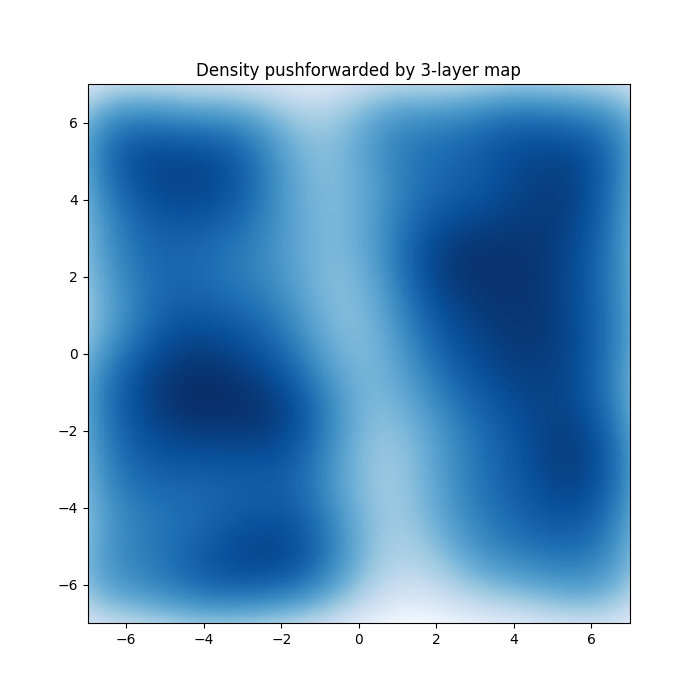}
\endminipage\hfill
\minipage{0.09\textwidth}
  \includegraphics[width=\linewidth]{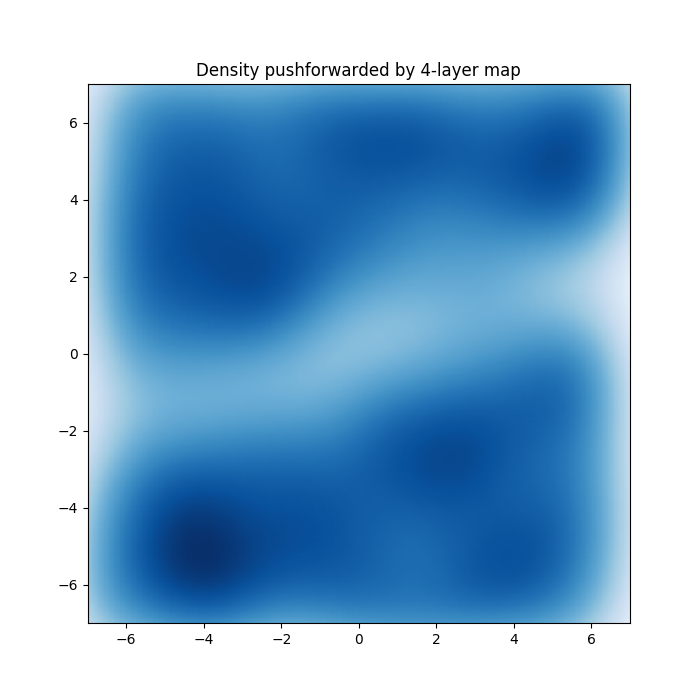}
\endminipage\hfill
\minipage{0.09\textwidth}
  \includegraphics[width=\linewidth]{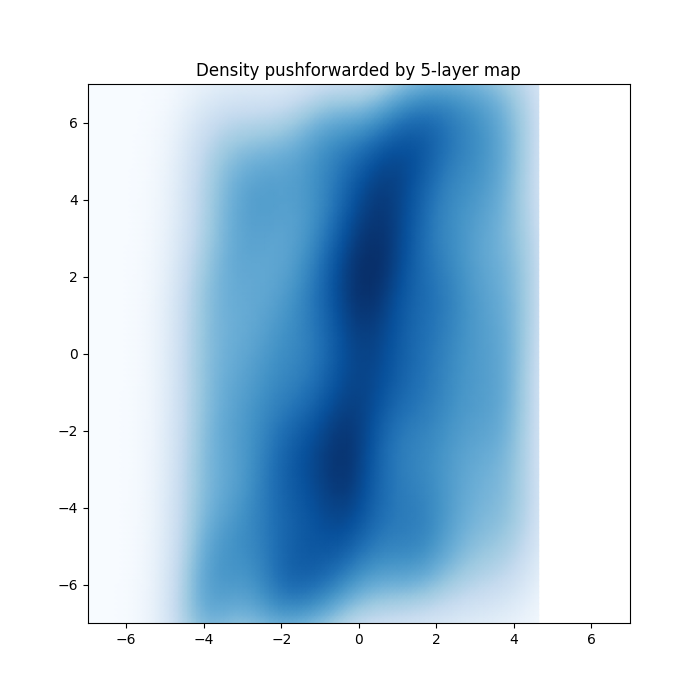}
\endminipage\hfill
\minipage{0.09\textwidth}
  \includegraphics[width=\linewidth]{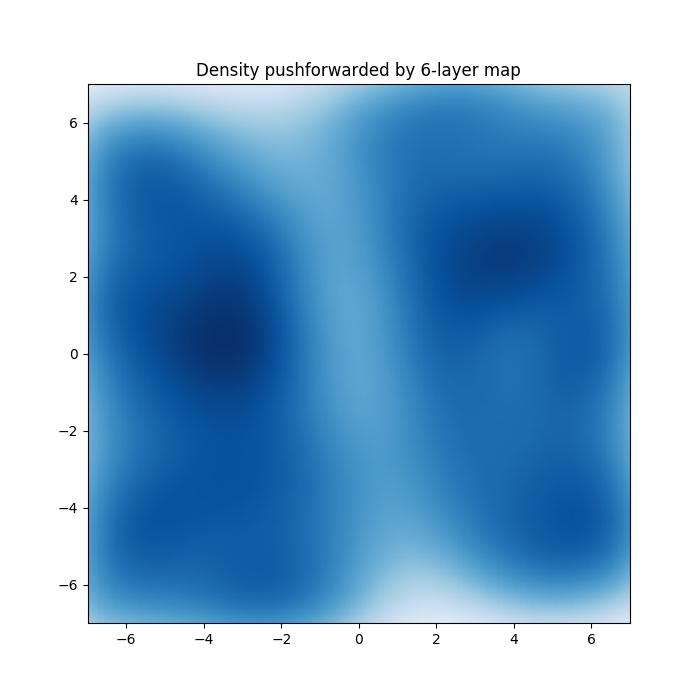}
\endminipage\hfill
\minipage{0.09\textwidth}
  \includegraphics[width=\linewidth]{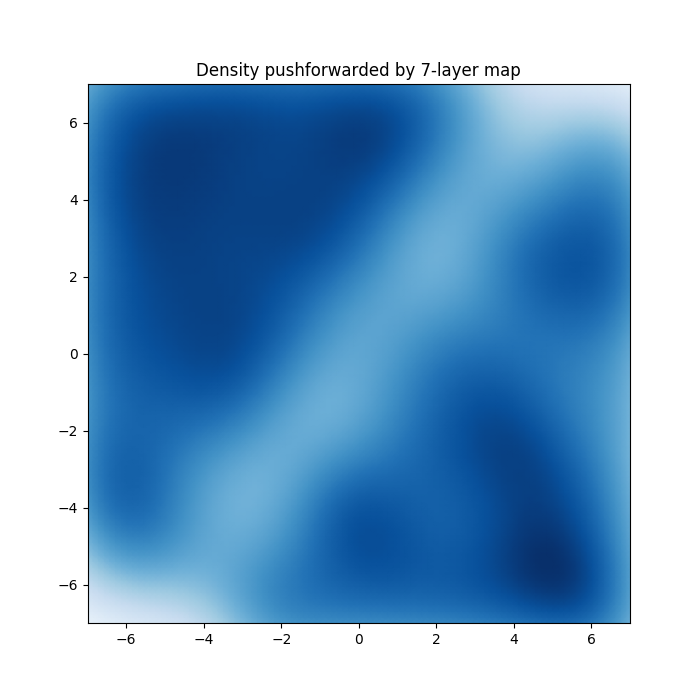}
\endminipage\hfill
\minipage{0.09\textwidth}
  \includegraphics[width=\linewidth]{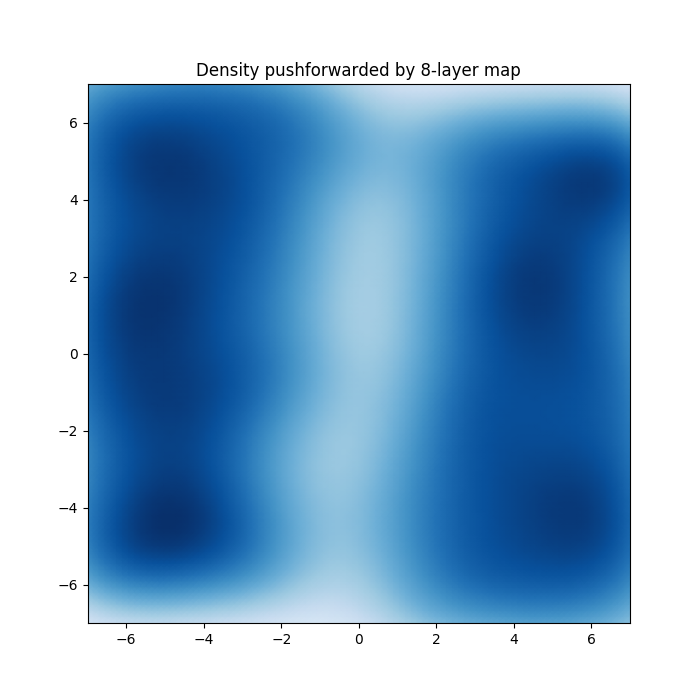}
\endminipage\hfill
\minipage{0.09\textwidth}
  \includegraphics[width=\linewidth]{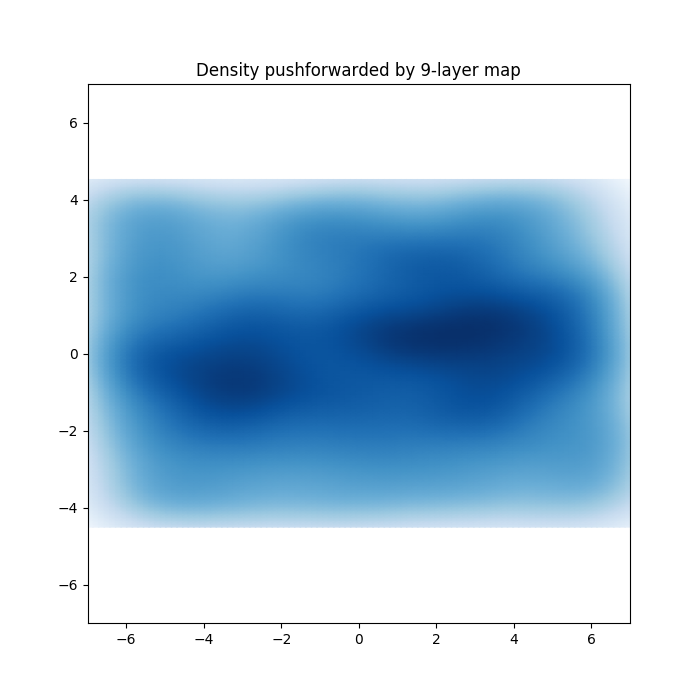}
\endminipage\hfill
\minipage{0.09\textwidth}
  \includegraphics[width=\linewidth]{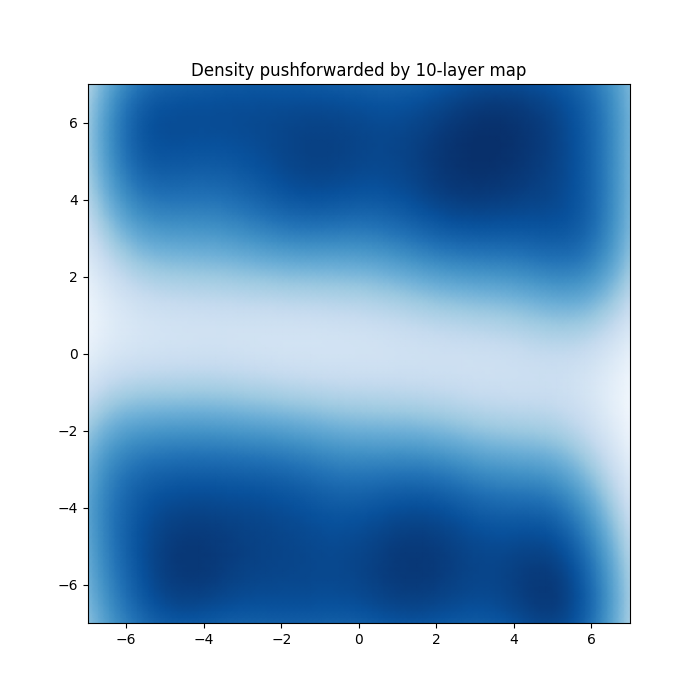}
\endminipage\hfill
\caption{Top row from left to right are the probability densities of distributions $f_{1 \sharp }p, (f_2\circ f_{1})_{\sharp}p,...,(f_{10}\circ f_9\circ ...\circ f_{1})_{\sharp } p$. The last image displays our target distribution. Bottom row displays the push-forward effect of each single-layer transformation $f_k$ ($1\leq k\leq 10$). }
\label{figure_pushforward gaussian}
\end{figure}

In a normalizing flow, the parameters are: $\theta=(w_1,u_1,b_1,...,w_K,u_K,b_K)$. The determinant of the Jacobi matrix of $T_\theta$, an important quantity for our schemes, can be explicitly computed by
\begin{equation*}
    \textbf{det}\left(\frac{\partial T_\theta(x)}{\partial x}\right) = \prod_{k=1}^K(1+\sigma'(w_k^{\textrm{T}} x_k + b_k)w_k^{\textrm{T}} u_k),
\end{equation*}
where $x_k = f_k\circ f_{k-1}\circ...\circ f_1(x)$.
Using the structure of normalizing flow, the logarithm of the density $\rho_\theta = { T_\theta }_{\sharp}p$ can be written as
\begin{equation}
  \log\rho_\theta(x) = \log p\circ T_\theta^{-1}(x) - \sum_{k=1}^K \log(1+\sigma'(w_k^{\textrm{T}}\tilde{x}_k)w_k^{\textrm{T}}u_k), \quad %\textrm{Here}~
  \tilde{x}_k= f_k\circ...\circ f_1(T_\theta^{-1}(x)) = f_{k+1}^{-1}\circ...\circ f_K^{-1}(x). \label{explicit_logrho}
\end{equation}
Then we can explicitly write the relative entropy functional $H(\theta)$ defined in \eqref{relative entropy parameter} as,
\begin{equation}
  H(\theta) = \mathbb{E}_{\mathbf{X}\sim p} [ V(T_\theta(\mathbf{X}))+\mathcal{L}_\theta(\mathbf{X}) ], \label{NF relative entropy}
\end{equation}
where $\mathcal{L}_\theta $ is defined by,
\begin{equation*}
\mathcal{L}_\theta(\cdot) = \log p(\cdot) - \sum_{k=1}^K \log(1+\sigma'(w_k^{\textrm{T}}F_k(\cdot))w_k^{\textrm{T}}u_k) \quad F_k(\cdot) = f_k\circ f_{k-1}\circ...\circ f_1(\cdot). 
\end{equation*}
Once $H(\theta)$ is computed explicitly, so does the gradient $\nabla_\theta H(\theta)$.

In summary, we choose the normalizing flow because it has sufficient expression power to approximate complicated distributions on $\mathbb{R}^d$ \cite{rezende2015variational}, and the relative entropy $H(\theta)$ has a very concise form \eqref{NF relative entropy}, and its gradient can be conveniently computed.
\begin{remark}
We want to emphasize here that the normalizing flow is not the only choice for $T_\theta$. One may choose other network structures as long as they have sufficient approximation power and can compute the gradient of relative entropy efficiently.
\end{remark}

\subsection{Numerical scheme}\label{4.2 }
For the convenience of our presentation, at the beginning of this section, we first introduce the following definition. 
\begin{definition}[Orthogonal projection onto space of gradient fields]\label{Hodge Decomp}
  Consider vector field $\vec{v}\in L^2(\mathbb{R}^d;\mathbb{R}^d,\rho)$. Define $\textrm{Proj}_\rho[\vec{v}]=\nabla\psi$ as the $L^2(\rho)$-orthogonal projection of $\vec{v}$ onto the subspace of gradient fields. Where $\psi$ solves: 
  \begin{equation}
    \underset{\psi}{\min}  \left\{\int |\vec{v}(x)-\nabla\psi(x)|^2\rho(x)~dx \right\}.  \label{least square hodge decomp}
  \end{equation}
  Or equivalently $\psi$ solves $-\nabla\cdot(\rho(x)\nabla\psi(x)) = -\nabla\cdot(\rho(x)\vec{v}(x))$.
\end{definition}
\subsubsection{Proposed Double-Minimization Scheme}\label{section double min scheme}
Our numerical scheme is inspired by the following semi-implicit scheme of \eqref{wass_grad_flow_on_para_spc},
\begin{equation*}
  \frac{\theta_{k+1}-\theta_k}{h}=-G^{-1}(\theta_{k})\nabla_\theta H(\theta_{k+1}).
\end{equation*}
Equivalently, we can write it as a proximal algorithm,
\begin{equation}
 \theta_{k+1}=\underset{\theta}{\textrm{argmin}} \left\{\frac{1}{2} \langle\theta-\theta_k, G(\theta_k)(\theta-\theta_k)\rangle + h H(\theta) \right\}.\label{JKO_para_FPE}
\end{equation}
Recall $\boldsymbol{\Psi}$ as defined in Theorem \ref{thm_about_computing_metric_of_Theta}, if we denote $\psi = \boldsymbol{\Psi}^{\textrm{T}}(\theta-\theta_k)$, we have $\langle (\theta-\theta_k), G(\theta)(\theta-\theta_k)\rangle = \int|\nabla\psi|^2\rho_{\theta_k}~dx$ with $\psi$ solves the equation
\begin{equation}
  -\nabla\cdot(\rho_{\theta_k}\nabla\psi(x)) = -\nabla \cdot (\rho_{\theta_k}\partial_\theta T_{\theta_k}(T_{\theta_k}^{-1}(x))(\theta-\theta_k)). \label{psi elliptic pde}
\end{equation}
By definition \ref{Hodge Decomp}, $\nabla\psi$ is the orthogonal projection of vector field $\partial_\theta T_{\theta_k}(T_{\theta_k}^{-1}(\cdot))(\theta-\theta_k)$. Equivalently, $\psi$ can also be obtained by solving the least square problem \eqref{least square hodge decomp}.

Based on the observation that $\nabla\psi$ is obtained via orthogonal projection after replacing $\partial_\theta T_{\theta_k}(\theta-\theta_k)$ by finite difference $T_\theta-T_{\theta_k}$, we end up with the following double-minimization scheme for solving \eqref{JKO_para_FPE}
\begin{align}
\label{selected double min scheme}
\begin{split}
  & \min_\theta \left\{\left(\int \left(2 ~\nabla\phi(x)\cdot((T_\theta - T_{\theta_k})\circ T_{\theta_k}^{-1}(x))-|\nabla\phi(x)|^2 \right)\rho_{\theta_k}(x)~dx\right) + 2hH(\theta)\right\} \\ 
  & \textrm{with} ~ \phi ~\textrm{solves:}~~\min_\phi\left\{\int |\nabla\phi(x)-((T_\theta-T_{\theta_k})\circ T_{\theta_k}^{-1}(x))|^2\rho_{\theta_k}(x)~dx\right\}.
\end{split}
\end{align}
% {\color{red} We can modify the scheme to an one-stage-minimization problem by penalty method, see E. Chong, S.Zak An Introduction to Optimization, section 22.4. These further discussions can be added to dissertation.}

Scheme \eqref{selected double min scheme} has an equivalent saddle point optimization formulation
\begin{equation}
  \min_\theta \max_\phi \left\{\left( \int (2\nabla\phi(x)\cdot((T_\theta-T_{\theta_k})\circ T_{\theta_k}^{-1}(x)) -  |\nabla\phi(x)|^2)\rho_{\theta_k}(x) ~ dx \right) + 2hH(\theta)\right\}, \label{JKO_A}
\end{equation}
which can be directly derived from \eqref{JKO_para_FPE} via adjoint method. Their equivalence is explained in the next remark.
\begin{remark}
Here we briefly demonstrate the equivalence among the three schemes \eqref{JKO_para_FPE}, \eqref{selected double min scheme} and \eqref{JKO_A}.
Our target function $\frac{1}{2}\langle\theta-\theta_k, G(\theta_k)(\theta-\theta_k)\rangle + hH(\theta)$ can be formulated as
\begin{equation*}
  \int \frac{1}{2}|\nabla\psi(x)|^2\rho_{\theta_k}(x)~dx + hH(\theta) \quad \textrm{with the constraint:}~~ \psi~\textrm{solves}~ \eqref{psi elliptic pde}.
\end{equation*}
By introducing the dual variable $\phi$, and applying the adjoint method, we obtain
\begin{align}
   & \frac{1}{2}\langle\theta-\theta_k, G(\theta_k)(\theta-\theta_k)\rangle + hH(\theta) \nonumber\\
 = & \max_{\phi}\min_{\psi} \left\{ \int \frac{1}{2}|\nabla\psi(x)|^2\rho_{\theta_k}dx + hH(\theta) + \int \phi(x)(\nabla\cdot(\rho_{\theta_k}\nabla\psi(x)) - \nabla \cdot (\rho_{\theta_k}\partial_\theta T_{\theta_k}    (T_{\theta_k}^{-1}(x))(\theta-\theta_k)))~dx \right\} \nonumber\\
 = & \max_\phi \min_\psi \left\{ \int \left( \frac{1}{2}|\nabla\psi(x)|^2 - \nabla\phi(x)\cdot\nabla\psi(x) + \nabla\phi(x)\cdot \partial_\theta T_{\theta_k}(T_{\theta_k}^{-1}(x))(\theta-\theta_k)) \right)\rho_{\theta_k}(x)~dx + hH(\theta) \right\} \nonumber \\
 = & \max_{\phi} \left\{\int \left(-\frac{1}{2}|\nabla\phi(x)|^2 + \nabla\phi(x)\cdot\partial_\theta T_{\theta_k}(T_{\theta_k}^{-1}(x))(\theta-\theta_k)\right)\rho_{\theta_k}(x)~dx + hH(\theta)\right\}  \label{adjoint loss}
\end{align}
In implementation, we substitute $\partial_\theta T_{\theta_k}(\theta-\theta_k)$ by $T_{\theta}-T_{\theta_k}$ since the latter is tractable in computation. As a consequence, by substituting \eqref{adjoint loss} into \eqref{JKO_para_FPE} we obtain (by multiplying the entire function by $2$) the saddle scheme \eqref{JKO_A}. To verify the equivalence between \eqref{JKO_A} and \eqref{selected double min scheme}, we check the identity
\begin{align*}
  & \int (2\nabla\phi(x)\cdot((T_\theta-T_{\theta_k})\circ T_{\theta_k}^{-1}(x))-|\nabla\phi(x)|^2)\rho_{\theta_k}(x)~dx \\
  = &-\int |\nabla\phi(x)-(T_\theta-T_{\theta_k})\circ T_{\theta_k}^{-1}(x)|^2\rho_{\theta_k}(x)~dx + \underbrace{\int |(T_\theta-T_{\theta_k})\circ T_{\theta_k}^{-1}(x)|^2\rho_{\theta_k}(x)~dx}_{\textrm{Constant w.r.t.}~\phi}
\end{align*}
Thus the $\phi$-minimization process of \eqref{selected double min scheme} is equivalent to the $\phi$-maximization process of \eqref{JKO_A}. This leads to the equivalence between \eqref{selected double min scheme} and \eqref{JKO_A}. 
\end{remark}

\begin{remark}\label{max Epsilon(phi) as approx of wasserstein distance}
Our proposed schemes \eqref{selected double min scheme}, \eqref{JKO_A} can be viewed as an approximation to the JKO scheme \eqref{JKO scheme} with $\mathcal{F}$ being the relative entropy $H(\theta)$.
To see this, we denote 
\begin{equation*}
\mathcal{E}(\phi) = \int ( 2\nabla\phi(x)\cdot((T_\theta-T_{\theta_k})\circ T_{\theta_k}^{-1}(x))-|\nabla\phi(x)|^2)\rho_{\theta_k}(x)~dx,
\end{equation*}
and set $\hat{\psi}=\underset{\phi}{\textrm{argmax}}~\mathcal{E}(\phi)$. We let $\vec{v}_h(x) = \frac{1}{h}(T_\theta\circ T_{\theta_k}^{-1}(x)-x)$. Under mild conditions, one can show
\begin{equation}
  W_2^2(\rho_{\theta},\rho_{\theta_k}) = W_2^2((\textrm{Id}+h\vec{v}_h)_{\sharp} \rho_{\theta_k}, \rho_{\theta_k} ) = \int |\nabla\hat{\psi}|^2\rho_{\theta_k}~dx + o(h^2) = \max_\phi\mathcal{E}(\phi)+o(h^2).  
\end{equation}
By replacing $W_2^2(\rho_\theta,\rho_{\theta_k})$ in \eqref{JKO scheme} by its approximation $\max_\phi\mathcal{E}(\phi)$, we obtain scheme \eqref{selected double min scheme}, \eqref{JKO_A}. 
\end{remark}

Although \eqref{selected double min scheme} and \eqref{JKO_A} are mathematically equivalent, we use them for different purposes. The saddle scheme \eqref{JKO_A} is our main tool to investigate the theoretical properties of our proposed method in Section \ref{error of scheme}, because it better reflects the nature of our approximation method. In our  implementation, as discussed in Section \ref{section implementation }, we prefer the double minimization scheme \eqref{selected double min scheme}. Our experience indicates that \eqref{selected double min scheme} makes our code run more efficiently and behaves more stably than \eqref{JKO_A}.

%%%%%%%%%%%%%%%%%%%%%%%%%%%%%%%%%%%%%%%%%%%%%%%%%%%%%%%%%%%%%%%%%%%%%%%

\subsubsection{Local error of the proposed scheme   } \label{error of scheme}
We now analyze the local error of scheme \eqref{JKO_A} as well as \eqref{selected double min scheme} compared with the semi-implicit scheme \eqref{JKO_para_FPE}. Let us denote $\max_{\phi}\mathcal{E}(\phi)$ as $\widehat{W}_2^2(\theta,\theta_k)$ (Here $\widehat{W}_2$ is treated as an approximation of $L^2$-Wasserstein distance (remark \ref{max Epsilon(phi) as approx of wasserstein distance})). It is straightforward to verify $\widehat{W}_2(\theta,\theta')\geq 0$ and $\widehat{W}_2(\theta,\theta)=0$.
\noindent
Consider the following assumption,
\begin{equation}
  \widehat{W}_2^2(\theta,\theta') \geq l(|\theta-\theta'|) \quad \quad \textrm{for any} \quad \theta,\theta'\in\Theta.  \label{assumption on T_theta positive def}
\end{equation}
Here $l:\mathbb{R}_{\geq 0}\rightarrow \mathbb{R}_{\geq 0}$ satisfies $l(0)=0$. $l(r)$ is continuous, strictly increasing when $r\leq r_0$ for a positive $r_0$ and is bounded below by $\lambda_0>0$ when $r>r_0$. Notice that this assumption generally guarantees positive definiteness of $\widehat{W}_2$. Clearly, \eqref{assumption on T_theta positive def} only depends on the structure of $T_\theta$, and we expect that \eqref{assumption on T_theta positive def} holds for the neural networks used as pushforward maps, including the ones we used in this paper.

\begin{theorem}\label{thm:loc_err_scheme_JKO_A}
Suppose assumption \eqref{assumption on T_theta positive def} holds true for the class of pushforward maps $\{T_\theta\}$. Then the local error of scheme \eqref{JKO_A} is of order $h^2$, i.e., assume that $\theta_{k+1}$ is the optimal solution to \eqref{JKO_A}, then
\begin{equation}
  |\theta_{k+1}-\theta_k+hG(\theta_k)^{-1}\nabla_\theta H(\theta_{k+1})|\sim O(h^2).  \label{second order loc err}
\end{equation}
or equivalently: 
$\limsup_{h\rightarrow 0^+} \frac{|\theta_{k+1}-\theta_k+hG(\theta_k)^{-1}\nabla_\theta H(\theta_{k+1})|}{h^2}<+\infty$.
\end{theorem}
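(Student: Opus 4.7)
The plan is to exploit that the extra cost in \eqref{JKO_A} is a second-order Taylor surrogate for $(\theta-\theta_k)^T G(\theta_k)(\theta-\theta_k)$, so that the first-order optimality condition of \eqref{JKO_A} matches that of the semi-implicit scheme \eqref{JKO_para_FPE} up to $O(h^2)$. The key ingredient is a Taylor expansion of $\widehat W_2^2$ near the diagonal. Expanding $T_\theta$ around $\theta_k$ gives $(T_\theta-T_{\theta_k})\circ T_{\theta_k}^{-1}(x)=\partial_\theta T_{\theta_k}(T_{\theta_k}^{-1}(x))(\theta-\theta_k)+O(|\theta-\theta_k|^2)$. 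By Remark \ref{equivalence of formulation}, $\nabla\hat\psi$ is the $L^2(\rho_{\theta_k})$-orthogonal projection of this field onto gradient fields, while Theorem \ref{thm_about_computing_metric_of_Theta} identifies the projection of the leading term as $\nabla\boldsymbol{\Psi}^T(\theta-\theta_k)$. Linearity of the projection then yields $\nabla\hat\psi=\nabla\boldsymbol{\Psi}^T(\theta-\theta_k)+O(|\theta-\theta_k|^2)$ in $L^2(\rho_{\theta_k})$, and squaring produces
\[\widehat W_2^2(\theta,\theta_k)=(\theta-\theta_k)^T G(\theta_k)(\theta-\theta_k)+O(|\theta-\theta_k|^3).\]
Invoking the envelope theorem for \eqref{E_functional} together with the Hodge identity $\int\nabla\psi_i\cdot\partial_{\theta_j}T_{\theta_k}(T_{\theta_k}^{-1}(x))\rho_{\theta_k}\,dx=G_{ij}(\theta_k)$, I would then obtain the companion expansion
\[\nabla_\theta\widehat W_2^2(\theta,\theta_k)=2G(\theta_k)(\theta-\theta_k)+O(|\theta-\theta_k|^2).\]

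Next I would show $|\theta_{k+1}-\theta_k|\to 0$ as $h\to 0^+$: testing $\theta=\theta_k$ in \eqref{JKO_A} gives $\widehat W_2^2(\theta_{k+1},\theta_k)\leq 2h\bigl(H(\theta_k)-H(\theta_{k+1})\bigr)\leq Ch$, using that $H$ is bounded below (it is a KL divergence up to an additive constant). Assumption \eqref{assumption on T_theta positive def} then forces $l(|\theta_{k+1}-\theta_k|)\leq Ch$; since $l$ is continuous, strictly increasing near $0$, and exceeds $\lambda_0>0$ for $r>r_0$, the claimed vanishing follows. The first-order optimality condition for \eqref{JKO_A} reads $\nabla_\theta\widehat W_2^2(\theta_{k+1},\theta_k)+2h\nabla_\theta H(\theta_{k+1})=0$, and substituting the gradient expansion above gives
\[\theta_{k+1}-\theta_k+hG(\theta_k)^{-1}\nabla_\theta H(\theta_{k+1})=O(|\theta_{k+1}-\theta_k|^2).\]
Taking norms, $|\theta_{k+1}-\theta_k|\leq Ch+C'|\theta_{k+1}-\theta_k|^2$; combined with the vanishing bound, $C'|\theta_{k+1}-\theta_k|<\tfrac{1}{2}$ for small enough $h$, forcing $|\theta_{k+1}-\theta_k|=O(h)$ and hence $|\theta_{k+1}-\theta_k|^2=O(h^2)$, which is exactly \eqref{second order loc err}.

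The delicate point is the first step: one must verify rigorously that the $O(|\theta-\theta_k|^2)$ remainder in the pushforward-difference field is preserved by the $L^2(\rho_{\theta_k})$-orthogonal projection onto gradient fields, and that the envelope formula produces an $O(|\theta-\theta_k|^2)$ error for the gradient of $\widehat W_2^2$. This requires uniform elliptic estimates for the weighted Laplace-type equation $-\nabla\cdot(\rho_{\theta_k}\nabla\hat\psi)=-\nabla\cdot\bigl(\rho_{\theta_k}(T_\theta-T_{\theta_k})\circ T_{\theta_k}^{-1}\bigr)$ together with higher-order smoothness of $T_\theta$ in $\theta$. Both are natural in the normalizing-flow setting of Section \ref{4.1 } but require care to state in a form uniform on a neighbourhood of $\theta_k$.
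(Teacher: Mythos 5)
Your proposal is correct and follows essentially the same route as the paper's proof: differentiate the saddle objective via Danskin's theorem, Taylor-expand $T_\theta$ in $\theta$, use linearity of the $L^2(\rho_{\theta_k})$ projection and the Hodge identity to recover $G(\theta_k)(\theta_{k+1}-\theta_k)$ as the leading term, obtain $|\theta_{k+1}-\theta_k|=o(1)$ from assumption \eqref{assumption on T_theta positive def} exactly as in Lemma \ref{prior est |theta_k+1-theta_k|}, and close with the quadratic inequality $|\eta-h\xi|\leq C|\eta|^2$. The only remark worth making is that the ``delicate point'' you flag does not require uniform elliptic estimates: the paper handles the remainder purely through the $L^2$ non-expansiveness of the projection (Lemma \ref{lemma:local err analys}, \eqref{loc err analys lemma 2}) together with the moment bounds \eqref{notation L,H} on $\partial_\theta T_\theta$ and $\partial^2_{\theta\theta}T_\theta$.
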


Before proving Theorem \ref{thm:loc_err_scheme_JKO_A}, we introduce a few additional notations. We define $\epsilon$ ball in parameter space as $B_\epsilon(\theta_k)=\{\theta~|~|\theta-\theta_k|\leq\epsilon\}$, let $T^{(i)}_\theta$ be the $i$th component ($1\leq i\leq d$) of map $T_\theta$. For fixed $\theta_k$ and $\epsilon>0$ small enough, we assume the following two quantities are finite
\begin{equation}
  L(\theta_k,\epsilon) = \sum_{i=1}^d ~\mathbb{E}_{x\sim p} \sup_{\theta\in B_\epsilon(\theta_k)} \left\{|\partial_\theta T_\theta^{(i)}(x)|^2 \right\}, \quad H(\theta_k,\epsilon) = \sum_{i=1}^d ~\mathbb{E}_{x\sim p} \sup_{\theta\in B_\epsilon(\theta_k)} \left\{\|\partial^2_{\theta\theta} T_\theta^{(i)}(x)\|_2^2 \right\}. \label{notation L,H}
\end{equation}
To prove Theorem \ref{thm:loc_err_scheme_JKO_A}, we need the following three lemmas:
\begin{restatable}{lemma}{lemmaBasicF}
\label{lemma_basics}
    Suppose we fix $\theta_0\in \Theta$, for arbitrary $\theta\in\Theta$ and $\nabla\phi\in L^2(\mathbb{R}^d;\mathbb{R}^d,\rho_{\theta_0})$ we consider
    \begin{equation}
     F(\theta, \nabla\phi~|~\theta_0) = \left( \int (2\nabla\phi(x)\cdot(T_\theta-T_{\theta_0})\circ T_{\theta_0}^{-1}(x) - |\nabla\phi(x)|^2) ~\rho_{\theta_0}(x) ~dx \right) + 2h  H(\theta).
     \label{F_theta_phi}
    \end{equation}
    Then $F(\theta,\nabla\phi~|~\theta_0)<\infty$, furthermore,  $F(\cdot, \nabla\phi~|~\theta_0)\in C^1(\Theta)$. We can compute
    \begin{equation}
      \partial_\theta F(\theta,\nabla\phi~|~\theta_0) =2\left(\int ~ \partial_\theta T_\theta(T_{\theta_0}^{-1}(x))^{\textrm{T}}~\nabla\phi(x)~\rho_{\theta_0}(x)~dx + h~\nabla_\theta H(\theta)\right).  \label{compute_partial_theta F} 
    \end{equation}
\end{restatable}
\begin{restatable}{lemma}{lemmaEnvelope}
\label{thm:Danskin}
Suppose we fix $\theta_0\in \Theta$ and define $J(\theta) = \underset{\nabla\phi\in L^2(\mathbb{R}^d;\mathbb{R}^d,\rho_{\theta_0})}{\sup} F(\theta,\nabla\phi~|~\theta_0) $. Then $J$ is differentiable. If we denote $\hat{\psi}_\theta = \underset{\phi}{\textrm{argmax}} \left\{ F(\theta,\nabla\phi~|~\theta_0) \right\}$, then
\begin{equation*}
  \nabla_\theta J(\theta) = \partial_\theta F(\theta,\nabla\hat{\psi}_\theta~|~\theta_0) = 2\left(\int ~ \partial_\theta T_\theta(T_{\theta_0}^{-1}(x))^{\textrm{T}}~\nabla\hat{\psi}_\theta(x)~\rho_{\theta_0}(x)~dx + h~\nabla_\theta H(\theta)\right).
\end{equation*}
\end{restatable}
\noindent
This lemma is an anology of the envelope theorem \cite{afriat1971theory} under our problem setting.

\begin{restatable}{lemma}{lemmaB}
\label{prior est |theta_k+1-theta_k|}
Under assumption\eqref{assumption on T_theta positive def}, the optimal solution of \eqref{JKO_A} $\theta_{k+1}$ satisfies, 
\begin{equation*}
  |\theta_{k+1}-\theta_k|\sim o(1)\quad \textrm{i.e.}~ ~ \lim_{h\rightarrow 0^+} |\theta_{k+1}-\theta_k|=0.
\end{equation*}
\end{restatable}
\noindent
This lemma provides {\it a prior} estimation of $|\theta_{k+1}-\theta_k|$.

We prove Lemma \ref{lemma_basics}, \ref{thm:Danskin} and 
\ref{prior est |theta_k+1-theta_k|} in Appendix \ref{pf section 4}.

\begin{proof}[ Proof of Theorem \ref{thm:loc_err_scheme_JKO_A}] 
Let us consider $F(\theta,\nabla\phi~|~\theta_k)$,
we denote $\nabla\hat{\psi}_\theta = \underset{\nabla\phi\in L^2(\mathbb{R}^d;\mathbb{R}^d,\rho_{\theta_k})}{\textrm{argmax}} \left\{ F(\theta,\nabla\phi~|~\theta_k) \right\}$. Then we can set
\begin{equation*}
\nabla\hat{\psi}_\theta = \textrm{Proj}_{\rho_{\theta_k}} [(T_\theta-T_{\theta_k})\circ T_{\theta_k}^{-1} ], \quad \textrm{and}\quad J(\theta) = \sup_{\nabla\phi\in L^2(\mathbb{R}^d;\mathbb{R}^d,\rho_{\theta_k})}F(\theta,\nabla\phi~|~ \theta_k)
\end{equation*}
Apply Lemma \ref{thm:Danskin}, we obtain:
\begin{equation*}
  \nabla_\theta J (\theta) = 2\left(\int ~ \partial_\theta T_\theta(T_{\theta_k}^{-1}(x))^{\textrm{T}}~\nabla\hat{\psi}_\theta(x)~\rho_{\theta_k}(x)~dx + h~\nabla_\theta H(\theta)\right).
\end{equation*}
Due to the differentiability of $J(\theta)$, at the optimizer $\theta_{k+1}$, the gradient must vanish, i.e.
\begin{equation}
 \left( \int \partial_\theta T_{\theta_{k+1}}(T_{\theta_k}^{-1}(x))^{\textrm{T}} ~\nabla\hat{\psi}_{\theta_{k+1}}(x)~\rho_{\theta_k}(x)~dx \right) + h\nabla_\theta H(\theta_{k+1}) = 0. \label{eq by Danskin}
\end{equation}
We use Taylor expansion at $\theta_{k+1}$ to get $T_{\theta_{k+1}}-T_{\theta_k}=\partial_\theta T_{\theta_k} (\theta_{k+1}-\theta_k) + R(\theta_{k+1},\theta_k)$, in which $R(\theta,\theta')(\cdot)\in L^2(\mathbb{R}^d;\mathbb{R}^m,\rho_{\theta_k})$, the $i$th entry of  $R(\theta,\theta')$ is $R_i(\theta, \theta' )(x) = \frac{1}{2}(\theta-\theta')^{\textrm{T}} \partial^2_{\theta\theta} T_{\tilde{\theta}_i(x)}^{(i)}(x)(\theta-\theta'),\; 1\leq i\leq m,$ where each $\tilde{\theta}_i(x)=\lambda_i(x)\theta+(1-\lambda_i(x))\theta'$ for some $\lambda_i(x) \in [0,1] $.
Then we can write: 
\begin{equation}
 \nabla\hat{\psi}_{\theta_{k+1}} = \textrm{Proj}_{\rho_{\theta_k}}[(T_{\theta_{k+1}}-T_{\theta_k})\circ T^{-1}_{\theta_k}] = \textrm{Proj}_{\rho_{\theta_k}}[\partial_\theta T_{\theta_k}\circ T_{\theta_k}^{-1} (\theta_{k+1}-\theta_k)]+\textrm{Proj}_{\rho_{\theta_k}}[R(\theta_{k+1},\theta_k)\circ T_{\theta_k}^{-1}]. \label{hodge proj of taylor expansion}
\end{equation}
On the other hand, 
\begin{equation}
  \partial_\theta T_{\theta_{k+1}} = \partial_{\theta} T_{\theta_k} +  r(\theta_{k+1},\theta_k). \label{mean value}
\end{equation}
Here $r(\theta,\theta')\in L^2(\mathbb{R}^d;\mathbb{R}^{d\times m},\rho_{\theta_k}),$ the $(i,j)$ entry of $r(\theta,\theta')(x)$ is $(\theta_{k+1}-\theta_k)^{\textrm{T}}\partial_\theta(\partial_{\theta_j} T^{(i)}_{\tilde{\theta}_{ij}(x)}(x)),\;  1\leq i\leq d, ~1\leq j\leq m,$
where each $\tilde{\theta}_{ij}(x) = \mu_{ij}(x)\theta_{k+1} + (1-\mu_{ij}(x))\theta_k $, for some $\mu_{ij}(x)\in(0,1)$.
Applying \eqref{mean value}, \eqref{hodge proj of taylor expansion} to \eqref{eq by Danskin}, we obtain
\begin{align}
 &\int \partial_\theta T_{\theta_k}(T_{\theta_k}^{-1}(x))^{\textrm{T}} \textrm{Proj}_{\rho_{\theta_k}}[\partial_\theta T_{\theta_k}\circ T_{\theta_k}^{-1}(x)(\theta_{k+1}-\theta_k)]~\rho_{\theta_k}(x)~dx \nonumber \\
 &+\int \partial_\theta T_{\theta_k}(T_{\theta_k}^{-1}(x))^{\textrm{T}} \textrm{Proj}_{\rho_{\theta_k}}[R(\theta_{k+1},\theta_k)\circ T_{\theta_k}^{-1}](x)~\rho_{\theta_k}(x)~dx \nonumber \\
 & + \int r(\theta_{k+1},\theta_k)(T_{\theta_k}^{-1}(x))^{\textrm{T}}\textrm{Proj}_{\rho_{\theta_k}}[(T_{\theta_{k+1}}-T_{\theta_k})\circ T^{-1}_{\theta_k}](x)~\rho_{\theta_k}(x)~dx \quad = -h\nabla_\theta H(\theta_{k+1}). \label{eq_for_local_err_anlys}
\end{align}
Recall definition of $\boldsymbol{\Psi}$ in Theorem \ref{thm_about_computing_metric_of_Theta}, use \eqref{loc err analys lemma 1} in lemma \ref{lemma:local err analys}, we know that the first term on the left hand side of \eqref{eq_for_local_err_anlys} equals 
\begin{equation*}
 \int \nabla\boldsymbol{\Psi}(x)\nabla\boldsymbol{\Psi}(x)^{\textrm{T}}(\theta_{k+1}-\theta_k)~\rho_{\theta_k}(x)~dx = G(\theta_k)(\theta_{k+1}-\theta_k).
\end{equation*}
By applying Cauchy–Schwarz inequality and \eqref{loc err analys lemma 2} in lemma \ref{lemma:local err analys}, we bound the $i$th entry of the second term in \eqref{eq_for_local_err_anlys} by:
\begin{align*}
 &\left( \int |\partial_{\theta}T_{\theta_k}^{(i)}(x)|^2~dp(x) \cdot \int\sum_{i=1}^d |(\theta_{k+1}-\theta_k)\partial^2_{\theta\theta} T_{\tilde{\theta}_i(x)}^{(i)}(x)(\theta_{k+1}-\theta_k)|^2~dp(x) \right)^{\frac{1}{2}}\\
& \leq \left( \mathbb{E}_{p}|\partial_{\theta}T_{\theta_k}^{(i)}(x)|^2  \cdot \mathbb{E}_p \left[\sum_{i=1}^d\|\partial^2_{\theta\theta}T_{\tilde{\theta}_i(x)}^{(i)}(x)\|_2\right] \right)^{\frac{1}{2}} |\theta_{k+1}-\theta_k|^2 \overset{\textrm{denote as}}{=} A^{(i)}|\theta_{k+1}-\theta_k|^2.
\end{align*}
To bound the third term in \eqref{eq_for_local_err_anlys}, we consider $T_{\theta_{k+1}}(x)-T_{\theta_k}(x)$, the $i$th entry can be written as
\begin{equation*}
T^{(i)}_{\theta_{k+1}}(x)-T_{\theta_k}^{(i)}(x) = \partial_\theta T_{\bar{\theta}_i(x)}(x)(\theta_{k+1}-\theta_k),
\end{equation*}
here $\bar{\theta}_i(x) = \zeta_i(x)\theta_{k+1}+(1-\zeta_i(x))\theta_k$ for some $\zeta_i(x)\in (0,1)$.
The $i$th entry of the third term of \eqref{eq_for_local_err_anlys} can be bounded by:
\begin{align*}
 & \left(\int\sum_{i=1}^d |(\theta_{k+1}-\theta_k)^{\textrm{T}}\partial_{\theta\theta}T_{\tilde{\theta}_{ij}(x)}^{(i)}(x)|^2~dp(x) \cdot \int |T^{(i)}_{\theta_{k+1}}(x)-T_{\theta_k}^{(i)}(x)|^2~dp(x) \right)^{\frac{1}{2}} \\
 & \leq \left( \mathbb{E}_p\left[\sum_{i=1}^d\|\partial^2_{\theta\theta}T_{\tilde{\theta}_{ij}(x)}(x)\|_2^2\right] \cdot \mathbb{E}_p |\partial_{\theta} T^{(i)}_{\bar{\theta}_i(x)}(x)|^2 \right)^{\frac{1}{2}} |\theta_{k+1}-\theta_k|^2 \overset{\textrm{denote as}}{ = } B^{(i)}|\theta_{k+1}-\theta_k|^2.
\end{align*}
We denote $A\in\mathbb{R}^m$ with entries $A^{(i)}$, $1\leq i\leq m$ and similarly $B\in\mathbb{R}^m$ with entries $B^{(i)}$, $1\leq i\leq m$. \eqref{eq_for_local_err_anlys}  leads to the following inequality,
\begin{equation*}
  |\theta_{k+1}-\theta_k+hG(\theta_k)^{-1}\nabla_\theta H(\theta_{k+1})|\leq \|G(\theta_k)^{-1}\|_2(|A|+|B|)~|\theta_{k+1}-\theta_k|^2.
\end{equation*}
As we have shown in Lemma \ref{prior est |theta_k+1-theta_k|} that $|\theta_{k+1}-\theta_k|\sim o(1)$ for any $\epsilon>0$ when step size $h$ is small enough, we always have $\theta_{k+1}\in B_\epsilon(\theta_k)$. Recall the notations in \eqref{notation L,H}, we have $|A|,|B|\leq \sqrt{L(\theta_k,\epsilon)H(\theta_k,\epsilon)}$. Thus we have
\begin{equation*}
  |\theta_{k+1}-\theta_k+hG(\theta_k)^{-1}\nabla_\theta H(\theta_{k+1})|\leq 2\sqrt{L(\theta_k,\epsilon)H(\theta_k,\epsilon)}\|G(\theta_k)^{-1}\|_2 |\theta_{k+1}-\theta_k|^2. 
\end{equation*}
Denote $\theta_{k+1}-\theta_k=\eta$, $G (\theta_k)^{-1}\nabla_\theta H(\theta_{k+1}) = \xi$ and $C=2\sqrt{L(\theta_k,\epsilon)H(\theta_k,\epsilon)}\|G(\theta_k)^{-1}\|_2$, the previous inequality is
\begin{equation}
  |\eta~ - ~ h~ \xi|\leq C|\eta|^2. \label{short form eq err analys}
\end{equation}
Since $|\eta- h \xi|\geq |\eta|-h|\xi|$, we have 
\begin{equation}
  C |\eta|^2\geq |\eta|-h|\xi|.  \label{short form eq err analysis 2}
\end{equation}
Solving \eqref{short form eq err analysis 2} gives 
\begin{equation*}
|\eta|\leq \frac{2|\xi|h}{1+\sqrt{1-4C|\xi|h}} \quad \textrm{or} \quad |\eta|>\frac{1+\sqrt{1-4Ch|\xi|}}{2C}.
\end{equation*}
The second inequality leads to $|\theta_{k+1}-\theta_k|>\frac{1}{2C}$ for any $h>0$, which avoids $|\theta_{k+1}-\theta_k|\sim o(1)$. Thus, when $h$ is sufficiently small, we have 
\begin{equation}
|\eta|\leq \frac{2|\xi|h}{1+\sqrt{1-4C|\xi|h}}.\label{local err analys up bound for eta}
\end{equation}
Combining \eqref{local err analys up bound for eta} and \eqref{short form eq err analys}, we have:
\begin{equation}
  |\theta_{k+1}-\theta_k+hG(\theta_k)^{-1}\nabla_\theta H(\theta_{k+1})|\leq \frac{4~C~|\xi|^2}{(1+\sqrt{1-4C|\xi|h})^2}~h^2 \leq 4 C|\xi|^2h^2.   \label{loc err}
\end{equation}
This proves the result.
\end{proof}

\begin{remark}
One may be aware of the relation between the positive definite condition \eqref{assumption on T_theta positive def} and the positive definiteness of the metric tensor $G(\theta_k)$. A positive definite $G(\theta)$ guarantees the inequality $\widehat{W}_2^2(\theta,\theta')\geq C|\theta-\theta'|^2$ for $\theta'\in B_{r_0}(\theta)$ ($r_0$ depends on $\theta$ is small enough). However, we are not able to bound $\widehat{W}_2^2(\theta,\theta')$ from below when $|\theta-\theta'|>r_0$. On the other hand, \eqref{assumption on T_theta positive def} is a locally weaker condition than positive definiteness of $G(\theta)$.  
\end{remark}

\subsubsection{Implementation}\label{section implementation }
As mentioned in \ref{section double min scheme}, we prefer double-minimization scheme \eqref{selected double min scheme} than saddle scheme \eqref{JKO_A}. We will thus implement scheme \eqref{selected double min scheme}. Let us denote
\begin{align}
 & J(\theta) = \left(\int \left(2 ~\nabla\hat{\psi}(T_{\theta_k}(x))\cdot((T_\theta(x) - T_{\theta_k}(x)))-|\nabla\hat{\psi}(T_{\theta_k}(x))|^2 \right)dp(x)\right) + 2hH(\theta)  \label{J(theta)}\\
 & \textrm{with}~~\hat{\psi}=\underset{\phi}{\textrm{argmin}}\left\{\int |\nabla\phi(T_{\theta_k})-(T_{\theta}(x)-T_{\theta_k}(x))|^2dp(x) \right\}  \label{Proj selected }
\end{align}
We then solve ODE \eqref{wass_grad_flow_on_para_spc} at $t_k$ by solving
\begin{equation}
    \theta_{k+1} = \underset{\theta}{\textrm{argmin}}~
      J(\theta),  \label{new formulation of JKO_A}
\end{equation}
Here we provide some detailed discussion on our implementation.
\begin{itemize}
    \item In our numerical computation, we approximate $\phi$ by $\psi_\nu:M\rightarrow\mathbb{R}$, which is a ReLU neural network \cite{glorot2011deep}. Here $\nu$ denotes the parameter vector of the network $\psi_\nu$.
     We know that in this case, $\psi_\nu$ is a piece-wise affine function and its gradient $\nabla\psi_\nu(\cdot)$ forms a piece-wise constant vector field. 
    
    \item The entire procedure of solving \eqref{new formulation of JKO_A} can be formulated as nested loops:
    \begin{itemize}
        \item (inner loop) Every inner loop aims at solving \eqref{Proj selected } on ReLU functions $\psi_\nu$, i.e. solving:
        \begin{equation}
            \min_\nu \left\{\mathbb{E}_{\boldsymbol{X}\sim p } |\nabla\psi_\nu(T_{\theta_k}(\boldsymbol{X}))-(T_\theta(\boldsymbol{X})-T_{\theta_k}(\boldsymbol{X}))|^2 \right\}.  \label{inner optimization problem}
        \end{equation}
        One can use Stochastic Gradient Descent (SGD) methods like RMSProp \cite{ruder2016overview} or Adam \cite{kingma2014adam} with learning rate $\alpha_{\textrm{in}}$ to deal with this inner loop optimization. In our implementation, we will stop after $M_{\textrm{in}}$ iterations. Let us denote the optimal $\nu$ in each inner loop as $\hat{\nu}$;
        \item (outer loop) We apply similar SGD method to $J(\theta)$: using Lemma \ref{thm:Danskin}, we are able to compute $\nabla_\theta J(\theta)$ as:
        \begin{equation*}
            \nabla_\theta J(\theta) = \partial_\theta \left(  \left( \int 2\nabla\hat{\psi}(x)\cdot(T_{\theta}\circ T_{\theta_k}^{-1}(x))\rho_{\theta_k}(x)~dx \right) + 2hH(\theta)\right).
        \end{equation*}
        If we treat optimal $\hat{\psi}$ as $\psi_{\hat{\nu}}$, what we need to do in each outer loop is to consider:
        \begin{equation}
           \tilde{J}(\theta) = \mathbb{E}_{\boldsymbol{X}\sim p}~ 2[\nabla\psi_{\hat{\nu}}(T_{ \theta_k }(\boldsymbol{X}))\cdot T_\theta(\boldsymbol{X})] + 2 h [V(T_{\theta}(\boldsymbol{X}))+\mathcal{L}_\theta(\boldsymbol{X})]   \label{outer optimization problem}
        \end{equation} 
        and update $\theta$ for one step by our chosen SGD method with learning rate $\alpha_{\textrm{out}}$ applied to optimize $\tilde{J}(\theta)$. In our actual computation, we will stop the outer loop after $M_{\textrm{out}}$ iterations.
   \end{itemize}
   
   \item We now present the entire algorithm for computing \eqref{wass_grad_flow_on_para_spc} based on the scheme \eqref{selected double min scheme} in Algorithm \ref{semi-backward_2}. This algorithm contains the following parameters: $T,N;M_{\textrm{out}}, K_{\textrm{out}},\alpha_{\textrm{out}}; M_{\textrm{in}},K_{\textrm{in}} ,\alpha_{\textrm{in}}.$ Recall we set reference distribution $p$ as standard Gaussian on $M=\mathbb{R}^d$.
   \begin{algorithm}
   \caption{Computing \eqref{wass_grad_flow_on_para_spc} by scheme \eqref{JKO_A} on the time interval $[0,T]$ }\label{semi-backward_2}
   \begin{algorithmic}[1]
   \STATE Initialize $ \theta $
   \FOR{$i=1,...,N$}
   \STATE Save current parameter value to $\theta_0$: $\theta_0=\theta$
   \FOR{$j=1,...M_{\textrm{out}}$}
   \FOR{$p=1,...,M_{\textrm{in}}$}
   \STATE Sample $\{\mathbf{X}_1,...,\mathbf{X}_{K_{\textrm{in}}}\}$ from $p$
   \STATE Apply one SGD (Adam) step with learning rate $\alpha_{\textrm{in}}$ to loss function of variable $\lambda$.
   \begin{equation*}
      \frac{1}{K_{\textrm{in}}}\left(\sum_{k=1}^{K_{\textrm{in}}} |\nabla\psi_\nu(T_{\theta_0}(\mathbf{X}_k))-(T_\theta(\mathbf{X}_k)-T_{\theta_0}(\mathbf{Y}_k))|^2 \right)    
   \end{equation*}
   \ENDFOR
   \STATE Sample $\{\mathbf{X}_1,...,\mathbf{X}_{K_{\textrm{out}}} \}$ from $p$ 
   \STATE Apply one SGD (Adam) step with learning rate $\alpha_{\textrm{out}}$ to loss function of variable $\theta$.
   \begin{equation*}
      \frac{1}{K_{\textrm{out}}}\left(\sum_{k=1}^{K_{\textrm{out}}} 2[\nabla\psi_\nu(T_{\theta_0}(\boldsymbol{X}_k))\cdot T_\theta(\boldsymbol{X}_k)] + 2 h [V(T_{\theta}(\boldsymbol{X}_k))+\mathcal{L}_\theta(\boldsymbol{X}_k)]\right)
   \end{equation*}
   \ENDFOR
   \STATE Set $\theta_i = \theta$
   \ENDFOR
   \STATE The sequence of probability densities $\{{T_{\theta_0}}_{\sharp}p,{T_{\theta_1}}_{\sharp}p,...,{T_{\theta_N}}_{\sharp}p\}$ will be the numerical solution of $\{\rho_{t_0},\rho_{t_1},...,\rho_{t_N}\}$, where $t_i=i\frac{T}{N}$ ($i=0,1,...,N-1,N$). Here $\rho_t$ solves the original Fokker--Planck equation \eqref{FPE}.
   \end{algorithmic}
   \end{algorithm}
\end{itemize}

\begin{remark}[Rescaling]
In our implementation, $T_\theta(\boldsymbol{X})-T_{\theta_k}(\boldsymbol{X})$ is usually of order $O(\alpha_{\textrm{out}})$, which is a small quantity. We can rescale it so that each inner loop can be solved in a more stable way with larger stepsize (learning rate). That is to say, we choose some small $\epsilon  \sim O(\alpha_{\textrm{out}})$ and consider
\begin{equation}
  \min_\theta \max_\phi ~\left\{ \underbrace{\left(\int (2\nabla\phi(x) \cdot \left(\frac{1}{\epsilon}(T_\theta-T_{\theta_k})\circ T_{\theta_k}^{-1}(x) \right) - |\nabla\phi(x)|^2)\rho_{\theta_k}(x)~dx\right)}_{\mathcal{E}_\epsilon(\phi)} + \frac{2h}{\epsilon^2}H(\theta)\right\}. \label{epsilon-adj scheme}
\end{equation}
We can also check
\begin{equation*}
     \textrm{argmax} ~\mathcal{E}_\epsilon (\phi) =   \textrm{Proj}_{\rho_{\theta_k}}[\frac{1}{\epsilon}(T_\theta-T_{\theta_k})\circ T_{\theta_k}^{-1}]
      = \frac{1}{\epsilon}\textrm{Proj}_{\rho_{\theta_k}}[(T_\theta-T_{\theta_k})\circ T_{\theta_k}^{-1}] = \frac{1}{\epsilon}\textrm{argmax}~\mathcal{E}(\phi).
\end{equation*}
Using this, we are able to verify $\max_\phi ~ \mathcal{E}_\epsilon(\phi) = \frac{1}{\epsilon^2}\max_\phi~\mathcal{E}(\phi)$. Thus the optimal solution of \eqref{epsilon-adj scheme} is
\begin{equation*}
  \textrm{argmin}_{\theta} \left\{\frac{1}{\epsilon^2}\max_\phi~\mathcal{E}(\phi) + \frac{2h}{\epsilon^2}H(\theta)\right\} = \textrm{argmin}_\theta \left\{ \max_\phi~\mathcal{E}(\phi) + 2hH(\theta) \right\}
\end{equation*}
This shows that the equivalence between the modified scheme \eqref{epsilon-adj scheme} and the original scheme \eqref{JKO_A}.

In our actual implementation, we still prefer double-minimization scheme. We solve 
\begin{equation}
    \min_\nu \left\{\mathbb{E}_{\boldsymbol{X}\sim p } \left|\nabla\psi_\nu(T_{\theta_k}(\boldsymbol{X}))-\left(\frac{T_\theta(\boldsymbol{X})-T_{\theta_k}(\boldsymbol{X})}{\epsilon}\right)\right|^2 \right\},  \label{modified inner optimization problem}
\end{equation}
instead of \eqref{inner optimization problem} in each inner loop and set: 
\begin{equation}
           \tilde{J}(\theta) = \mathbb{E}_{\boldsymbol{X}\sim p}~ 2[\nabla\psi_{\hat{\nu}}(T_{ \theta_k }(\boldsymbol{X}))\cdot T_\theta(\boldsymbol{X})] + \frac{2h}{\epsilon} [V(T_{\theta}(\boldsymbol{X}))+\mathcal{L}_\theta(\boldsymbol{X})] \label{modified outer optimization problem}
\end{equation}
in each outer loop. In actual experiments, we set $\epsilon=\alpha_{\textrm{out}}$.
\end{remark}

\begin{remark}[Sufficiently large sample size]\label{rmk:mention large sample}
It is worth mentioning that the sample size $K_{\textrm{in}},K_{\textrm{out}}$ in each SGD step (especially $K_{\textrm{in}}$) should be chosen reasonably large so that the inner optimization problem can be solved with enough accuracy. 
In our practice, we usually choose $K_{\textrm{in}}=K_{\textrm{out}}=\max\{1000,300d\}$. Here $d$ is the dimension of sample space. This is very different from the small batch technique applied to training neural network in deep learning  \cite{masters2018revisiting}.
\end{remark}

\begin{remark}[Using fixed samples]
  Our numerical experiments indicate that the same samples can be used for both the inner and outer iterations, which may reduce the computational cost of our original algorithm. 
\end{remark}

\section{Asymptotic properties and error estimations}\label{Analysis}
In this section, we establish numerical analysis for the parametric Fokker--Planck equation \eqref{wass_grad_flow_on_para_spc}.

\subsection{An important quantity}\label{5.1 }

Before our analysis, we introduce an important quantity that plays an essential role in our numerical analysis. Let us recall the optimal value of the least square problem \eqref{important_lemma} in Theorem \ref{lemma_submfld_grad_err} of section \ref{section 3.2}, or equivalently \eqref{important_lemma_equiv} of section \ref{section 3.2}, \eqref{expectation of l2 discrepancy particle} of section \ref{particle level explanation}. If we denote the upper bound of all possible values to be $\delta_0$, i.e. 
\begin{equation}
  \delta_0 = \sup_{\theta\in\Theta} ~\underset{\xi\in \mathbb{R}^m}{\min} \left\{\int \left|\sum_{k=1}^M\xi_k\nabla\psi_k(x)-\nabla\left( V(x)+{D}\log\rho_{\theta}(x) \right)\right|^2\rho_\theta(x)dx \right\},
  \label{def_delta}
\end{equation} 
where $\psi_k$ are solutions to \eqref{Hodge Dcom} in Theorem \ref{thm_about_computing_metric_of_Theta}, then this quantity provides crucial error bound between our parametric equation and original equation in the forthcoming analysis. Ideally, we hope $\delta_0$ to be sufficiently small. And this can be guaranteed if the neural network we select has universal approximation power. $\delta_0$ can be bounded by another constant with more approachable form 
\begin{equation}
  \hat{\delta}_0 = \sup_{\theta\in\Theta} ~\underset{\xi\in \mathbb{R}^m}{\min} \left\{\int \left|\sum_{k=1}^M\xi_k\frac{\partial T_\theta(x)}{\partial\theta_k}-\nabla\left( V(x)+{D}\log\rho_{\theta}(x) \right)\right|^2\rho_\theta(x)dx \right\}. \label{def_hat_delta}
\end{equation}
By \eqref{loc err analys lemma 2} of Lemma \ref{lemma:local err analys}, one can verify $\delta_0\leq \hat{\delta}_0$. From \eqref{def_hat_delta}, we observe that $\hat{\delta}_0$ is determined by the optimal linear combination of $\{\frac{\partial T_\theta}{\partial \theta_k}\}_{k=1}^M$ to approximate the vector field $\nabla( V + D\log\rho_\theta )$. 
One may understand this approximation from three different aspects. 
\begin{itemize}
  \item If $T_\theta$ is chosen as a linear combination of basis functions, i.e. $T_\theta(x)=\sum_{k=1}^M\theta_k \Phi_k(x)$, we can give an explicit estimate on $\hat{\delta}_0$. For example, if $\Phi_k(x)$ is picked as the Fourier basis and $\nabla(V+D\log\rho_\theta)\in H^s$ ($s>1$), the classical spectral method theory can be applied to obtain an estimate $\hat{\delta}_0 = O(M^{-s})$ \cite{patera1984spectral, szabo1991finite}. If Radial Basis Function is selected, an related approximation bounded can be obtained too \cite{cabrera2013vector}. 
  
  \item Having a small value for $\hat{\delta}_0$ as well as $\delta_0$ is equivalent to find a suitable $T_\theta$ such that a specific vector field $\nabla(V+D\log\rho_\theta)$ can be accurately approximated in our estimate. In other words, when neural networks are used for $T_\theta$, one needs to pick a neural network structure such that it can approximate $\nabla(V+D\log\rho_\theta)$ well. This seems to be an easier question than the task for the so-called universal approximation theory for neural networks, which requires $T_\theta$ to approximate an arbitrary function in a space.
  
  \item In our implementation, we use Normalizing Flows, a special type of deep neural networks. Our numerical examples seem to show promising performance. 
  In the existing literature, although there are several references providing the universal approximation power of neural networks \cite{yarotsky2017error, daubechies2019nonlinear}, the results are mainly focused on general ReLU networks and on the approximation power of function value, which is different from our case. To the best of our knowledge, there is no existing study discussing explicit bounds for vector field approximation by deep neural networks. We believe that the question of how $\delta_0$ or $\hat{\delta}_0$ explicitly depends on the structure of $T_\theta$ is a fundamental research problem that deserves careful investigations.
  \end{itemize}

It is also worth mentioning that $\delta_0$ is used for {\it a priori} estimate in this section, because we don't know the exact trajectory of $\{\theta_t\}$ when solving ODE \eqref{wass_grad_flow_on_para_spc}, and we take supremum over $\Theta$ to obtain $\delta_0$. Once solved for $\{\theta_t\}$, denote $\mathcal{C}$ as the set covering its trajectory, i.e. 
\begin{equation}
  \mathcal{C}=\{\theta~|~\exists ~ t\geq 0,~\textrm{s.t.} ~\theta=\theta_t\} \label{set of traj}
\end{equation}
We define another quantity $\delta_1$:
\begin{equation}
  \delta_1 = \sup_{\theta\in\mathcal{C}} ~\underset{\xi\in \mathcal{T}_{\theta}\Theta}{\min} \left\{\int |\nabla\boldsymbol{\Psi}( T_{\theta}(x))^{\textrm{T}}\xi-\nabla\left( V+{D}\log\rho_{\theta} \right)\circ T_{\theta}(x)|^2 ~ dp(x) \right\}. \label{def_delta1}
\end{equation}
Clearly, we have $\delta_1 \le \delta_0$.
We can obtain corresponding 
{\it posterior} estimates for the asymptotic convergence and error analysis by replacing $\delta_0$ with $\delta_1$.

\subsection{Asymptotic Convergence Analysis}\label{5.2 }
In this section, we consider the solution $\{\theta_t\}_{t\geq 0}$ of our parametric Fokker--Planck equation \eqref{wass_grad_flow_on_para_spc}. We define:
\begin{equation*}
  \mathcal{V} = \left\{V~\Bigg|
  \substack{\text{$V\in\mathcal{C}^2(\mathbb{R}^d)$, $V$ can be decomposed as: $V=U+\phi$, with $U,\phi\in\mathcal{C}^2(\mathbb{R}^d)$;} \\ \text{$\nabla^2 U\succeq K I 
  $ with $K>0$ and $\phi\in L^\infty(\mathbb{R}^d)$}}\right\}
\end{equation*}
As we know, for the Fokker--Planck equation \eqref{FPE}, when the potential $V\in\mathcal{V}$, $\{\rho_t\}$ will converge to the Gibbs distribution $\rho_* = \frac{1}{Z_{D}}e^{-V(x)/{D}}$ as $t\rightarrow \infty$ under the measure of KL divergence \cite{holley1987logarithmic}. For \eqref{wass_grad_flow_on_para_spc}, we wish to study its asymptotic convergence property. We come up with the following result:
\begin{theorem}[{\it a priori} estimation on asymptotic convergence]\label{thm:convergenceParaFPE}
Consider the Fokker--Planck equation \eqref{FPE} with the potential $V\in\mathcal{V}$.
Suppose $\{\theta_t\}$ solves the parametric Fokker--Planck equation \eqref{wass_grad_flow_on_para_spc}, denote $\delta_0$ as in \eqref{def_delta}. Let $\rho_*(x)=\frac{1}{Z_{D}}e^{-V(x)/{D}}$ be the Gibbs distribution of original equation \eqref{FPE}.  Then we have the inequality:
 \begin{equation}
 \mathcal{D}_{\textrm{KL}}(\rho_{\theta_t}\|\rho_*)\leq \frac{\delta_0}{\tilde{\lambda}_{D} {D}^2}(1-e^{-{D}\tilde{\lambda}_{D} t}) +  \mathcal{D}_{\textrm{KL}}(\rho_{\theta_0}\|\rho_*) e^{-{D}\tilde{\lambda}_{D} t}.  \label{ineq_convergence_paraFPE}
\end{equation}
Here $\tilde{\lambda}_{D}>0$ is the constant associated to the Logarithmic Sobolev inequality discussed in Lemma \ref{lemma:HS perturbation} with potential function $\frac{1}{{D}} V$.
\end{theorem}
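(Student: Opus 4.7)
The plan is to derive a one-dimensional differential inequality of the form $\frac{d}{dt}\mathrm{D}_{\text{KL}}(\rho_{\theta_t}\|\rho_*) \leq -\beta\tilde{\lambda}_\beta \mathrm{D}_{\text{KL}}(\rho_{\theta_t}\|\rho_*) + \delta_0/\beta$ and then apply Gr\"onwall. Indeed, such an inequality integrates to exactly the stated bound, so the work lies entirely in establishing it.

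First I would differentiate $H(\theta_t) = \beta\mathrm{D}_{\text{KL}}(\rho_{\theta_t}\|\rho_*)$ along the flow \eqref{wass_grad_flow_on_para_spc}. The chain rule together with $\dot\theta_t = -G(\theta_t)^{-1}\nabla_\theta H(\theta_t)$ gives
\[
\frac{d}{dt}H(\theta_t) = -\xi^T G(\theta_t)\xi, \qquad \xi := G(\theta_t)^{-1}\nabla_\theta H(\theta_t).
\]
Next I invoke Theorem \ref{lemma_submfld_grad_err}, which states that $\xi$ is the minimizer of the quadratic
\[
J(\eta) = \int |\nabla\boldsymbol{\Psi}(T_{\theta_t}(x))^T \eta - \nabla(V+\beta\log\rho_{\theta_t})\circ T_{\theta_t}(x)|^2 dp(x) = \eta^T G(\theta_t)\eta - 2\eta^T\nabla_\theta H(\theta_t) + c(\theta_t),
\]
with $c(\theta_t) = \int|\nabla(V+\beta\log\rho_{\theta_t})|^2\rho_{\theta_t}\,dy$. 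Evaluating at the minimizer yields the Pythagorean identity $\xi^T G(\theta_t)\xi = c(\theta_t) - J(\xi)$, and by the very definition \eqref{def_delta} of $\delta_0$ we have $J(\xi) \leq \delta_0$. Combining,
\[
\frac{d}{dt}H(\theta_t) \leq -\int|\nabla(V+\beta\log\rho_{\theta_t})|^2\rho_{\theta_t}\,dy + \delta_0.
\]

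Since $\rho_* \propto e^{-V/\beta}$, one has $\nabla(V+\beta\log\rho) = \beta\nabla\log(\rho/\rho_*)$, so the integral above equals $\beta^2\mathcal{I}(\rho_{\theta_t}\|\rho_*)$, where $\mathcal{I}$ is the relative Fisher information. Under the assumption $V\in\mathcal{V}$, the Holley–Stroock perturbation lemma (the referenced Lemma \ref{lemma:HS perturbation}) combined with the Bakry–\'Emery criterion for the strongly convex part $U/\beta$ guarantees a log-Sobolev inequality $\mathcal{I}(\rho\|\rho_*) \geq \tilde{\lambda}_\beta \mathrm{D}_{\text{KL}}(\rho\|\rho_*) = \tilde{\lambda}_\beta H(\theta)/\beta$ with some $\tilde{\lambda}_\beta>0$ depending on $K$, $\beta$, and $\|\phi\|_\infty$. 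Substituting gives $\dot H(\theta_t) \leq -\beta\tilde{\lambda}_\beta H(\theta_t) + \delta_0$; dividing by $\beta$ produces the target differential inequality, and a direct Gr\"onwall integration yields \eqref{ineq_convergence_paraFPE}.

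The main obstacle, in my view, is not the Gr\"onwall step but the Pythagorean/projection identity that converts the parametric dissipation $\xi^T G\xi$ into the ambient Fisher information minus a controlled defect. This is where the approximation error $\delta_0$ of the generative model enters in a geometrically transparent way: $\xi^T G\xi$ is the squared $L^2(\rho_{\theta_t})$-norm of the projection of the Wasserstein gradient onto $T_{\rho_{\theta_t}}\mathcal{P}_\Theta$, and the defect $J(\xi)$ is the squared norm of the orthogonal complement, bounded uniformly by $\delta_0$ by assumption. A secondary subtlety is tracking the precise constant $\tilde{\lambda}_\beta$ through the Holley–Stroock perturbation, but this is a quotable consequence of Lemma \ref{lemma:HS perturbation} and does not require new work.
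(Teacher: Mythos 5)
Your proposal is correct and follows essentially the same route as the paper: a dissipation estimate along the flow, the Pythagorean identity that writes $\beta^2\mathcal{I}(\rho_{\theta_t}|\rho_*)$ as the parametric dissipation $\xi^T G(\theta_t)\xi$ plus a defect bounded by $\delta_0$, then Holley--Stroock log-Sobolev and Gr\"onwall. The only difference is presentational: you obtain the key inequality (the paper's Lemma on $\beta^2\mathcal{I}\leq\delta_0+\nabla_\theta H\cdot G^{-1}\nabla_\theta H$) by completing the square in the least-squares formulation of Theorem \ref{lemma_submfld_grad_err}, whereas the paper derives the same identity from the $g^W$-orthogonal projection onto $T_{\rho_{\theta_t}}\mathcal{P}_\Theta$ — two equivalent expressions of the same fact.
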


To prove Theorem \ref{thm:convergenceParaFPE}, we need the following two lemmas:
\begin{lemma}\label{lemma:HS perturbation}[Holley-Stroock Perturbation]\label{lemma:h-s-perturbation}
Suppose the potential $V\in\mathcal{V}$ is decomposed as $V=U+\phi$ where $\nabla^2 U\succeq K I $ and $\phi\in L^\infty$. Let $\tilde{\lambda}=K e^{-\textrm{osc}(\phi)}$, where $\textrm{osc}(\phi)=\sup\phi-\inf\phi$. Then the following Logarithmic Sobolev inequality holds for any probability density $\rho$:
\begin{equation}
  \mathcal{D}_{\textrm{KL}}(\rho\|\rho_*)\leq \frac{1}{\tilde{\lambda}}\mathcal{I}(\rho|\rho_*).  \label{log-Sobolev}
\end{equation}
\end{lemma}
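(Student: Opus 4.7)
The plan is to follow the classical Holley--Stroock perturbation argument: first establish a log-Sobolev inequality for an auxiliary ``nice'' reference measure built from the strongly convex piece $U$, then transfer it to $\rho_*$ using boundedness of $\phi$. Concretely, introduce $d\mu_U = e^{-U}/Z_U\, dx$. Since $\nabla^2 U\succeq K I$, the Bakry--Emery criterion provides the log-Sobolev inequality for $\mu_U$:
\begin{equation*}
D_{\textrm{KL}}(\nu \| \mu_U) \leq \frac{1}{K}\,\mathcal{I}(\nu | \mu_U)
\end{equation*}
for every probability density $\nu$ (up to a convention-dependent constant that may absorb into the LSI normalization).

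Next I would encode the perturbation: since $V = U + \phi$, one has $d\rho_*/d\mu_U = e^{-\phi}/Z_\phi$ with $Z_\phi = \int e^{-\phi}\,d\mu_U$. The crucial observation is that although $\sup(d\rho_*/d\mu_U)$ and $\inf(d\rho_*/d\mu_U)$ individually depend on the unknown normalization $Z_\phi$, their ratio is exactly $e^{\sup\phi-\inf\phi}=e^{\textrm{osc}(\phi)}$, independent of $Z_\phi$. This cancellation is what produces the exponent $-\textrm{osc}(\phi)$ (rather than $-2\,\textrm{osc}(\phi)$) in the final constant $\tilde\lambda$.

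To transfer the inequality, set $h = \rho/\rho_*$, so that $D_{\textrm{KL}}(\rho\|\rho_*) = \textrm{Ent}_{\rho_*}(h)$, and use the variational characterization $\textrm{Ent}_\mu(h) = \inf_{a>0}\int[h\log(h/a) - h + a]\,d\mu$, whose integrand is pointwise nonnegative. Nonnegativity lets one pass from integration against $\rho_*$ to integration against $\mu_U$, incurring only the loss factor $\sup(d\rho_*/d\mu_U)$, so that
\begin{equation*}
\textrm{Ent}_{\rho_*}(h) \leq \sup(d\rho_*/d\mu_U)\cdot \textrm{Ent}_{\mu_U}(h).
\end{equation*}
Applying the base LSI to $h$ and then transferring the Fisher information back to $\rho_*$ via $\int \frac{|\nabla h|^2}{h}\,d\mu_U \leq \sup(d\mu_U/d\rho_*) \cdot \mathcal{I}(\rho|\rho_*)$, and finally chaining these three inequalities with the identity $\sup(d\rho_*/d\mu_U)\cdot\sup(d\mu_U/d\rho_*) = e^{\textrm{osc}(\phi)}$, yields the bound with the stated $\tilde\lambda = K e^{-\textrm{osc}(\phi)}$.

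The main obstacle is purely bookkeeping in the second step: the two Radon--Nikodym bounds $\sup(d\rho_*/d\mu_U)$ and $\sup(d\mu_U/d\rho_*)$ each individually depend on the unknown normalization $Z_\phi$, and only their product has a clean bound in terms of $\textrm{osc}(\phi)$. Estimating $\phi$ by $\|\phi\|_\infty$ pointwise in each place separately would lose an extra factor of $e^{\textrm{osc}(\phi)}$; the correct argument must be organized so that $Z_\phi$ cancels between the two estimates. Everything else---Bakry--Emery, the variational entropy formula, and the change of reference measure---is mechanical.
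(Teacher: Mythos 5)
Your argument is the standard Holley--Stroock perturbation proof (Bakry--\'Emery log-Sobolev inequality for the strongly convex part $U$, then entropy and Fisher-information transfer via the variational formula, with the two Radon--Nikodym bounds multiplied so that $Z_\phi$ cancels and only $e^{\textrm{osc}(\phi)}$ survives), which is exactly the argument the paper invokes by citing \cite{holley1987logarithmic} rather than proving; your sketch is correct, including the key bookkeeping point about the cancellation. Note moreover that Bakry--\'Emery gives the base constant $1/(2K)$ rather than $1/K$, so your chain actually yields the stronger constant $2Ke^{-\textrm{osc}(\phi)}$, which implies the stated $\tilde{\lambda}=Ke^{-\textrm{osc}(\phi)}$ a fortiori.
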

Here $\rho_*=\frac{1}{Z}e^{-V}$ and $\mathcal{I}(\rho|\rho_*)$ is the Fisher information functional defined as:
\begin{equation*}
  \mathcal{I}(\rho|\rho_*) = \int\Bigl|\nabla\log\left(\frac{\rho(x)}{\rho_*(x)}\right)\Bigr|^2\rho(x) ~ dx .
\end{equation*}
Lemma \ref{lemma:HS perturbation} is first proved in \cite{holley1987logarithmic}.

\begin{lemma}\label{lemma:convergenceParaFPE}
 For any $\theta\in \Theta$, we have:
\begin{equation}
 {D}^2 ~\mathcal{I}(\rho_\theta|\rho_*)\leq\delta_0+\nabla_\theta H(\theta)\cdot G(\theta)^{-1}\nabla_\theta H(\theta), \label{inq_lemma_convergenceParaFPE}
\end{equation}
where $\delta_0$ is defined in \eqref{def_delta}.
\end{lemma}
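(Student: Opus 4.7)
The plan is to express $\beta^2\mathcal{I}(\rho_\theta\|\rho_*)$ as a value of the very least-squares functional that defines $\delta_0$, and then use Theorem~\ref{lemma_submfld_grad_err} to evaluate that functional at its minimizer.

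First, I would rewrite the Fisher information in a form compatible with \eqref{def_delta}. Since $\rho_*=\frac{1}{Z_\beta}e^{-V/\beta}$ we have $\nabla\log(\rho_\theta/\rho_*)=\frac{1}{\beta}\nabla(V+\beta\log\rho_\theta)$, so that
\[
\beta^2\,\mathcal{I}(\rho_\theta\|\rho_*)=\int|\nabla(V+\beta\log\rho_\theta)(y)|^2\rho_\theta(y)\,dy=\int|\nabla(V+\beta\log\rho_\theta)\circ T_\theta(x)|^2\,dp(x),
\]
after the change of variables $y=T_\theta(x)$.

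Next I would expand the quadratic objective appearing in Theorem~\ref{lemma_submfld_grad_err}. Writing $\boldsymbol{\Psi}$ for the field from Theorem~\ref{thm_about_computing_metric_of_Theta} and using the computation in the proof of Theorem~\ref{lemma_submfld_grad_err} (namely $\int\nabla\psi_k\cdot\nabla(V+\beta\log\rho_\theta)\,\rho_\theta\,dy=\partial_{\theta_k}H(\theta)$ together with the definition \eqref{Metric tensor D dimension} of $G(\theta)$), I obtain the identity
\[
\int|(\nabla\boldsymbol{\Psi}(T_\theta(x)))^T\eta-\nabla(V+\beta\log\rho_\theta)\circ T_\theta(x)|^2\,dp(x)=\eta^TG(\theta)\eta-2\eta^T\nabla_\theta H(\theta)+\beta^2\mathcal{I}(\rho_\theta\|\rho_*)
\]
for every $\eta\in\mathbb{R}^m$.

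Finally, I would specialize this identity at the minimizer $\xi^{*}=G(\theta)^{-1}\nabla_\theta H(\theta)$ identified in Theorem~\ref{lemma_submfld_grad_err}. A direct computation gives $\xi^{*T}G(\theta)\xi^{*}=\nabla_\theta H(\theta)^TG(\theta)^{-1}\nabla_\theta H(\theta)=\xi^{*T}\nabla_\theta H(\theta)$, so the right-hand side at $\eta=\xi^{*}$ reduces to $\beta^2\mathcal{I}(\rho_\theta\|\rho_*)-\nabla_\theta H(\theta)^TG(\theta)^{-1}\nabla_\theta H(\theta)$. Since the left-hand side at this $\eta$ equals the optimal value of \eqref{important_lemma}, which is bounded above by $\delta_0$ by definition \eqref{def_delta}, rearranging yields \eqref{inq_lemma_convergenceParaFPE}. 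I don't foresee any serious obstacle here: the result is essentially a bookkeeping identity combining the definition of $\delta_0$ with the characterization of $G(\theta)^{-1}\nabla_\theta H(\theta)$ as the least-squares projection; the only step that requires a small amount of care is confirming that the cross term really equals $\nabla_\theta H(\theta)$, but this is already carried out inside the proof of Theorem~\ref{lemma_submfld_grad_err}.
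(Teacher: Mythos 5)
Your proof is correct. It rests on the identity
\begin{equation*}
\int \bigl|(\nabla\boldsymbol{\Psi}(T_\theta(x)))^T\eta-\nabla(V+\beta\log\rho_\theta)\circ T_\theta(x)\bigr|^2\,dp(x)=\eta^TG(\theta)\eta-2\eta^T\nabla_\theta H(\theta)+\beta^2\mathcal{I}(\rho_\theta|\rho_*),
\end{equation*}
which is exactly the expansion carried out inside the proof of Theorem \ref{lemma_submfld_grad_err} (the cross term computation $\int\nabla\psi_k\cdot\nabla(V+\beta\log\rho_\theta)\rho_\theta\,dy=\partial_{\theta_k}H(\theta)$ is valid there, and the constant term is $\beta^2\mathcal{I}(\rho_\theta|\rho_*)$ after the change of variables $y=T_\theta(x)$); evaluating at the minimizer $\xi^*=G(\theta)^{-1}\nabla_\theta H(\theta)$ and bounding the minimum value by $\delta_0$ via \eqref{def_delta} gives precisely \eqref{inq_lemma_convergenceParaFPE}. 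The route differs from the paper's in presentation rather than substance: the paper invokes Theorem \ref{theorem_submfld} to view $(T_{\#}|_\theta)_*\xi$ as the $g^W$-orthogonal projection of $-\textrm{grad}_W\mathcal{H}(\rho_\theta)$ onto $T_{\rho_\theta}\mathcal{P}_\Theta$ and then uses the Pythagorean decomposition of $g^W(\textrm{grad}_W\mathcal{H},\textrm{grad}_W\mathcal{H})=\beta^2\mathcal{I}(\rho_\theta|\rho_*)$ into a residual term (bounded by $\delta_0$) plus $\nabla_\theta H\cdot G^{-1}\nabla_\theta H$, whereas you complete the square in the finite-dimensional least-squares objective directly. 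The two decompositions are algebraically the same identity, but your argument is more elementary and self-contained — it needs only Theorem \ref{lemma_submfld_grad_err} and the definition of $\delta_0$, bypassing the submanifold-projection geometry — while the paper's version makes the geometric meaning (orthogonality in the Wasserstein metric) explicit, which is the picture it reuses later in the $W_2$ error analysis.
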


\begin{proof}[Proof of Lemma \ref{lemma:convergenceParaFPE}]
Let us denote  $\xi = G(\theta)^{-1}\nabla_\theta H(\theta)$ for convenience.
Suppose $\{\theta_t\}$ solves \eqref{wass_grad_flow_on_para_spc} with $\theta_0=\theta$. By Theorem \ref{theorem_submfld}, $\frac{d}{dt}\rho_{\theta_t}\Bigr|_{t=0} = -{(T_{\theta\sharp})}_*\xi $ is orthogonal projection of $-\textrm{grad}_W\mathcal{H}(\rho_\theta)$ onto $\mathcal{T}_{\rho_\theta}\mathcal{P}$ with respect to metric $g^W$. Thus the orthogonal relation gives:
\begin{align}
  g^W(-\textrm{grad}_W\mathcal{H}(\rho_\theta),-\textrm{grad}_W\mathcal{H}(\rho_\theta))= &~ g^W(\textrm{grad}_W\mathcal{H}(\rho_\theta)-{(T_{\theta\sharp})}_*\xi,\textrm{grad}_W\mathcal{H}(\rho_\theta)-{(T_{\theta\sharp})}_*\xi)\nonumber\\
  & + g^W({(T_{\theta\sharp})}_*\xi,{(T_{\theta\sharp})}_*\xi).\label{Pythagorean}
\end{align}
One can verify that the left hand side of \eqref{Pythagorean} is:
\begin{equation}
  g^W(-\textrm{grad}_W\mathcal{H}(\rho_\theta),-\textrm{grad}_W\mathcal{H}(\rho_\theta))=\int|\nabla(V(x)+{D}\log\rho_\theta(x))|^2\rho(x)~dx={D}^2~\mathcal{I}(\rho_\theta|\rho_*).\label{lemma_convergence_1}
\end{equation}
Recall the equivalence between \eqref{important_lemma} and \eqref{important_lemma_equiv} and the definition of $\delta_0$ in \eqref{def_delta}, we know that the first term on the right hand side of \eqref{Pythagorean} has an upper bound
\begin{equation}
  g^W(\textrm{grad}_W\mathcal{H}(\rho_\theta)-{(T_{\theta \sharp})}_*\xi,\textrm{grad}_W\mathcal{H}(\rho_\theta)-{(T_{\theta \sharp})}_*\xi)\leq  \delta_0.   \label{lemma_convergence_2}
\end{equation}
The second term on the right hand side of \eqref{Pythagorean} is:
\begin{align}
  g^W({(T_{\theta\sharp})}_*\xi,{(T_{\theta \sharp})}_*\xi)=(T_{\theta \sharp})^*g^W(\xi,\xi) &= G(\theta)(G(\theta)^{-1}\nabla_\theta H(\theta),~ G(\theta)^{-1}\nabla_\theta H(\theta))\nonumber\\
  &= \nabla_\theta H(\theta)\cdot G(\theta)^{-1}\nabla_\theta H(\theta) \label{lemma_convergence_3}
\end{align}
Combining \eqref{Pythagorean}, \eqref{lemma_convergence_1},\eqref{lemma_convergence_2} and \eqref{lemma_convergence_3} yields to \eqref{inq_lemma_convergenceParaFPE}.
\end{proof}

\begin{proof}[Proof of Theorem \ref{thm:convergenceParaFPE}]
Let us recall the relationship between KL divergence and relative entropy,
\begin{equation*}
  \mathcal{D}_{\textrm{KL}}(\rho\|\rho_*) = \frac{1}{{D}}\mathcal{H}(\rho)+\log(Z_{D}).
\end{equation*}
Actually, we can treat $\mathcal{D}_{\textrm{KL}}(\rho_\theta\|\rho_*)$ as a Lyapunov function for our ODE \eqref{wass_grad_flow_on_para_spc}, because by taking time derivative of $\mathcal{D}_{\textrm{KL}}(\rho_{\theta_t}\|\rho_*)$, we obtain
\begin{equation*}
  \frac{d}{dt}\mathcal{D}_{\textrm{KL}}(\rho_{\theta_t}\|\rho_*) = \frac{1}{{D}}\frac{d}{dt}\mathcal{H}(\rho_{\theta_t})=\frac{1}{{D}}\dot\theta_t\cdot\nabla H(\theta_t) = -\frac{1}{{D}} \nabla H(\theta_t)\cdot G^{-1}(\theta_t)\nabla H(\theta_t).
\end{equation*}
Using the inequality in Lemma \ref{lemma:convergenceParaFPE}, we are able to show:
\begin{equation*}
  \frac{d}{dt}\mathcal{D}_{\textrm{KL}}(\rho_{\theta_t}\|\rho_*)\leq \frac{\delta_0}{{D}} - {D}~ \mathcal{I}(\rho_{\theta_t}|\rho_*).
\end{equation*}
By Lemma \ref{lemma:h-s-perturbation}, we have:
\begin{equation*}
 \frac{d}{dt}\mathcal{D}_{\textrm{KL}}(\rho_{\theta_t}\|\rho_*)\leq \frac{\delta_0}{{D}}-{D}~\tilde{\lambda}_{D} ~\mathcal{D}_{\textrm{KL}}(\rho_{\theta_t}\|\rho_*).
\end{equation*}
Therefore we obtain, by Grownwall's inequality, the following estimate, 
\begin{equation*}
 \mathcal{D}_{\textrm{KL}}(\rho_{\theta_t}\|\rho_*)\leq \frac{\delta_0}{\tilde{\lambda}_{D} {D}^2}(1-e^{-{D}\tilde{\lambda}_{D} t}) +  \mathcal{D}_{\textrm{KL}}(\rho_{\theta_0}\|\rho_*) e^{-{D}\tilde{\lambda}_{D}  t}.
\end{equation*}

\end{proof}

\begin{remark}
Following the previous proof, we can show a similar convergence estimation for the solution $\{\rho_t\}_{t\geq 0}$ of \eqref{FPE}. Such result was first discovered in \cite{bakry1985diffusions}.

\begin{equation}
  \mathcal{D}_{\textrm{KL}}(\rho_t \|\rho_*) \leq \mathcal{D}_{\textrm{KL}}(\rho_0\|\rho_*)~e^{-{D}\tilde{\lambda}_{D} t} \quad \forall ~ t>0. \label{ineq_convergence_FPE}
\end{equation}
\end{remark}
A nominal modification of our proof for Theorem \ref{thm:convergenceParaFPE} leads to a {\it posterior} version of our asymptotic convergence analysis, which is stated in the following theorem. 
\begin{theorem}[{\it Posterior} estimation on asymptotic convergence]\label{thm:convergenceParaFPE posterior}
 \begin{equation*}
 \mathcal{D}_{\textrm{KL}}(\rho_{\theta_t}\|\rho_*)\leq \frac{\delta_1}{\tilde{\lambda}_{D} {D}^2}(1-e^{-{D}\tilde{\lambda}_{D} t}) +  \mathcal{D}_{\textrm{KL}}(\rho_{\theta_0}\|\rho_*) e^{-{D}\tilde{\lambda}_{D} t},
\end{equation*}
where $\delta_1$ is defined in \eqref{def_delta1}.
\end{theorem}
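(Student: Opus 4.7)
The plan is to repeat the argument used for Theorem \ref{thm:convergenceParaFPE} verbatim, noting that at every step where $\delta_0$ was invoked we are in fact evaluating the least-squares residual at a parameter on the trajectory, so we may replace $\delta_0$ by the (possibly tighter) trajectory supremum $\delta_1$ defined in \eqref{def_delta1}.

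First I would revisit Lemma \ref{lemma:convergenceParaFPE}. Its proof combines the orthogonal decomposition provided by Theorem \ref{theorem_submfld} with the Pythagorean identity in $g^W$. The only place where $\delta_0$ enters is as an upper bound for the projection residual
\[
g^W\!\bigl(\operatorname{grad}_W\mathcal{H}(\rho_\theta)-(T_{\#}|_\theta)_*\xi,\; \operatorname{grad}_W\mathcal{H}(\rho_\theta)-(T_{\#}|_\theta)_*\xi\bigr),
\]
via the equivalence between \eqref{important_lemma} and \eqref{important_lemma_equiv}. By the very definition of $\delta_1$, this same residual admits the refined upper bound $\delta_1$ whenever $\theta\in\mathcal{C}$. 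Consequently, we obtain the pointwise inequality
\[
\beta^{2}\,\mathcal{I}(\rho_\theta|\rho_*)\;\le\;\delta_1+\nabla_\theta H(\theta)\cdot G(\theta)^{-1}\nabla_\theta H(\theta),\qquad \forall\,\theta\in\mathcal{C}.
\]

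Next I would differentiate the Lyapunov functional along the trajectory exactly as in the proof of Theorem \ref{thm:convergenceParaFPE}. Since $\theta_t\in\mathcal{C}$ for every $t\ge 0$, the refined lemma applies at $\theta=\theta_t$, giving
\[
\frac{d}{dt}D_{\mathrm{KL}}(\rho_{\theta_t}\|\rho_*)=-\frac{1}{\beta}\,\nabla_\theta H(\theta_t)\cdot G(\theta_t)^{-1}\nabla_\theta H(\theta_t)\;\le\;\frac{\delta_1}{\beta}-\beta\,\mathcal{I}(\rho_{\theta_t}|\rho_*).
\]
Applying the Holley--Stroock log-Sobolev inequality (Lemma \ref{lemma:HS perturbation}) with potential $V/\beta$ yields
\[
\frac{d}{dt}D_{\mathrm{KL}}(\rho_{\theta_t}\|\rho_*)\;\le\;\frac{\delta_1}{\beta}-\beta\,\tilde{\lambda}_\beta\,D_{\mathrm{KL}}(\rho_{\theta_t}\|\rho_*),
\]
and a direct application of Gronwall's inequality produces the stated bound.

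There is essentially no new obstacle beyond the bookkeeping observation that the a priori argument only ever uses the residual bound on-trajectory, so $\delta_0$ may be shrunk to $\delta_1$ throughout. The one point worth stating explicitly in the writeup is that $\mathcal{C}$ is precisely the set of parameters at which the inequality must hold, which is self-evident from the definition \eqref{set of traj} but deserves a single clarifying sentence so the reader sees why the posterior version is an immediate strengthening rather than an independent result.
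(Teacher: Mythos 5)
Your proposal is correct and matches the paper's intent exactly: the paper itself proves this theorem only by remarking that a nominal modification of the proof of Theorem \ref{thm:convergenceParaFPE} suffices, and your observation that $\delta_0$ is only ever used to bound the projection residual at parameters $\theta_t\in\mathcal{C}$, where $\delta_1$ serves equally well, is precisely that modification. Nothing further is needed.
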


\subsection{Wasserstein error estimations}\label{5.3 }
In this subsection, we establish our error bounds for both continuous and discrete version of the parametric Fokker--Planck equation \eqref{wass_grad_flow_on_para_spc} as approximations to the original equation \eqref{FPE}.
\subsubsection{ Wasserstein error for the parametric Fokker--Planck equation}
The following theorem provides an upper bound between the solutions of \eqref{FPE} and \eqref{wass_grad_flow_on_para_spc}.

\begin{theorem}\label{thm:error_analysis_continuous_para_FPE}
Assume that $\{\theta_t\}_{t\geq 0}$ solves \eqref{wass_grad_flow_on_para_spc} and $\{\rho_t\}_{t\geq 0}$ solves \eqref{FPE}. If the Hessian of the potential function $V$ in \eqref{FPE} is bounded below by a constant $\lambda$, i.e. $\nabla^2 V\succeq \lambda ~I$. the $2$-Wasserstein difference between $\rho_t$ and $\rho_{\theta_t}$ can be bounded as
\begin{equation}
  W_2(\rho_{\theta_t}, \rho_t) \leq \Omega_\lambda(t) = \begin{cases}
    \frac{\sqrt{\delta_0}}{\lambda}(1-e^{-\lambda t})+e^{-\lambda t}W_2(\rho_{\theta_0}, \rho_0), \quad \textrm{if}~ \lambda \neq 0, \\
    \sqrt{\delta_0} t + W_2(\rho_{\theta_0}, \rho_0), \quad \textrm{if}~ \lambda = 0.
  \end{cases} \label{original_error_estimate}
\end{equation}
\end{theorem}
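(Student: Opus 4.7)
The plan is to reduce the Wasserstein error bound to a Gronwall-type differential inequality of the form $\dot u \leq -\lambda u + \sqrt{\delta_0}$ for $u(t) := W_2(\rho_{\theta_t}, \rho_t)$, from which the claimed estimate follows immediately. The natural starting point is the particle viewpoint developed in Section \ref{particle level explanation}: the parametric density flow $\{\rho_{\theta_t}\}$ is the push-forward of the deterministic ODE $\dot X_t = v_t(X_t)$ with velocity field $v_t = -\mathcal{K}_{\theta_t}[\nabla V + \beta\nabla\log\rho_{\theta_t}]$, and the genuine FPE solution $\{\rho_t\}$ is the push-forward of $\dot Y_t = w_t(Y_t)$ with $w_t = -\nabla(V + \beta\log\rho_t)$, the Vlasov field from Section \ref{background}. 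I would initialize $(X_0,Y_0)$ with an optimal $W_2$-coupling of $(\rho_{\theta_0},\rho_0)$ and, at each subsequent $t$, re-couple optimally to track the first-order evolution of $W_2^2$.

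From this setup I would derive the one-sided estimate
\begin{equation*}
\tfrac{d^+}{dt}\tfrac{1}{2}W_2^2(\rho_{\theta_t}, \rho_t) \;\leq\; \mathbb{E}\bigl\langle X - Y,\; v_t(X) - w_t(Y)\bigr\rangle,
\end{equation*}
where $(X,Y)=(X,T_t(X))$ is the current optimal coupling. This is the standard sub-optimal-coupling argument: advancing $(X,Y)$ by the paired velocity for time $h$ produces a coupling of $(\rho_{\theta_{t+h}},\rho_{t+h})$ whose transport cost expands to first order in $h$ as above. The key trick is to insert $\tilde v_t := -\nabla(V + \beta\log\rho_{\theta_t})$, the \emph{full} Wasserstein gradient evaluated at $\rho_{\theta_t}$, and split
\begin{equation*}
\mathbb{E}\langle X-Y,\, v_t(X) - w_t(Y)\rangle \;=\; \mathbb{E}\langle X-Y,\, v_t(X) - \tilde v_t(X)\rangle + \mathbb{E}\langle X-Y,\, \tilde v_t(X) - w_t(Y)\rangle.
\end{equation*}
The first piece is the \emph{projection residual}; Cauchy--Schwarz bounds it by $W_2(\rho_{\theta_t},\rho_t)\cdot\|v_t-\tilde v_t\|_{L^2(\rho_{\theta_t})} \leq \sqrt{\delta_0}\, W_2(\rho_{\theta_t},\rho_t)$ by the very definition \eqref{def_delta} of $\delta_0$ together with the identification of $v_t$ as the orthogonal projection of $\tilde v_t$ onto $\mathrm{span}\{\nabla\psi_j\}$. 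The second piece further decomposes into a potential part and an entropy part. The potential part contributes $-\mathbb{E}\langle X-Y, \nabla V(X)-\nabla V(Y)\rangle \leq -\lambda\mathbb{E}|X-Y|^2 = -\lambda W_2^2$ by $\nabla^2 V \succeq \lambda I$. The entropy part contributes $-\beta\,\mathbb{E}\langle X-Y,\,\nabla\log\rho_{\theta_t}(X)-\nabla\log\rho_t(Y)\rangle \leq 0$ by the $0$-displacement convexity of $\int\rho\log\rho$ (McCann), which is precisely the statement that $\nabla\log\rho$ is monotone along the Brenier map between two densities.

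Assembling these gives $\tfrac{d^+}{dt}\tfrac{1}{2}W_2^2 \leq -\lambda W_2^2 + \sqrt{\delta_0}\,W_2$, so with $u(t)=W_2(\rho_{\theta_t},\rho_t)$ we obtain $u\dot u \leq -\lambda u^2 + \sqrt{\delta_0}\,u$, i.e.\ $\dot u \leq -\lambda u + \sqrt{\delta_0}$. Solving this scalar linear inequality by Gronwall yields exactly
\begin{equation*}
u(t) \;\leq\; e^{-\lambda t}u(0) + \sqrt{\delta_0}\!\int_0^t e^{-\lambda(t-s)}\,ds \;=\; \tfrac{\sqrt{\delta_0}}{\lambda}(1-e^{-\lambda t}) + e^{-\lambda t} W_2(\rho_{\theta_0},\rho_0),
\end{equation*}
which is the claimed bound. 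The main obstacle I expect is the rigorous justification of the derivative formula and the displacement-convexity step: both require enough regularity of the curves $\{\rho_t\}$ and $\{\rho_{\theta_t}\}$ for the Brenier map to exist and for the Wasserstein-metric derivative to coincide with the coupled ODE expansion. Under the smoothness assumptions on $V$ and on the normalizing-flow map $T_\theta$ this should follow from the Ambrosio--Gigli--Savar\'e framework, but care is needed near the boundary of regularity; the monotonicity of $\nabla\log\rho$ along optimal transport, in particular, is the step where all ``transport-information'' structure of the argument is concentrated.
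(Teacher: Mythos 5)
Your argument is correct and delivers exactly the differential inequality $\frac{d}{dt}W_2(\rho_{\theta_t},\rho_t)\leq-\lambda W_2(\rho_{\theta_t},\rho_t)+\sqrt{\delta_0}$ that the paper also arrives at, but you reach it by a genuinely different route. The paper works entirely at the probability-manifold level: it joins $\rho_{\theta_t}$ and $\rho_t$ by a Wasserstein geodesic $\{\bar\rho_\tau\}_{0\leq\tau\leq1}$, differentiates $W_2^2$ in time using Villani's Theorem~23.9 (expressing $\frac{d}{dt}W_2^2$ through $g^W$-pairings of the curve velocities with the geodesic endpoint velocities), bounds the projection term by Cauchy--Schwarz in $g^W$ together with the definition \eqref{def_delta} of $\delta_0$, and controls the remaining term by integrating the second-order displacement-convexity inequality $\frac{d}{d\tau}\,g^W(\textrm{grad}_W\mathcal{H}(\bar\rho_\tau),\dot{\bar\rho}_\tau)\geq\lambda W_2^2$ along the geodesic (Lemmas \ref{constant speed of geodesic on P(M)} and \ref{Displacement_convex_entropy}). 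You instead stay at the particle level: a sub-optimal-coupling expansion gives a one-sided (Dini) derivative of $\frac12 W_2^2$, the projection residual is bounded via \eqref{expectation of l2 discrepancy particle} and Theorem \ref{lemma_submfld_grad_err}, the potential part is handled pointwise through $\nabla^2V\succeq\lambda I$ along the optimal coupling, and the entropy part through first-order monotonicity of $\nabla\log\rho$ along the Brenier map (the above-tangent inequalities for the $0$-displacement-convex entropy). What your route buys is that only a one-sided derivative is needed, avoiding the exact differentiability theorem and the geodesic equations; what it costs is the need for Brenier maps, the monotonicity of $\nabla\log\rho$ along optimal couplings, and an $o(h)$ justification of the coupling expansion when the velocity fields are frozen over $[t,t+h]$ --- regularity points you correctly flag, and which the paper's geodesic argument sidesteps by citing the established differentiability and displacement-convexity machinery. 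It is worth noting that your proof is in the spirit of the particle viewpoint of Section \ref{particle level explanation}, which the paper itself reserves for the analysis of the time-discrete scheme (Lemmas \ref{Lemma_1}--\ref{Lemma_3}); your proposal shows the continuous-time estimate can be obtained the same way.
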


\noindent
To prove this inequality, we need the following lemmas.
\begin{restatable}[Constant speed of geodesic]{lemma}{lemmaC}
\label{constant speed of geodesic on P(M)}
The geodesic connecting $\rho_0,\rho_1\in\mathcal{P}(M)$ is described by,
\begin{equation}
  \begin{cases}
  \frac{\partial\rho_t}{\partial t}+\nabla\cdot(\rho_t\nabla\psi_t)=0 \\
  \frac{\partial\psi_t}{\partial t}+\frac{1}{2}|\nabla\psi_t|^2=0
  \end{cases} \quad \rho_t|_{t=0}=\rho_0,~\rho_t|_{t=1}=\rho_1.   \label{geodesic eq}
\end{equation}
Using the notation $ \dot \rho_t=\partial_t\rho_t=-\nabla\cdot(\rho_t\nabla\psi_t)\in \mathcal{T}_{\rho_t}\mathcal{P}(M)$, $g^W( \dot \rho_t,  \dot \rho_t)$ is constant for $0\leq t\leq 1$ and $ g^W( \dot \rho_t, \dot \rho_t) = W_2^2(\rho_0,\rho_1) $ for $0\leq t\leq 1$.
\end{restatable}

\begin{restatable}[Displacement convexity of relative entropy]{lemma}{lemmaD}
\label{Displacement_convex_entropy}
Suppose $\{\rho_t\}$ solves \eqref{geodesic eq}, the relative entropy $\mathcal{H}$ in \eqref{relative entropy} has potential $V$ satisfying $\nabla^2 V\succeq \lambda I$, then we have $\frac{d}{dt}g^W(\textrm{grad}_W\mathcal{H}(\rho_t),\dot\rho_t)\geq \lambda W_2^2(\rho_0,\rho_1)$. Or equivalently,  $\frac{d^2}{dt^2}\mathcal{H}(\rho_t)\geq \lambda W_2^2(\rho_0,\rho_1)$.
\end{restatable}
Lemma \ref{constant speed of geodesic on P(M)} originates from section 7.2 of \cite{ambrosio2008gradient}. A generalization of it has been proved in Lemma 5 of \cite{lott2008some}. A more general version on the displacement convexity related to Lemma \ref{Displacement_convex_entropy} has been discussed in chapter 16 and 17 of \cite{villani2008optimal}. To be self-contained, we provide direct proofs to both Lemma \ref{constant speed of geodesic on P(M)} and \ref{Displacement_convex_entropy} in Appendix \ref{pf section5}.

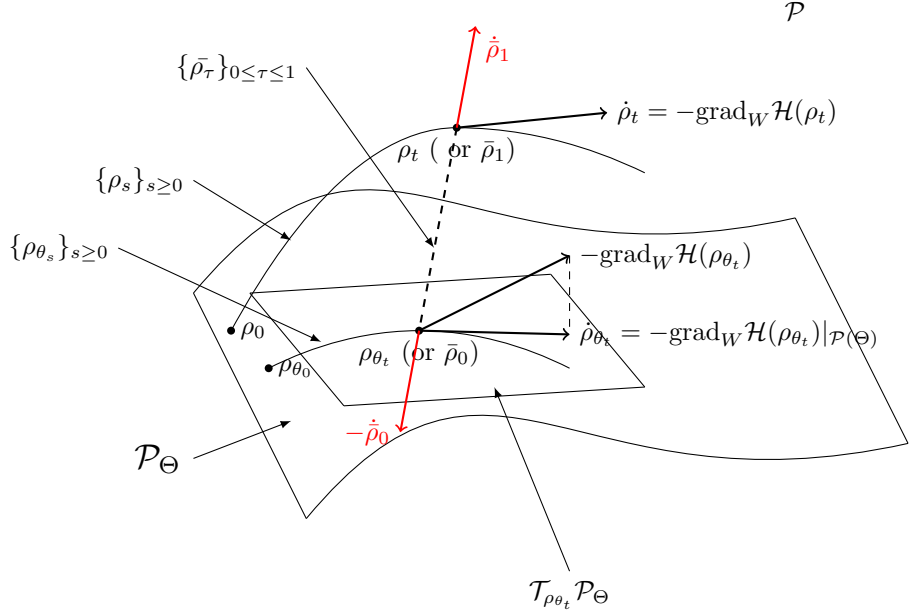
\begin{figure}
\begin{tikzpicture}

\draw (7,6)node[below]{$\mathcal{P}$};

% draw submanifold
\draw (0.5,-1) .. controls (3,2) and (4,-1) .. (8.5,0);
\draw (-1,2) .. controls (1.5,5) and (2.5,2) .. (7,3);
\draw (0.5,-1) -- (-1,2);
\draw (7,3) -- (8.5,0);

% draw trajectories of {\rho_\theta_t} and {\rho_t}
\draw (0,1) parabola bend (2,1.5) (4,1);
\draw (-0.5, 1.5) parabola bend (2.5,4.2) (5,3.6);

% join \rho_{\theta t} and \rho_t
\draw[thick,dashed] (2.5,4.2) -- (2,1.5);

% plot necessary points:
% {\rho_t}
\fill (-0.5,1.5) circle[radius = 1.5pt] node[anchor = west] {$\rho_0$};
\fill (2.5,4.2) circle[radius = 1.5pt] node[anchor = north] {$\rho_t$ ( or $\bar{\rho}_1$)};
% {\rho_{\theta_t}}
\fill (0,1) circle[radius = 1.5pt] node[anchor = west] {$\rho_{\theta_0}$};
\fill (2,1.5) circle[radius = 1.5pt] node[anchor = north] {$\rho_{\theta_t}$ (or $\bar{\rho}_0$)};

% draw tangent space at \rho_(\theta_t):
\draw (1,0.5) -- (-0.25,2);
\draw (5,0.75) -- (3.75,2.25);
\draw (1,0.5) -- (5,0.75);
\draw (-0.25,2) -- (3.75,2.25);

% draw gradient vectors at \rho_{\theta_t} and at ]rho_t
\draw[thick, ->] (2,1.5) -- (4,2.5) node[anchor=west] {$-\textrm{grad}_W\mathcal{H}(\rho_{\theta_t})$};
\draw[thick, ->] (2.5,4.2) -- (4.5,4.4) node[anchor=west] {$\dot\rho_t = -\textrm{grad}_W\mathcal{H}(\rho_{t})$};

% draw gradient vector on submanifold \mathcal{P}(\Theta)
\draw[thick,->] (2,1.5) -- (4,1.45) node[anchor=west] {$\dot\rho_{\theta_t} = -\textrm{grad}_W\mathcal{H}(\rho_{\theta_t})|_{\mathcal{P}(\Theta)}$};

% projection relation
\draw[dashed] (4,2.5) -- (4,1.45);

%draw sigma vector
\draw[red, thick,->] (2.5,4.2) -- (2.75,5.55) node[anchor= north west ] {$\dot{\bar{\rho}}_1$};
\draw[red, thick,->] (2,1.5) -- (1.75,0.15) node[anchor=east] {$-\dot{\bar{\rho}}_0$}; %{$\sigma_a$};

% point out P_\Theta
\draw[-latex](-1,-0.2)node[left,scale=1.3]{$\mathcal{P}_\Theta$} to (0.3,0.3);

% point out{rho_t}
\draw[-latex](-1,3.5)node[left]{$\{\rho_s\}_{s\geq 0}$} to (0.3,2.7);

% point out{rho_{\theta_t} }
\draw[-latex](-2,2.6)node[left]{$\{\rho_{\theta_s}\}_{s\geq 0}$} to (0.7,1.35);

% point out \bar{\rho_\tau}
\draw[-latex](0.5,5)node[left]{$\{\bar{\rho_\tau}\}_{0\leq\tau\leq 1}$} to (2.2,2.6);

% point out tangent space
\draw[-latex](4,-1.7)node[below]{$\mathcal{T}_{\rho_{\theta_t}}\mathcal{P}_{\Theta} $ } to (3,0.75);

\end{tikzpicture}
\caption{An illustrative diagram for the proof of Theorem \ref{thm:error_analysis_continuous_para_FPE}}
\label{fig:theorem_in_5.3.1}
\end{figure}

\begin{proof}[Proof of Theorem \ref{thm:error_analysis_continuous_para_FPE}]
Figure \ref{fig:theorem_in_5.3.1} provides a sketch of our proof: For a given time $t$, the geodesic $\{\bar{\rho}_\tau\}_{0\leq \tau\leq 1}$ on Wasserstein manifold $\mathcal{P}(M)$ that connects $\rho_{\theta_t}$ and $\rho_t$ satisfies the geodesic equations \eqref{geodesic eq}. If differentiating $W_2^2(\rho_{\theta_t},\rho_t)$ with respect to time $t$ according to Theorem 23.9 of \cite{villani2008optimal}, we are able to deduce
\begin{equation}
\frac{d}{dt} W_2^2(\rho_{\theta_t}, \rho_t) = 2g^W(\dot\rho_{\theta_t}, -\dot{\bar{\rho}}_0)+2g^W(\dot\rho_t, \dot{\bar{\rho}}_1), \label{derivative_of_Wasserstein_distance}
\end{equation}
in which $ \dot{\bar{\rho}}_0 = \partial_\tau\bar{\rho}_\tau|_{\tau=0} = -\nabla\cdot(\bar{\rho}_0\nabla\psi_0)$, $\dot{\bar{\rho}}_1 = \partial_\tau\bar{\rho}_\tau|_{\tau=1} = -\nabla\cdot(\bar{\rho}_1\nabla\psi_1)$. Notice that
\begin{equation*}
 \dot\rho_{\theta_t} = (T_{\theta  \sharp})_*\dot\theta_t \quad \dot\rho_t  = -\textrm{grad}_W\mathcal{H}(\rho_t) = \nabla\cdot(\rho_t\nabla(V+{D}\log\rho_t)).
\end{equation*}
Using the definition \eqref{def_metric} of Wasserstein metric, we can compute (recall that $\rho_{\theta_t} = \bar{\rho}_0$, $\rho_t = \bar{\rho}_1$):
\begin{equation*}
  g^W(\dot\rho_{\theta_t}, \dot{\bar{\rho}}_0) = \int \nabla(V+{D}\log\bar{\rho}_0)\cdot\psi_0~\bar{\rho}_0~dx \quad g^W(\dot\rho_{t}, \dot{\bar{\rho}}_1 )  = \int \nabla(V+{D}\log\bar{\rho}_1)\cdot\psi_1~\bar{\rho}_1~dx.
\end{equation*}
Now we can write \eqref{derivative_of_Wasserstein_distance} as,
\begin{align}
 \frac{1}{2}\frac{d}{dt} W_2^2(\rho_{\theta_t}, \rho_t)=&  g^W((T_{\theta_t \sharp})_*\dot\theta_t+\textrm{grad}_W\mathcal{H}(\rho_{\theta_t}),-\dot{\bar{\rho}}_0) +g^W(-\textrm{grad}_W\mathcal{H}(\rho_{\theta_t}), -\dot{\bar{\rho}}_0)+g^W(-\textrm{grad}_W\mathcal{H}(\rho_t),\dot{\bar{\rho}}_1)  \nonumber \\
\overset{\textrm{set:~} \xi=-\dot\theta_t}{=} & g^W(\textrm{grad}_W\mathcal{H}(\rho_{\theta_t})-(T_{\theta_t \sharp})_*\xi, ~ - \dot{\bar{\rho}}_0) - (g^W(\textrm{grad}_W\mathcal{H}(\bar{\rho}_1), ~ \dot{\bar{\rho}}_1) - g^W(\textrm{grad}_W\mathcal{H}(\bar{\rho}_0), ~ \dot{\bar{\rho}}_0) ). \label{derivative_of_Wasserstein_dist_2}
\end{align}
For the first term in \eqref{derivative_of_Wasserstein_dist_2}, we use Cauchy–Schwarz inequality, \eqref{def_delta}, and Lemma \ref{constant speed of geodesic on P(M)}, which implies  $g(\dot{\bar{\rho}}_0,\dot{\bar{\rho}}_0)=W_2^2(\rho_{\theta_t}, \rho_t)$, to obtain 
\begin{align}
 g^W(\textrm{grad}_W\mathcal{H}(\rho_{\theta_t})-(T_{\theta_t\sharp})_*\xi, -\dot{\bar{\rho}}_0) \leq & \sqrt{g^W(\textrm{grad}_W\mathcal{H}(\rho_{\theta_t})-(T_{\theta_t \sharp})_*\xi, ~ \textrm{grad}_W\mathcal{H}(\rho_{\theta_t})-(T_{\theta_t \sharp})_*\xi)}\sqrt{g^W(\dot{\bar{\rho}}_0, \dot{\bar{\rho}}_0)}\nonumber\\
 \leq & \sqrt{\delta_0}W(\rho_{\theta_t},\rho_t).\label{err_analyz_ineq1}
\end{align}
For the second term in \eqref{derivative_of_Wasserstein_dist_2} , we write it as:
\begin{equation}
  g^W(\textrm{grad}_W\mathcal{H}(\bar{\rho}_1),\dot{\bar{\rho}}_1) - g^W(\textrm{grad}_W\mathcal{H}(\bar{\rho}_0), \dot{\bar{\rho}}_0) = \int_{0}^{1} \frac{d}{d\tau} g^W(\textrm{grad}_W\mathcal{H}(\bar{\rho}_\tau), \dot{\bar{\rho}}_\tau)~d\tau.   \label{need to be simplified using Bochner}
\end{equation}
By Lemma \ref{Displacement_convex_entropy}, we have:
\begin{equation}
  g^W(\textrm{grad}_W\mathcal{H}(\bar{\rho}_1),\dot{\bar{\rho}}_1) - g^W(\textrm{grad}_W\mathcal{H}(\bar{\rho}_0), \dot{\bar{\rho}}_0)
  \geq  \lambda ~ W_2^2(\rho_{\theta_t},\rho_t). \label{err_analyz_ineq2}
\end{equation}
Combining inequalities \eqref{err_analyz_ineq1}, \eqref{err_analyz_ineq2} and \eqref{derivative_of_Wasserstein_dist_2}, we get
\begin{equation*}
  \frac{1}{2}\frac{d}{dt} W_2^2(\rho_{\theta_t}, \rho_t) \leq - \lambda W_2^2(\rho_{\theta_t}, \rho_t) + \sqrt{\delta_0}~W_2(\rho_{\theta_t}, \rho_t).
\end{equation*}
This is:
\begin{equation*}
\frac{d}{dt} W_2(\rho_{\theta_t},\rho_t)\leq  -\lambda W_2(\rho_{\theta_t},\rho_t) + \sqrt{\delta_0}.   
\end{equation*}
When $\lambda\neq 0$, the Grownwall's inequality gives 
\begin{equation*}
 W_2(\rho_{\theta_t},\rho_t)\leq \frac{\sqrt{\delta_0}}{\lambda}(1-e^{-\lambda t}) + e^{-\lambda t}W_2(\rho_{\theta_0}, \rho_0).
\end{equation*}
When $\lambda = 0$, the inequality is $\frac{d}{dt} W_2(\rho_{\theta_t},\rho_t)\leq \sqrt{\delta_0}$, direct integration yields
\begin{equation*}
  W_2(\rho_{\theta_t},\rho_t)\leq \sqrt{\delta_0} t + W_2(\rho_{\theta_0}, \rho_0)~.
\end{equation*}

\end{proof}

When the potential $V$ is strictly convex, i.e. $\lambda>0$. \eqref{original_error_estimate} in Theorem \ref{thm:error_analysis_continuous_para_FPE} provides a nice estimation of the error term $W_2(\rho_{\theta_t}, \rho_t)$ at any time $t$ that is always upper bounded by $\max\{\frac{\sqrt{\delta_0}}{\lambda}, W_2(\rho_{\theta_0}, \rho_0)\}$. 

In case that the potential $V$ is not strictly convex, i.e. $\lambda$ could be $0$ or negative, the right hand side in \eqref{original_error_estimate} may increase to infinity when time $t\rightarrow\infty$. However, \eqref{ineq_convergence_paraFPE} and \eqref{ineq_convergence_FPE} reveals that both $\rho_{\theta_t}$ and $\rho_t$ stay in a small neighbourhood of the Gibbs $\rho_*$ when $t$ is large. When taking this into account, we are able to show that the error term $W_2(\rho_{\theta_t}, \rho_t)$ doesn't get arbitrarily large. In the following theorem, we provide a uniform bound for the error depending on $t$. 
\begin{theorem}\label{thm:improved_upper_bound_of_error_estimate}
Suppose $\{\rho_t\}_{t\geq 0}$ solves \eqref{FPE} and $\{\rho_{\theta_t}\}_{t\geq 0}$ solves \eqref{wass_grad_flow_on_para_spc}, the Hessian of the potential $V\in\mathcal{V}$ is bounded from below by $\lambda$, i.e. $\nabla^2V\succeq \lambda I$, then
\begin{equation}
W_2(\rho_{\theta_t}, \rho_t) \leq \min\left\{\Omega_\lambda(t),~ \sqrt{\frac{2\delta_0}{\tilde{\lambda}_{D}^2{D}^2} } + \left(\sqrt{  \left| 2K_1-\frac{2\delta_0}{\tilde{\lambda}^2_{D}{D}^2} \right|  }+\sqrt{\frac{2 K_2}{\tilde{\lambda}_{D}}}\right)e^{-\frac{\tilde{\lambda}_{D}}{2}{D} t} \right\},  \label{better_error_estimate}
\end{equation}
where function $\Omega_\lambda(t)$ is defined in \eqref{original_error_estimate},
 $E_0=W_2(\rho_{\theta_0},\rho_0)$, $K_1=\mathcal{D}_{\textrm{KL}}(\rho_{\theta_0}\|\rho_*)$, and $K_2=\mathcal{D}_{\textrm{KL}}(\rho_0\|\rho_*)$.
\end{theorem}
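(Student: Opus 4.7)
\bigskip

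\noindent\textbf{Proof proposal for Theorem \ref{thm:improved_upper_bound_of_error_estimate}.}
The first expression inside the $\min$ is nothing but a rewriting of the bound from Theorem \ref{thm:error_analysis_continuous_para_FPE}: expanding $(1-e^{-\lambda t})$ and collecting the terms multiplying $e^{-\lambda t}$ gives
\begin{equation*}
\frac{\sqrt{\delta_0}}{\lambda}(1-e^{-\lambda t})+E_0\,e^{-\lambda t}=\frac{\sqrt{\delta_0}}{\lambda}+\Bigl(E_0-\frac{\sqrt{\delta_0}}{\lambda}\Bigr)e^{-\lambda t}.
\end{equation*}
So the only genuinely new task is to derive the second bound; once both are proven, we may take the pointwise minimum.

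The plan for the second bound is to route through the Gibbs distribution $\rho_*$ via the triangle inequality
\begin{equation*}
W_2(\rho_{\theta_t},\rho_t)\;\leq\;W_2(\rho_{\theta_t},\rho_*)+W_2(\rho_t,\rho_*),
\end{equation*}
and then convert each distance-to-$\rho_*$ into a KL-divergence using a Talagrand $T_2$ transportation inequality of the form $W_2^2(\rho,\rho_*)\leq \tfrac{2}{\tilde{\lambda}_\beta}D_{\mathrm{KL}}(\rho\|\rho_*)$. Since $V\in\mathcal{V}$, the perturbed log-Sobolev inequality of Lemma \ref{lemma:HS perturbation} is available with constant $\tilde{\lambda}_\beta$ for the potential $V/\beta$, and the Otto--Villani theorem upgrades LSI to $T_2$ with the same (up to a universal factor) constant $\tilde{\lambda}_\beta$. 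This is the only external input beyond what the excerpt already provides.

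With the $T_2$ inequality in hand, the two KL bounds we substitute are the {\it a priori} convergence estimate \eqref{ineq_convergence_paraFPE} for the parametric flow and the analogous estimate \eqref{ineq_convergence_FPE} for the exact flow. Rewriting \eqref{ineq_convergence_paraFPE} as $\frac{\delta_0}{\tilde{\lambda}_\beta\beta^2}+\bigl(K_1-\frac{\delta_0}{\tilde{\lambda}_\beta\beta^2}\bigr)e^{-\tilde{\lambda}_\beta\beta t}$ and applying $T_2$ gives
\begin{equation*}
W_2^2(\rho_{\theta_t},\rho_*)\leq \frac{2\delta_0}{\tilde{\lambda}_\beta^{2}\beta^{2}}+\Bigl(\frac{2K_1}{\tilde{\lambda}_\beta}-\frac{2\delta_0}{\tilde{\lambda}_\beta^{2}\beta^{2}}\Bigr)e^{-\tilde{\lambda}_\beta\beta t},
\end{equation*}
while the analogous estimate for $\rho_t$ reduces to $W_2^2(\rho_t,\rho_*)\leq \frac{2K_2}{\tilde{\lambda}_\beta}e^{-\tilde{\lambda}_\beta\beta t}$. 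Using the subadditivity $\sqrt{a+b}\leq\sqrt{a}+\sqrt{b}$ on the first inequality factors out the exponential from the transient term, so that $W_2(\rho_{\theta_t},\rho_*)$ and $W_2(\rho_t,\rho_*)$ each split into a stationary piece plus an $e^{-\frac{\tilde{\lambda}_\beta}{2}\beta t}$-decaying piece; adding the two and comparing with the constant term shows $W_2(\rho_{\theta_t},\rho_t)$ is dominated by $\sqrt{2\delta_0/(\tilde{\lambda}_\beta^2\beta^2)}$ plus the advertised transient factor times $e^{-\frac{\tilde{\lambda}_\beta}{2}\beta t}$.

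I expect no serious obstacle; the proof is essentially a bookkeeping exercise combining Theorem \ref{thm:error_analysis_continuous_para_FPE}, Theorem \ref{thm:convergenceParaFPE}, and the $T_2$ inequality. The only subtle point is the constant matching between the log-Sobolev constant in Lemma \ref{lemma:HS perturbation} and the $T_2$ constant produced by Otto--Villani; I would simply cite the Otto--Villani theorem and identify the resulting constant with $\tilde{\lambda}_\beta$ as the paper does elsewhere, absorbing any universal numerical factors. The final bound follows by taking the minimum of the two estimates, since each is valid independently for every $t\geq 0$.
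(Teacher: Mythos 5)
Your proposal is correct and follows essentially the same route as the paper: triangle inequality through $\rho_*$, then the Talagrand inequality (already stated as Lemma \ref{lemma:Talagrand}, so no separate Otto--Villani citation or constant-matching is needed) applied to the convergence estimates \eqref{ineq_convergence_paraFPE} and \eqref{ineq_convergence_FPE}, combined with the first bound taken verbatim from Theorem \ref{thm:error_analysis_continuous_para_FPE}. Your bookkeeping even carries the $1/\tilde{\lambda}_\beta$ factor on the $K_1$ transient term correctly, which the paper's displayed formula appears to drop.
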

\begin{lemma}[Talagrand inequality \cite{otto2000generalization,villani2008optimal}]\label{lemma:Talagrand}
If the Gibbs distribution $\rho_*$ satisfies the Logarithmic Sobolev inequality \eqref{log-Sobolev} with constant $\tilde{\lambda}>0$, $\rho_*$ also satisfies the Talagrand inequality: 
\begin{equation}
 \sqrt{2 \frac{\mathcal{D}_{\textrm{KL}}(\rho\|\rho_*)}{\tilde{\lambda}}}\geq W_2(\rho,\rho_*). \quad \textrm{for any}~ \rho\in\mathcal{P}.\label{Talagrand}
\end{equation}
\end{lemma}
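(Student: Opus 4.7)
The plan is to establish the two candidate upper bounds separately and then observe that the minimum of the two is automatically an upper bound. The first bound is essentially a rewriting of the estimate already obtained in Theorem \ref{thm:error_analysis_continuous_para_FPE}: starting from
$W_2(\rho_{\theta_t},\rho_t)\leq \frac{\sqrt{\delta_0}}{\lambda}(1-e^{-\lambda t})+e^{-\lambda t} E_0,$
I would simply regroup terms as $\frac{\sqrt{\delta_0}}{\lambda}+\bigl(E_0-\frac{\sqrt{\delta_0}}{\lambda}\bigr)e^{-\lambda t}$, which matches the first entry inside the min. So no real work is needed for that half.

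For the second bound, the strategy is to route through the Gibbs measure $\rho_\ast$ using the triangle inequality:
\begin{equation*}
W_2(\rho_{\theta_t},\rho_t)\leq W_2(\rho_{\theta_t},\rho_\ast)+W_2(\rho_t,\rho_\ast),
\end{equation*}
and then bound each summand by KL divergence via Talagrand's inequality (Lemma \ref{lemma:Talagrand}). On the parametric side, I would insert the a priori convergence inequality from Theorem \ref{thm:convergenceParaFPE}, yielding
\begin{equation*}
W_2(\rho_{\theta_t},\rho_\ast)\leq\sqrt{\tfrac{2}{\tilde{\lambda}_\beta}\,D_{\mathrm{KL}}(\rho_{\theta_t}\|\rho_\ast)}\leq\sqrt{\tfrac{2\delta_0}{\tilde{\lambda}_\beta^2\beta^2}+\Bigl(\tfrac{2K_1}{\tilde{\lambda}_\beta}-\tfrac{2\delta_0}{\tilde{\lambda}_\beta^2\beta^2}\Bigr)e^{-\tilde{\lambda}_\beta\beta t}}.
\end{equation*}
On the true-solution side, the remark following Theorem \ref{thm:convergenceParaFPE} gives $D_{\mathrm{KL}}(\rho_t\|\rho_\ast)\leq K_2 e^{-\tilde{\lambda}_\beta\beta t}$, so Talagrand yields $W_2(\rho_t,\rho_\ast)\leq \sqrt{\frac{2K_2}{\tilde{\lambda}_\beta}}\,e^{-\tilde{\lambda}_\beta\beta t/2}$.

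To decouple the steady-state piece from the transient piece inside the first square root, I would apply the elementary inequality $\sqrt{a+b}\leq\sqrt{a}+\sqrt{b}$ (valid for $a,b\geq 0$, which one checks holds here provided $K_1\geq \frac{\delta_0}{\tilde{\lambda}_\beta\beta^2}$; in the degenerate case the transient term is absent and the bound $W_2(\rho_{\theta_t},\rho_\ast)\leq\sqrt{2\delta_0}/(\tilde{\lambda}_\beta\beta)$ is even cleaner). This produces a constant floor $\sqrt{2\delta_0/(\tilde{\lambda}_\beta^2\beta^2)}$ plus a decaying remainder of the form $\bigl(\sqrt{2K_1/\tilde{\lambda}_\beta-2\delta_0/(\tilde{\lambda}_\beta^2\beta^2)}+\sqrt{2K_2/\tilde{\lambda}_\beta}\bigr)e^{-\tilde{\lambda}_\beta\beta t/2}$, which matches the second entry of the min (up to what appears to be a typographical omission of $1/\tilde{\lambda}_\beta$ in the stated $\sqrt{2K_1-\cdots}$ term).

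The hard part is conceptual rather than computational: recognizing that the two bounds complement each other in complementary time regimes. The first is sharp for small $t$ (where the exponential factor $e^{-\lambda t}$ barely helps, especially when $\lambda\leq 0$, and the bound can blow up), while the second is sharp for large $t$ (where both $\rho_{\theta_t}$ and $\rho_t$ are trapped near $\rho_\ast$ by log-Sobolev/Talagrand regardless of the sign of $\lambda$). Taking the minimum gives a uniform-in-time estimate. The only subtle algebraic step is the $\sqrt{a+b}\leq\sqrt{a}+\sqrt{b}$ splitting, since it must be applied to the correct grouping so that the surviving constant term is exactly the intrinsic floor $\sqrt{2\delta_0/(\tilde{\lambda}_\beta^2\beta^2)}$ and not inflated by the initial data.
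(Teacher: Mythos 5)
Your proposal does not prove the statement it was assigned. The statement is Lemma \ref{lemma:Talagrand} itself: the implication that a Logarithmic Sobolev inequality \eqref{log-Sobolev} with constant $\tilde{\lambda}$ for $\rho_*$ entails the Talagrand transport inequality $W_2(\rho,\rho_*)\leq\sqrt{2D_{\textrm{KL}}(\rho\|\rho_*)/\tilde{\lambda}}$ for all $\rho$. What you wrote instead is a proof of Theorem \ref{thm:improved_upper_bound_of_error_estimate} (the $\min$ of two time-dependent bounds on $W_2(\rho_{\theta_t},\rho_t)$), and at the crucial step you explicitly \emph{invoke} ``Talagrand's inequality (Lemma \ref{lemma:Talagrand})'' to pass from $W_2(\cdot,\rho_*)$ to KL divergence. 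Relative to the assigned statement this is circular: you assume the very inequality you are asked to establish. The paper itself offers no internal proof of this lemma --- it is cited from Otto--Villani \cite{otto2000generalization,villani2008optimal} --- but that does not make your argument a proof of it.

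A correct self-contained argument would follow the Otto--Villani route, which fits naturally into the machinery already set up in Sections \ref{wass_mfld}--\ref{background wass_grad}. Run the Wasserstein gradient flow of $D_{\textrm{KL}}(\cdot\|\rho_*)$ (i.e.\ the Fokker--Planck semigroup) started at $\rho_0=\rho$, so that $\rho_s\to\rho_*$. Along this flow the metric speed is $\sqrt{g^W(\dot\rho_s,\dot\rho_s)}=\sqrt{\mathcal{I}(\rho_s|\rho_*)}$, hence $W_2(\rho,\rho_*)\leq\int_0^\infty\sqrt{\mathcal{I}(\rho_s|\rho_*)}\,ds$, while the entropy dissipation identity gives $\frac{d}{ds}D_{\textrm{KL}}(\rho_s\|\rho_*)=-\mathcal{I}(\rho_s|\rho_*)$. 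Combining these,
\begin{equation*}
-\frac{d}{ds}\sqrt{D_{\textrm{KL}}(\rho_s\|\rho_*)}=\frac{\mathcal{I}(\rho_s|\rho_*)}{2\sqrt{D_{\textrm{KL}}(\rho_s\|\rho_*)}}\geq \frac{\sqrt{\tilde{\lambda}}}{2}\,\sqrt{\mathcal{I}(\rho_s|\rho_*)},
\end{equation*}
where the inequality uses \eqref{log-Sobolev} in the form $\sqrt{\mathcal{I}}\geq\sqrt{\tilde{\lambda}\,D_{\textrm{KL}}}$; integrating over $s\in[0,\infty)$ bounds $\int_0^\infty\sqrt{\mathcal{I}}\,ds$ by $\frac{2}{\sqrt{\tilde{\lambda}}}\sqrt{D_{\textrm{KL}}(\rho\|\rho_*)}$ and hence controls $W_2(\rho,\rho_*)$. (Carrying out this computation with the paper's normalization of \eqref{log-Sobolev} actually yields the constant $\sqrt{4/\tilde{\lambda}}$ rather than $\sqrt{2/\tilde{\lambda}}$; the sharper constant in \eqref{Talagrand} corresponds to the standard convention $D_{\textrm{KL}}\leq\frac{1}{2\tilde{\lambda}}\mathcal{I}$, a discrepancy worth flagging but separate from the logical gap.) None of this appears in your proposal, so the statement remains unproved.
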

\begin{proof}[Proof of Theorem \ref{thm:improved_upper_bound_of_error_estimate}]
The first term is already provided in Theorem \ref{thm:error_analysis_continuous_para_FPE}, the second term is just a quick result of Theorem \ref{thm:convergenceParaFPE} and Talagrand inequality: for $t$ fixed, \eqref{ineq_convergence_paraFPE} together with Talagrand inequality \eqref{Talagrand} gives:
\begin{align*}
  W_2(\rho_{\theta_t}, \rho_*)\leq \sqrt{2\frac{\mathcal{D}_{\textrm{KL}}(\rho_{\theta_t} \|\rho_*)}{\tilde{\lambda}_{D}}}&\leq \sqrt{\frac{2\delta_0}{\tilde{\lambda}_{D}^2{D}^2}(1-e^{-\tilde{\lambda}_{D}{D} t})+ 2K_1 e^{-\tilde{\lambda}_{D}{D} t} }\\
  & \leq \sqrt{\frac{2\delta_0}{\tilde{\lambda}_{D}^2{D}^2} } + \sqrt{\left|2K_1-\frac{2\delta_0}{\tilde{\lambda}^2_{D}{D}^2} \right| } e^{-\frac{\tilde{\lambda}_{D}}{2}{D} t}.
\end{align*}
Similarly, \eqref{ineq_convergence_FPE} and \eqref{Talagrand} gives
\begin{equation*}
  W_2(\rho_t, \rho_*) \leq \sqrt{2\frac{\mathcal{D}_{\textrm{KL}}(\rho_t\|\rho_*)}{\tilde{\lambda}_{D}}}\leq\sqrt{\frac{2 K_2}{\tilde{\lambda}_{D}}} e^{-\frac{\tilde{\lambda}_{D}}{2}{D} t}.
\end{equation*}
Applying triangle inequality of Wasserstein distance $W_2(\rho_{\theta_t}, \rho_t)\leq W_2(\rho_{\theta_t},\rho_*)+W_2(\rho_t, \rho_*)$, we get \eqref{better_error_estimate}.
\end{proof}
Based on Theorem \ref{thm:improved_upper_bound_of_error_estimate}, we can obtain a uniform {\it a priori} error estimate. 
\begin{theorem}[Main Theorem on {\it a priori} error analysis of the parametric Fokker--Planck equation]\label{thm:uniform_error_bound} 
Assume $E_0=W_2(\rho_{\theta_0},\rho_0)$ and $\delta_0$ defined in \eqref{def_delta} are sufficiently small in the sense that 
\begin{equation}
  E_0<A\sqrt{\delta_0}+B,~ \sqrt{\delta_0}+E_0\leq Be^{-\mu_{D} (A+1)}.  \label{condition for uniform bds}
\end{equation}
Then the approximation error $W_2(\rho_{\theta_t}, \rho_t)$ at any time $t>0$ can be uniformly bounded by $E_0$ and $\delta_0$.
\begin{itemize}
 \item When $\lambda>0$, $W_2(\rho_{\theta_t},\rho_t)\leq \max\{\sqrt{\delta_0}/\lambda, E_0\}\sim O(\sqrt{\delta_0}+E_0)$, %$O(E_0+\sqrt{\delta_0})$,
 \item When $\lambda = 0$, $W_2(\rho_{\theta_t}, \rho_t) \leq \frac{\sqrt{\delta_0}}{\mu_{D} }\log\frac{B}{\sqrt{\delta_0}+ E_0}+E_0\sim O(\sqrt{\delta_0}\log \frac{1}{\sqrt{\delta_0}+E_0}+E_0)$,
 \item When $\lambda<0$,  $W_2(\rho_{\theta_t},\rho_t) \leq  A\sqrt{\delta_0} + B^{\frac{|\lambda|}{|\lambda|+\mu_{D}}}~\left( E_0+\sqrt{\delta_0}/|\lambda| \right)^{\frac{\mu_{D}}{|\lambda|+\mu_{D}}}\sim O((E_0 +\sqrt{\delta_0})^{\frac{\tilde{\lambda}_{D}{D}}{2|\lambda|+\tilde{\lambda}_{D}{D}}})$.
\end{itemize}
Here $A,B,\mu_{D}$ are $O(1)$ constants depending on $V,{D},\rho_0,\theta_0$. Their values are given in \eqref{some notations}.
\end{theorem}
\begin{proof}[ Proof of Theorem \ref{thm:uniform_error_bound} ]
When $\lambda > 0$, by \eqref{better_error_estimate}, we have $E(t)\leq \frac{\sqrt{\delta_0}}{\lambda} + \left(E_0-\frac{\sqrt{\delta_0}}{\lambda}\right)~e^{-\lambda t}$, the right hand side can be bounded by $\max\{E_0, \frac{\sqrt{\delta_0}}{\lambda}\}$.\\

When $\lambda<0$, we denote the right hand side of \eqref{better_error_estimate} as
\begin{equation}
  E(t) = \min\left\{-\frac{1}{|\lambda|}\sqrt{\delta_0} + \left( E_0+\frac{\sqrt{\delta_0}}{|\lambda|} \right)~ e^{|\lambda| t} ,A\sqrt{\delta_0} + B e^{-\mu_{D} t}  \right\},\label{shorthand_upper_bound}
\end{equation}
where 
\begin{equation}
 \quad A = \frac{\sqrt{2}}{\tilde{\lambda}_{D}{D}}, \quad B = \sqrt{ \left|2K_1-\frac{2\delta_0}{\tilde{\lambda}^2_{D}{D}^2} \right| }+\sqrt{\frac{2 K_2}{\tilde{\lambda}_{D}}},\quad \text{and} \quad \mu_{D} = \frac{\tilde{\lambda}_{D} {D} }{2}   \label{some notations}
\end{equation}
are all positive numbers.
The first term in \eqref{shorthand_upper_bound} is increasing as a function of time $t$ while the second term is decreasing, combining $E_0<A\sqrt{\delta_0}+B$, we know $t_0 = \textrm{argmax}_{t\geq 0 } E(t)$ is unique and satisfies
\begin{equation}
  -\frac{1}{|\lambda|}\sqrt{\delta_0} + \left( E_0+\frac{\sqrt{\delta_0}}{|\lambda|} \right) ~ e^{|\lambda| t_0} = A \sqrt{\delta_0 }  + B e^{-\mu_{D} t_0}, \label{uniform_up_bdd_derivation_a}
\end{equation}
as indicated in Figure \ref{fig:main_theorem}.

\begin{figure}[ht]
\centering
\begin{tikzpicture}

% draw axis
\draw[->] (0,0) -- (12,0) node[right] {$t$};
\draw[->] (0,0) -- (0,2.6) node[above] {$E(t)$};

% graph of first exp
\draw[domain=0:2.5,smooth,variable=\x,blue] plot ({\x},{0.5*exp(0.25*\x)-0.4});
\draw[domain=2.5:7,smooth,variable=\x,blue,dashed] plot ({\x},{0.5*exp(0.25*\x)-0.4});

% graph of second exp      
\draw[domain=0:2.5,smooth,variable=\x,red,dashed]  plot ({\x},{exp(-0.25*\x)});
\draw[domain=2.5:10,smooth,variable=\x,red]  plot ({\x},{exp(-0.25*\x)});

% detail
\node[text=blue] at (8.5,1.9){$-\frac{1}{|\lambda|}\sqrt{\delta_0} + \left( E_0+\frac{\sqrt{\delta_0}}{|\lambda|} \right) e^{|\lambda|t}$};
\node[text=red] at (6.5,0.5) {$A\sqrt{\delta_0}+Be^{-\mu_{D} t}$};

% intersect
\draw[dashed] (2.5, 0.5) -- (2.5,0);
\node at (2.5,-0.3) {$t_0$};

\end{tikzpicture}
\caption{An illustrative diagram for the proof of Theorem \ref{thm:uniform_error_bound} }
\label{fig:main_theorem}
\end{figure}

Since  $A>0$, \eqref{uniform_up_bdd_derivation_a} leads to $\left( E_0+\frac{\sqrt{\delta_0}}{|\lambda|} \right) e^{|\lambda| t_0} > B e^{-\mu_{D} t_0}$, thus
\begin{equation}
  t_0 > \frac{\log B - \log \left( E_0+\frac{\sqrt{\delta_0}}{|\lambda|} \right)}{|\lambda|+\mu_{D}}.  \label{uniform_up_bdd_derivation_b}
\end{equation}
Using \eqref{uniform_up_bdd_derivation_b}, we show
\begin{equation}
  \max_{t\geq 0} E(t) = E(t_0) = A\sqrt{\delta_0} + B ~e^{-\mu_{D} t_0} < A\sqrt{\delta_0} + B^{\frac{|\lambda|}{|\lambda|+\mu_{D}}}~\left( E_0+\frac{\sqrt{\delta_0}}{|\lambda|} \right)^{\frac{\mu_{D}}{|\lambda|+\mu_{D}}}. \label{uniform_up_bdd}
\end{equation}
As a result, $W_2(\rho_{\theta_t},\rho_t)$ can be uniformly bounded by the right hand side of \eqref{uniform_up_bdd}. Since $A,B$ are $O(1)$ coefficients, this uniform bound is dominated by the term $O(\left( E_0+\frac{\sqrt{\delta_0}}{|\lambda|} \right)^{\frac{\mu_{D}}{|\lambda|+\mu_{D}}}) = O((E_0 +\sqrt{\delta_0})^{\frac{\tilde{\lambda}_{D}{D}}{2|\lambda|+\tilde{\lambda}_{D}{D}}})$.

At last, when $\lambda=0$, by \eqref{better_error_estimate}
\begin{equation*}
  E(t) = \min\left\{\sqrt{\delta_0}t+E_0 ,A\sqrt{\delta_0} + B e^{-\mu_{D} t}  \right\},  
\end{equation*}
Let us denote $f(t)=A\sqrt{\delta_0}+Be^{-\mu_{D} t}-\sqrt{\delta_0}t-E_0$. Similar to the analysis for the case $\lambda<0$, we denote $t_0 = \textrm{argmax}_{t\geq 0}E(t)$, then $t_0$ is unique and solves $f(t_0)=0$. Since $f(t)$ is decreasing with $f(A+1)>0$, $t_0>A+1$. Then we have
\begin{equation*}
  \max_{t\geq 0} E(t) =E(t_0)= A\sqrt{\delta_0}+Be^{-\mu_{D} t_0} = \sqrt{\delta_0}t_0+E_0>\sqrt{\delta_0}(A+1)+E_0
\end{equation*}
This leads to $Be^{-\mu_{D} t_0}>\sqrt{\delta_0}+E_0$, i.e. $t_0<\frac{1}{\mu_{D}}\log\frac{B}{\sqrt{\delta_0}+E_0}$. Thus we have
\begin{equation*}
\max_{t\geq 0}E(t)=E(t_0)=\sqrt{\delta_0}t_0+E_0<\frac{\sqrt{\delta_0}}{\mu_{D}}\log\frac{B}{\sqrt{\delta_0}+E_0}+E_0.
\end{equation*}
Therefore $W_2(\rho_{\theta_t},\rho_t)$ can be uniformly bounded by the term $\frac{\sqrt{\delta_0}}{\mu_{D}}\log\frac{B}{\sqrt{\delta_0}+E_0}+E_0\sim O(\sqrt{\delta_0}\log\frac{1}{\sqrt{\delta_0}+E_0}+E_0)$.

\end{proof}

\begin{remark} In the case that $V\in\mathcal{V}$ is not convex, we can decompose $V$ by $V=U+\phi$ with $\nabla^2U\succeq K I$ ($K>0$) and $\nabla^2\phi\succeq K_\phi I$. We can still assume $\nabla^2 V\succeq\lambda I$, but $\lambda$ may be negative. One can verify that $ K_\phi<0$ and $|K_\phi|-K\geq|\lambda|$. On the other hand, one can compute $\tilde{\lambda}_{D}=\frac{K}{{D}}e^{-\frac{\textrm{osc}(\phi)}{{D}}}$. Combining them together, we provide a lower bound for $\alpha$:
\begin{equation*}
  \alpha \geq \gamma({D},U,\phi)=\frac{1}{1+2\left(\frac{|K_\phi|}{K}-1\right)e^{\frac{\textrm{osc}(\phi)}{{D}}}}
\end{equation*}
One can verify that increasing the diffusion coefficient ${D}$ or convexity $K$, or decreasing the oscillation $\textrm{osc}(\phi)$ and convexity $K_\phi$ can improve the lower bound $\gamma({D}, U, \phi)$ for the order $\alpha$.
\end{remark}

In a similar way, we can establish the corresponding {\it posterior} error estimate for $W_2(\rho_{\theta_t},\rho_t)$:
\begin{theorem}[{\it Posterior} error analysis of the parametric Fokker--Planck equation]\label{thm:uniform_error_boundposterior} 
Suppose $E_0=W_2(\rho_{\theta_0},\rho_0)$ and $\delta_1$defined in \eqref{def_delta1} satisfy the condition \eqref{condition for uniform bds} with $\delta_0$ replaced by $\delta_1$.  Then 
\begin{enumerate}
 \item When $\lambda\geq 0$, $W_2(\rho_{\theta_t},\rho_t)$ can be uniformly bounded by $O(E_0+\sqrt{\delta_1})$;
 \item When $\lambda = 0$, $W_2(\rho_{\theta_t},\rho_t)$ can be  uniformly bounded by $O(\sqrt{\delta_1}\log\frac{1}{\sqrt{\delta_1}+E_0}+E_0)$;
 \item When $\lambda<0$, $W_2(\rho_{\theta_t},\rho_t)$ can be uniformly bounded by $O((E_0 +\sqrt{\delta_1})^{\frac{\tilde{\lambda}_{D}{D}}{2|\lambda|+\tilde{\lambda}_{D}{D}}})$.
\end{enumerate}
\end{theorem}

\subsubsection{  Wasserstein error for the time discrete schemes}\label{subsection on error analysis for forward Euler scheme}
To solve \eqref{wass_grad_flow_on_para_spc} numerically, we need time discrete schemes, such as the one proposed in \eqref{JKO_A}. In this subsection, we present the error estimate in Wasserstein distance for our scheme. We begin our analysis by focusing on the forward Euler scheme, meaning that we apply forward Euler scheme to solve \eqref{wass_grad_flow_on_para_spc} and compute $\theta_k$ at each time step. We denote $\rho_{\theta_k}={T_{\theta_k}}_{\sharp}p$. We  estimate the $W_2$-error between  $\rho_{\theta_k}$ and the real solution $\rho_{t_k}$. Then we analyze the $W_2$ distance between the solutions obtained by
forward Euler scheme and our scheme \eqref{JKO_A} respectively, which in turn give us the $W_2$ error estimate for our scheme.

\begin{theorem}[{\it a priori} error analysis of forward Euler scheme]\label{Theorem_err_analyz}
Let $\theta_k$ ($k=0,1, \dots, N$) be the solution of forward Euler scheme applied to \eqref{wass_grad_flow_on_para_spc} at time $t_k=kh$ on $[0,T]$ with time step size $h=\frac{T}{N}$, $\rho_{\theta_k} = {T_{\theta_k}}_{\sharp}p$, and $\{\rho_t\}_{t\geq 0}$ solves the Fokker--Planck Equation \eqref{FPE} exactly. 
Assume that the Hessian of the potential function $V\in\mathcal{C}^2(\mathbb{R}^d)$ can be bounded from above and below, i.e. $\lambda I \preceq \nabla^2 V \preceq\Lambda I$. Then
\begin{equation}
W_2 (\rho_{\theta_k}, \rho_{t_k})\leq (\sqrt{\delta_0}  h + C h^2)\frac{1-e^{-\lambda t_k}}{1-e^{-\lambda h }}+e^{-\lambda t_k}W_2(\rho_{\theta_0},\rho_0)  \quad \textrm{for any} \quad t_k=kh, \quad 0\leq k\leq N, \label{main_result}
\end{equation}
where $C$ is a constant whose direct formula is provided in \eqref{definition of C_N}.  
\end{theorem}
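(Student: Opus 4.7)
The plan is to avoid working directly with the awkward matrix $G(\theta_k)^{-1}$ by translating the forward Euler update into a particle picture, then coupling that picture with the Vlasov-type SDE representation of the true Fokker-Planck equation. Concretely, I would represent $\rho_{\theta_k}$ as the law of a particle $\boldsymbol{X}_{t_k}$ produced by a discrete-time analogue of the ODE \eqref{X_t dym}-\eqref{Vlasov SDE with proj}: starting from $\boldsymbol{X}_0 \sim \rho_{\theta_0}$, one updates $\boldsymbol{X}_{t_{k+1}} = \boldsymbol{X}_{t_k} + h\,\nabla \boldsymbol{\Psi}_{t_k}(\boldsymbol{X}_{t_k})^T \dot\theta_k^{\mathrm{Euler}}$ where $\dot\theta_k^{\mathrm{Euler}} = -G(\theta_k)^{-1}\nabla_\theta H(\theta_k)$. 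Simultaneously I would consider the exact Vlasov-type flow $\dot{\tilde{\boldsymbol{X}}}_t = -(\nabla V+\beta\nabla\log\rho_t)(\tilde{\boldsymbol{X}}_t)$, with $\tilde{\boldsymbol{X}}_0$ coupled to $\boldsymbol{X}_0$ through the optimal transport plan between $\rho_{\theta_0}$ and $\rho_0$. The $W_2$ distance at time $t_k$ is then controlled by $\bigl(\mathbb{E}|\boldsymbol{X}_{t_k}-\tilde{\boldsymbol{X}}_{t_k}|^2\bigr)^{1/2}$.

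Next I would derive a one-step recursion. Decompose the drift gap at $t_k$ into three pieces: (i) the projection error $\mathcal{K}_{\theta_k}[\nabla V+\beta\nabla\log\rho_{\theta_k}]-(\nabla V+\beta\nabla\log\rho_{\theta_k})$, whose squared $L^2(\rho_{\theta_k})$-norm is bounded by $\delta_0$ via \eqref{expectation of l2 discrepancy particle} and \eqref{def_delta}; (ii) the forward-Euler truncation error, obtained by Taylor expanding $\boldsymbol{X}_t$ as a function of $t$ over $[t_k,t_{k+1}]$, which contributes an $O(h^2)$ term with constant depending on bounds on $\nabla V$, $\nabla\log\rho_{\theta_k}$ and derivatives of $T_\theta$; (iii) the Vlasov-drift mismatch $(\nabla V+\beta\nabla\log\rho_{\theta_k})(\boldsymbol{X}_{t_k})-(\nabla V+\beta\nabla\log\rho_{t_k})(\tilde{\boldsymbol{X}}_{t_k})$, which under $\lambda I\preceq\nabla^2 V\preceq\Lambda I$ produces the key contractive/one-sided Lipschitz inequality $\langle \boldsymbol{X}-\tilde{\boldsymbol{X}},(\nabla V)(\boldsymbol{X})-(\nabla V)(\tilde{\boldsymbol{X}})\rangle\ge \lambda |\boldsymbol{X}-\tilde{\boldsymbol{X}}|^2$, and similarly the $\beta\nabla\log\rho$ contribution yields a non-negative dissipation term (using displacement convexity of entropy, as in Lemma~\ref{Displacement_convex_entropy}). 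Putting these together and taking expectations with respect to the coupling, I expect an estimate of the form
\begin{equation*}
W_2(\rho_{\theta_{k+1}},\rho_{t_{k+1}}) \;\le\; e^{-\lambda h}\,W_2(\rho_{\theta_k},\rho_{t_k}) \;+\; \sqrt{\delta_0}\,h \;+\; C_N\,h^2,
\end{equation*}
where the constant $C_N$ absorbs the higher-derivative bounds uniform on $[0,T=Nh]$.

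From this one-step inequality, the claim \eqref{main_result} follows by iteration and summing the geometric series $\sum_{j=0}^{k-1} e^{-\lambda j h}=(1-e^{-\lambda k h})/(1-e^{-\lambda h})$, yielding exactly the right-hand side of \eqref{main_result} with $E_0=W_2(\rho_{\theta_0},\rho_0)$. The leading behaviour $O(\sqrt{\delta_0})+O(C_N h)+O(E_0)$ for finite $t_k$ then follows as noted in Theorem~\ref{introduction theorem 2}.

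The main obstacle, as the paper itself signals, is handling the $G(\theta_k)^{-1}$ factor in the Euler update. Directly expanding $h\,G(\theta_k)^{-1}\nabla_\theta H(\theta_k)$ in the particle increment is delicate because $G(\theta_k)^{-1}$ is not uniformly bounded a priori. The particle-level interpretation resolves this elegantly: the product $\nabla\boldsymbol{\Psi}_{t_k}^T\dot\theta_k^{\mathrm{Euler}}$ is exactly the image of the orthogonal projection $\mathcal{K}_{\theta_k}$ applied to the true drift, and projections are non-expansive on $L^2(\rho_{\theta_k})$. Thus the tricky term is replaced by a manifestly bounded object, and the only remaining work is carefully quantifying the deterministic truncation constant $C_N$ in \eqref{definition of C_N}, which amounts to bounding higher-order derivatives of the particle trajectory driven by $\mathcal{K}_{\theta_t}[\nabla V+\beta\nabla\log\rho_{\theta_t}]$ uniformly on the discrete grid.
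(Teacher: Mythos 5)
Your particle-level plan is close in spirit to what motivates the paper's argument (the paper itself notes that Lemmas \ref{Lemma_1} and \ref{Lemma_2} are proved via the particle versions of the density evolutions, and your use of the non-expansiveness of the projection $\mathcal{K}_{\theta_k}$ to tame $G(\theta_k)^{-1}$ is exactly the mechanism behind the $\sqrt{\delta_0}\,h$ term). However, there is a genuine gap at your step (iii). Displacement convexity of the entropy (Lemma \ref{Displacement_convex_entropy}) is a statement along Wasserstein geodesics: it yields the monotonicity inequality $\mathbb{E}_{\pi}\langle x-y,\nabla\log\rho(x)-\nabla\log\mu(y)\rangle\geq 0$ only when $\pi$ is an \emph{optimal} coupling of $(\rho,\mu)$. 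In your construction the pair $(\boldsymbol{X}_{t_k},\tilde{\boldsymbol{X}}_{t_k})$ carries the coupling obtained by pushing the time-zero optimal plan through two different dynamics; after the first instant this coupling is no longer optimal, so the sign of the cross term $-\beta\,\mathbb{E}\langle \boldsymbol{X}-\tilde{\boldsymbol{X}},\nabla\log\rho_{\theta_k}(\boldsymbol{X})-\nabla\log\rho_{t_k}(\tilde{\boldsymbol{X}})\rangle$ is not controlled, and $\nabla\log\rho_t$ admits no global Lipschitz bound that would let a Gronwall argument absorb it. Hence the claimed one-step inequality with the factor $e^{-\lambda h}$ multiplying $W_2(\rho_{\theta_k},\rho_{t_k})$ does not follow from the proposed coupling. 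A secondary (fixable) omission: the law of your discrete update $\boldsymbol{X}_{t_k}+h\nabla\boldsymbol{\Psi}_{t_k}(\boldsymbol{X}_{t_k})^T\dot\theta_k$ is not exactly $\rho_{\theta_{k+1}}={T_{\theta_{k+1}}}_{\#}p$; the discrepancy between $T_{\theta_{k+1}}\circ T_{\theta_k}^{-1}$ and $\mathrm{Id}+h\nabla\boldsymbol{\Psi}^T\xi_k$ is a separate $O(h^2)$ item (term (A) in the paper's Lemma \ref{Lemma_1}) that must be accounted for explicitly.

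The paper sidesteps the coupling problem by restarting at every step: both auxiliary flows, $\tilde{\rho}^{\star}_t$ (frozen drift \eqref{aux_b}) and $\tilde{\rho}_t$ (exact Fokker--Planck \eqref{aux_a}), are launched from $\rho_{\theta_{k-1}}$ at $t_{k-1}$, so the within-step comparisons only ever couple trajectories with identical initial points (and, in Lemma \ref{Lemma_2}, the same Brownian motion), needing only $\nabla^2 V\preceq\Lambda I$ and one-step regularity constants; the contraction factor $e^{-\lambda h}$ acting on the \emph{previous} error is then supplied by the known Wasserstein stability theorem for the Fokker--Planck semigroup (Lemma \ref{Lemma_3}, quoted from the literature), which is precisely the displacement-convexity argument carried out with the optimal coupling at each time. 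To repair your proof you would either have to re-initialize the coupling at the optimal plan between $\rho_{\theta_k}$ and $\rho_{t_k}$ at the start of each step and invoke that contraction result for the exact-flow branch, or adopt the paper's triangle-inequality decomposition \eqref{tri_inequality}; with that fix, your items (i)--(ii) do correspond to the paper's Lemma \ref{Lemma_1} ($\sqrt{\delta_0}\,h$ plus $O(h^2)$) and your geometric-series summation coincides with the paper's concluding recursion \eqref{discrete theorem Gronwall}.
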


In order to estimate $W_2(\rho_{\theta_k},\rho_{t_k})$, we use the triangle inequality of $W_2$ distance \cite{villani2008optimal} to separate it into three parts:
\begin{equation}
W_2(\rho_{\theta_k}, \rho_{t_k})\leq W_2(\rho_{\theta_k},\tilde{\rho}^{\star}_{t_k})+ W_2(\rho^{\star}_{t_k}, \tilde{\rho}_{t_k})+
 W_2(\tilde{\rho}_{t_k}, \rho_{t_k}).\label{tri_inequality}
\end{equation}
Here $\{\tilde{\rho}_t\}_{t_{k-1}\leq t\leq t_k}$ satisfies:
\begin{equation}
    \frac{\partial\tilde{\rho}_t}{\partial t}=\nabla\cdot(\tilde{\rho}_t\nabla V)+{D} \Delta \tilde{\rho}_t ~ , \quad \tilde{\rho}_{t_{k-1}}=\rho_{\theta_{k-1}}, \label{aux_a}
\end{equation}
and $\{\rho^{\star}_t\}_{  t\geq t_{k-1}}$ satisfies:
\begin{equation}
    \frac{\partial \rho^{\star}_t}{\partial t}=\nabla\cdot(\rho^{\star}_t\nabla (V+{D}\log \rho_{\theta_{k-1}})) ~ , \quad \rho^{\star}_{t_{k-1}}=\rho_{\theta_{k-1}}.  \label{aux_b}
\end{equation}
Figure  \ref{help to prove discrete version} shows the relations of different items used in our proof. We present three lemmas that estimate three terms in \eqref{tri_inequality} respectively. 
\begin{figure}
{\par\centering
\begin{tikzpicture}
\draw (0.5,6) node{$\mathcal{P}(\mathbb{R}^d)$};

\fill (0,0) circle[radius=1.5pt];
\fill (0,-0.3) circle[radius=1.5pt];
\fill(5/2,25/16) circle[radius=1.5pt];
\fill(4,2.6) circle[radius=1.5pt];
\fill(3.6,2.8) circle[radius=1.5pt];
\fill(3,1-0.3) circle[radius=1.5pt];
\fill(3.9,1521/400) circle[radius=1.5pt];
\fill(3.6,2.8) circle[radius=1.5pt];
\fill (5,25/9-0.3) circle[radius=1.5pt];

\draw (0,0) parabola (5,25/4);
\draw (0,-0.3) parabola (6,4-0.3);
\draw (5/2,25/16) parabola (4,2.6);
\draw (5/2,25/16) parabola (3.6,2.8);
\draw[dotted] (5/2,25/16) -- (3,1-0.3);
\draw[dotted] (3.9,1521/400) -- (5,25/9-0.3);
\draw[dotted]  (3.9,1521/400) -- (3.6,2.8);
\draw[dotted]  (3.6,2.8) -- (4,2.6);
\draw[dotted]  (4,2.6) -- (5,25/9-0.3);
\draw (-0.5,-0.3) node{$\rho_0$};
\draw (-0.5,0) node{${\rho}_{\theta_0}$};
\draw (3.6,1521/400) node{$\rho_{\theta_k}$};
\draw (5,22/9-0.4) node{$\rho_{t_k}$};
\draw (2,25/16-0.3) node{$\rho_{\theta_{k-1}}$};
\draw (3,0) node{$\rho_{t_{k-1}}$};
\draw (3.4,2.8) node{$\rho^{\star}_{t_k}$};
\draw (4.1,2.2) node{$\tilde{\rho}_{t_k}$};
\end{tikzpicture}
\par}
\caption{Trajectory of $\{{\rho}_{\theta_k}\}_{k=0,...,N}$ is our numerical solution; trajectory of $\{\rho_t\}_{t\geq 0 }$ is the real solution of the Fokker--Planck Equation; $\{\tilde{\rho}_t\}_{t\geq t_{k-1}}$ solves (\ref{aux_a});  $\{\rho^{\star}_t\}_{t\geq t_{k-1}}$ solves (\ref{aux_b}).}
\label{help to prove discrete version}
\end{figure}
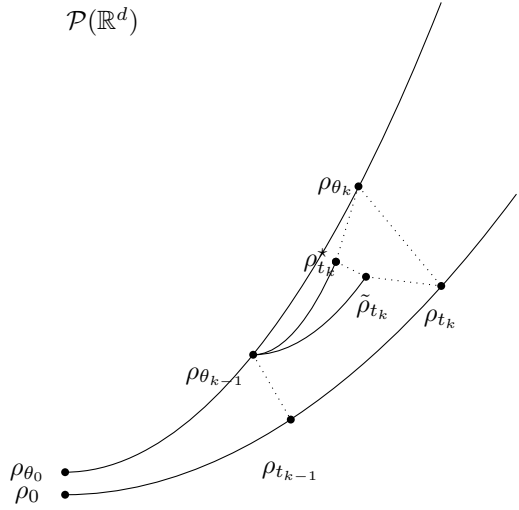

\begin{lemma}\label{Lemma_1}
 $W_2(\rho_{\theta_k},  \rho^{\star}_{t_k} )$ in (\ref{tri_inequality}) can be upper bounded by $\sqrt{\delta_0} h + O(h^2)$. 
\end{lemma}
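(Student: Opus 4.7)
The idea is to realize both $\rho_{\theta_k}$ and $\tilde\rho^{\star}_{t_k}$ as time-$h$ marginals of two particle flows issued from a common initial random variable $Z\sim\rho_{\theta_{k-1}}$, couple them, and then control the $L^2$-discrepancy of the two particles at $s=h$ by the projection error that defines $\delta_0$. Let $\xi_{k-1}=G(\theta_{k-1})^{-1}\nabla_\theta H(\theta_{k-1})$, so the forward-Euler update reads $\theta_k=\theta_{k-1}-h\xi_{k-1}$. Interpolate by $\theta(s)=\theta_{k-1}-s\xi_{k-1}$ for $s\in[0,h]$. The calculation leading to \eqref{compute_drhodt}, combined with the identity \eqref{proof_pos_def_hodge_dcom}, shows that $s\mapsto\rho_{\theta(s)}$ satisfies the continuity equation $\partial_s\rho_{\theta(s)}=\nabla\cdot(\rho_{\theta(s)}\,\nabla\boldsymbol{\Psi}_{\theta(s)}^{T}\xi_{k-1})$, where $\boldsymbol{\Psi}_{\theta(s)}$ is obtained from \eqref{Hodge Dcom} at $\theta(s)$.

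Define two characteristic flows with $Y_0=X_0=Z$: $\dot Y_s=-\nabla\boldsymbol{\Psi}_{\theta(s)}(Y_s)^{T}\xi_{k-1}$ and $\dot X_s=-\nabla(V+\beta\log\rho_{\theta_{k-1}})(X_s)$. By the method of characteristics applied to the continuity equation in the previous paragraph, $Y_h\sim\rho_{\theta_k}$; and $X_h\sim\tilde\rho^{\star}_{t_k}$ by direct comparison with \eqref{aux_b}, whose drift is time-independent and equals that of $X_s$. Since $(Y_h,X_h)$ is a coupling of $\rho_{\theta_k}$ and $\tilde\rho^{\star}_{t_k}$,
\[
W_2^{2}(\rho_{\theta_k},\tilde\rho^{\star}_{t_k})\leq \mathbb{E}\bigl[|Y_h-X_h|^{2}\bigr].
\]
A first-order Taylor expansion in $s$ of both flows, combined with $|Y_s-Z|,|X_s-Z|\lesssim s$, gives
\[
Y_h-X_h = -h\Bigl[\nabla\boldsymbol{\Psi}_{k-1}(Z)^{T}\xi_{k-1}-\nabla(V+\beta\log\rho_{\theta_{k-1}})(Z)\Bigr] + R(h,Z),
\]
with a remainder $R(h,Z)$ of order $h^{2}$ in $L^{2}(\mathbb{P})$.

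To close the estimate, invoke Theorem \ref{lemma_submfld_grad_err}, which identifies $\xi_{k-1}$ as the minimizer of the least-squares problem \eqref{important_lemma} at $\theta=\theta_{k-1}$, together with the definition \eqref{def_delta} of $\delta_0$, yielding
\[
\int\bigl|\nabla\boldsymbol{\Psi}_{k-1}(y)^{T}\xi_{k-1}-\nabla(V+\beta\log\rho_{\theta_{k-1}})(y)\bigr|^{2}\rho_{\theta_{k-1}}(y)\,dy \;\leq\; \delta_0 .
\]
Taking $L^{2}(\mathbb{P})$ norms of the displayed expression for $Y_h-X_h$ and using $Z\sim\rho_{\theta_{k-1}}$ delivers $\|Y_h-X_h\|_{L^{2}}\leq \sqrt{\delta_0}\,h+O(h^{2})$, and hence $W_2(\rho_{\theta_k},\tilde\rho^{\star}_{t_k})\leq \sqrt{\delta_0}\,h+O(h^{2})$, which is the claim. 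The main technical obstacle is to make the remainder $R(h,Z)=O(h^{2})$ quantitative in $L^{2}(\mathbb{P})$: one has to control the drift differences $\dot Y_s-\dot Y_0$ (which involves first-order changes both in the spatial argument $Y_s$ and in the parameter $\theta(s)$) and $\dot X_s-\dot X_0$ by $O(s)$, together with short-time growth estimates $\mathbb{E}|Y_s-Z|^{2},\mathbb{E}|X_s-Z|^{2}\lesssim s^{2}$. These require joint smoothness of $(\theta,y)\mapsto\nabla\boldsymbol{\Psi}_{\theta}(y)$ and Lipschitz control of $\nabla(V+\beta\log\rho_{\theta_{k-1}})$, both of which follow from the standing smoothness hypotheses on $T_\theta$, $V$, and the reference density $p$; the characteristic representation of the continuity equation is then standard.
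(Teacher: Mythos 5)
Your proposal is correct and follows essentially the same route as the paper: both realize $\rho_{\theta_k}$ and $\tilde{\rho}^{\star}_{t_k}$ through their characteristic (particle) flows started from a common sample of $\rho_{\theta_{k-1}}$, obtain the leading $\sqrt{\delta_0}\,h$ term from the frozen-time drift discrepancy via Theorem \ref{lemma_submfld_grad_err} and the definition \eqref{def_delta}, and relegate the flow-versus-Euler-step errors to $O(h^2)$ remainders. The only difference is organizational: the paper inserts the two frozen Euler maps $\mathrm{Id}+h\nabla\boldsymbol{\Psi}^T\xi_{k-1}$ and $\mathrm{Id}-h\nabla\zeta_{k-1}$ as intermediate pushforward measures and applies the $W_2$ triangle inequality, quantifying the remainders by the constants $M(\theta_{k-1},h)$ and $N(\theta_{k-1},h)$, whereas you absorb both remainders into a single synchronous coupling estimated in $L^2(\mathbb{P})$, at the same level of rigor regarding the required smoothness of $(\theta,y)\mapsto\nabla\boldsymbol{\Psi}_\theta(y)$ and of $\nabla(V+\beta\log\rho_{\theta_{k-1}})$.
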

An explicit formula for the coefficient of $h^2$ is included in the following proof.
\begin{proof}

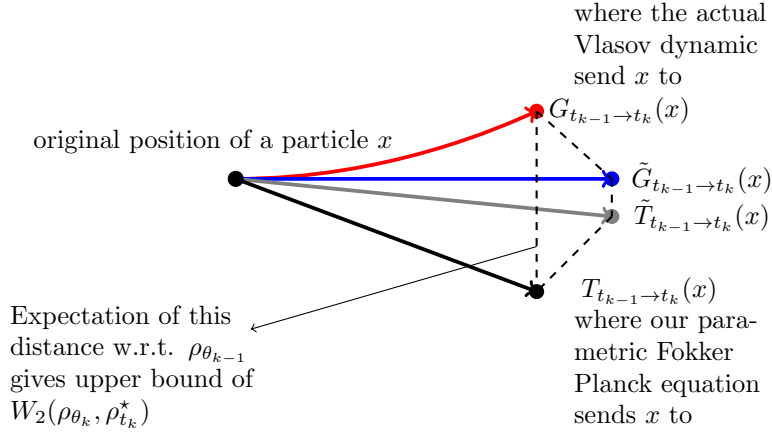
\begin{figure}
{\par\centering
\begin{tikzpicture}
% four arrows
\draw[->,red,line width=0.5mm] (0, 0) parabola (4,0.9);
\draw[->,blue,line width=0.5mm] (0,0) -- (5,0);
\draw[->,gray,line width=0.5mm] (0,0) -- (5,-0.5);
\draw[->,black,line width=0.5mm] (0,0) -- (4,-1.5);
% start at particle x
\draw (0,0) circle (0.1cm);
% color the particles
\fill[black] (0,0) circle (0.1cm);
\fill[red] (4,0.9) circle (0.1cm);
\fill[blue] (5,0) circle (0.1cm);
\fill[gray] (5,-0.5) circle (0.1cm);
\fill[black] (4,-1.5) circle (0.1cm);
% label the particles
\draw (-0.3,0.5) node{original position of a particle $x$};

\draw (5.5,-1.5) node{$T_{t_{k-1}\rightarrow t_k}(x)$ };
\node[text width=3cm] at (6,-2.5){where our parametric Fokker Planck equation sends $x$ to};

\draw (6.2,-0.5) node{$\tilde{T}_{t_{k-1}\rightarrow t_k}(x)$};

\draw (6.2,0) node{$\tilde{G}_{t_{k-1}\rightarrow t_k}(x)$};

\draw (5.1,0.9) node{$G_{t_{k-1}\rightarrow t_k}(x)$ };
\node[text width=3cm] at (6,1.8){where the actual Vlasov dynamic send $x$ to};

% draw arrow
\draw[thick, dashed] (4,0.9) -- (5,0);
\draw[thick, dashed] (5,0) -- (5,-0.5);
\draw[thick, dashed] (5,-0.5) -- (4,-1.5);
\draw[thick, dashed] (4,0.9) -- (4,-1.5);

% additional illustration
\draw[->] (4,-0.9) -- (0.2,-2);
\node[text width=4cm] at (-1,-2.5){Expectation of this distance w.r.t. $\rho_{\theta_{k-1}}$ gives upper bound of $W_2(\rho_{\theta_{k}}, \rho^{\star}_{t_k})$};

\end{tikzpicture}
\par}
\caption{ Illustration of proof strategy for Lemma \ref{Lemma_1}} \label{lemma-proof}
\end{figure}

We establish the desired estimation by introducing several different pushforward maps as shown in Figure \ref{lemma-proof} and then applying triangle inequality.
\begin{enumerate}[label=(\alph*)]
    \item We know $\rho_{\theta_{k-1}}={T_{\theta_{k-1}}}_{\sharp}p$ and $\rho_{\theta_k}={T_{\theta_k}}_{\sharp}p$, let us denote $T_{t_{k-1}\rightarrow t_k}=T_{\theta_k}\circ T^{-1}_{\theta_{k-1}}$. Then ${\rho}_{\theta_k}={T_{t_{k-1}\rightarrow t_k}}_{\sharp}\rho_{\theta_{k-1}}$.
    
    \item Let $\xi_{k-1}=\dot\theta_{k-1}=-G(\theta_{k-1})^{-1}\nabla_\theta H(\theta_{k-1})$ and by convention, we denote $\boldsymbol{\Psi}$ as solution of \eqref{Hodge Dcom}. We consider the map $\tilde{T}_{t_{k-1}\rightarrow t_k}(\cdot)=\textrm{Id}+h\nabla\boldsymbol{\Psi}(\cdot)^{\textrm{T}}\xi_{k-1}$.
    
    \item We denote $\zeta_{\theta}(\cdot)=V(\cdot)+{D}\log \rho_{\theta}(\cdot)$. The particle version (recall \eqref{Vlasov-type SDE }) of (\ref{aux_b}) is:
    \begin{equation}
      \dot{z}_t=-\nabla \zeta_{\theta_{k-1}}(z_t) \quad 0\leq t\leq h \quad \textrm{with initial condition} ~ z_0=x\sim\rho_{\theta_{k-1}}.   \label{ODE_origin}
    \end{equation}
    we denote the solution map of \eqref{ODE_origin} by $G_{t_{k-1}\rightarrow t_k}(x)=z_{t_k}$. Then $\rho^{\star}_{t_k} = {G_{t_{k-1}\rightarrow t_k}}_{\sharp}\rho_{\theta_{k-1}}$.
    
    \item The map $G_{t_{k-1}\rightarrow t_{k}}$ is obtained by solving an ODE, in order to compare the difference with $T_{t_{k-1}\rightarrow t_k}$, we consider the ODE with fixed initial vector field: 
    \begin{equation}
       \dot{\tilde{z}}_t=-\nabla\zeta_{\theta_{k-1}}(x) \quad 0 \leq t\leq h \quad  \tilde{z}_0=x\sim\rho_{\theta_{k-1}}.\label{ODE_approximate}
    \end{equation}
    This ODE will induce the solution map $\tilde{G}_{t_{k-1}\rightarrow t_k}(\cdot)=\textrm{Id}-h\nabla\zeta_{\theta_{k-1}}(\cdot)$.
\end{enumerate}
With the maps defined in (a),(b),(c),(d), and using the triangle inequality of $W_2$ distance, we have,
\begin{align*}
  W_2( \rho_{\theta_k},\tilde{\rho}^{\star}_{t_k})&=W_2( T_{t_{k-1}\rightarrow t_k \sharp} \rho_{\theta_{k-1}}, G_{t_{k-1}\rightarrow t_k \sharp} \rho_{\theta_{k-1}} )\\
  &\leq \underbrace{W_2(T_{t_{k-1}\rightarrow t_k\sharp}\rho_{\theta_{k-1}},\tilde{T}_{t_{k-1}\rightarrow t_k\sharp} {\rho}_{\theta_{k-1}})}_\text{(A)} +\underbrace{W_2( \tilde{T}_{t_{k-1}\rightarrow t_k \sharp} {\rho}_{\theta_{k-1}},  \tilde{G}_{t_{k-1}\rightarrow t_k \sharp } {\rho}_{\theta_{k-1}})}_\text{(B)}\\
  &~ + \underbrace{W_2(\tilde{G}_{t_{k-1}\rightarrow t_k\sharp}\rho_{\theta_{k-1}}, G_{t_{k-1}\rightarrow t_k \sharp} \rho_{\theta_{k-1}})}_\text{(C)}.
\end{align*}
In the rest of the proof, We give upper bounds for distances (A),(B) and (C) respectively.
\begin{itemize}
\item[(A)] Let us define $\xi(\theta) = -G(\theta)^{-1}\nabla H(\theta)$. Now we set $\theta(\tau)=\theta_{k-1}+\frac{\tau}{h}(\theta_k-\theta_{k-1})=\theta_{k-1}+\tau\xi(\theta_{k-1})$. For any $x$, consider $x_\tau = T_{\theta(\tau)}(T_{\theta_{k-1}}^{-1}(x))$ with $0\leq \tau\leq h$, then $\{x_\tau\}_{0\leq\tau\leq 
h}$ satisfies 
\begin{equation}
\dot x_\tau = \partial_\theta T_{\theta(\tau)}(T_{\theta(\tau)}^{-1}(x_\tau))\xi(\theta_{k-1})\quad  0 \leq \tau\leq h. \label{A 1}
\end{equation}
If $x_0\sim \rho_{\theta_{k-1}}$ in \eqref{A 1}, it is clear that $x_h\sim {T_{t_{k-1}\rightarrow t_k}}_{\sharp}{\rho}_{\theta_{k-1}}$. Furthermore, we denote the distribution of $x_\tau$ as $\rho_\tau$ and $\{\psi_\tau\}$ satisfying
\begin{equation}
  -\nabla\cdot(\rho_\tau(x)\partial_\theta T_{\theta(\tau)}(T_{\theta(\tau)}^{-1}(x))\xi_{k-1}) = -\nabla\cdot(\rho_\tau(x)\nabla\psi_\tau(x)) \quad 0\leq \tau\leq h. \label{A 2}
\end{equation}
If we consider
\begin{equation*}
  \dot y_\tau = \nabla\psi_\tau(y_\tau) \quad 0\leq \tau\leq h \quad \textrm{with} ~ y_0\sim{\rho}_{\theta_{k-1}},
\end{equation*}
and denote $\varrho_\tau$ as the distribution of $y_\tau$, by continuity equation and \eqref{A 2}, we know $\rho_\tau=\varrho_\tau$ for $0\leq \tau\leq h$, thus $y_h\sim {T_{t_{k-1}\rightarrow t_k}}_{\sharp}\rho_{\theta_{k-1}}$. On the other hand, when $\tau=0$, \eqref{A 2} shows  $\nabla\psi_0(x)=\nabla\boldsymbol{\Psi}(x)^{\textrm{T}}\xi_{k-1}$. Combining them together, we bound term (A) as,
%\begin{equation*}
\begin{align*}
  & W_2^2(T_{t_{k-1}\rightarrow t_k \sharp} \rho_{\theta_{k-1}},{\tilde{T}}_{t_{k-1}\rightarrow t_k \sharp} \rho_{\theta_{k-1}})  \\
  \leq & \mathbb{E}_{y_0\sim \rho_{\theta_{k-1}}} |y_h-(y_0+h\nabla\psi_0(y_0))|^2  = \mathbb{E}_{y_0\sim \rho_{\theta_{k-1}}}\Bigl|\int_0^h ( \nabla\psi_\tau(y_\tau)-\nabla\psi_0(y_0) ) ~d\tau\Bigr|^2 \\
  = & \mathbb{E}_{y_0} \left| \int_0^h \int_0^\tau \frac{d}{ds}(\nabla\psi_s(y_s))~ds~d\tau  \right|^2 = \mathbb{E}_{y_0} \left|\int_0^h\int_s^h \frac{d}{ds}(\nabla\psi_s(y_s))~d\tau~ds\right|^2\\
  = & \mathbb{E}_{y_0}\left|\int_0^h (h-s)\frac{d}{ds}(\nabla\psi_s(y_s))~ds \right|^2\leq \mathbb{E}_{y_0} \int_0^h(h-s)^2~ds\int_0^h\left|\frac{d}{ds}(\nabla\psi_s(y_s))\right|^2~ds\\
  =& \frac{h^3}{3} \int_0^h \mathbb{E}_{y_0}\left|\frac{d}{ds}(\nabla\psi_s(y_s))\right|^2~ds =\frac{h^4}{3}\left(\frac{1}{h}\int_0^h \mathbb{E}_{y_s}\left|\frac{\partial \nabla\psi_s(y_s)}{\partial t}+\nabla^2\psi_s(y_s)\nabla\psi_s(y_s)\right|^2~ds\right)
\end{align*}
Notice that $y_s$ follows the distribution $\rho_s = (T_{\theta_{k-1}+s\xi(\theta_{k-1})}\circ T_{\theta_{k-1}}^{-1})_{\sharp} \rho_{\theta_{k-1}} = T_{\theta_{k-1}+s\xi(\theta_{k-1}) \sharp}p$.

If we define
\begin{align}
& \mathfrak{M}(\theta, s) = \int \left|\frac{\partial}{\partial t} \nabla\psi_s(T_{\theta(s)}(z))+\nabla^2\psi_s(T_{\theta(s)}(z))\nabla\psi_s(T_{\theta(s)}(z))\right|^2p(z)~dz \quad \textrm{with}~ \theta(s) = \theta+s\xi(\theta), \label{def_M_discrete_scheme} \\
& \textrm{and}~  \psi_s ~~\textrm{solving} ~~ -\nabla\cdot(\rho_s\nabla\psi_s) = -\nabla\cdot(\rho_s~\partial_\theta T_{\theta(s)}\circ T_{\theta(s)}^{-1}~\xi(\theta)),~ \textrm{here} ~\rho_s = T_{\theta+s\xi(\theta)\sharp}p.\nonumber
\end{align}
we are able to derive
\begin{equation}
  W_2^2(T_{t_{k-1}\rightarrow t_k \sharp} \rho_{\theta_{k-1}},{\tilde{T}}_{t_{k-1}\rightarrow t_k \sharp} \rho_{\theta_{k-1}}) \leq \frac{1}{3} \underset{0\leq s\leq h}{\textrm{sup}}\mathfrak{M}(\theta_{k-1}, s) h^4 .
\end{equation}

\item[(B)] We have
\begin{align*}
  W^2_2&( \tilde{T}_{t_{k-1}\rightarrow t_k\sharp} \rho_{\theta_{k-1}}, {\tilde{G}}_{t_{k-1}\rightarrow t_k\sharp} {\rho}_{\theta_{k-1}}) \leq \int |{\tilde{T}}_{t_{k-1}\rightarrow t_k}(x)-\tilde{G}_{t_{k-1}\rightarrow t_k}(x)|^2 \rho_{\theta_{k-1}}(x)~dx\\
  &=h^2\left( \int |\nabla\boldsymbol{\Psi}(x)^{\textrm{T}}\xi(\theta_{k-1})-(-\nabla\zeta_{\theta_{k-1}}(x))|^2 \rho_{\theta_{k-1}}(x)~dx \right)\\
  & =  h^2\left( \int|\nabla\boldsymbol{\Psi}(T_{\theta_{k-1}}(x))^{\textrm{T}}\xi(\theta_{k-1})-(-\nabla(V+{D}\log\rho_{\theta_{k-1}})\circ T_{\theta_{k-1}}(x))|^2~dp(x)   \right) \leq \delta_0 ~h^2.
\end{align*}
The last inequality is due to Theorem \ref{lemma_submfld_grad_err} and definition \eqref{def_delta}.\\

\item[(C)] Recall that $\{z_t\}$ and $\{\tilde{z}_t\}$ solve \eqref{ODE_origin} and \eqref{ODE_approximate} with initial condition $z_0=\tilde{z}_0=x$ respectively, similar to the analysis in (A), we can estimate term (C) as
\begin{align*}
  &W_2^2( \tilde{G}_{t_{k-1}\rightarrow t_k\sharp} \rho_{\theta_{k-1}} , G_{t_{k-1}\rightarrow t_k \sharp} \rho_{\theta_{k-1}}) \\
  \leq & {\mathbb{E}}_{x\sim\rho_{\theta_{k-1}}} |z_h-\tilde{z}_h|^2 = {\mathbb{E}}_{x\sim\rho_{\theta_{k-1}}} \left|\int_0^h \nabla\zeta_{k-1}(x)-\nabla\zeta_{k-1}(z_\tau)~d\tau\right|^2\\
  =&\mathbb{E}_x\left|\int_0^h\int_0^\tau \frac{d}{ds}\nabla\zeta_{\theta_{k-1}}(z_s)~ds~d\tau \right|^2 = \mathbb{E}_{x}\left|\int_0^h(h-s)\frac{d}{ds}\nabla\zeta_{\theta_{k-1}}(z_s)~ds \right|^2\\
  \leq & \mathbb{E}_x \frac{h^3}{3}\int_0^h \left|\frac{d}{ds}\nabla\zeta_{\theta_{k-1}}(z_s)\right|^2 ds = \frac{h^4}{3}\left(\frac{1}{h}\int_0^h \mathbb{E}_{z_s}\left|\nabla^2\zeta_{\theta_{k-1}}(z_s)\zeta_{\theta_{k-1}}(z_s)\right|^2~ds\right)
\end{align*}
We define
\begin{align*}
  & \mathfrak{N}(\theta, s)=\mathbb{E}_{z_s}\left|\nabla^2\zeta_\theta(z_s)\zeta_\theta(z_s)\right|^2, \quad \textrm{with} ~~~ \zeta_\theta(\cdot) = V(\cdot) + D\log\rho_\theta(\cdot),  \label{def_N_discrete_scheme}  \\
  & \dot{z}_t = -\nabla\zeta_\theta(z_t),~~z_0\sim \rho_\theta. \nonumber
\end{align*}
Similar to (A), we have:
\begin{equation*}
  W_2^2( \tilde{G}_{t_{k-1}\rightarrow t_k\sharp} {\rho}_{\theta_{k-1}} , G_{t_{k-1}\rightarrow t_k \sharp} {\rho}_{\theta_{k-1}}) \leq \frac{1}{3}\sup_{0\leq s\leq h}\mathfrak{N}(\theta_{k-1},h)h^4
\end{equation*}

\end{itemize}

Combining the estimates for terms (A),(B) and (C), and defining
\begin{equation}
  M(\theta, h) = \underset{0\leq s\leq h}{\textrm{sup}}\mathfrak{M}(\theta_{k-1}, s), \quad  N(\theta, h) = \underset{0\leq s\leq h}{\textrm{sup}}\mathfrak{N}(\theta_{k-1}, s),  \label{def M  N}
\end{equation}
we obtain
\begin{equation*}
 W_2( \rho_{\theta_k},\tilde{\rho}^{\star}_{t_k})\leq \sqrt{\delta_0} h + \frac{M(\theta_{k-1},h)+N(\theta_{k-1}, h)}{\sqrt{3}}~h^2.
\end{equation*}

\end{proof}

\begin{lemma}\label{Lemma_2}
The second term in (\ref{tri_inequality}) can be upper bounded by $O(h^2)$.
\end{lemma}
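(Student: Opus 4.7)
The plan is to use the particle (characteristic) viewpoint, as in the proof of Lemma \ref{Lemma_1}, and to synchronously couple the two flows $\{\tilde{\rho}_t\}$ and $\{\tilde{\rho}^{\star}_t\}$ via their common initial distribution. Draw $Z_0\sim\rho_{\theta_{k-1}}$ and define two particles on $[t_{k-1},t_k]$ by
\begin{equation*}
  \dot{\tilde{X}}_t = -\nabla V(\tilde{X}_t)-\beta\nabla\log\tilde{\rho}_t(\tilde{X}_t), \qquad \dot{X}^{\star}_t = -\nabla V(X^{\star}_t)-\beta\nabla\log\rho_{\theta_{k-1}}(X^{\star}_t),
\end{equation*}
with $\tilde{X}_{t_{k-1}}=X^{\star}_{t_{k-1}}=Z_0$. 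These are exactly the characteristic ODEs of the continuity-equation reformulations of \eqref{aux_a} and \eqref{aux_b}, so $\tilde{X}_{t_k}\sim\tilde{\rho}_{t_k}$ and $X^{\star}_{t_k}\sim\tilde{\rho}^{\star}_{t_k}$, giving the coupling bound $W_2^2(\tilde{\rho}^{\star}_{t_k},\tilde{\rho}_{t_k})\le\mathbb{E}\,|\tilde{X}_{t_k}-X^{\star}_{t_k}|^2$.

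Setting $\delta_t = \tilde{X}_t - X^{\star}_t$ with $\delta_{t_{k-1}}=0$, I would add and subtract $\nabla\log\rho_{\theta_{k-1}}(\tilde{X}_t)$ in $\dot{\delta}_t$ to obtain
\begin{equation*}
\dot{\delta}_t = -\bigl(\nabla V(\tilde{X}_t)-\nabla V(X^{\star}_t)\bigr) - \beta\bigl(\nabla\log\rho_{\theta_{k-1}}(\tilde{X}_t)-\nabla\log\rho_{\theta_{k-1}}(X^{\star}_t)\bigr) - \beta\bigl(\nabla\log\tilde{\rho}_t-\nabla\log\rho_{\theta_{k-1}}\bigr)(\tilde{X}_t).
\end{equation*}
The first two brackets are controlled by $(\Lambda+\beta L_{k-1})|\delta_t|$, where $\Lambda$ comes from the hypothesis $\nabla^2 V\preceq\Lambda I$ and $L_{k-1}$ is a Lipschitz constant of $\nabla\log\rho_{\theta_{k-1}}$, which is finite thanks to the smoothness of the normalizing-flow pushforward $T_{\theta_{k-1}}$ and the explicit formula \eqref{explicit_logrho}. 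For the last term, since $\tilde{\rho}_t$ solves the Fokker--Planck equation \eqref{aux_a} with smooth initial datum $\rho_{\theta_{k-1}}$, parabolic regularity yields the linear-in-time estimate $\|\nabla\log\tilde{\rho}_t-\nabla\log\rho_{\theta_{k-1}}\|_\infty\le A_{k-1}(t-t_{k-1})$ for a constant $A_{k-1}$ depending on $V$, $\beta$ and the $C^2$-norm of $\log\rho_{\theta_{k-1}}$.

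Combining these bounds gives $|\dot{\delta}_t|\le (\Lambda+\beta L_{k-1})|\delta_t|+\beta A_{k-1}(t-t_{k-1})$, and since $\delta_{t_{k-1}}=0$ an elementary Gr\"onwall argument yields $|\delta_{t_k}|\le \tfrac{\beta A_{k-1}}{2}\,h^2 e^{(\Lambda+\beta L_{k-1})h}$. Taking expectation and square roots produces $W_2(\tilde{\rho}^{\star}_{t_k},\tilde{\rho}_{t_k})\le C'_{k}h^2$, and this constant can be absorbed into $C_N$ in \eqref{main_result}. The main obstacle I anticipate is the linear-in-$t$ regularity estimate $\|\nabla\log\tilde{\rho}_t-\nabla\log\rho_{\theta_{k-1}}\|_\infty=O(t-t_{k-1})$; although it follows morally from differentiating \eqref{aux_a} once in space and exploiting the smoothing of the heat semigroup, producing a constant $A_{k-1}$ that is uniform in $k$ (so that the resulting $C_N$ is finite) requires uniform control on the Hessian of $\log\rho_{\theta_{k-1}}$ along the entire trajectory, which in turn hinges on the regularity assumptions on the parametric family $\{T_\theta\}$ and the boundedness of the higher derivatives of $V$.
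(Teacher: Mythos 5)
Your argument is essentially correct and reaches the right $O(h^2)$ bound, but it takes a genuinely different route from the paper. The paper rewrites the transport equation \eqref{aux_b} as a Fokker--Planck equation with the extra drift $\beta\nabla\log\tilde{\rho}^{\star}_t-\beta\nabla\log\rho_{\theta_{k-1}}$ and then synchronously couples two \emph{stochastic} dynamics, \eqref{SDE_1} and \eqref{SDE_2}, driven by the same Brownian motion; the discrepancy drift $\vec r$ is controlled by a constant $L_{k-1}(\theta_{k-1},h)$ defined as a supremum of an \emph{expectation along the coupled trajectory} of the difference quotient of $\nabla\log\tilde{\rho}^{\star}_t$, and an $L^2$-Gr\"onwall argument gives $W_2(\tilde{\rho}^{\star}_{t_k},\tilde{\rho}_{t_k})\leq \beta L_{k-1}h^2$ for small $h$. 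You instead keep \eqref{aux_b} as a deterministic flow with the frozen drift and absorb the diffusion of \eqref{aux_a} into the Vlasov velocity field $-\nabla V-\beta\nabla\log\tilde{\rho}_t$, coupling two \emph{ODEs} through a common initial particle and running a pathwise Gr\"onwall estimate. The two are mirror images: each treats one auxiliary flow exactly and pushes all the awkwardness into an $O(\tau)$ short-time regularity constant, the paper's on $\nabla\log\tilde{\rho}^{\star}_t$ (a pushforward by a smooth ODE flow, so arguably easier to control), yours on $\nabla\log\tilde{\rho}_t$ (a parabolic solution), and yours additionally needs a Lipschitz bound for $\nabla\log\rho_{\theta_{k-1}}$, which the paper's decomposition avoids entirely since only $\nabla V$ appears in its contraction term.

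One concrete step in your write-up would fail as stated: the sup-norm estimate $\|\nabla\log\tilde{\rho}_t-\nabla\log\rho_{\theta_{k-1}}\|_\infty\leq A_{k-1}(t-t_{k-1})$ is generally false, because $\nabla\log$ of the densities grows at least linearly in $|x|$ in the tails. For example, with quadratic $V$ and Gaussian $\rho_{\theta_{k-1}}$ the solution stays Gaussian and the difference equals $-(\Sigma_t^{-1}-\Sigma_{t_{k-1}}^{-1})(x-\mu_t)$ plus a constant shift, which is unbounded in $x$ for every $t>t_{k-1}$. The remedy is immediate and keeps your structure intact: replace the pathwise bound by an $L^2$ Gr\"onwall inequality for $u(t)=(\mathbb{E}|\delta_t|^2)^{1/2}$, defining $A_{k-1}$ as a supremum over $\tau\in(0,h]$ of $\tau^{-1}\bigl(\mathbb{E}\,|(\nabla\log\tilde{\rho}_{t_{k-1}+\tau}-\nabla\log\rho_{\theta_{k-1}})(\tilde X_{t_{k-1}+\tau})|^2\bigr)^{1/2}$, exactly parallel to the trajectory-averaged constants $M$, $N$, $L_{k-1}$ that the paper itself introduces (and, like the paper, does not prove finite) before folding them into $C_N$ in \eqref{definition of C_N}. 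With that modification your proof is a valid alternative, at the cost of requiring parabolic short-time regularity of \eqref{aux_a} and Lipschitz control of $\nabla\log\rho_{\theta_{k-1}}$, where the paper's SDE coupling needs neither.
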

\begin{proof}
Recall that $\tilde{\rho}_t$ is defined by \eqref{aux_a} and $\rho^*_t$ is defined by \eqref{aux_b}. We can rewrite \eqref{aux_b} as:
\begin{equation*}
  \frac{\partial \rho^{\star}_t}{\partial t}=\nabla\cdot(\rho^{\star}_t(\nabla V+{D}\nabla\log\rho_{\theta_{k-1}}-D\nabla\log \rho^{\star}_t)) + {D}\Delta  \rho^{\star}_t \quad t_{k-1}\leq t\leq t_k
\end{equation*}
We consider the following Stochastic Differential Equations (SDEs) sharing the same trajectory of Brownian motion $\{\boldsymbol{B}_\tau\}_{0\leq\tau\leq h}$ and initial condition:
\begin{align}
  & dx_\tau = -\nabla V(x_\tau)d\tau + \sqrt{2{D}}~d\boldsymbol{B}_\tau \label{SDE_1}\\
  & dx^\star_\tau = -\nabla V(x^\star_\tau) d\tau + ({D}\nabla\log\rho^\star_{t_{k-1}+\tau}(x^\star_\tau) - {D} \nabla\log\rho_{\theta_{k-1}}(x^\star_\tau))d\tau + \sqrt{2{D}}~d\boldsymbol{B}_\tau \label{SDE_2} \\
  & \textrm{with initial condition:}~ x_0 = x^\star_0\sim\rho_{\theta_{k-1}}\quad\textrm{and}~ 0\leq \tau \leq h. \nonumber
\end{align}
Subtracting \eqref{SDE_1} from \eqref{SDE_2}, we get:
\begin{equation*}
  x^\star_\tau - x_\tau = \int_0^\tau \nabla V(x_s)-\nabla V(x^\star_s) + \vec{r}(x^\star_s,s) ~ds,
\end{equation*}
in which we denote $\vec{r}(x,\tau) = {D} \nabla\log\rho^\star_{t_{k-1}+\tau}(x)-{D}\nabla\log\rho_{\theta_{k-1}}(x)$ for convenience.
Hence, 
\begin{align*}
  \mathbb{E}|x^\star_\tau-x_\tau|^2 & = \mathbb{E} \left| \int_0^\tau \nabla V(x_s)-\nabla V(x^\star_s) + \vec{r}(x^\star_s,s) ~ds \right|^2 \\
  & \leq 2~\mathbb{E}\left|\int_0^\tau \nabla V(x_s)-\nabla V(x^\star_s)~ds\right|^2 + 2 ~ \mathbb{E}\left| \int_0^\tau \vec{r}(x^\star_s,s) ~ds \right|^2 \\
   & \leq 2~\mathbb{E}\left[ \tau~\int_0^\tau |\nabla V(x_s)-\nabla V(x^\star_s)|^2~ds \right] + 2~\mathbb{E}\left[\tau~\int_0^\tau|\vec{r}(x^\star_s,s)|^2~ds\right] \\
   &=2\tau\left(\int_0^\tau \mathbb{E}|\nabla V(x_s)-\nabla V(x^\star_s)|^2 + \mathbb{E}|\vec{r}(x^\star_s,s)|^2~ds \right)
\end{align*}
Since Hessian of $V$ is bounded from above by $\Lambda$, $|\nabla V(x)-\nabla V(y)|\leq \Lambda|x-y|$ for any $x,y\in\mathbb{R}^d$, we have the inequality:
\begin{equation}
  \mathbb{E}|x^\star_\tau-x_\tau|^2 \leq 2\tau\Lambda^2 ~\int_0^\tau \mathbb{E}|x^\star_s-x_s|^2 ~ds + 2\tau~\int_0^\tau\mathbb{E}|\vec{r}(x^\star_s,s)|^2~ds  \label{discrete lemma 2 Gronwall}
\end{equation}
If we define $U_\tau=\int_0^\tau \mathbb{E}|x^\star_s-x_s|^2 ~ds$ and $R_\tau = \int_0^\tau\mathbb{E}|\vec{r}(x^\star_s,s)|^2~ds$, \eqref{discrete lemma 2 Gronwall} becomes:
\begin{equation*}
  U'_\tau \leq 2\Lambda^2\tau U_\tau + 2\tau R_\tau 
\end{equation*}
By integrating this inequality, we have $U_\tau \leq \int_0^\tau 2e^{\Lambda(\tau^2-s^2)}sR_s~ds$ and $U'_\tau\leq 4\Lambda^2\tau\int_0^\tau e^{\Lambda(\tau^2-s^2)}sR_sds+2\tau R_\tau$. Therefore
\begin{equation*}
  W_2(\rho^\star_{t_k},\tilde{\rho}_{t_k}) \leq \sqrt{\mathbb{E}|x^\star_h-x_h|^2} = \sqrt{U'_h} \leq \sqrt{4\Lambda^2h\int_0^he^{\Lambda(h^2-s^2)}sR_s~ds + 2hR_h} \label{discrete lemma 2 Gronwall upper bdd 1}
\end{equation*}
Since $R_\tau$ is increasing with respect to  $\tau$, we are able to estimate
\begin{equation}
  W_2(\rho_{t_k}^{\star}, \tilde{\rho}_{t_k}) \leq \sqrt{ 4 \Lambda^2 h^2 \int_0^h e^{\Lambda (h^2-s^2)}sds + 2h} ~ \sqrt{R_h} = \sqrt{2\Lambda(e^{\Lambda h^2} - 1)h + 2h} \sqrt{R_h}.  \label{intermediate est on w2-distance}
\end{equation}
Next we estimate $R_h$. Recall $\rho^*_{t_{k-1}}=\rho_{\theta_{k-1}}$ as in \eqref{aux_b}, we have
\begin{align*}
  R_h = & \int_0^h \mathbb{E}_{x^\star_s} |D\log\rho^\star_{t_{k-1}+s}(x^\star_s)-D\log\rho^\star_{t_{k-1}}(x^\star_s)|^2~ds = D^2 \int_0^h \mathbb{E}_{x^\star_s} \left|\int_0^s \frac{\partial}{\partial t}\nabla \log \rho^\star_{t_{k-1}+t}(x^\star_s)~dt\right|^2~ds  \\
  \leq & D^2\int_0^h \mathbb{E}_{x^\star_s}\left[s\int_0^s \left|\frac{\partial}{\partial t}\nabla\log\rho^\star_{t_{k-1}+t}(x^\star_s)\right|^2~dt\right]~ds = D^2\int_0^h\int_0^s  s\int \left|\frac{\partial}{\partial t}\nabla\log\rho^\star_{t_{k-1}+t}\right|^2\rho^\star_{t_{k-1}+s}~dx~dt ~ds.
\end{align*}
By \eqref{aux_b}, one can further compute $\frac{\partial}{\partial t}\log\rho^\star_{t_{k-1}+t} = -\nabla\log\rho^\star_{t_{k-1}+t}\cdot\nabla\zeta_{\theta_{k-1}} - \Delta\zeta_{\theta_{k-1}}$.
Let us define
\begin{align*}
  \mathfrak{L}(\theta, t, s) & = \int |\nabla(\nabla\log\rho_t\cdot\nabla\zeta_\theta + \Delta\zeta_\theta)|^2\rho_s~dx \quad \textrm{with} ~~ \zeta_\theta = V+D\log\rho_\theta \\
  & \textrm{and}~~ \frac{\partial \rho_s}{\partial s}+\nabla\cdot(\rho_s\nabla\zeta_\theta) = 0 \quad \rho_0 = \rho_\theta
\end{align*}
Then we have the estimation
\begin{align*}
  R_h \leq D^2\int_0^h\int_0^s~s\cdot\left(\sup_{0\leq t\leq s\leq h}\mathfrak{L}(\theta_{k-1}, t, s)\right)~dt~ds = \frac{D^2}{3}\sup_{0\leq t\leq s\leq h}\mathfrak{L}(\theta_{k-1}, t, s)~h^3.
\end{align*}
Let us also define
\begin{equation}
  L(\theta, h) = \left(\sup_{0\leq t\leq s\leq h} \mathfrak{L}(\theta, t, s)\right)^{\frac{1}{2}}  \label{def L }
\end{equation}
Thus \eqref{intermediate est on w2-distance} becomes $W_2(\rho^\star_{t_k}, \tilde{\rho}_{t_k})\leq \sqrt{\frac{2D^2}{3}(\Lambda(e^{\Lambda h^2} - 1) + 2) } L(\theta_{k-1}, h) ~ h^2$. When the stepsize $h$ is small enough, we have $e^{\Lambda h^2}<2$. Let us denote $K(D,\Lambda) = \sqrt{\frac{2D^2}{3}(\Lambda + 2)}$. Thus we have  $W_2(\rho^\star_{t_k}, \tilde{\rho}_{t_k})\leq K(D,\Lambda)L(\theta_{k-1}, h)~h^2$.
\end{proof}
\begin{remark}
Analyzing the discrepancy of stochastic particles under different movements  provides a natural upper bound for $W_2$ distance. Both Lemma \ref{Lemma_1} and Lemma \ref{Lemma_2} are derived by making use of the particle version of their corresponding density evolution. Such proving strategy was motivated from section \ref{particle level explanation}.
\end{remark}

\begin{lemma}\label{Lemma_3}
The third term in (\ref{tri_inequality}) satisfies $W_2(\rho_{t_k},\tilde{\rho}_{t_k})\leq e^{-\lambda h} W_2(\rho_{t_{k-1}}, \rho_{\theta_{k-1}})$. Here we recall that $\lambda$ satisfies $\nabla^2 V\succeq  \lambda I$. 
\end{lemma}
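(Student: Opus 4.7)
The key observation is that $\rho_{t_k}$ and $\tilde{\rho}_{t_k}$ are solutions of the \emph{same} Fokker--Planck equation \eqref{FPE} over the interval $[t_{k-1}, t_k]$, differing only in their initial conditions at time $t_{k-1}$: $\rho_{t_{k-1}}$ for the former and $\rho_{\theta_{k-1}}$ for the latter. The lemma is therefore a standard $W_2$-contraction estimate for the Fokker--Planck flow under $\lambda$-convexity of $V$, with sharp rate $e^{-\lambda h}$. My plan is to prove it by synchronous coupling of the associated overdamped Langevin SDEs, mirroring the particle-based strategy used for Lemmas \ref{Lemma_1} and \ref{Lemma_2}.

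Concretely, I would take an optimal coupling $\pi^\star \in \Pi(\rho_{t_{k-1}}, \rho_{\theta_{k-1}})$, draw $(X_0, \tilde X_0) \sim \pi^\star$ so that $\mathbb{E}|X_0 - \tilde X_0|^2 = W_2^2(\rho_{t_{k-1}}, \rho_{\theta_{k-1}})$, and then drive the two processes by a common Brownian motion $\{B_\tau\}$:
\begin{equation*}
dX_\tau = -\nabla V(X_\tau)\,d\tau + \sqrt{2\beta}\,dB_\tau, \qquad d\tilde X_\tau = -\nabla V(\tilde X_\tau)\,d\tau + \sqrt{2\beta}\,dB_\tau,
\end{equation*}
for $\tau \in [0,h]$, with the above initial conditions. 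By the SDE--Fokker--Planck correspondence recalled in Section \ref{background}, the marginals satisfy $X_\tau \sim \rho_{t_{k-1}+\tau}$ and $\tilde X_\tau \sim \tilde{\rho}_{t_{k-1}+\tau}$. The diffusion terms cancel in the difference equation, giving the pathwise identity $\frac{d}{d\tau}(X_\tau - \tilde X_\tau) = -(\nabla V(X_\tau) - \nabla V(\tilde X_\tau))$, and $\nabla^2 V \succeq \lambda I$ together with the mean value theorem yields $\frac{d}{d\tau}|X_\tau - \tilde X_\tau|^2 \leq -2\lambda |X_\tau - \tilde X_\tau|^2$. Gronwall's inequality then gives $|X_h - \tilde X_h|^2 \leq e^{-2\lambda h}|X_0 - \tilde X_0|^2$ almost surely; taking expectations and using $W_2^2(\rho_{t_k}, \tilde{\rho}_{t_k}) \leq \mathbb{E}|X_h - \tilde X_h|^2$ closes the argument.

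The proof is essentially routine, and I do not expect any genuine obstacle; the only thing to verify carefully is the marginal identity for the coupled SDEs, which follows from uniqueness in law for \eqref{Over-damped_Langevin} given the initial distribution. An alternative, purely geometric route would use displacement convexity of $\mathcal{H}$ (Lemma \ref{Displacement_convex_entropy}) along the lines of Theorem \ref{thm:error_analysis_continuous_para_FPE}: the ``projection error'' term there drops out since both $\rho_t$ and $\tilde{\rho}_t$ are \emph{exact} Wasserstein gradient flows of $\mathcal{H}$, leaving only $\frac{d}{dt}W_2^2(\rho_t, \tilde{\rho}_t) \leq -2\lambda W_2^2(\rho_t, \tilde{\rho}_t)$ and hence the same conclusion by Gronwall. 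I would choose the synchronous-coupling presentation because it is shorter and fits seamlessly with the particle-level proofs of Lemmas \ref{Lemma_1} and \ref{Lemma_2}.
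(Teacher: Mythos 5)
Your argument is correct, but it takes a different route from the paper. The paper does not prove the contraction itself: it restates the bound as a general stability theorem for two solutions of \eqref{FPE} with different initial data under $\nabla^2 V\succeq\lambda I$, and then simply cites \cite{ambrosio2008gradient} and \cite{villani2008optimal} for the proof, noting that Lemma \ref{Lemma_3} is the special case with initial data $\rho_{t_{k-1}}$ and $\rho_{\theta_{k-1}}$ evolved over a time interval of length $h$ (your ``alternative, purely geometric route'' via displacement convexity is essentially the argument behind that cited result). Your synchronous-coupling proof is a legitimate, self-contained alternative: coupling the two overdamped Langevin processes through a common Brownian motion cancels the noise, the one-sided monotonicity $(\nabla V(x)-\nabla V(y))\cdot(x-y)\geq\lambda|x-y|^2$ plus Gronwall gives the pathwise decay $|X_h-\tilde X_h|^2\leq e^{-2\lambda h}|X_0-\tilde X_0|^2$, and optimality of the initial coupling together with $W_2^2(\rho_{t_k},\tilde\rho_{t_k})\leq\mathbb{E}|X_h-\tilde X_h|^2$ closes the bound; the marginal identification for $\tilde X_\tau$ with the solution of \eqref{aux_a} is exactly the SDE--Fokker--Planck correspondence you flag. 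What each approach buys: your coupling proof is elementary and meshes with the particle-level style of Lemmas \ref{Lemma_1} and \ref{Lemma_2}, but it implicitly needs enough regularity of $\nabla V$ (e.g.\ the Lipschitz bound $\nabla^2 V\preceq\Lambda I$, which is in force in Theorem \ref{Theorem_err_analyz}) to guarantee well-posedness and uniqueness in law of the coupled SDEs, whereas the cited Wasserstein gradient-flow stability result operates directly at the PDE level and requires only the lower Hessian bound, at the cost of appealing to heavier machinery.
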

\noindent
This lemma is a direct corollary of the following theorem:
\begin{theorem}
Suppose the potential $V\in C^2(\mathbb{R}^d)$ satisfying $\nabla^2 V \succeq \lambda I$ for a finite real number $\lambda$, i.e. the matrix $\nabla^2 V(x)-\lambda I$ is semi-positive definite for any $x\in\mathbb{R}^d$. Given $\rho_1,\rho_2\in\mathcal{P}$, and denote $\rho^{(1)}_t$ and $\rho^{(2)}_t$ the solutions of the Fokker--Planck equation with different initial distributions $\rho_1$ and $\rho_2$ respectively, i.e.
\begin{align*}
   \frac{\partial \rho^{(1)}_t}{\partial t}=\nabla\cdot(\rho^{(1)}_t\nabla V)+{D}\Delta\rho^{(1)}_t ~~~\rho^{(1)}_0=\rho_1,\\
   \frac{\partial \rho^{(2)}_t}{\partial t}=\nabla\cdot(\rho^{(2)}_t\nabla V)+{D}\Delta\rho^{(2)}_t ~~~\rho^{(2)}_0=\rho_2.
\end{align*}
Then
\begin{equation}
   W_2(\rho^{(1)}_t ,\rho^{(2)}_t )\leq e^{-\lambda t}W_2(\rho_1,\rho_2)\label{Lemma3_general_result}
\end{equation}
\end{theorem}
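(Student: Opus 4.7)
The plan is to prove the contraction estimate via a \emph{synchronous coupling} of the two Langevin SDEs associated to the Fokker--Planck equation, exactly in the spirit of the Vlasov--/particle viewpoint developed in Section \ref{particle level explanation} and used in the proofs of Lemmas \ref{Lemma_1}--\ref{Lemma_2}. This approach converts the PDE statement into a pathwise inequality whose only input is the one-sided Lipschitz (i.e. $\lambda$-convexity) bound $\nabla^2 V \succeq \lambda I$.

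First, I would let $(\boldsymbol{X}^{(1)}_0, \boldsymbol{X}^{(2)}_0)$ be an optimal $W_2$-coupling of $(\rho_1, \rho_2)$, so that $\mathbb{E}|\boldsymbol{X}^{(1)}_0 - \boldsymbol{X}^{(2)}_0|^2 = W_2^2(\rho_1, \rho_2)$. Then I would run the two over-damped Langevin dynamics \eqref{Over-damped_Langevin} \emph{driven by the same} Brownian motion $\{\boldsymbol{B}_t\}$:
\begin{equation*}
  d\boldsymbol{X}^{(i)}_t = -\nabla V(\boldsymbol{X}^{(i)}_t)\,dt + \sqrt{2\beta}\,d\boldsymbol{B}_t, \qquad i=1,2.
\end{equation*}
As reviewed in Section \ref{background}, the laws of $\boldsymbol{X}^{(i)}_t$ are exactly $\rho^{(i)}_t$, so $(\boldsymbol{X}^{(1)}_t, \boldsymbol{X}^{(2)}_t)$ is a (not necessarily optimal) coupling of $(\rho^{(1)}_t, \rho^{(2)}_t)$ for every $t$.

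Next, subtract the two SDEs: the Brownian increments cancel and I obtain the deterministic pathwise ODE
\begin{equation*}
  \frac{d}{dt}\bigl(\boldsymbol{X}^{(1)}_t - \boldsymbol{X}^{(2)}_t\bigr) = -\bigl(\nabla V(\boldsymbol{X}^{(1)}_t) - \nabla V(\boldsymbol{X}^{(2)}_t)\bigr).
\end{equation*}
Taking the inner product of both sides with $\boldsymbol{X}^{(1)}_t - \boldsymbol{X}^{(2)}_t$ and using the $\lambda$-convexity of $V$, which gives the monotonicity bound $(\nabla V(x)-\nabla V(y))\cdot(x-y)\geq \lambda |x-y|^2$, yields
\begin{equation*}
  \frac{d}{dt}|\boldsymbol{X}^{(1)}_t - \boldsymbol{X}^{(2)}_t|^2 \leq -2\lambda\,|\boldsymbol{X}^{(1)}_t - \boldsymbol{X}^{(2)}_t|^2.
\end{equation*}
Grönwall's inequality then gives the pathwise contraction $|\boldsymbol{X}^{(1)}_t - \boldsymbol{X}^{(2)}_t|^2 \leq e^{-2\lambda t}|\boldsymbol{X}^{(1)}_0 - \boldsymbol{X}^{(2)}_0|^2$. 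Taking expectations and using that $(\boldsymbol{X}^{(1)}_t,\boldsymbol{X}^{(2)}_t)$ is an admissible coupling, I conclude
\begin{equation*}
  W_2^2(\rho^{(1)}_t,\rho^{(2)}_t)\leq \mathbb{E}|\boldsymbol{X}^{(1)}_t-\boldsymbol{X}^{(2)}_t|^2 \leq e^{-2\lambda t}\,\mathbb{E}|\boldsymbol{X}^{(1)}_0-\boldsymbol{X}^{(2)}_0|^2 = e^{-2\lambda t}W_2^2(\rho_1,\rho_2),
\end{equation*}
which is \eqref{Lemma3_general_result} after taking square roots.

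The main obstacle is the well-posedness of the synchronous coupling: $\nabla V$ need not be globally Lipschitz under the assumption $\nabla^2 V\succeq \lambda I$ alone (for possibly negative $\lambda$), so the SDE's strong solutions are not immediate from the classical Lipschitz theory. However, the one-sided Lipschitz property furnished by $\lambda$-convexity is precisely what is needed for pathwise uniqueness and existence of strong solutions (and, equally importantly, makes the Grönwall step above rigorous even without a two-sided Lipschitz bound). An alternative route, should one wish to avoid SDE technicalities entirely, would be to carry out the same computation at the PDE level by differentiating $W_2^2(\rho^{(1)}_t,\rho^{(2)}_t)$ in time using Theorem 23.9 of \cite{villani2008optimal}, exactly as in the proof of Theorem \ref{thm:error_analysis_continuous_para_FPE}, and invoking the displacement convexity of the potential energy $\int V\,d\rho$ (Lemma \ref{Displacement_convex_entropy} restricted to the $V$-part) to control the cross term; both approaches give the same estimate.
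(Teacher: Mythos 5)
Your argument is correct, but it is worth noting that the paper does not actually prove this statement: immediately after the theorem it remarks that this is a known stability result for Wasserstein gradient flows and cites \cite{ambrosio2008gradient} and \cite{villani2008optimal}, where the proof proceeds through the abstract contraction property of gradient flows of $\lambda$-displacement-convex functionals (EVI-type arguments), the relevant functional being the full free energy $\mathcal{H}(\rho)=\int V\rho+\beta\int\rho\log\rho$, which is $\lambda$-convex along geodesics because the entropy term is $0$-convex and the potential term inherits $\lambda$ from $\nabla^2V\succeq\lambda I$. Your synchronous-coupling proof is a genuinely different and more elementary route: running the two over-damped Langevin dynamics \eqref{Over-damped_Langevin} with the same Brownian motion cancels the noise, the one-sided monotonicity $(\nabla V(x)-\nabla V(y))\cdot(x-y)\geq\lambda|x-y|^2$ plus Gr\"onwall gives the pathwise bound $|\boldsymbol{X}^{(1)}_t-\boldsymbol{X}^{(2)}_t|\leq e^{-\lambda t}|\boldsymbol{X}^{(1)}_0-\boldsymbol{X}^{(2)}_0|$, and taking expectations over an optimal initial coupling yields \eqref{Lemma3_general_result}; this fits naturally with the particle viewpoint of Section \ref{particle level explanation} and with the coupling constructions already used in Lemmas \ref{Lemma_1} and \ref{Lemma_2}. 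What the coupling proof buys is concreteness and consistency with the paper's probabilistic machinery; what it costs is the well-posedness issue you correctly flag: with only $\nabla^2 V\succeq\lambda I$ and $V\in C^2$, the drift is locally Lipschitz and satisfies a one-sided (Lyapunov-type) growth bound, which is enough for strong existence, pathwise uniqueness and non-explosion, so the gap is fillable with standard SDE theory, whereas the displacement-convexity route avoids these stochastic technicalities altogether (your sketch of that alternative should invoke $\lambda$-convexity of the whole functional $\mathcal{H}$, not just of $\int V\,d\rho$, though the conclusion is the same since the entropy contributes with modulus zero).
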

\noindent
This is a known stability result on Wasserstein gradient flows. One can find its proof in \cite{ambrosio2008gradient} or \cite{villani2008optimal}. With the results in Lemmas \ref{Lemma_1},\ref{Lemma_2},\ref{Lemma_3}, we are ready to prove Theorem \ref{Theorem_err_analyz}. 
\begin{proof}
(Proof of Theorem \ref{Theorem_err_analyz}) For convenience, we write 
\begin{equation*}
  \textrm{Err}_k=W_2(\rho_{\theta_k} , \rho_{t_k})  \quad k=0,1,...,N.
\end{equation*}
Combining Lemma \ref{Lemma_1}, Lemma \ref{Lemma_2} and Lemma \ref{Lemma_3}, the triangle inequality \eqref{tri_inequality} becomes 
\begin{equation*}
  \textrm{Err}_{k}\leq \sqrt{\delta_0}~h + \left( \frac{1}{\sqrt{3}}M(\theta_{k-1},h)+\frac{1}{\sqrt{3}}N(\theta_{k-1},h)+ K(D, \Lambda) L(\theta_{k-1}, h) \right)h^2 + e^{-\lambda h} ~\textrm{Err}_{k-1}.
\end{equation*}
Let us denote the constant $C$ depending on initial parameter $\theta_0$, time stepsize $h$ and time steps $N$:
\begin{equation}
  C(\theta_0, h, N) = \underset{0\leq k\leq N-1}{\textrm{max}}\left\{ \frac{1}{\sqrt{3}}M(\theta_{k-1},h)+\frac{1}{\sqrt{3}}N(\theta_{k-1},h)+ K(D, \Lambda) L(\theta_{k-1}, h) \right\}.   \label{definition of C_N}
\end{equation} 

In the following discussion, we will denote $C=C(\theta_0, h, N)$ for simplicity. By \eqref{definition of C_N}, We have:
\begin{equation}
  \textrm{Err}_{k}\leq\sqrt{\delta_0} h + C h^2 + e^{-\lambda h} \textrm{Err}_{k-1}   \label{discrete theorem Gronwall}
\end{equation}
Multiplying $e^{\lambda kh}$ to both sides of \eqref{discrete theorem Gronwall}, we get:
\begin{equation}
    e^{\lambda kh} \textrm{Err}_k \leq (\sqrt{\delta_0}~h + Ch^2)e^{\lambda kh} + e^{\lambda (k-1)h} \textrm{Err}_{k-1}. \label{recurrence}
\end{equation}
For any $n$, $1 \leq n \leq N$, summing \eqref{recurrence} from $1$ to $n$, we reach 
\begin{equation*}
    e^{\lambda nh} \textrm{Err}_n \leq (\sqrt{\delta_0} h + C h^2 )\left(\sum_{k=1}^n e^{\lambda kh}\right)+\textrm{Err}_0 = (\sqrt{\delta_0} h + C h^2 )\frac{e^{\lambda(n+1)h}-e^{\lambda h}}{e^{\lambda h}-1} + \textrm{Err}_0.
\end{equation*}
Recall that $t_n = nh$ for $1\leq n\leq N$, it leads to:
\begin{equation*}
  \textrm{Err}_n \leq (\sqrt{\delta_0} h + C h^2)\frac{1-e^{-\lambda t_n}}{1-e^{-\lambda h}}+e^{-\lambda t_n} \textrm{Err}_0 \quad  n=1,...,N.
\end{equation*}
\end{proof}

Theorem \ref{Theorem_err_analyz} indicates that the error $W_2(\rho_{\theta_k},\rho_{t_k})$ is upper bounded by $O(\sqrt{\delta_0})+O(C h)+O(W_2(\rho_{\theta_0},\rho_0))$. Here $O(\sqrt{\delta_0})$ is the essential error term that originates from the approximation mechanism of our parametric Fokker--Planck equation. The $O(C h)$ error term is induced by the finite difference scheme. And the $O(W_2(\rho_{\theta_0},\rho_0))$ term is the initial error.

It is worth mentioning that the error bound for forward Euler scheme in \eqref{main_result} matches the error bound for the continuous scheme \eqref{original_error_estimate} as we reduce the effects introduced by finite difference. To be more precise, under the assumption $\lim_{h\rightarrow 0} C(\theta_0, h, N) h = 0$, we have:
\begin{align*}
  &\lim_{h\rightarrow 0} (\sqrt{\delta_0}h+Ch^2)\frac{1-e^{-\lambda t}}{1-e^{-\lambda h}}+e^{-\lambda t}W_2(\rho_{\theta_0}, \rho_0)\\ =&\lim_{h\rightarrow 0}(\sqrt{\delta_0}+Ch)(1-e^{-\lambda t})\frac{h}{1-e^{-\lambda h}} + e^{-\lambda t}W_2(\rho_{\theta_0},\rho_0)=\frac{\sqrt{\delta_0}}{\lambda}(1-e^{-\lambda t}) + e^{-\lambda t}W_2(\rho_{\theta_0},\rho_0)
\end{align*}
this indicates that error bounds \eqref{main_result} and \eqref{original_error_estimate} are compatible as  $h\rightarrow 0$.

\begin{remark}[$O(h)$ error order]
  Under further assumptions that $\Theta=\mathbb{R}^m$, $T_{\theta}(x)\in C^3(\Theta\times\mathbb{R}^d)$ and 
  \begin{equation}
    \lim_{\theta\rightarrow \infty}H(\theta)=+\infty  \label{H(theta) infinite}
  \end{equation}
  we can show the finite difference error term $O(Ch)$ is of order $O(h)$. In fact, the solution obtained from forward Euler scheme is always restricted in a fixed bounded region of $\Theta$. To be more precise, suppose the initial value is $\theta_0$, we consider $\Theta_0 = \{\theta|H(\theta)\leq H(\theta_0)\}$. By \eqref{H(theta) infinite}, one can verify $\Theta_0$ is bounded and closed set and thus is compact. We set $l = \max_{\theta\in \Theta_0} |G(\theta)^{-1}\nabla_\theta H(\theta)|$. Then we consider a slightly larger set $\Theta_0^l = \{\theta~|~\textrm{there exists}~\theta'\in\Theta_0, ~\textrm{s.t.}~|\theta-\theta'|\leq l\}$. Notice that $\Theta_0^l$ is also bounded. We define
  \begin{equation*}
     \sigma^G_{\textrm{min}} = \min_{\theta\in\Theta_0^l}  \sigma_{\textrm{min}}(G(\theta)) \quad \sigma^H_{\textrm{max}} = \max_{\theta\in\Theta_0^l} \sigma_{\textrm{max}}(\nabla_{\theta\theta}^2 H(\theta)).
  \end{equation*}
  Here $\sigma_{\textrm{max}}(A),\sigma_{\textrm{min}}(A)$ denotes the maximum and the minimum singular values of matrix $A$.
  We can show that for any time step size $h<\min\{\frac{2\sigma_{\textrm{min}}^G}{{\sigma}_{\textrm{max}}^H}, 1 \}$, the numerical solution $\{\theta_k\}_{k=1}^N$ obtained by applying forward-Euler scheme to \eqref{wass_grad_flow_on_para_spc} is included in $\Theta_0$. To prove this, we first show $\theta_1\in\Theta_0$, we consider
  \begin{align*}
     H(\theta_1)=H(\theta_0-hG(\theta_0)^{-1}\nabla_\theta H(\theta_0)) % \overset{\xi = G(\theta_0)^{-1}\nabla_\theta H(\theta_0)}{=} 
     = & H(\theta_0)-h\xi^{\textrm{T}}G(\theta_0)\xi + \frac{h^2}{2} \xi^{\textrm{T}} \nabla^2_{\theta\theta}H(\tilde{\theta})\xi  \\
     \leq &  H(\theta_0) - h\sigma_{\textrm{min}}^G|\xi|^2 + \frac{h^2}{2}\sigma_{\textrm{max}}^H |\xi|^2 \leq H(\theta_0)
  \end{align*}
  Here we denote $\xi = G(\theta_0)^{-1}\nabla_\theta H(\theta_0)$. The second equality is due to $T_\theta(x)\in C^3(\Theta\times\mathbb{R}^d)$ and thus $H(\cdot)\in C^2(\Theta)$. We notice that $\tilde{\theta}=\theta_0+\tau(hG(\theta_0)^{-1}\nabla_\theta H(\theta_0))$ with $0\leq \tau\leq 1$ and thus $\tilde{\theta}\in\Theta_0^l$. Since $H(\theta_1)\leq H(\theta_0)$, we know $\theta_1\in \Theta_0$. Applying a similar argument with $\theta_0$ being replaced by $\theta_1$, we can further prove $\theta_2\in\Theta_0$. By induction, we can prove $\{\theta_k\}_{k=1}^N\subset\Theta_0$. Since $\mathfrak{M}(\theta, s),\mathfrak{N}(\theta, s),\mathfrak{L}(\theta, s)$ depend continuously on $\theta, s$, their supreme values on compact set $\Theta_0\times [0, 1]$ must be finite so we know $C(\theta_0, h, N)$ in \eqref{definition of C_N} is upper bounded by a constant independent of $h$ as well as $N$ (recall $N=\frac{T}{h}$). Thus the error term $O(Ch)$ is of $O(h)$ order.
\end{remark}

Similar to the discussion in previous sections, we can naturally extend Theorem \ref{Theorem_err_analyz} to a {\it posterior} estimate.
\begin{theorem}[{\it posterior} error analysis of forward Euler scheme] \label{Theorem_err_analyz post} 
\begin{equation*}
W_2 (\rho_{\theta_k}, \rho_{t_k})\leq (\sqrt{\delta_1}  h + C h^2)\frac{1-e^{-\lambda t_k}}{1-e^{-\lambda h }}+e^{-\lambda t_k}W_2(\rho_{\theta_0},\rho_0)  \quad \textrm{for any} \quad t_k=kh, \quad 0\leq k\leq N.  
\end{equation*}
The explicit definition of the constant $C $ is in \eqref{definition of C_N}.  
\end{theorem}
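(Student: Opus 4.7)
The plan is to mirror the proof of Theorem \ref{Theorem_err_analyz} line by line and observe that the uniform quantity $\delta_0$ enters in exactly one place; replacing it by the smaller, trajectory-restricted quantity $\delta_1$ is all that is needed. Concretely, I would revisit the triangle-inequality split
\[
W_2(\rho_{\theta_k},\rho_{t_k}) \leq W_2(\rho_{\theta_k},\tilde{\rho}^{\star}_{t_k}) + W_2(\tilde{\rho}^{\star}_{t_k},\tilde{\rho}_{t_k}) + W_2(\tilde{\rho}_{t_k},\rho_{t_k})
\]
from \eqref{tri_inequality} and re-derive each of the three terms.

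Tracking the dependencies, $\delta_0$ is invoked only inside Lemma \ref{Lemma_1}, specifically in bounding term (B), where Theorem \ref{lemma_submfld_grad_err} together with the uniform supremum in \eqref{def_delta} is used to control
\[
h^2 \int \bigl| \nabla\boldsymbol{\Psi}(T_{\theta_{k-1}}(x))^T \xi_{k-1} + \nabla(V+\beta\log\rho_{\theta_{k-1}}) \circ T_{\theta_{k-1}}(x) \bigr|^2 dp(x) \leq \delta_0\, h^2.
\]
But this residual is evaluated only at the iterate $\theta_{k-1}$, which lies on the (discrete) solution trajectory. Reading \eqref{def_delta1} with the set $\mathcal{C}$ of \eqref{set of traj} extended to also cover the forward-Euler iterates $\{\theta_k\}_{k=0}^{N-1}$, the inner minimum in the definition of $\delta_1$ evaluated at $\theta=\theta_{k-1}$ already upper-bounds this integral by $\delta_1$, so term (B) contributes $\sqrt{\delta_1}\, h$ instead of $\sqrt{\delta_0}\, h$.

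The remaining ingredients of the proof do not involve the quantity $\delta_0$ at all. Terms (A) and (C) of Lemma \ref{Lemma_1} depend only on the local regularity constants $M(\theta_{k-1},h)$ and $N(\theta_{k-1},h)$ defined in \eqref{def_M_discrete_scheme}--\eqref{def_N_discrete_scheme}; Lemma \ref{Lemma_2} is an SDE-coupling argument yielding an $O(h^2)$ bound through $L_{k-1}(\theta_{k-1},h)$; Lemma \ref{Lemma_3} is the standard $\lambda$-contraction estimate for Wasserstein gradient flows, giving the factor $e^{-\lambda h}$. Plugging the new bound for (B) into the same one-step recurrence
\[
\mathrm{Err}_k \leq \sqrt{\delta_1}\, h + C_N h^2 + e^{-\lambda h}\, \mathrm{Err}_{k-1},
\]
then multiplying by $e^{\lambda k h}$ and summing the resulting geometric series exactly as in the proof of Theorem \ref{Theorem_err_analyz}, produces the claimed bound.

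The only subtle point, which I do not view as a genuine obstacle, is making sure the discrete iterates $\{\theta_k\}$ are indeed covered by the trajectory set used in the definition of $\delta_1$; the cleanest reading is to enlarge $\mathcal{C}$ to be the union of the continuous and discrete trajectories, which is harmless because $\delta_1$ so defined still satisfies $\delta_1 \leq \delta_0$. In the limit $h \to 0$ the discrete trajectory converges to the continuous one, so this extension collapses and the posterior bound is consistent both with the original estimate \eqref{main_result} and with the continuous-time posterior result in Theorem \ref{thm:uniform_error_boundposterior}.
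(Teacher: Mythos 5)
Your proposal is correct and is essentially the argument the paper intends: the paper offers no separate proof for this theorem, only the remark that the estimate of Theorem \ref{Theorem_err_analyz} extends by observing that $\delta_0$ enters solely through term (B) of Lemma \ref{Lemma_1}, evaluated at the computed iterates, so it can be replaced by the trajectory-restricted quantity $\delta_1$. Your observation that $\mathcal{C}$ in \eqref{set of traj} should be read as (or enlarged to) covering the discrete forward-Euler iterates is the right reading of the posterior statement and does not change the argument.
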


Up to this point, we mainly analyze the error term for the forward Euler scheme. In our numerical implementation, we adopt the scheme \eqref{JKO_A}, which turns out to be a semi-implicit scheme with $O(h^2)$ local error. In the following discussion, we compare the difference between the numerical solutions of our semi-implicit scheme and forward Euler scheme.

Recall that the parametric Fokker--Planck equation \eqref{wass_grad_flow_on_para_spc} is an ODE: $\dot\theta = - G(\theta)^{-1}\nabla_\theta H(\theta)$. We consider two numerical schemes:
 \begin{align}
    \theta_{n+1} =& \theta_{n}-hG(\theta_n)^{-1}\nabla_\theta H(\theta_n) \quad\theta_0=\theta,~n=1,2,...,N\quad \textrm{forward Euler scheme},\label{explicit_scheme}\\
    \hat{\theta}_{n+1} = & \hat{\theta}_{n}-hG(\hat{\theta}_{n})^{-1}\nabla_\theta H(\hat{\theta}_{n+1}) \quad\hat{\theta}_0=\theta,~n=1,2,...,N\quad \textrm{semi-implicit Euler scheme}.  \label{semi-implicit_scheme}
 \end{align}
 We denote $F(\theta')=G(\theta')^{-1}\nabla_\theta F(\theta'')$, and set:
 \begin{align*}
 L_1 =& \underset{1\leq n\leq N}{\max}\left\{\|F(\theta_n)-F(\hat{\theta}_n)\|/\|\theta_n-\hat{\theta}_n\|\right\}, \quad L_2=\underset{1\leq k\leq N}{\max}\{\|\nabla_\theta H(\hat{\theta}_n)-\nabla_\theta H(\hat{\theta}_{n+1})\|/\|\hat{\theta}_n-\hat{\theta}_{n+1}\|\}, \\
 M_1=&\max_{1\leq n\leq N}\{\|G(\hat{\theta}_n)^{-1}\|\},\quad M_2=\max_{1\leq n\leq N}\{\|\nabla_\theta H(\hat{\theta}_n)\|\},
 \end{align*}
 where $\|\cdot \|$ is a vector norm (or its corresponding matrix norm). 
 \begin{theorem} [Relation between forward Euler and proposed semi-implicit schemes]\label{rel forward Euler and semi-implicit}
The numerical solutions $\theta_n$ and $\hat{\theta}_n$ of the forward Euler and semi-implicit schemes with time stepsize $h$ and $Nh=T$ satisfy
 \begin{equation*}
  \|\theta_n-\hat{\theta}_n\|\leq ((1 + L_1 h )^n-1)\frac{M_1^2M_2L_2}{L_1}h \quad n=1,2,...,N
 \end{equation*}
  \end{theorem}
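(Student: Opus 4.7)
The plan is to derive a one-step recurrence for the difference $a_n = \|\theta_n - \hat\theta_n\|$ and then solve it by the standard trick for linear inhomogeneous recurrences. First I would subtract the two update formulas to get
\begin{equation*}
\theta_{n+1}-\hat\theta_{n+1} = (\theta_n-\hat\theta_n) - h\bigl[F(\theta_n)-G(\hat\theta_n)^{-1}\nabla_\theta H(\hat\theta_{n+1})\bigr],
\end{equation*}
where $F(\theta) := G(\theta)^{-1}\nabla_\theta H(\theta)$. The key algebraic step is to insert $\pm F(\hat\theta_n) = \pm G(\hat\theta_n)^{-1}\nabla_\theta H(\hat\theta_n)$ so that the bracket splits into the part where the two schemes are compared at the same (explicit) argument, namely $F(\theta_n)-F(\hat\theta_n)$, and the part that measures how much the implicit evaluation differs from the explicit one, namely $G(\hat\theta_n)^{-1}[\nabla_\theta H(\hat\theta_n)-\nabla_\theta H(\hat\theta_{n+1})]$.

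Next I would take norms and use the definitions of the constants. The first piece is bounded by $L_1\|\theta_n-\hat\theta_n\|$ directly from the definition of $L_1$. For the second piece, applying $\|G(\hat\theta_n)^{-1}\|\le M_1$ and the Lipschitz-type constant $L_2$ yields $M_1 L_2\|\hat\theta_n-\hat\theta_{n+1}\|$. To control the remaining step size $\|\hat\theta_{n+1}-\hat\theta_n\|$, I use the semi-implicit update itself: it equals $h\|G(\hat\theta_n)^{-1}\nabla_\theta H(\hat\theta_{n+1})\|\le h M_1 M_2$. Putting the three estimates together gives the linear recurrence
\begin{equation*}
a_{n+1} \le (1+L_1 h)\, a_n + M_1^2 M_2 L_2\, h^2,\qquad a_0=0.
\end{equation*}

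Finally I would solve this recurrence. Since $a_0=0$, iterating yields $a_n \le M_1^2 M_2 L_2 h^2 \sum_{k=0}^{n-1}(1+L_1 h)^k$, and summing the geometric series produces exactly
\begin{equation*}
a_n \le \bigl((1+L_1 h)^n - 1\bigr)\,\frac{M_1^2 M_2 L_2}{L_1}\, h,
\end{equation*}
which is the advertised bound. I do not expect a serious obstacle: the only subtle point is choosing the right $\pm$ insertion so that the Lipschitz constants $L_1$ and $L_2$ (as defined in the paper) apply directly, and keeping careful track that $L_2$ controls the increment of $\nabla_\theta H$ along the \emph{semi-implicit} trajectory $\{\hat\theta_n\}$, which is exactly how $L_2$ is defined. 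Everything else is a routine Gronwall-type summation.
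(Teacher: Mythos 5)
Your proposal is correct and follows essentially the same route as the paper: subtract the two updates, insert $\pm F(\hat\theta_n)$, bound the pieces by $L_1$, $M_1L_2$, and the step bound $\|\hat\theta_{n+1}-\hat\theta_n\|\le hM_1M_2$, and solve the resulting recurrence $a_{n+1}\le(1+L_1h)a_n+M_1^2M_2L_2h^2$ with $a_0=0$. The paper solves the recurrence via the shift $a_n+\frac{M_1^2M_2L_2}{L_1}h$ while you sum the geometric series directly, but these are the same computation and yield the identical bound.
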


This result implies that 
 $\|\theta_n-\hat{\theta}_n\|$ can be upper bounded by $(e^{L_1 T }-1)\frac{M_1^2M_2L_2}{L_1}h$.
 When assuming the upper bounds $L_1,L_2,M_1,M_2\sim O(1)$ as $h\rightarrow 0$ (or equivalently $N \rightarrow \infty$), the differences between our proposed semi-implicit scheme and forward Euler scheme can be bounded by $O(h)$. As a consequence, we are able to establish $O(h)$ error bound for our proposed scheme \eqref{JKO_A}.
 
 \begin{proof}[Proof of Theorem \ref{rel forward Euler and semi-implicit}]
 If we subtract \eqref{semi-implicit_scheme} from \eqref{explicit_scheme},
 \begin{equation*}
   (\theta_{n+1}-\hat{\theta}_{n+1}) = (\theta_n-\hat{\theta}_n)-h(G(\theta_n)^{-1}\nabla_\theta H(\theta_n) - G(\hat{\theta}_{n})^{-1}\nabla_\theta H(\hat{\theta}_{n+1}))
 \end{equation*}
 and denote $e_n=\theta_n-\hat{\theta}_n$ and $F(\theta) = G(\theta_n)^{-1}\nabla_\theta H(\theta)$, we may rewrite this equation as
 \begin{equation*}
   e_{n+1} = e_n-h(F(\theta_n)-F(\hat{\theta}_n)+G(\hat{\theta}_n)^{-1}(\nabla_\theta H(\hat{\theta}_n)-\nabla_\theta H(\hat{\theta}_{ n+1 }))).
 \end{equation*}
 Recall the definitions of $L_1,L_2,M_1$, we have
 \begin{equation*}
   \|e_{n+1}\|\leq\|e_n\|+hL_1\|e_n\|+hM_1L_2\|\hat{\theta}_{n+1}-\hat{\theta}_n\|.
 \end{equation*}
 By the semi-simplicit scheme, we have
 \begin{equation*}
    \hat{\theta}_{n+1}-\hat{\theta}_n = -hG(\hat{\theta}_n)^{-1} \nabla_\theta H(\hat{\theta}_{n+1})
 \end{equation*}
 Thus $|\hat{\theta}_{n+1}-\hat{\theta}_n\|\leq hM_1M_2$. This gives us a recurrent inequality,
 \begin{equation*}
    \|e_{n+1}\|\leq\|e_n\|+hL_1\|e_n\| + M_1^2M_2L_2 h^2,
 \end{equation*}
which implies
 \begin{equation*}
    \left(\|e_{n+1}\| +\frac{M_1^2M_2L_2}{L_1}h \right)\leq (1+hL_1)\left(\|e_n\| +\frac{M_1^2M_2L_2}{L_1}h\right)\quad n=0,1,...,N-1.
 \end{equation*}
This leads to:
 \begin{equation*}
   \|e_n\|\leq ((1+hL_1)^n-1) \frac{M_1^2M_2L_2}{L_1}h.
 \end{equation*}
 When we solve the ODE on $[0,T]$ with $h=T/N$, we have $(1+hL_1)^n\leq(1+hL_1)^N=\left(1+\frac{L_1 T }{N}\right)^{N}\leq e^{L_1T}$. This means all terms $\{\|e_n\|\}_{1\leq n\leq N}$ can be upper bounded by $(e^{L_1T}-1)\frac{M_1^2M_2L_2}{L_1}h$.
 \end{proof}

\noindent
\begin{remark}\label{remark on relu approx}
In order to make our argument clear and concise, we omitted the errors introduced by the approximation of ReLU function $\psi_\nu$. Careful analysis on how well $\nabla\psi_\nu$ can approximate a general gradient field is among our future research directions.
\end{remark}

\begin{remark}\label{remark on SGD}
 The convergence property of the Stochastic Gradient Descent method (mainly Adam method) used in our Algorithm \ref{semi-backward_2} is not discussed in details. One can check its convergence analysis in the paper \cite{kingma2014adam}. Based on our experiences, for most of the smooth potential functions $V \in \mathcal{V}$ with diffusion coefficient ${D}$ not too small (i.e. ${D} > 0.1$), our algorithm shows convergent behavior and produces accurate result when checking against the true solution if it is possible.  
\end{remark}

\section{Numerical examples}\label{section 6}
In this section, we consider solving the Fokker--Planck equation (\ref{FPE}) on $\mathbb{R}^d$ with initial condition $\rho_0(x)=\mathcal{N}(0,I_d)$\footnote{We can set initial value $\theta_0$ so that $T_{\theta_0}=Id$ and thus $\rho_0={T_{\theta_0}}_{\sharp}p$ is standard Gaussian distribution.} by using Algorithm \ref{semi-backward_2}. We demonstrate several numerical examples with different potential functions $V$. 
In the following experiments, unless specifically stated, we choose the length of normalizing flow $T_\theta$ as $60$. We set $\psi_\nu:\mathbb{R}^d\rightarrow \mathbb{R}$ as ReLU network with number of layers equals $6$ and hidden dimension equals $20$. We use Adam (Adaptive Moment Estimation) Stochastic Gradient Descent method \cite{kingma2014adam} with default parameters ${D}_1=0.9,{D}_2 = 0.999; \epsilon=10^{-8}$. For the parameters of Algorithm \ref{semi-backward_2}, we choose $\alpha_{\textrm{out}}=0.005$, $\alpha_{\textrm{in}}=0.0005$. We follow Remark \ref{rmk:mention large sample} to choose $K_{\textrm{in}},K_{\textrm{out}} = \max\{1000,300d\}$. Based on our experience, we set $M_{\textrm{out}}=O(\frac{h}{\alpha_{\textrm{out}}})$. The suitable value of $M_{\textrm{in}}$ can be chosen after several quick tests to make sure that every inner optimization problem \eqref{inner optimization problem} can be solved.

Our Python code is uploaded to Github, which can be  downloaded from website \url{https://github.com/LSLSliushu/Parametric-Fokker--Planck-Equation}.

\subsection{Quadratic Potential}
Our first set of examples uses quadratic potential $V$. In this case, we can compute the explicit solution of \eqref{FPE}. These examples are used for the verification purpose, because we can check the results with exact solutions.
\subsubsection{2D cases}
We take $d=2$, and set $V(x)=\frac{1}{2}(x-\mu)^{\textrm{T}}\Sigma^{-1}(x-\mu)$, with $\mu = [3, 3]^{\textrm{T}}$ and $\Sigma= \textrm{diag}([0.25, 0.25])$. The solution of \eqref{FPE} is:
\begin{equation*}
   \rho_t = \mathcal{N}(\mu(t), \Sigma(t)) \quad \mu(t) = (1-e^{-4t})\mu , ~ \Sigma(t) =\left[\begin{array}{cc}
       \frac{1}{4}+\frac{3}{4}e^{-8t} &  \\
        & \frac{1}{4}+\frac{3}{4}e^{-8t}
   \end{array}\right]~~ t\geq 0.
\end{equation*}
We solve the equation in time interval $[0, 0.7]$ with time stepsize $0.01$. We set $M_{\textrm{out}}=20$ and $M_{\textrm{in}}=100$.

To compare against the exact solution, we set $M=6000$ and sample $\{\boldsymbol{X}_1,...,\boldsymbol{X}_M\}\sim {T_{\theta_k}}_{\sharp}p$ at time $t_k$ and use:
\begin{equation*}
\hat{\mu}^k=\frac{1}{M}\sum_{j=1}^M\boldsymbol{X}_j, \quad  \hat{\Sigma}^k = \frac{1}{M-1}\sum_{j=1}^M (\boldsymbol{X}_j-\hat{\mu}_k)(\boldsymbol{X}_j-\hat{\mu}_k)^{\textrm{T}}
\end{equation*}
to compute for its empirical mean and covariance of $\hat{\rho}_k$. We plot the blue curves $\{\hat{\mu}^{(k)}\}$, $\{\hat{\mu}^{(k)}_2\}$, $\{(\hat{\Sigma}^{(k)}_{11}, \hat{\Sigma}^{(k)}_{22})\}$, $\{(\hat{\mu}^{(k)}_1,\hat{\Sigma}^{(k)}_{11})\}$ in Figure \ref{plot_statistics_isotropic_2D}, these plots properly captures the exponential convergence exhibited by the explicit solution (red curves) $\{\mu(t)\}$, $\{\mu_2 (t)\}$, $\{(\Sigma_{11}(t), \Sigma_{22}(t))\}$, $\{(\mu_1(t) , \Sigma_{11}(t))\}$.
\begin{figure}[!htb]
\minipage{0.24\textwidth}
  \includegraphics[width=\linewidth]{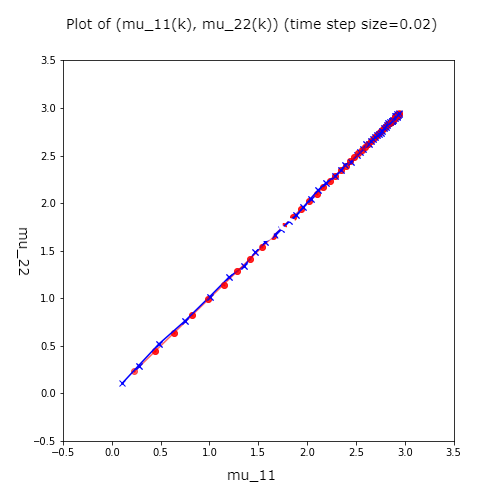}
  \caption{$\{\hat{\mu}^{(k)}\}$}
\endminipage\hfill
\minipage{0.24\textwidth}
  \includegraphics[width=\linewidth]{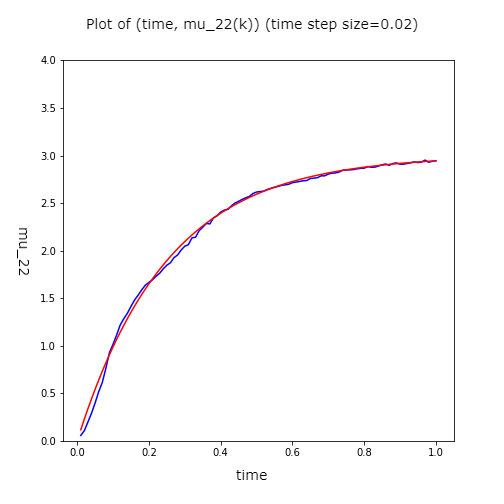}
  \caption{$\{\hat{\mu}_1^{(k)}\}$}
\endminipage\hfill
\minipage{0.24\textwidth}
  \includegraphics[width=\linewidth]{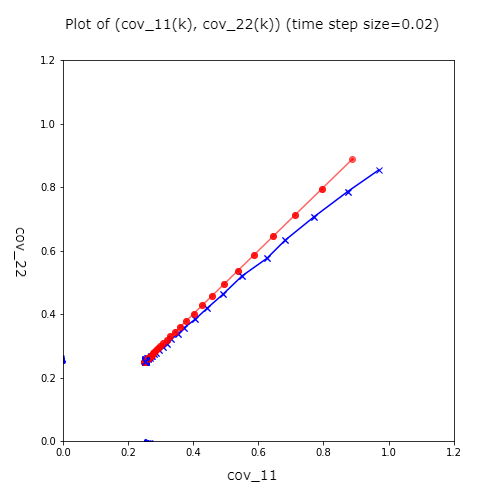}
  \caption{$\{(\hat{\Sigma}^{(k)}_{11},\hat{\Sigma}^{(k)}_{22})\}$}
\endminipage\hfill
\minipage{0.24\textwidth}%
  \includegraphics[width=\linewidth]{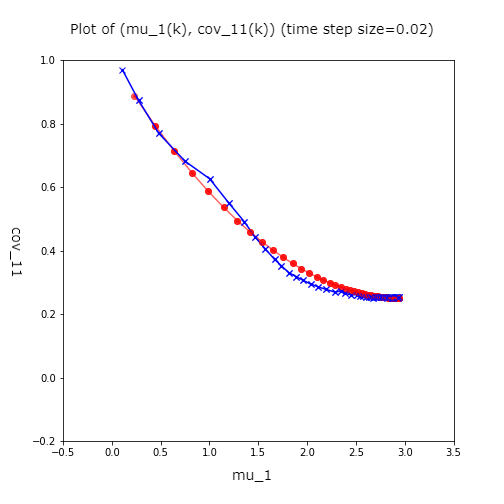}
  \caption{$\{(\hat{\mu}_{(k)},\hat{\Sigma}_{11}^{(k)})\}$}
\endminipage
\caption{Plot of empirical statistics (numerical solution: blue; real solution: red) }\label{plot_statistics_isotropic_2D}
\end{figure}

We also exam the network $\psi_{\hat{\nu}}$ trained at the end of each outer iteration. Generally speaking, the gradient field $\nabla\psi_{\hat{\nu}}$ reflects the movements of the particles under the Vlasov-typed dynamic \eqref{Vlasov-type SDE } at every time step. Here are the graph of $\psi_{\hat{\nu}}$ at $k=10,k=140$ (Figure \ref{Graph of psi iso Gaussian 1}, Figure \ref{Graph of psi iso Gaussian 2}). As we can see from these graphs, the gradient field is in the same direction, but judging from the variation of two $\psi_{\hat{\nu}}$s, when $k=10$, $|\nabla\psi_{\hat{\nu}}|$ is much greater than its value at $k=140$. This is because when $t=140$, the distribution is already close to the Gibbs distribution, the particles no longer need to move for a long distance to reach their final destination.
\begin{figure}[!htb]
\minipage{0.46\textwidth}
\centering
  \includegraphics[width=0.8\linewidth]{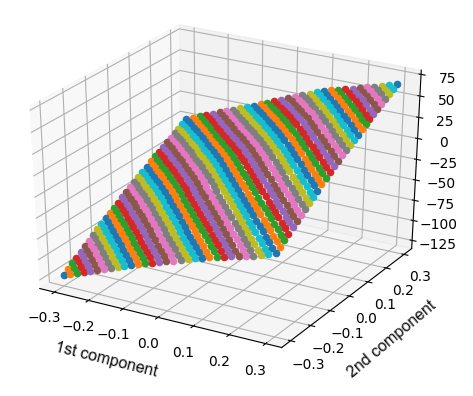}
  \caption{Graph of $\psi_{\hat{\nu}}$ after $M_{\textrm{out}}=20$ outer iterations at $k=10$th time step}\label{Graph of psi iso Gaussian 1}
\endminipage\hfill
\minipage{0.46\textwidth}
\centering
  \includegraphics[width=0.8\linewidth]{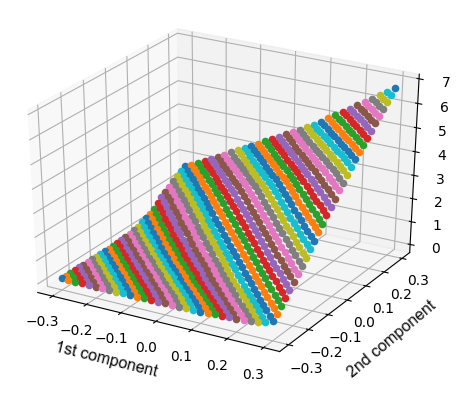}
  \caption{Graph of $\psi_{\hat{\nu}}$ after $M_{\textrm{out}}=20$ outer iterations at $k=140$th time step}\label{Graph of psi iso Gaussian 2}
\endminipage
\end{figure}

In the next example, we apply our algorithm to the Fokker--Planck equation with non-isotropic potential 
\begin{equation*}
   V(x)=\frac{1}{2}(x-\mu)^{\textrm{T}}\Sigma^{-1}(x-\mu) \quad \mu=\left[\begin{array}{c}
        3 \\
        3 
   \end{array}\right] ~\textrm{and}~ \Sigma = \left[\begin{array}{cc}
        1 &  \\
        & \frac{1}{4}
   \end{array}\right].
\end{equation*}
One can verify that the solution to \eqref{FPE} is
\begin{equation*}
  \rho_t = \mathcal{N}(\mu_t, \Sigma_t) \quad \mu_t = \left[\begin{array}{c}
       3(1-e^{-t}) \\
       3(1-e^{-4t}) 
  \end{array}\right], ~ \Sigma_t=\left[\begin{array}{cc}
       1 &  \\
       & \frac{1}{4}(1+3e^{-8t})
  \end{array}\right].
\end{equation*}
We use the same parameters as before. We solve \eqref{FPE} on time interval $[0, 1.4]$ with time step size $0.005$. 

Similarly, we also plot the empirical mean trajectory, one can compare it with the true solution $\mu(t)=(3(1-e^{-t}), 3(1-e^{-4t}))$. Both the curvature and the exponential convergence to $\mu$ are captured by our numerical result. To demonstrate the effectiveness of our formulation, we also compare the mean trajectory obtained by our result
(Figure \ref{mean_traj of wass grad flow}) with the mean trajectory obtained by computing the flat gradient flow
$\dot\theta = -\nabla_\theta H(\theta )$ (Figure \ref{mean_traj of flat grad flow}). It reveals very different behavior of the Wasserstein gradient ($G(\theta)^{-1}\nabla_\theta $) flow and the flat gradient ($\nabla_\theta$) flow. Clearly, our approximation based on Wasserstein gradient flow captures the exact mean function much more accurately.
\begin{figure}[!htb]
\minipage{0.40\textwidth}
  \includegraphics[width=0.8\linewidth]{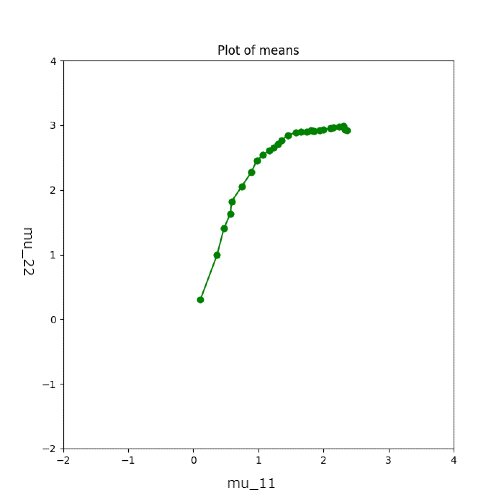}
  \caption{mean trajectory of $\{\rho_{\theta_t}\}$ w.r.t. $\dot\theta = -G(\theta)^{-1}\nabla_\theta H(\theta)$}\label{mean_traj of wass grad flow}
\endminipage\hfill
\minipage{0.40\textwidth}
  \includegraphics[width=0.8\linewidth]{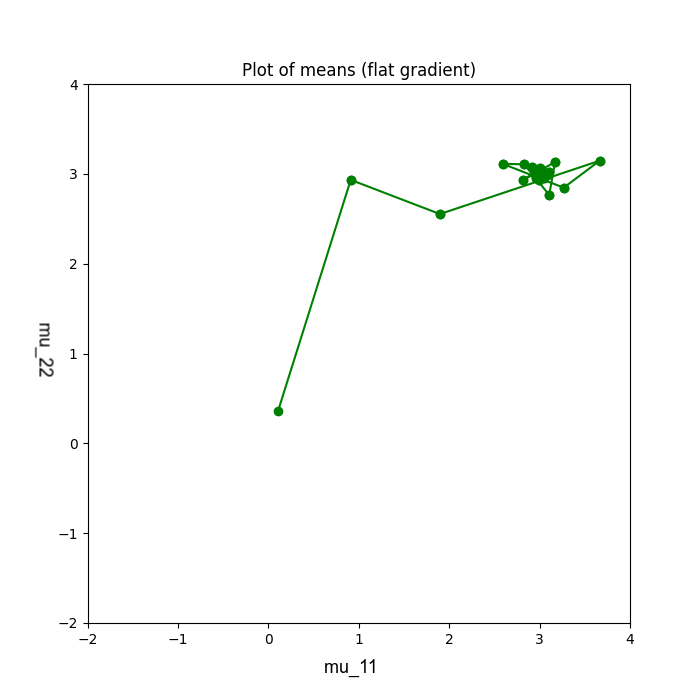}
  \caption{mean trajectory of $\{\rho_{\theta_t}\}$ w.r.t. $\dot\theta = -\nabla_\theta H(\theta)$}\label{mean_traj of flat grad flow}
\endminipage
\end{figure}
We compare the graph of trained $\psi_{\hat{\nu}}$ at different time steps $k=10,140$ (Figure \ref{nonisotropic gauss psi graph k=10}, \ref{nonisotropic gauss psi graph k=140}). The directions of $\nabla\psi_{\hat{\nu}}$ at $k=10$ and $k=140$ is different from the previous example. This is caused by the non-isotropic quadratic (Gaussian) potential $V$ used in this example. 
\begin{figure}[!htb]
\minipage{0.46\textwidth}
\centering
  \includegraphics[width=0.8\linewidth]{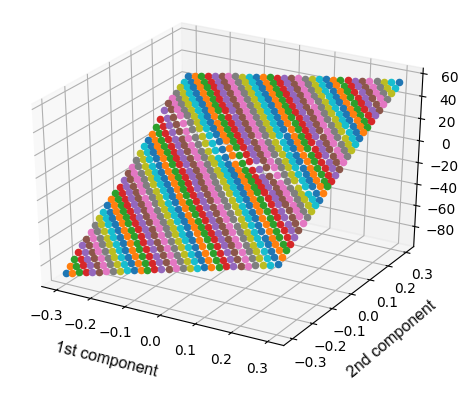}
  \caption{Graph of $\psi_{\hat{\nu}}$ after $M_{\textrm{out}}=20$ outer iterations at $k=10$th time step}\label{nonisotropic gauss psi graph k=10}
\endminipage\hfill
\minipage{0.46\textwidth}
\centering
  \includegraphics[width=0.8\linewidth]{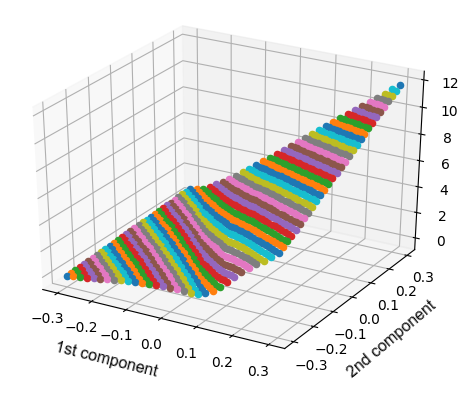}
  \caption{Graph of $\psi_{\hat{\nu}}$ after $M_{\textrm{out}}=20$ outer iterations at $k=140$th time step}\label{nonisotropic gauss psi graph k=140}
\endminipage\hfill
\end{figure}

\subsubsection{Verification of the error estimate}
We verify the $O(h)$ error estimation discussed in \ref{subsection on error analysis for forward Euler scheme} based on numerical experiments with quadratic potentials. We consider $V(x) = |x-\mu|^2$ defined on $\mathbb{R}^2$ with $\mu = (12.0, 12.0)$ and $\rho_0$ as standard Gaussian on time interval $[0, 1]$. We run our algorithm with several different time step size $h=0.01, 0.05, 0.08, 0.1, 0.2, 0.3$ and record their corresponding mean trajectory $\{\hat{\mu}^{(k)}\}$ as defined in Section 6.1.1. During this process, we need to adjust our hyperparameters $\alpha_{\textrm{in}}, \alpha_{\textrm{out}}, M_{\textrm{in}}, M_{\textrm{out}}$ correspondingly in order to guarantee the convergence of Adam method. Denote $\{\mu(t_k)\}$ as the real solution. We compute the average $l^2$ error of mean values as $\textrm{AveErr}(h) = \frac{1}{N}\sum_{k}|\hat{\mu}^{(k)}-\mu(t_k)|$. 
We pick $h$ in a range larger than $0.01$ because when $h$ is smaller, the influence from  the approximation error $\delta_0$ of normalizing flow $T_\theta$ as well as initial error $W_2(\rho_0, \rho_{\theta_0})$
start to dominate the overall error.  Figure \ref{Fig_linear_rel} exhibits the linear relationship between our numerical error $\textrm{AveErr}(h)$ and time step size $h$, which confirms our theoretical estimates.
\begin{figure}[ht]
\centering
  \includegraphics[scale=0.6]{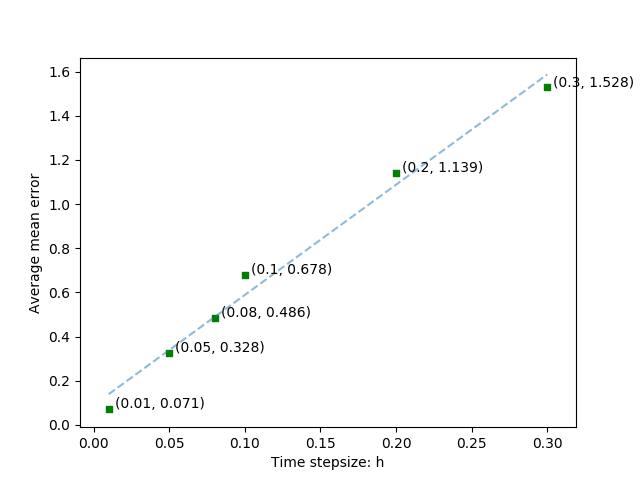}
  \caption{  Numerical errors versus time stepsize $h$.  }\label{Fig_linear_rel}
\end{figure}
\begin{remark}
 The reason of choosing quadratic potential is because its corresponding Fokker--Planck equation has an explicit solution. The reason that we focus on the average error of mean vectors is mainly due to computational accuracy and convenience: one can approximate the error of the mean vector of a distribution by computing the arithmetic average of samples, which is faster and more accurate than computing for the $L^2$-Wasserstein error among two distributions.
\end{remark}

\subsubsection{Higher dimension}
We implement our algorithm in higher dimensional space. In the next example, we take $d=10$, and consider the quadratic potential
\begin{equation*}
    V(x) = \frac{1}{2}(x-\mu)^{\textrm{T}}\Sigma^{-1}(x-\mu) \quad \Sigma=\textrm{diag}(\Sigma_A,I_2,\Sigma_B,I_2,\Sigma_C) \quad \mu = (1,1,0,0,1,2,0,0,2,3)^{\textrm{T}}.
\end{equation*}
Here we set the diagonal blocks as:
\begin{equation*}
  \Sigma_A = \left[\begin{array}{cc}
       \frac{5}{8} & -\frac{3}{8} \\
       -\frac{3}{8}   & \frac{5}{8}
  \end{array}\right] ~~ \Sigma_B = \left[\begin{array}{cc}
      1 &  \\
       & \frac{1}{4}
  \end{array}\right] ~~ \Sigma_C = \left[\begin{array}{cc}
       \frac{1}{4} &  \\
       &  \frac{1}{4}
  \end{array}\right].
\end{equation*}

We solve the equation in time interval $[0, 0.7]$ with time stepsize $0.005$. We set $M_{\textrm{out}}=20$ and $M_{\textrm{in}}=100$. To demonstrate the results, $6000$ samples from the reference distribution $p$ are drawn and pushforwarded by using our computed map $T_{\theta_k}$. We plot a few snapshots of the pushforwarded points (from $t=0.05$ to $t=0.70$) in Figure \ref{sample plots 10d gauss}. One can check that the distribution of our numerical computed samples gradually converges to the Gibbs distribution $\mathcal{N}(\mu,\Sigma)$.

We solve \eqref{FPE} on time interval $[0,2]$ with time step size $h=0.005$. We set $K_{\textrm{in}}=K_{\textrm{out}}=3000$ and choose $M_{\textrm{out}}=30$, $M_{\textrm{in}}=100$. To demonstrate the results, $6000$ samples from the reference distribution $p$ are drawn and pushforwarded by using our computed map $T_{\theta_k}$. We exhibit the projection of the samples on $0-1$, $4-5$ and $8-9$ plane in Figure \ref{sample plots 10d gauss} at time $t=2.0$. One can verify that the distribution of our numerical computed samples converges to the Gibbs distribution $\mathcal{N}(\mu,\Sigma)$.
\begin{figure}[!htb]
\minipage{0.3\textwidth}  \includegraphics[width=\linewidth]{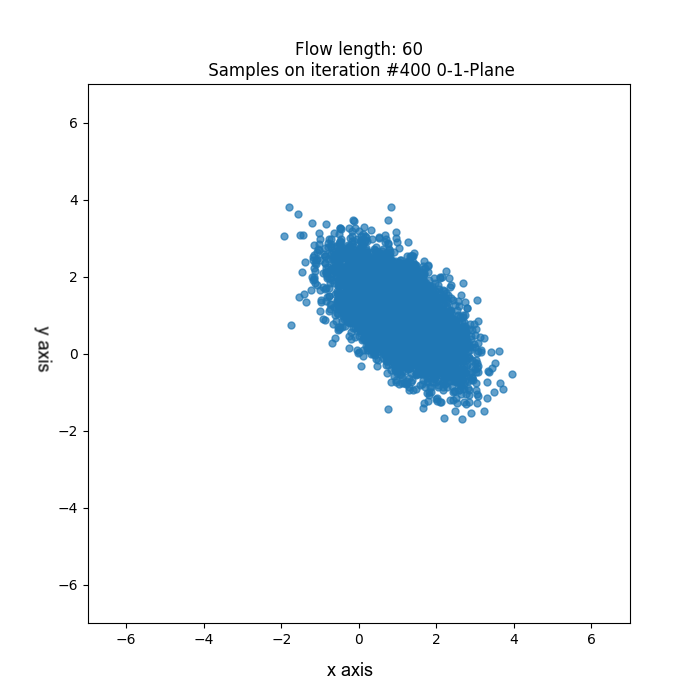}
  \caption{projection of samples on 0-1 plane}
\endminipage\hfill
\minipage{0.3\textwidth}
  \includegraphics[width=\linewidth]{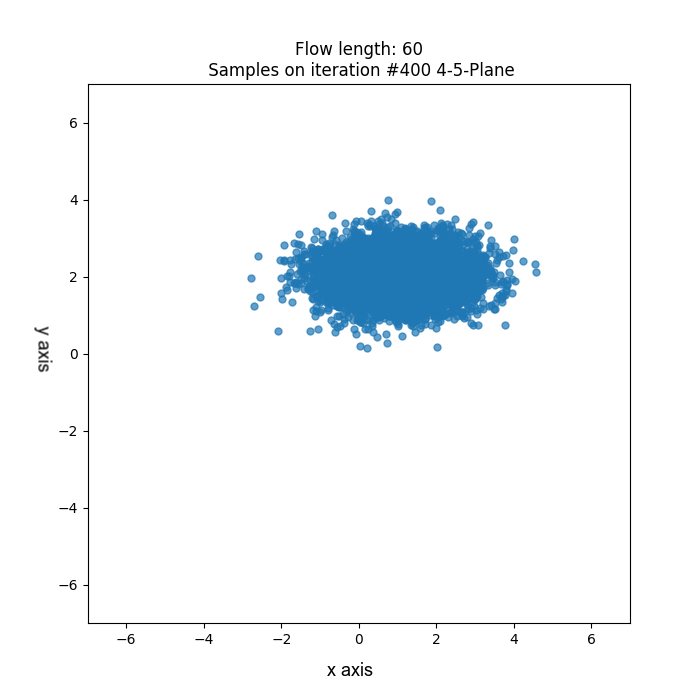}
  \caption{projection of samples on 4-5 plane}
\endminipage\hfill
\minipage{0.3\textwidth}%
  \includegraphics[width=\linewidth]{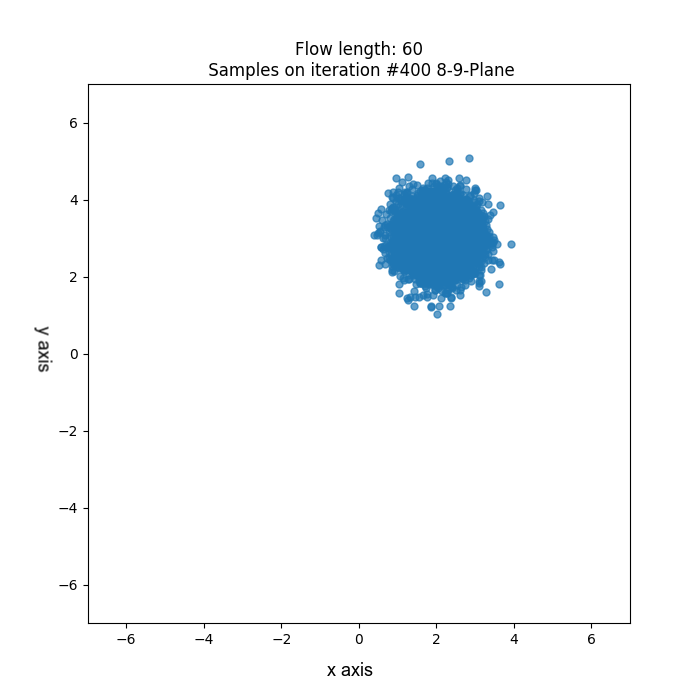}
  \caption{projection of samples on 8-9 plane}
\endminipage
\caption{Sample points of computed $\rho_{\theta_t}$ projected on different planes at $t=2.0$}\label{sample plots 10d gauss}
\end{figure}
The explicit solution to the Fokker--Planck equation is always Gaussian distribution $\mathcal{N}(\mu(t),\Sigma(t))$ with mean $\mu(t)$ and covariance matrix $\Sigma(t)$:
\begin{align*}
\mu(t) = &(1-e^{-t}, 1-e^{-t}, 0, 0, 1-e^{-t}, 2(1-e^{-4t}), 0, 0, 2(1-e^{-4t}), 3(1-e^{-4t}))^{\textrm{T}}\\
\Sigma(t) = & \textrm{diag}(\Sigma_A(t), I, \Sigma_B(t), I, \Sigma_C(t))\\
\textrm{with}~ & ~ \Sigma_A(t) = \left[\begin{array}{cc}
    \frac{5}{8} + f(t) & -\frac{3}{8} + f(t) \\
    -\frac{3}{8} +f(t) & \frac{5}{8} + f(t)
\end{array}\right],~ \Sigma_B(t) = \left[\begin{array}{cc}
  1   &  \\
     & \frac{1+3e^{-8t}}{4}
\end{array}\right] , ~ \Sigma_C(t) = \left[\begin{array}{cc}
   \frac{1+3e^{-8t}}{4}  &  \\
     & \frac{1+3e^{-8t}}{4}
\end{array}\right]\\
\textrm{here}~& ~f(t) = -\frac{2}{7} e^{-t} +\frac{1}{3} e^{-2t} + \frac{55}{168}e^{-8t}
\end{align*}
To compare against the exact solution, we set sample size $M=6000$ and compute the empirical mean $\hat{\mu}^k$ and covariance $\hat{\Sigma}^k$ of our numerical solution $\hat{\rho}_k$ at time $t_k$. We evaluate the error between $\hat{\mu}^{(k)}$ and $\mu(t_k)$; $\hat{\Sigma}^{(k)}$ and $\Sigma(t_k)$. We plot the error curves of $\|\hat{\mu}^{(k)}-\mu(t_k)\|_2$ (Figure \ref{10 dim gauss mean error}) and $\|\hat{\Sigma}^{(k)} - \Sigma(t_k)\|_F$ (Figure \ref{10 dim gauss covariance error}). Here $\|\cdot\|_F$ is the matrix Frobenius norm.
\begin{figure}[!htb]
\minipage{0.33\textwidth}
  \includegraphics[width=\linewidth]{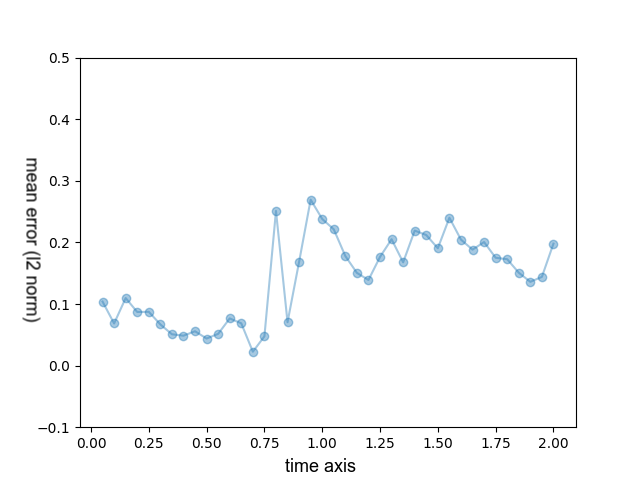}
  \caption{mean error ($l_2$)}\label{10 dim gauss mean error}
\endminipage\hfill
\minipage{0.33\textwidth}
  \includegraphics[width=\linewidth]{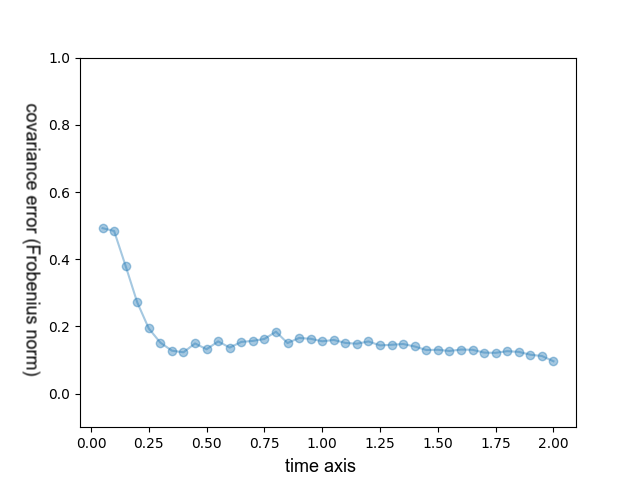}
  \caption{covariance error ($\|\cdot\|_F$)}\label{10 dim gauss covariance error}
\endminipage\hfill
\minipage{0.33\textwidth}
   \includegraphics[width=\textwidth]{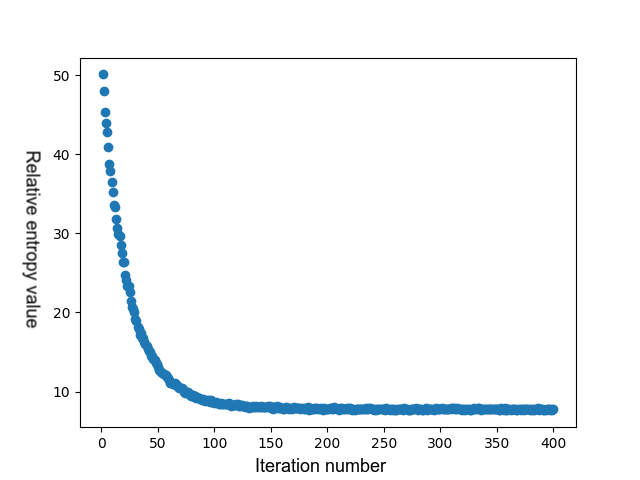}
    \caption{Plot of $\{H(\theta)\}$}\label{10 dim gauss dissipative verification}
\endminipage\hfill
\end{figure}
Figure \ref{10 dim gauss dissipative verification} captures the exponential decay of  $H$ along its Wasserstein gradient flow, this verifies the entropy dissipation property of the Fokker--Planck equation with convex potential function $V$.

In this case, we take a closer look at the loss in the inner loops. Figure \ref{plot inner loop loss 10 d gauss} shows the first $10$ (out of $20$) loss plots when applying SGD method to solve \eqref{modified inner optimization problem} with $k=200$ ($t=200\cdot h=1.0$). The remaining loss plots from the 11th outer iteration to 20th iteration are similar to the plots in the second row. The situations are similar for other time step $k$. We believe that $M_{\textrm{in}}=100$ works well in this problem, the SGD method we used can thoroughly solve the variational problem \eqref{modified inner optimization problem} for each outer loop.

\begin{figure}[tbhp]
\centering
\subfloat[1st iteration]{\includegraphics[width=0.2\linewidth]{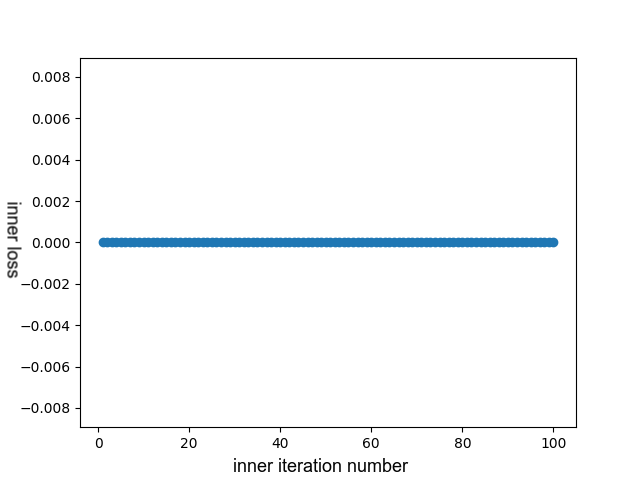}}
\subfloat[2nd iteration]{\includegraphics[width=0.2\linewidth]{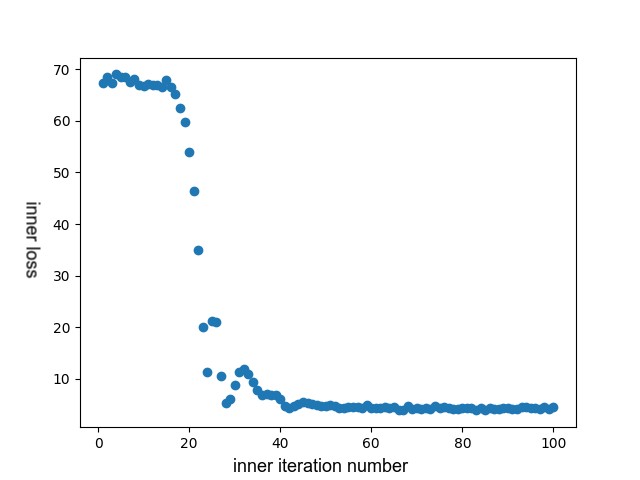}}
\subfloat[3th iteration]{\includegraphics[width=0.2\linewidth]{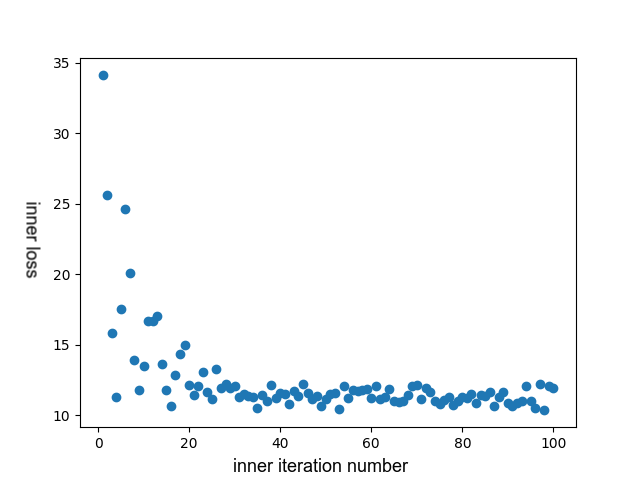}}
\subfloat[4th iteration]{\includegraphics[width=0.2\linewidth]{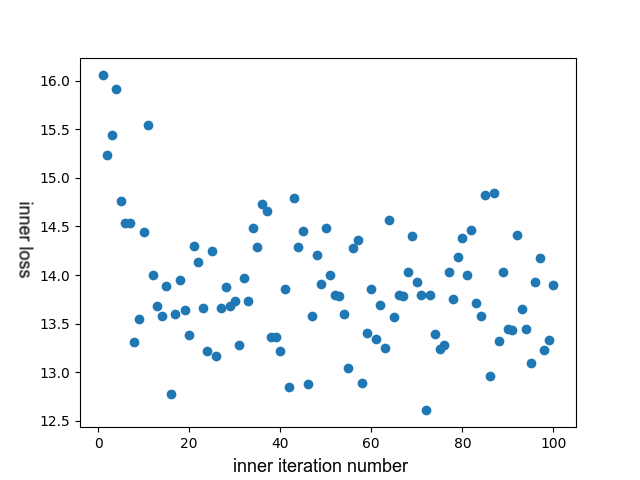}}
\subfloat[5th iteration]{\includegraphics[width=0.2\linewidth]{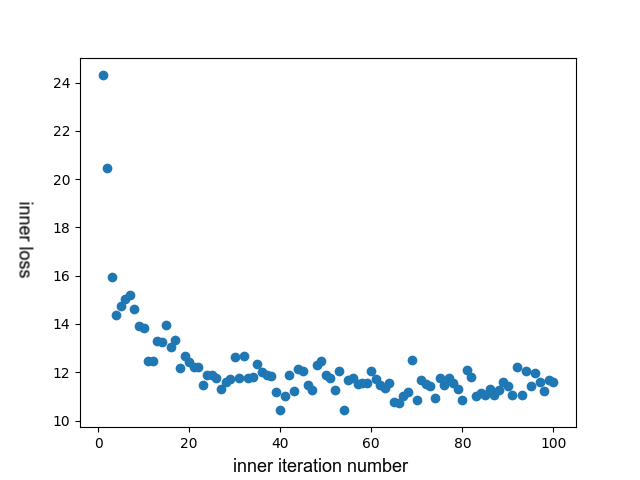}}
\newline
\subfloat[6th iteration]{\includegraphics[width=0.2\linewidth]{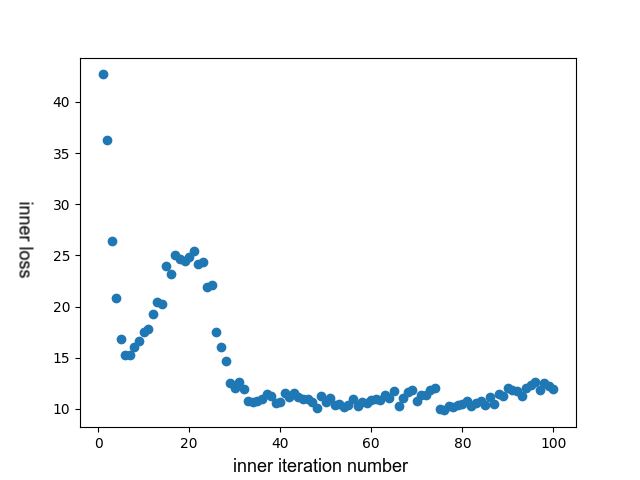}}
\subfloat[7th iteration]{\includegraphics[width=0.2\linewidth]{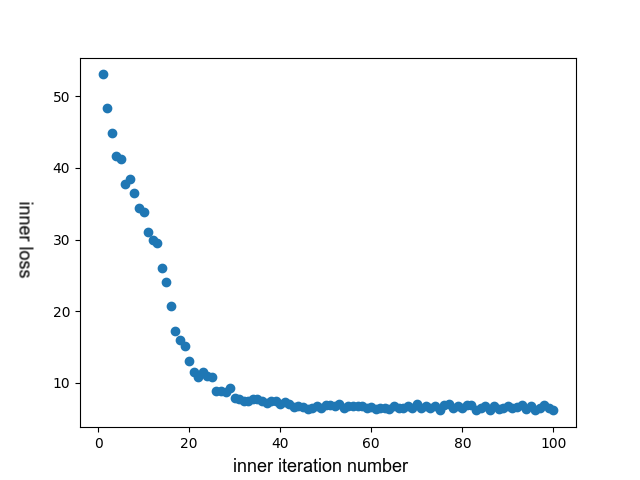}}
\subfloat[8th iteration]{\includegraphics[width=0.2\linewidth]{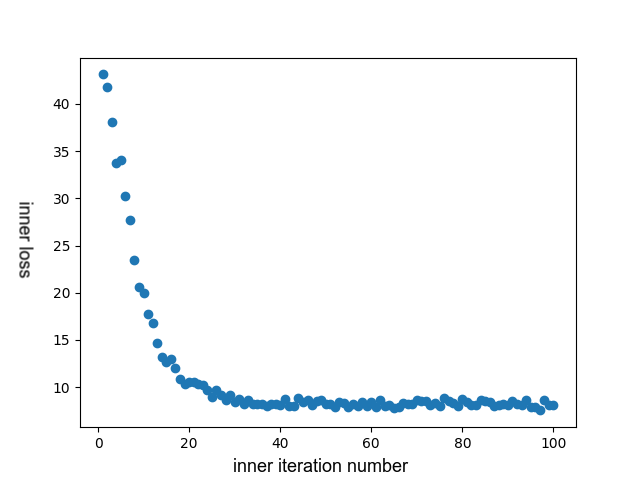}}
\subfloat[9th iteration]{\includegraphics[width=0.2\linewidth]{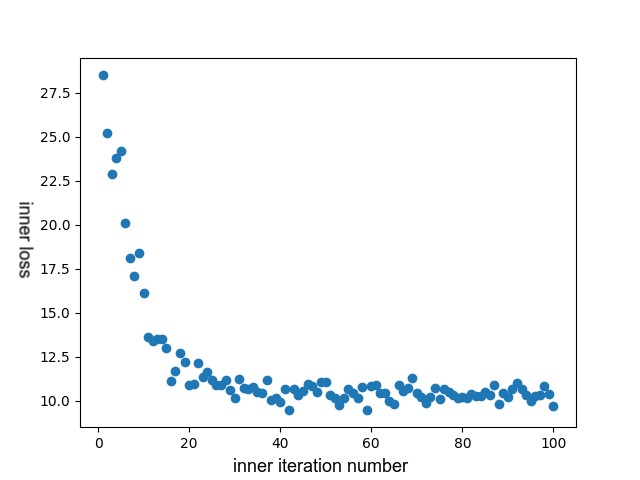}}
\subfloat[10th iteration]{\includegraphics[width=0.2\linewidth]{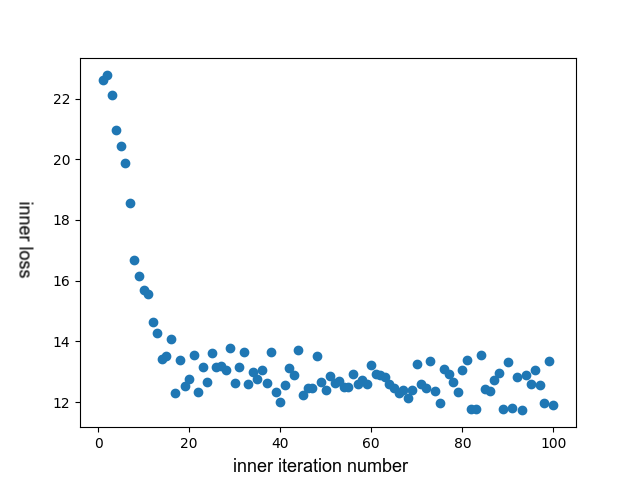}}
\caption{Plots of inner loop losses}
\label{plot inner loop loss 10 d gauss}
\end{figure}

\subsection{Experiments with more general potentials}
In this section, we exhibit two examples with more general potentials in higher dimensional space.
\subsubsection{Styblinski-Tang potential}\label{st ex}
In this example, we set dimension $d=30$, and consider the Styblinski–Tang function \cite{simulationlib}
\begin{equation*}
 V(x)=\frac{3}{50}\left(\sum_{i=1}^{d} x_i^4-16x_i^2+5x_i\right).
\end{equation*}
We solve \eqref{FPE} with potential $V$ on time interval $[0,3]$ with time step size $h=0.005$. We set $K_{\textrm{in}}=K_{\textrm{out}}=9000$ and $M_{\textrm{in}}=100$, $M_{\textrm{out}}=30$.

To exhibit sample results, due to the symmetric structure of the potential function, we  project the sample points in $\mathbb{R}^{30}$ to some random plane, such as $5-15$ plane used in this paper. The sample plots and their estimated densities are presented in Figure \ref{sample plot S-T}.

\begin{figure}[tbhp]
\centering
\subfloat[t=0.30]{\includegraphics[width=0.16\linewidth]{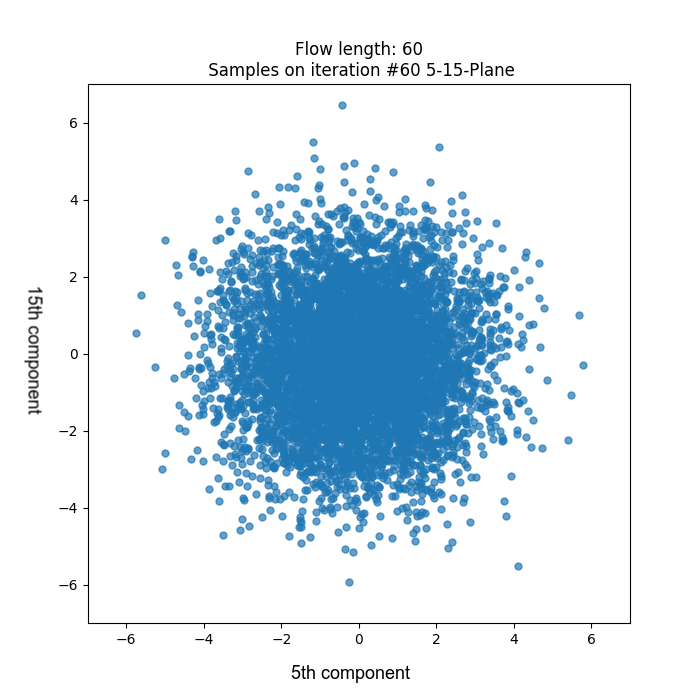}}
\subfloat[t=0.60]{\includegraphics[width=0.16\linewidth]{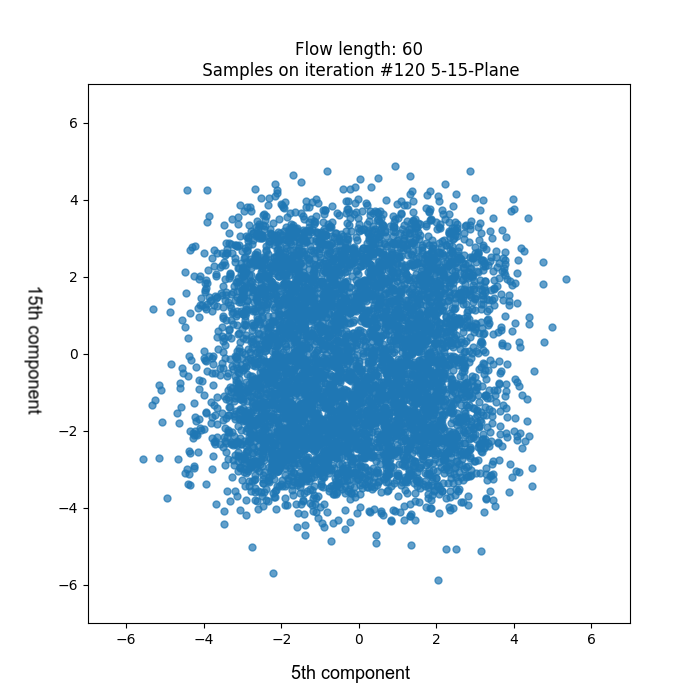}}
\subfloat[t=0.90]{\includegraphics[width=0.16\linewidth]{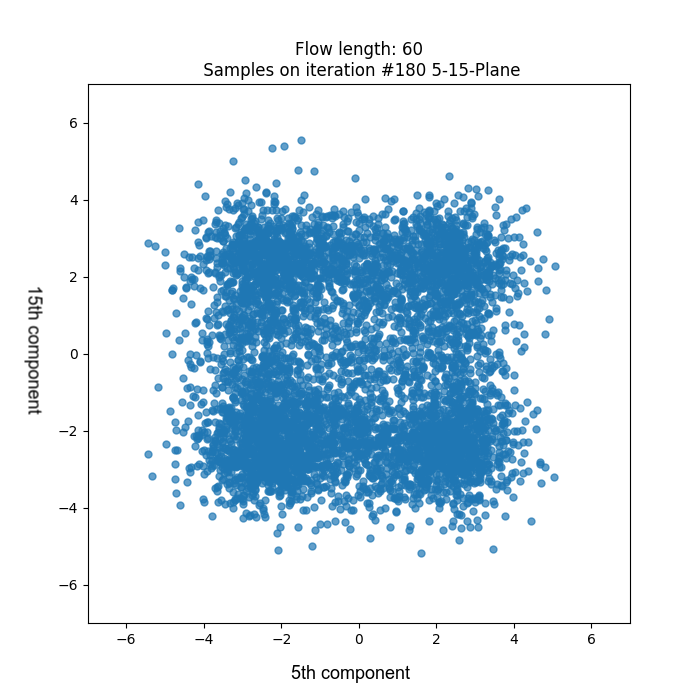}}
\subfloat[t=1.20]{\includegraphics[width=0.16\linewidth]{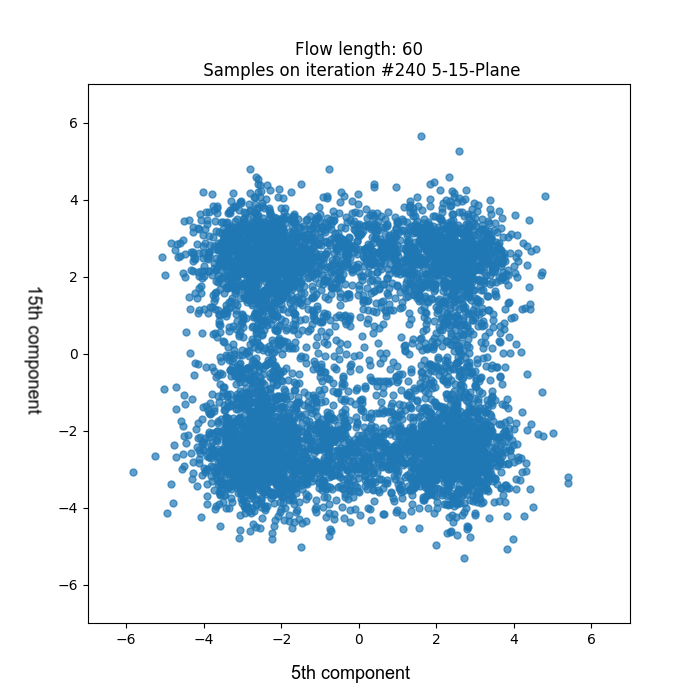}}
\subfloat[t=1.50]{\includegraphics[width=0.16\linewidth]{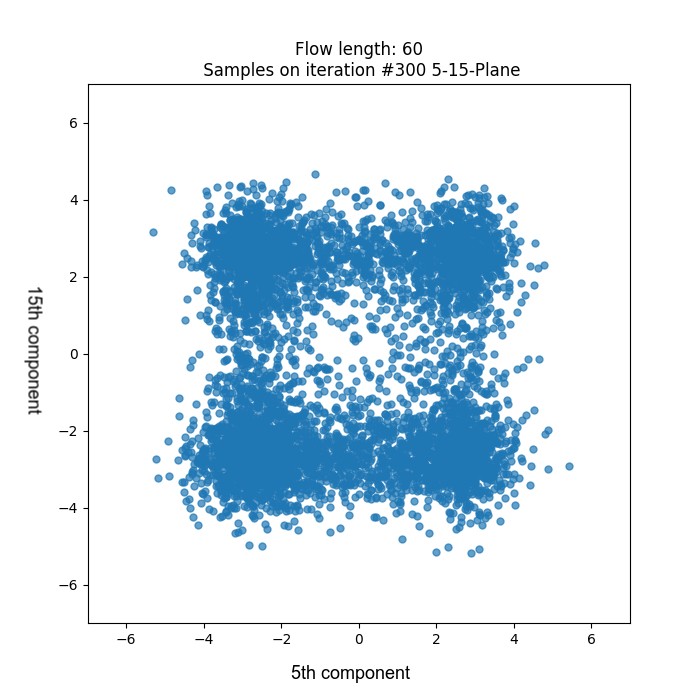}}
\subfloat[t=1.80]{\includegraphics[width=0.16\linewidth]{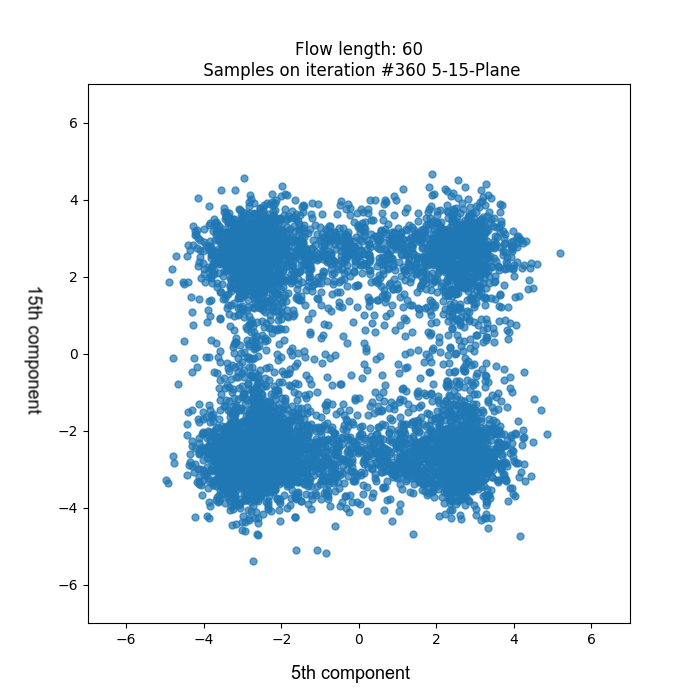}}
\newline
\subfloat[t=0.30]{\includegraphics[width=0.16\linewidth]{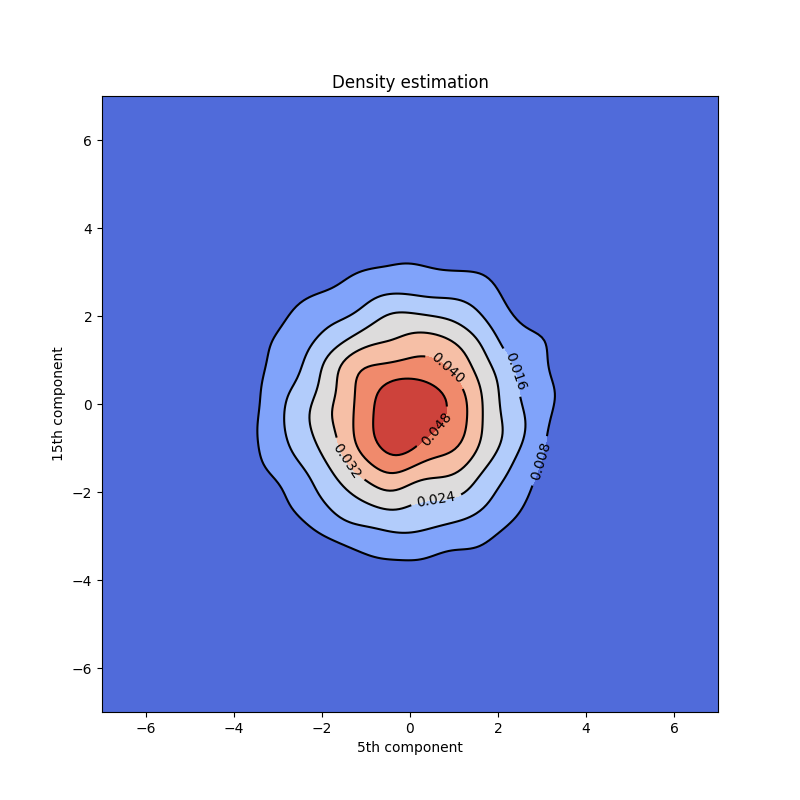}}
\subfloat[t=0.60]{\includegraphics[width=0.16\linewidth]{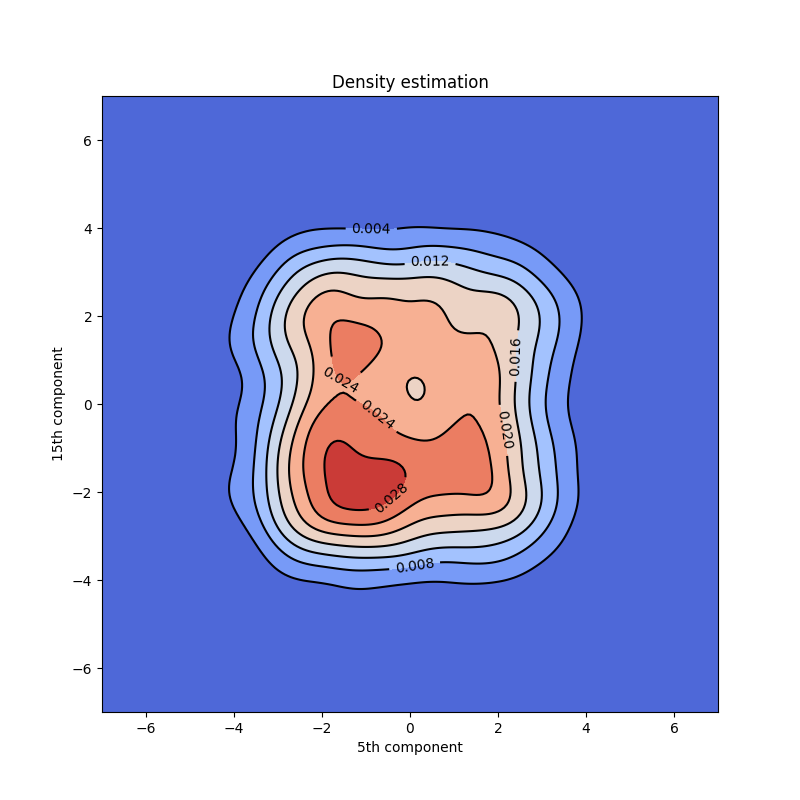}}
\subfloat[t=0.90]{\includegraphics[width=0.16\linewidth]{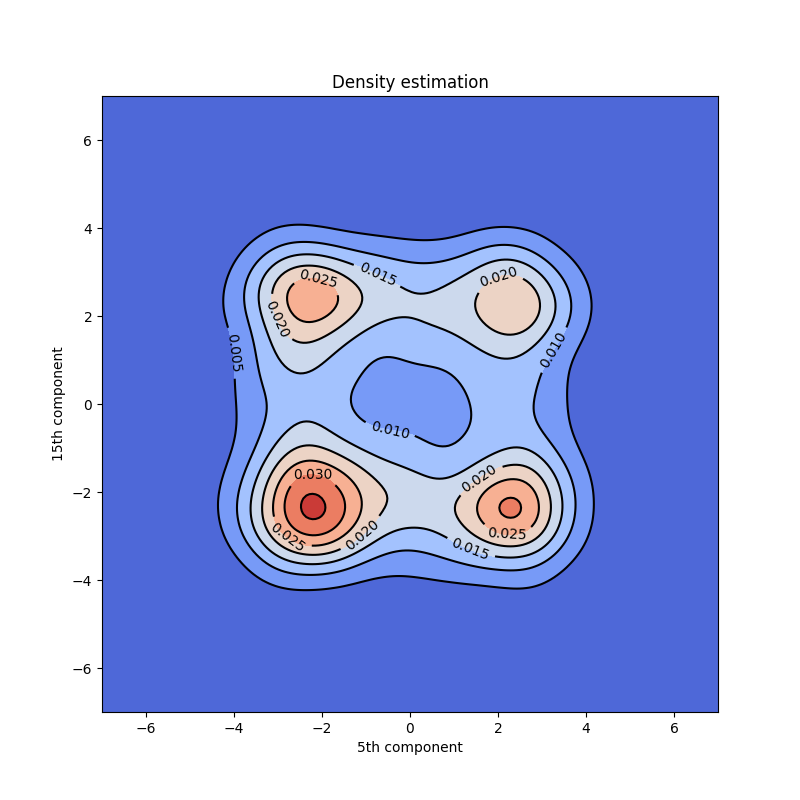}}
\subfloat[t=1.20]{\includegraphics[width=0.16\linewidth]{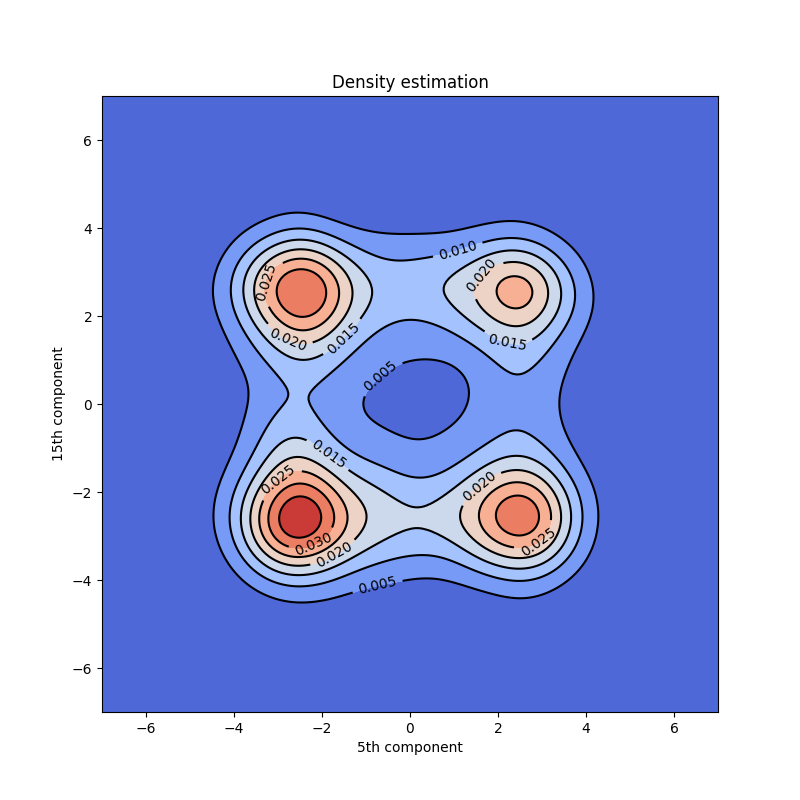}}
\subfloat[t=1.50]{\includegraphics[width=0.16\linewidth]{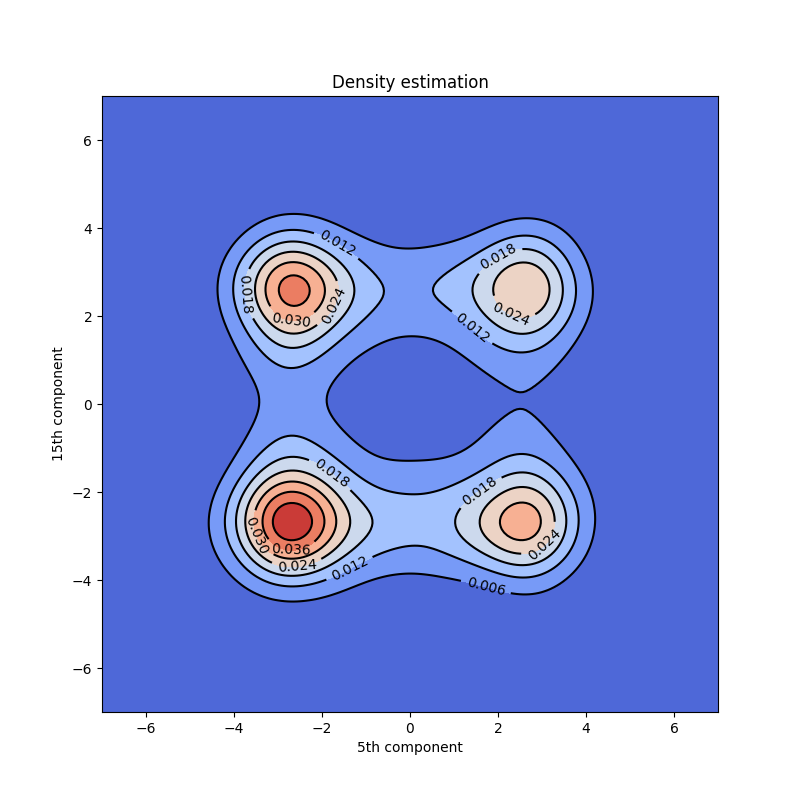}}
\subfloat[t=1.80]{\includegraphics[width=0.16\linewidth]{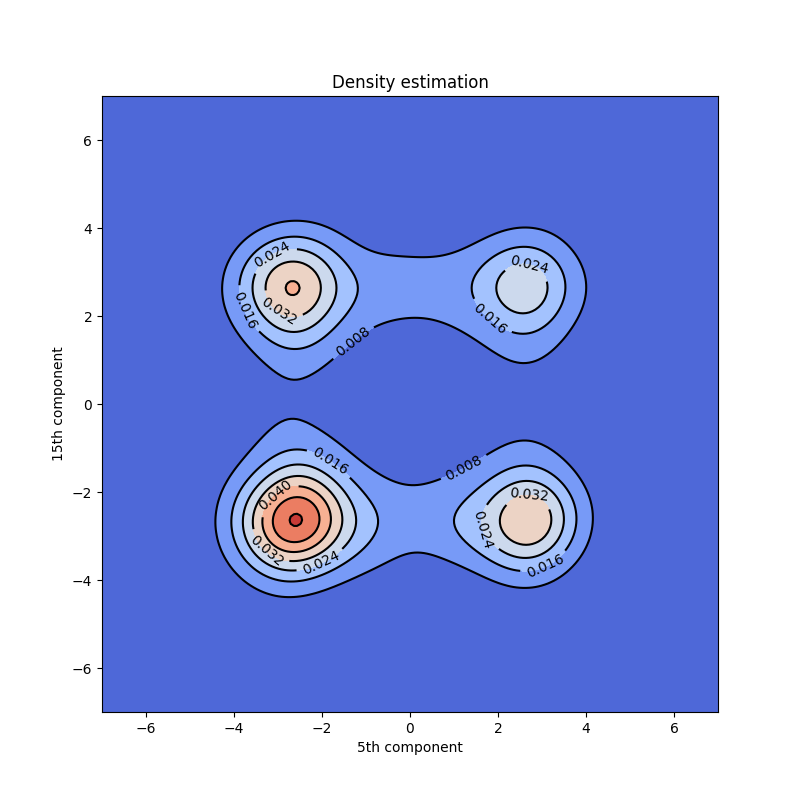}}
\caption{Sample points and estimated densities of $\rho_{\theta_t}$ on $5-15$ plane at different time nodes}
\label{sample plot S-T}
\end{figure}

In this special example, the potential function is the direct addition of same functions, we can exploit this property and show that any marginal distribution 
\begin{equation*}
  \varrho_j(x_j,t)=\int...\int \rho(x, t)~dx_1...dx_{j-1}dx_{j+1}...dx_d
\end{equation*}
of the solution $\rho_t$ solves the following the 1D Fokker--Planck equation:
\begin{equation}
  \frac{\partial\varrho(x,t)}{\partial t}=\frac{\partial}{\partial x}(\varrho(x,t) ~ V'(x))+{D}\Delta\varrho(x,t) \quad \varrho(\cdot, 0)=\mathcal{N}(0,1) \quad \textrm{with}~ V(x)=\frac{3}{50}(x^4-16x^2+5x). \label{1 d FPE}
\end{equation}
We then solve the SDE associated to \eqref{1 d FPE}:
\begin{equation}
  dX_t = - V'(X_t)~dt + \sqrt{2{D}} dB_t \quad X_0\sim\mathcal{N}(0,1). \label{Euler Maruyama 1D SDE}
\end{equation}
Since \eqref{Euler Maruyama 1D SDE} is an SDE in one dimensional space, we can solve it with high accuracy by Euler-Maruyama scheme \cite{kloeden2013numerical} and use it as a benchmark for our numerical solution. The following Figure \ref{1D compare } exhibits both the estimated densities for our numerical solutions (marginal distribution on the 15th component) and the solution of \eqref{Euler Maruyama 1D SDE} given by Euler-Maruyama scheme with step size $0.005$. The sample sizes for both solutions equal to $6000$.
\begin{figure}[!htb]
\minipage{0.165\textwidth}
  \includegraphics[width=\linewidth]{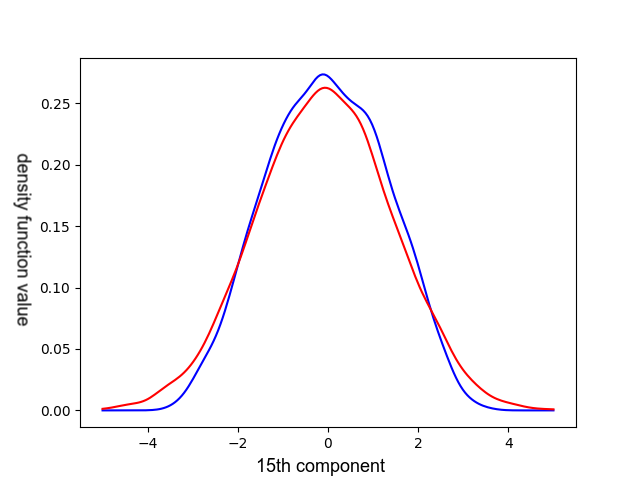}
  \caption*{t=0.30}
\endminipage\hfil
\minipage{0.165\textwidth}
  \includegraphics[width=\linewidth]{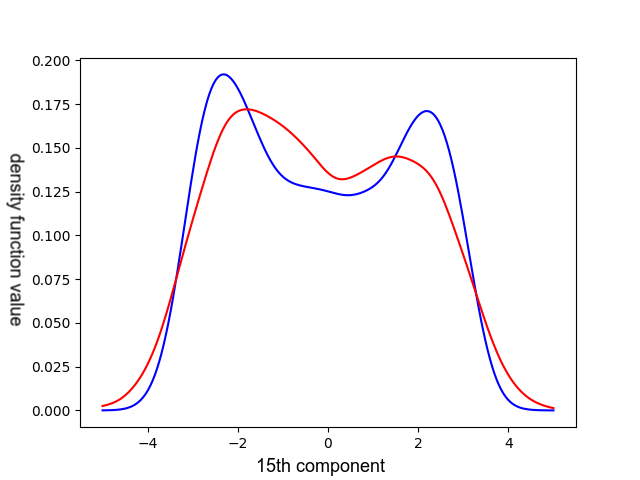}
  \caption*{t=0.60}
\endminipage\hfill
\minipage{0.165\textwidth}
  \includegraphics[width=\linewidth]{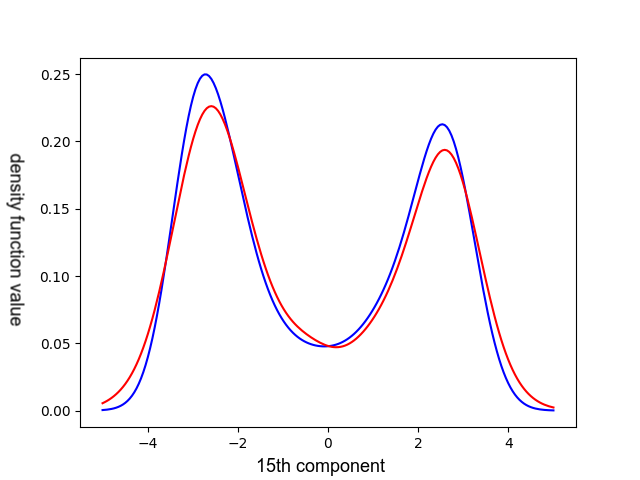}
  \caption*{t=0.90}
\endminipage\hfill
\minipage{0.165\textwidth}%
  \includegraphics[width=\linewidth]{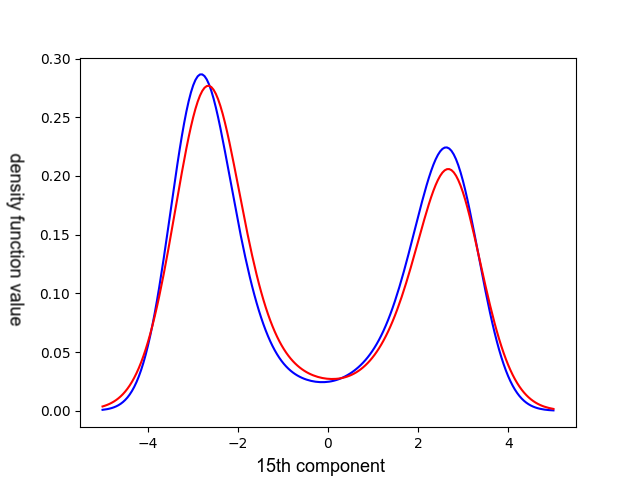}
  \caption*{t=1.20}
\endminipage
\minipage{0.165\textwidth}%
  \includegraphics[width=\linewidth]{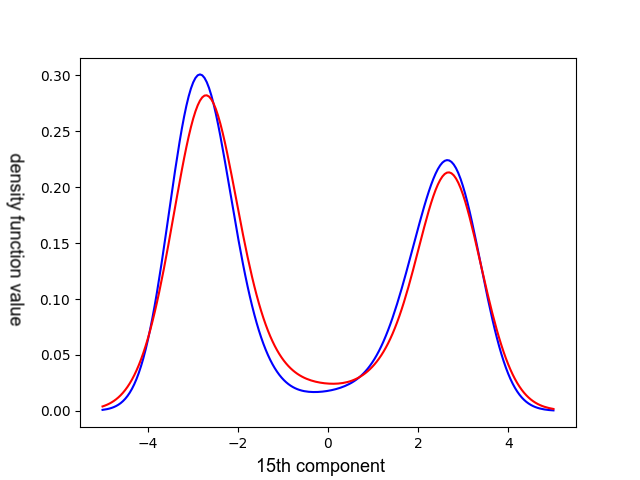}
  \caption*{t=1.50}
\endminipage
\minipage{0.165\textwidth}%
  \includegraphics[width=\linewidth]{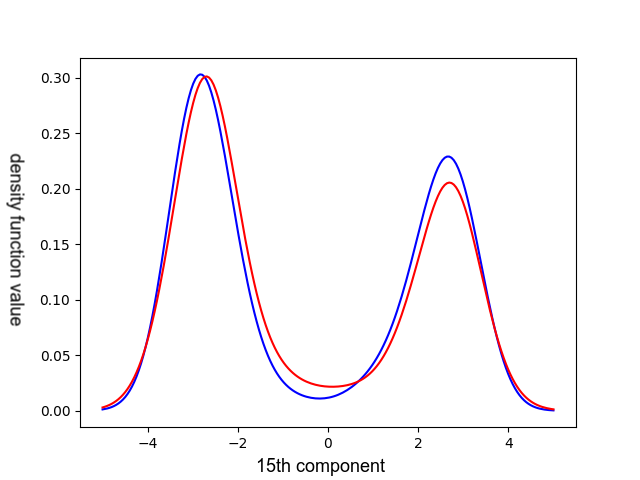}
  \caption*{t=1.80}
\endminipage
\caption{Estimated densities of our numerical solution(red) (projected onto the 15th component) and the solution given by Euler Maruyama scheme(blue) }\label{1D compare }
\end{figure}

We also illustrate the graphs of $\psi_{\hat{\nu}}$ on $5-15$ plane trained at different time steps in Figure \ref{Graph psi S-T }. 
\begin{figure}[!htb]
\minipage{0.33\textwidth}
  \includegraphics[width=0.8\linewidth]{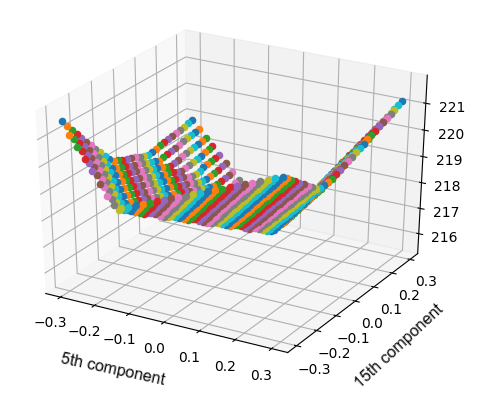}
  \caption*{$\psi_{\hat{\nu} } $ at $k=30$}
\endminipage\hfill
\minipage{0.33\textwidth}
  \includegraphics[width=0.8\linewidth]{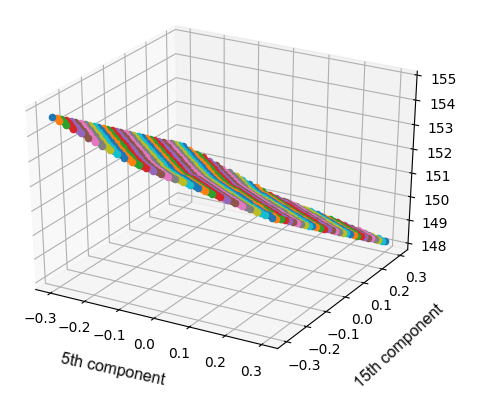}
  \caption*{$\psi_{\hat{\nu}}$ at $k=150$}
\endminipage\hfill
\minipage{0.33\textwidth}
  \includegraphics[width=0.8\linewidth]{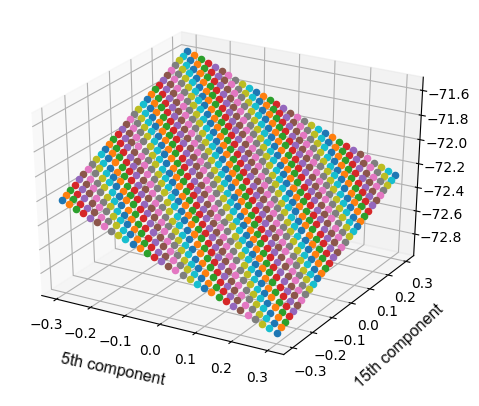}
  \caption*{$\psi_{\hat{\nu}}$ at  $k=360$}
\endminipage\hfill
\caption{Graph of $\psi_{\hat{\nu}}$ on $5-15$ plane trained at different time steps}\label{Graph psi S-T }
\end{figure}

\subsubsection{Affects of different initial distributions}
Different initial conditions $\rho_0$ affect the behavior of solutions of neural parametric Fokker--Planck equations differently, especially on the convergence speed to the Gibbs distribution. Here is an example. We consider $V$ as Styblinski-Tang potential in $\mathbb{R}^2$. We compute the solutions with three different initial distributions given as Gaussian distributions with covariances 
\begin{equation*}
\Sigma_1=\left[\begin{array}{cc} 
1 &  \\
 & 1
\end{array}\right],\Sigma_2=\left[\begin{array}{cc}
  \frac{13}{8} & \frac{5}{8} \\
 \frac{5}{8} & \frac{13}{8}
\end{array}\right],\Sigma_3=\left[\begin{array}{cc}
  \frac{13}{8} & -\frac{5}{8} \\
-\frac{5}{8} & \frac{13}{8}
\end{array}\right], \end{equation*}
respectively. Although the solutions converge to the Gibbs distribution, as expected from the theory, regardless of the initial density, their convergence speed may be different. 
Figure \ref{fig_c} shows the initial distributions and the corresponding densities (which are the estimations of the samples obtained from our algorithm) at $t=1.0$. 
As we can observe, the numerical result produced by $\rho_0=\mathcal{N}(0,\Sigma_1)$ is already close to Gibbs distribution at $t=1.0$, while numerical results associated to $\Sigma_2, \Sigma_3$ still have noticeable differences from Gibbs. They seems to be trapped in intermediate metastable statuses that are clearly influenced by the orientations in initial distributions.
\begin{figure}[!htb]
\minipage{0.16\textwidth}
  \includegraphics[width = \linewidth]{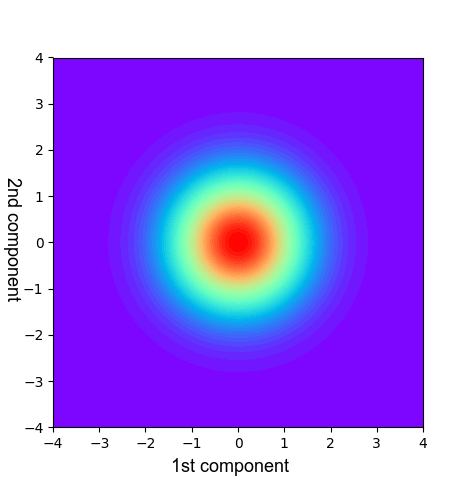}
  \caption*{$\rho_0 = \mathcal{N}(0, \Sigma_1)$}
\endminipage\hfill
\minipage{0.16\textwidth}
  \includegraphics[width = \linewidth]{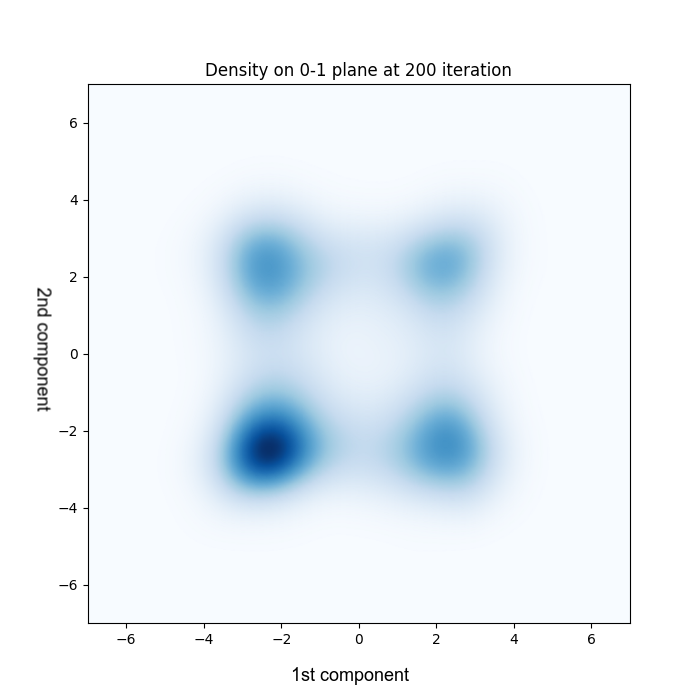}
  \caption*{$t=1.0$}
\endminipage\hfill
\minipage{0.16\textwidth}
  \includegraphics[width = \linewidth]{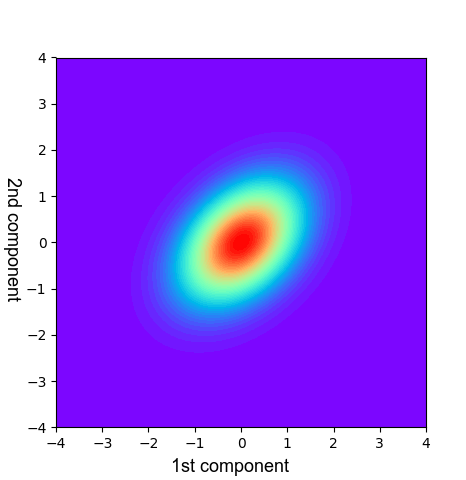}
  \caption*{$\rho_0 = \mathcal{N}(0, \Sigma_2)$}
\endminipage\hfill
\minipage{0.16\textwidth}
  \includegraphics[width = \linewidth]{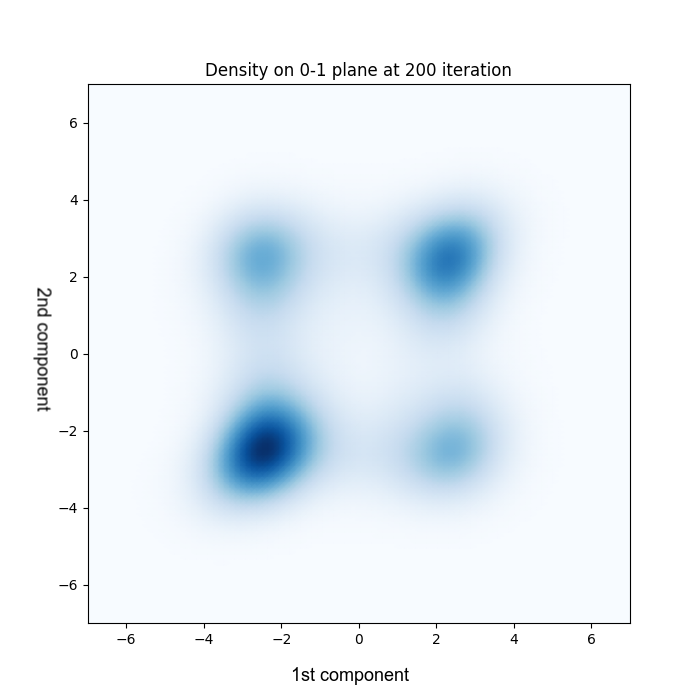}
  \caption*{$t=1.0$}
\endminipage\hfill
\minipage{0.16\textwidth}
  \includegraphics[width = \linewidth]{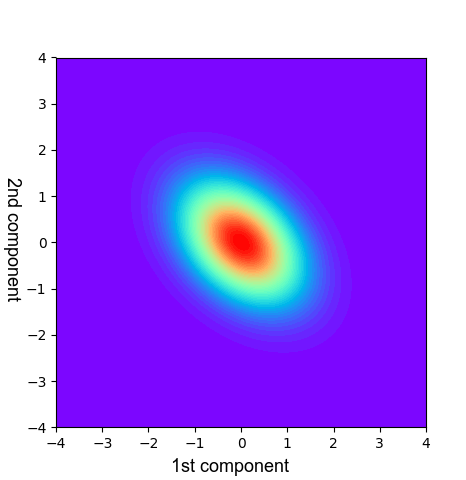}
  \caption*{$\rho_0 = \mathcal{N}(0, \Sigma_3)$}
\endminipage\hfill
\minipage{0.16\textwidth}
  \includegraphics[width = \linewidth]{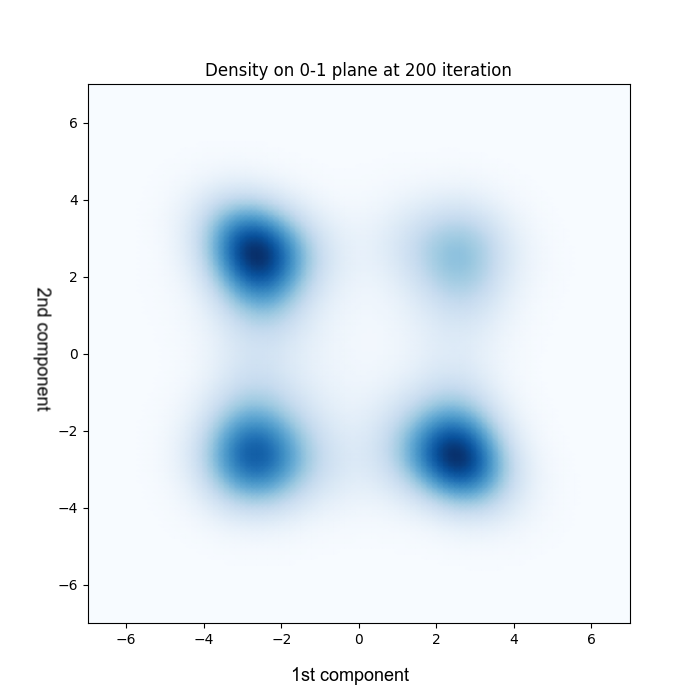}
  \caption*{$t=1.0$}
\endminipage\hfill
\caption{Different behaviors of numerical solution with different  $\rho_0$s}\label{fig_c}
\end{figure}

In general, we believe that the choice of $\rho_0$ affects the behavior of numerical solution.  Choosing a suitable $\rho_0$ may shorten the computing time in the training process.

\subsubsection{Solving the equation with different diffusion coefficients}
The different behaviors of the Fokker--Planck equation caused by different diffusion coefficients ${D}$ can be captured by our algorithm. As the following figure shows, we apply our method to solve Fokker--Planck equation with Styblinski-Tang potential function with ${D}=0.1,1.0,10.0$ and exhibit samples points and estimated density surfaces at the time $t=3.0$.
\begin{figure}[ht]
\centering
\subfloat[Samples ${D}=10$]{
\includegraphics[width=0.2\linewidth]{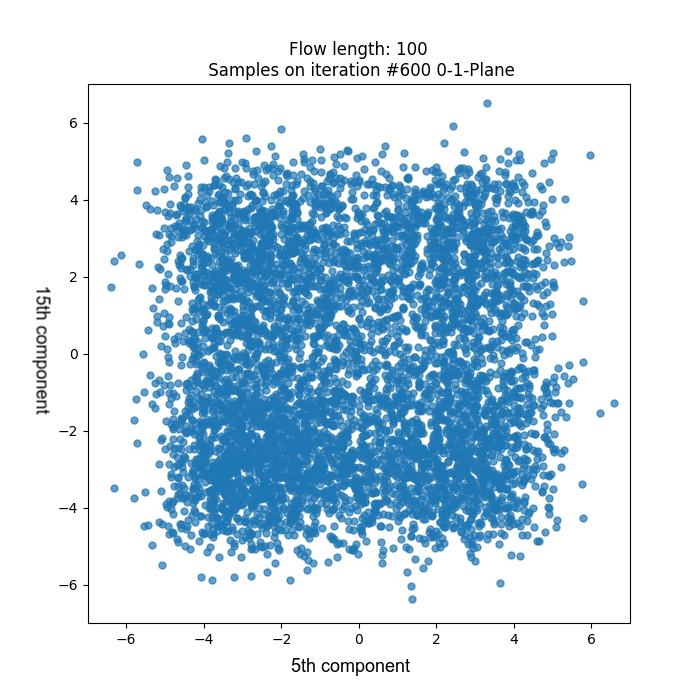}} \hspace{1cm} % \quad
\subfloat[Samples ${D}=1$]{
\includegraphics[width=0.2\linewidth]{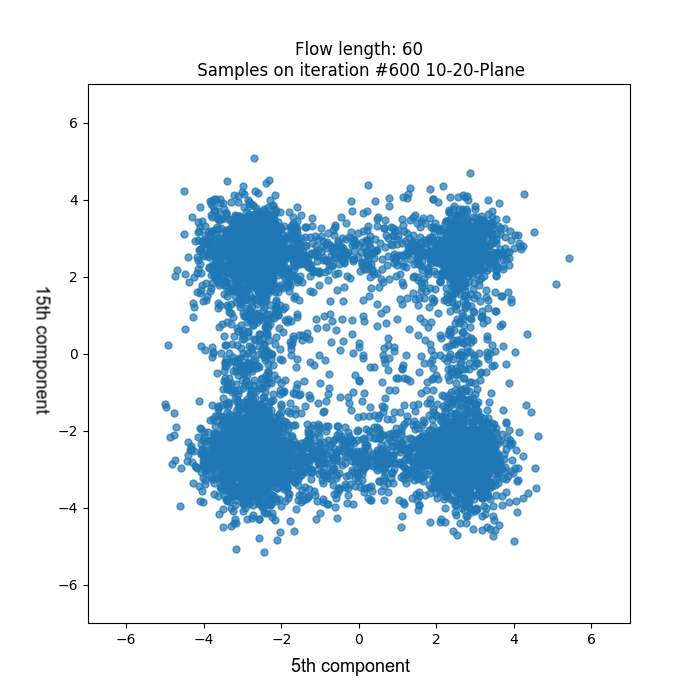}} \hspace{1cm}
\subfloat[Samples ${D} = 0.1$]{
\includegraphics[width=0.2\linewidth]{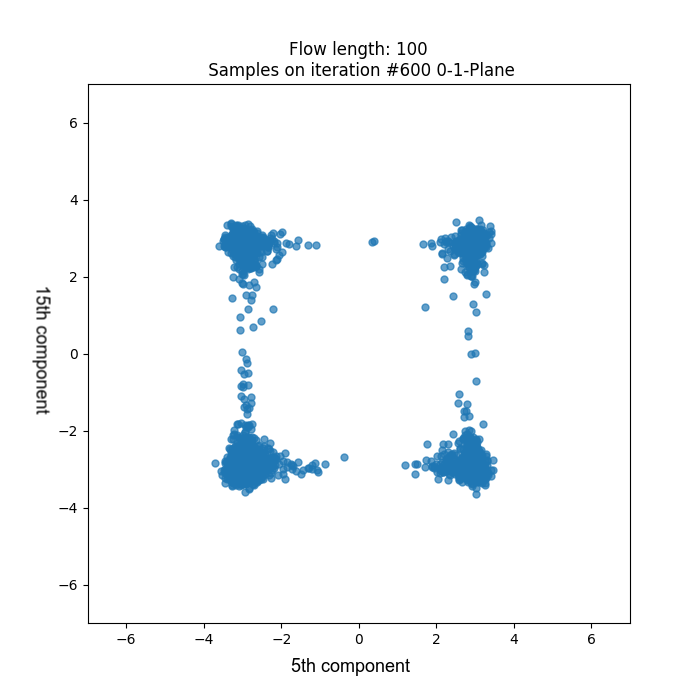}}
\qquad
\subfloat[Density ${D}=10$]{
\includegraphics[width=0.32\textwidth]{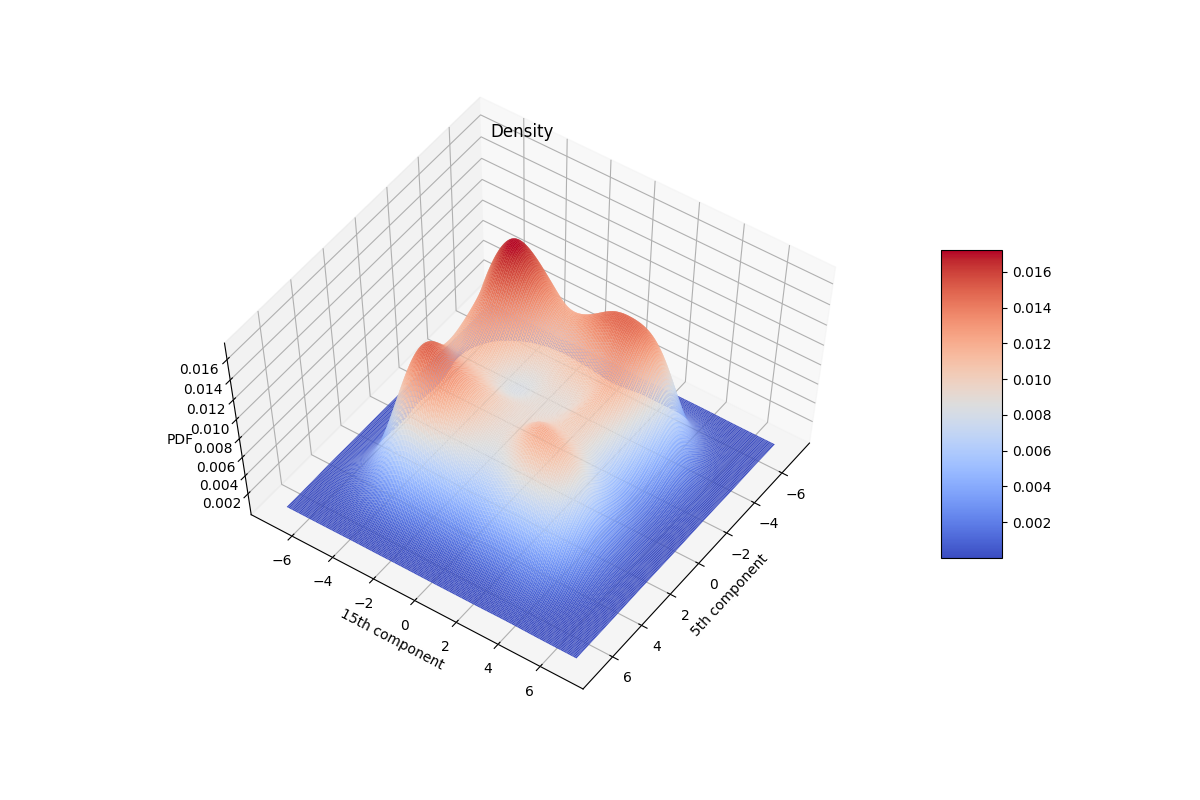}}
\subfloat[Density ${D}=1$]{
\includegraphics[width=0.32\textwidth]{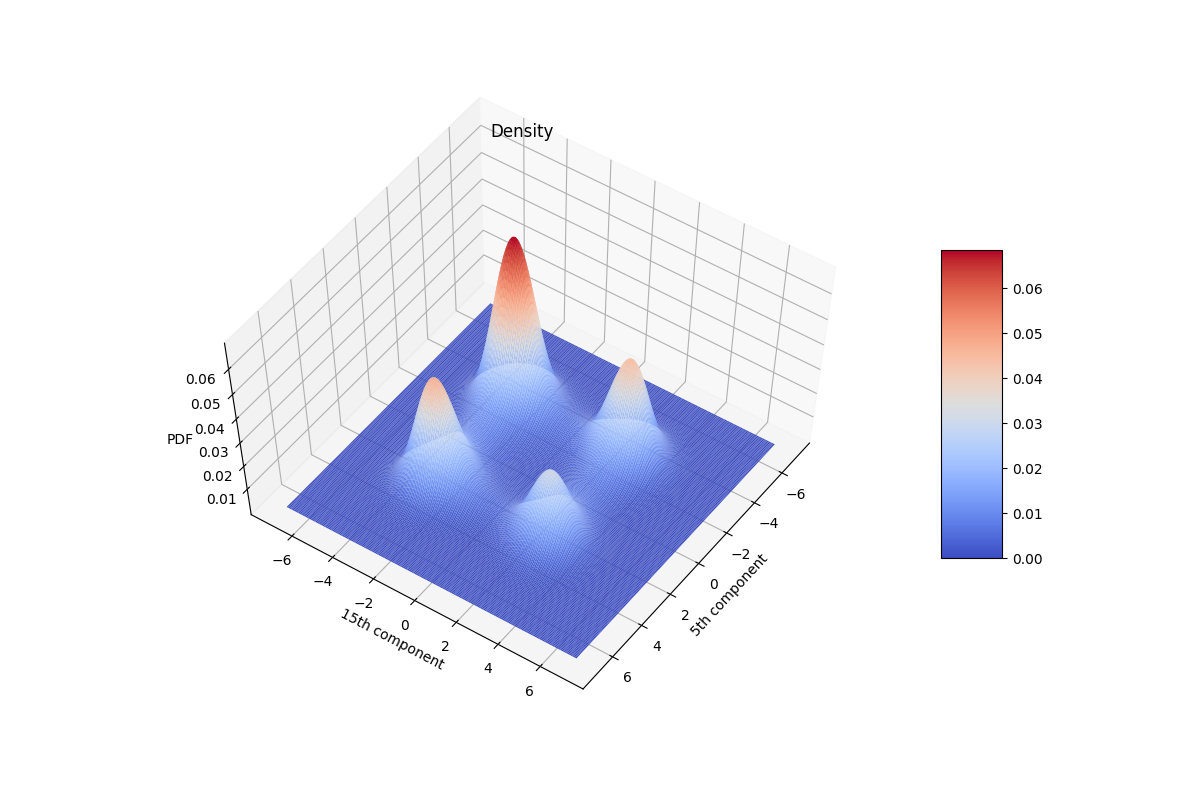}}
\subfloat[Density ${D}=0.1$]{
\includegraphics[width=0.32\textwidth]{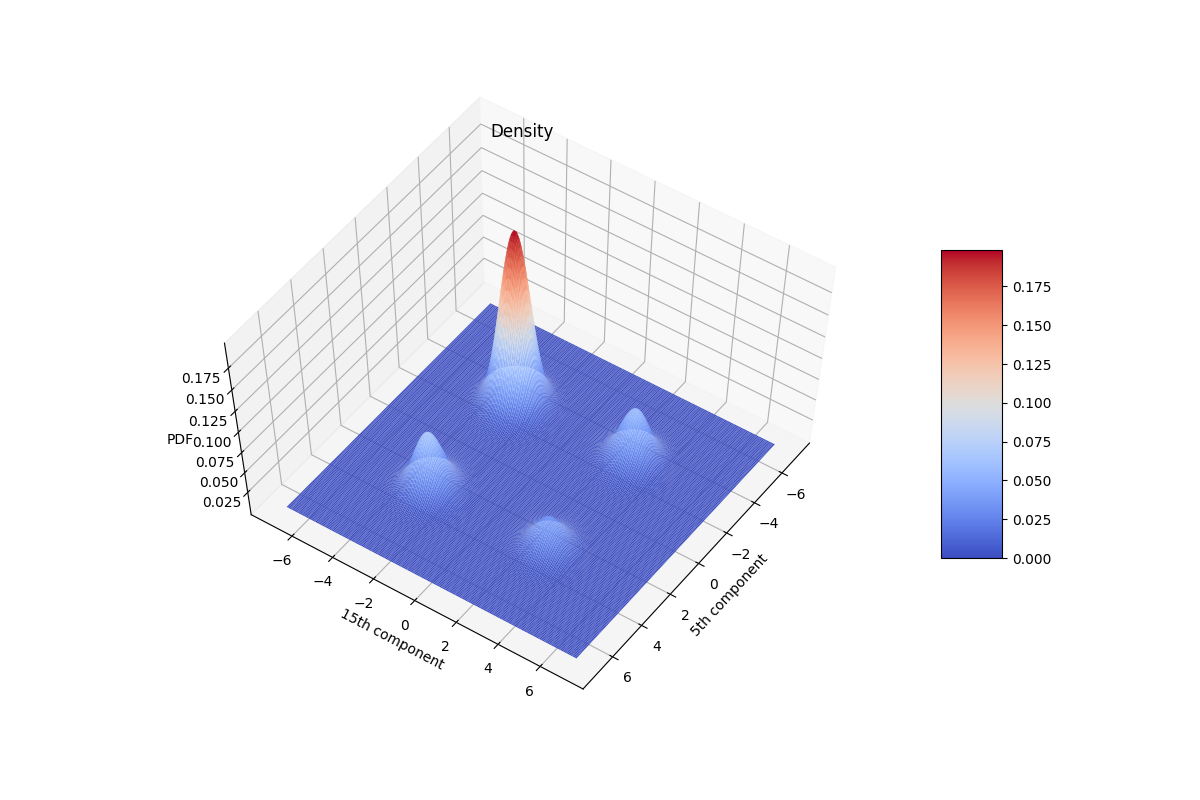}}
\caption{Samples and estimated densities at $t=3.0$, from left to right: ${D}=10$, ${D}=1.0$, ${D}=0.1$}
\label{fig:globfig}
\end{figure}

\subsubsection{Rosenbrock potential}
In this example, we set dimension $d=10$. We consider the Rosenbrock typed function \cite{rosenbrock1960automatic}:
\begin{equation*}
 V(x)=\frac{3}{50}\left(\sum_{i=1}^{d-1} 10 (x_{k+1}-x_k^2)^2 + (x_k-1)^2 \right),
\end{equation*}
which involve interactions among its coordinates.
We solve the corresponding \eqref{FPE} on time interval $[0,1]$ with step size $h=0.005$. We set the length of normalizing flow $T_\theta$ as 100. We set $K_{\textrm{in}}=K_{\textrm{out}}=3000$ and $M_{\textrm{in}}=100$, $M_{\textrm{out}}=60$.\\
Here are the sample results, we exhibit the projection of sample points on the $1-2$, $7-8$ and $9-10$ plane in Figure \ref{samples Rsbrck}. Blue samples are obtained from our numerical solution while red samples are obtained by applying Euler-Maruyama scheme with the same step size.

\begin{figure}[tbhp]
\centering
\subfloat[$t=0.05$]{\includegraphics[width=0.24\linewidth]{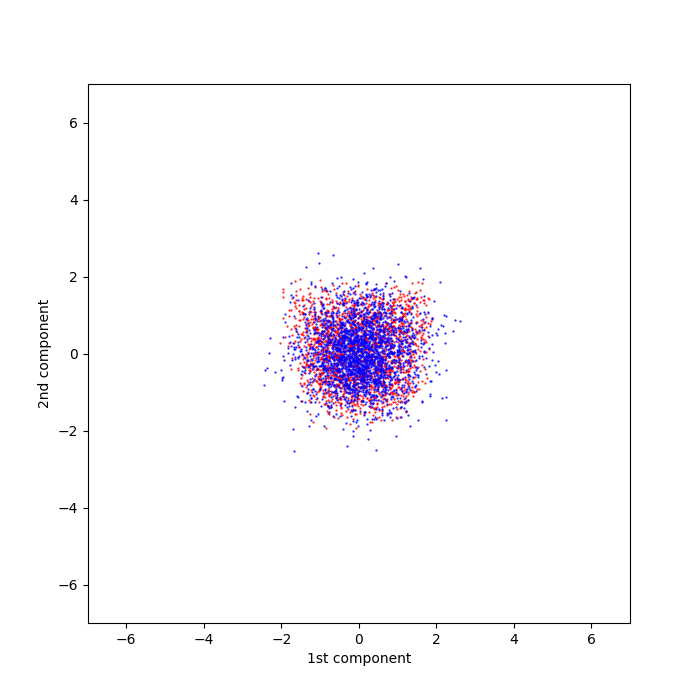}}
\subfloat[$t=0.35$]{\includegraphics[width=0.24\linewidth]{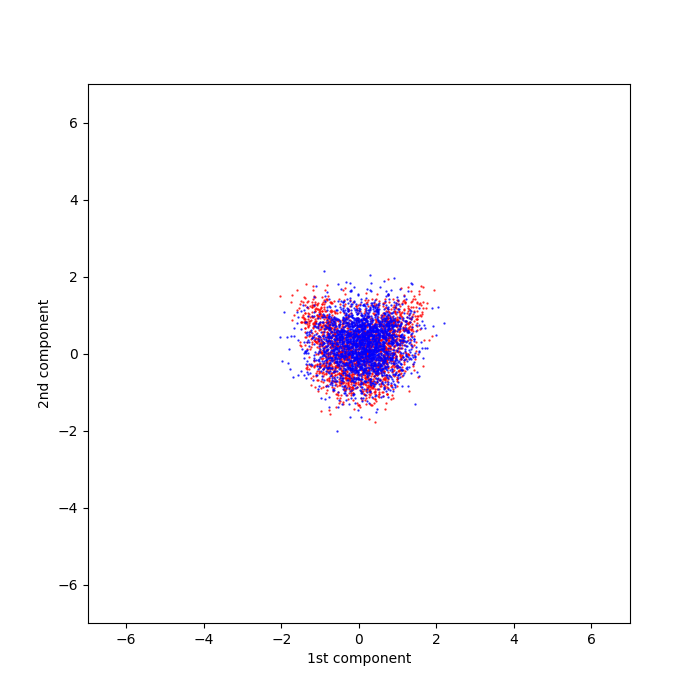}}
\subfloat[$t=0.50$]{\includegraphics[width=0.24\linewidth]{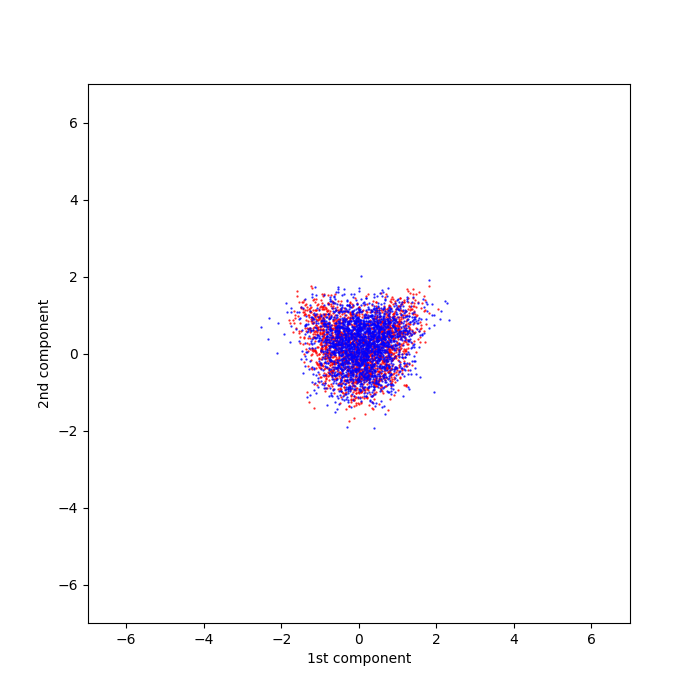}}
\subfloat[$t=1.00$]{\includegraphics[width=0.24\linewidth]{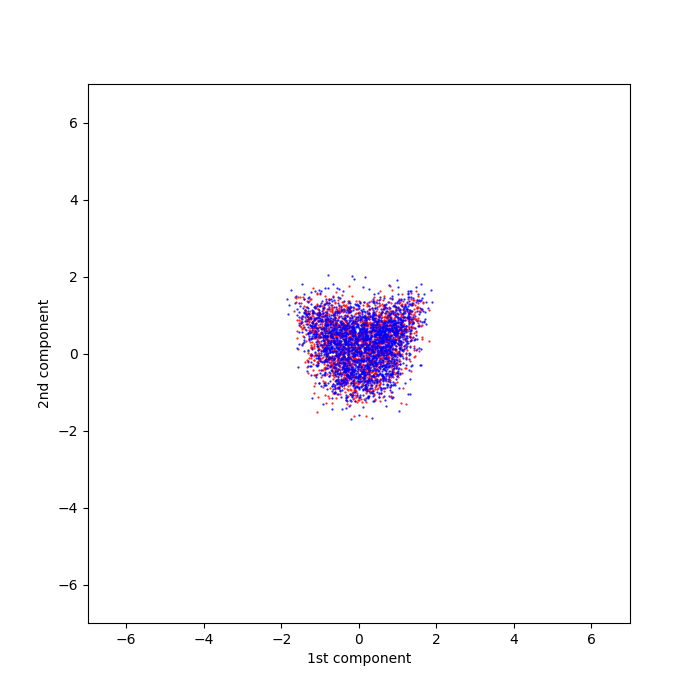}}
\newline
\subfloat[$t=0.05$]{\includegraphics[width=0.24\linewidth]{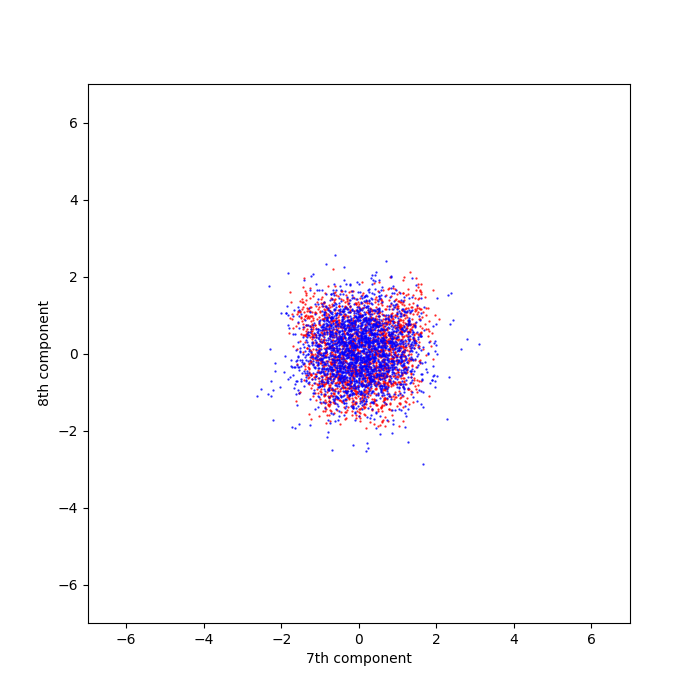}}
\subfloat[$t=0.35$]{\includegraphics[width=0.24\linewidth]{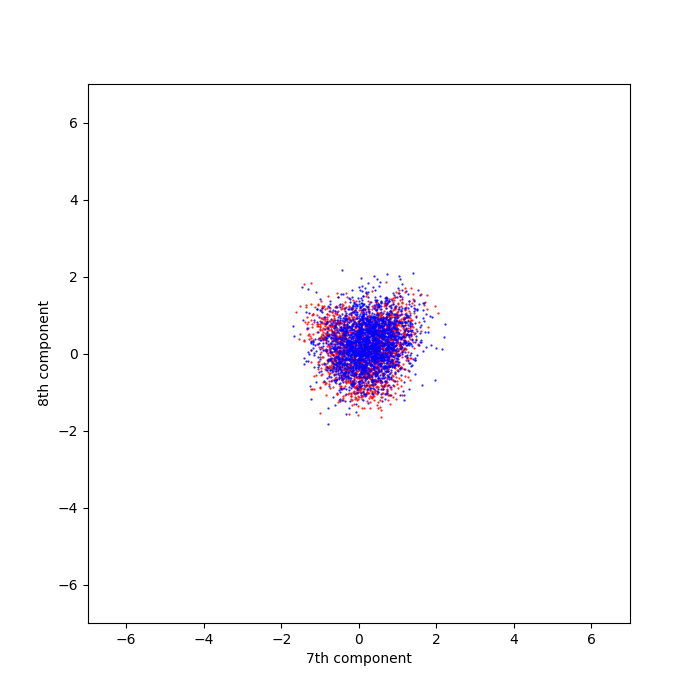}}
\subfloat[$t=0.50$]{\includegraphics[width=0.24\linewidth]{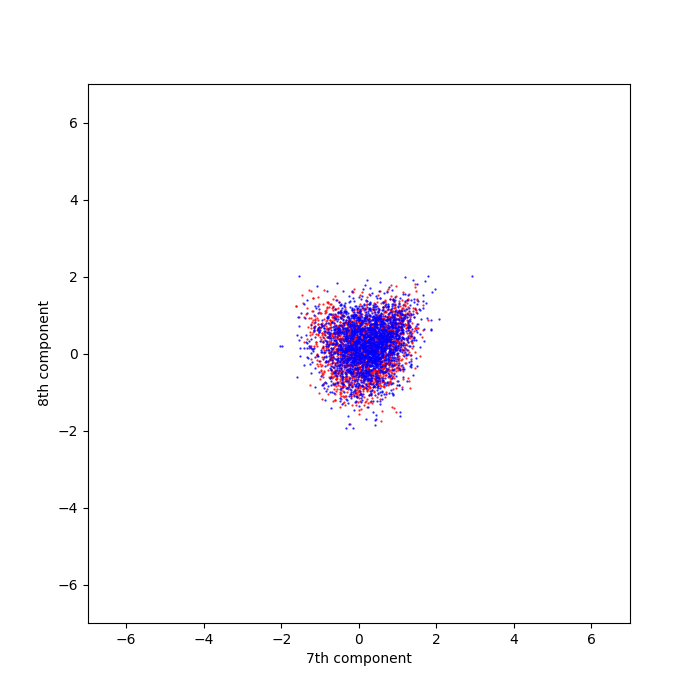}}
\subfloat[$t=1.00$]{\includegraphics[width=0.24\linewidth]{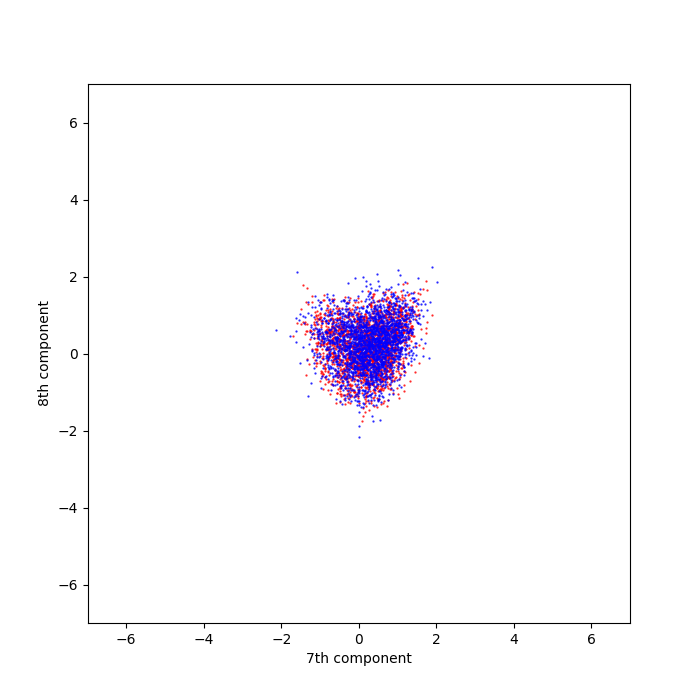}}
\newline
\subfloat[$t=0.05$]{\includegraphics[width=0.24\linewidth]{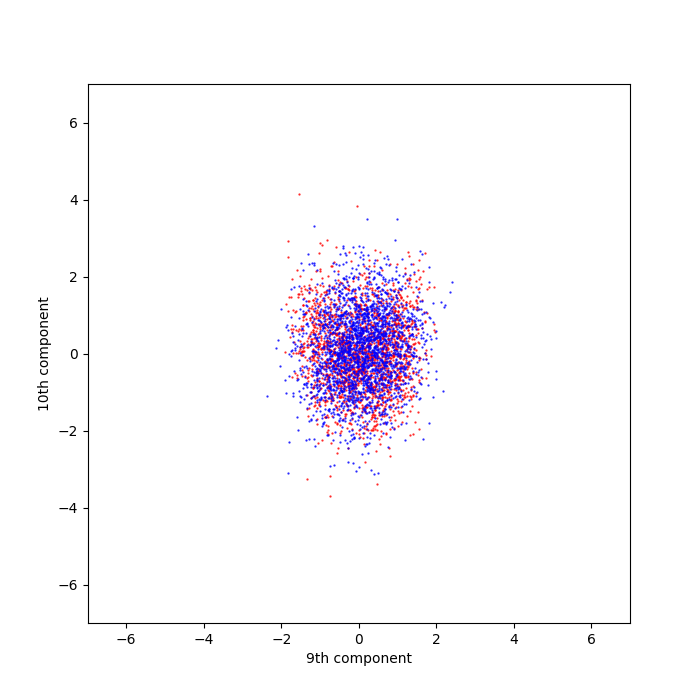}}
\subfloat[$t=0.35$]{\includegraphics[width=0.24\linewidth]{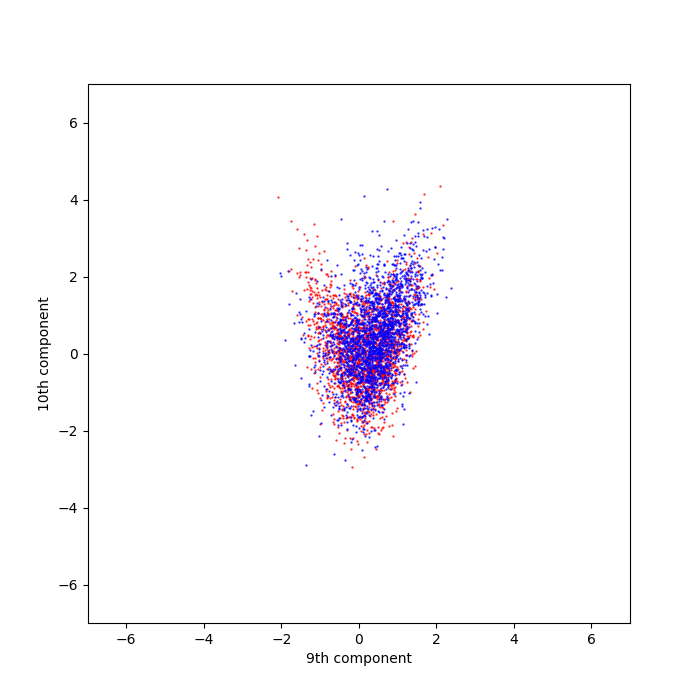}}
\subfloat[$t=0.50$]{\includegraphics[width=0.24\linewidth]{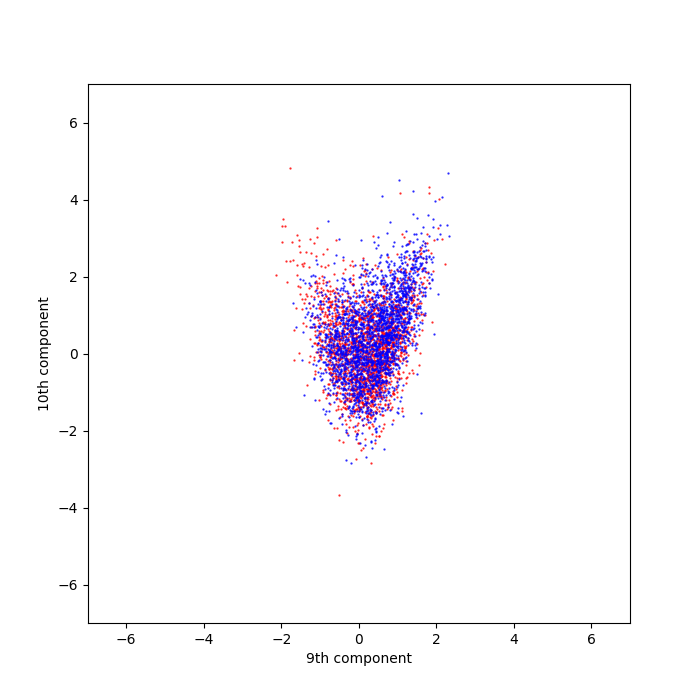}}
\subfloat[$t=1.00$]{\includegraphics[width=0.24\linewidth]{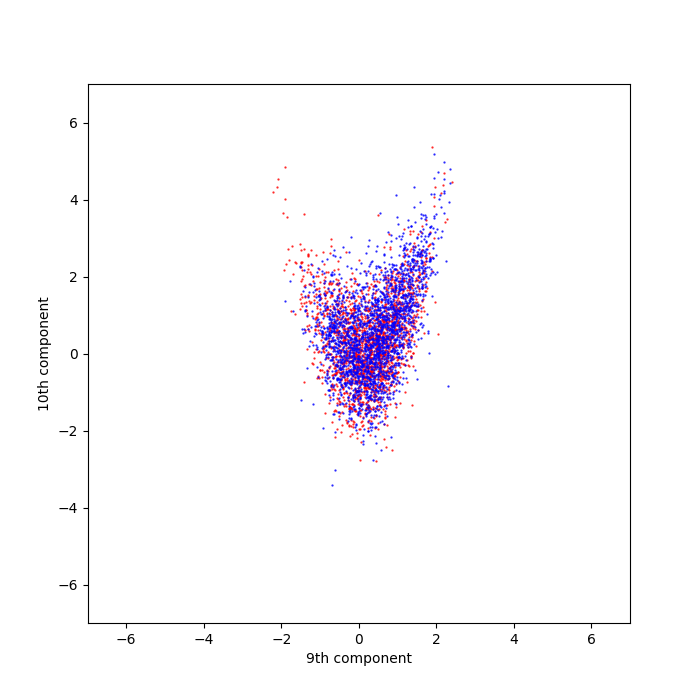}}
\caption{Samples of our numerical solution (blue) and Euler-Maruyama (red) on different planes at different time nodes}
\label{samples Rsbrck}
\end{figure}

%\newpage

\subsection{Discussion on time consumption}
we should point out that the running time of our algorithm depends on the following three aspects: 
\begin{enumerate}
   \item[(i)] Dimension $d$ of the problem; potential function $V$;
   \item[(ii)] The size of normalizing flow $T_\theta$ and fully connected neural network $\psi_\nu$;
   \item[(iii)] Number of time steps $N$; outer iterations $M_{\textrm{out}}$; inner iterations $M_{\textrm{in}}$; sample size $K_{\textrm{out}}$ and $K_{\textrm{in}}$.
\end{enumerate} 
Among them, the networks in (ii) are selected according to (i). The hyper-parameters $M_{\textrm{out}},M_{\textrm{out}},K_{\textrm{out}},K_{\textrm{in}}$ in (iii) are chosen based on our trial and error as well as Remark \ref{rmk:mention large sample} stated earlier in this paper. 

All numerical examples reported in this paper are computed on a Laptop with Intel Core™ i5-8250U CPU @ 1.60GHz × 8 processor. For most of the high dimensional examples ($d\geq 10$), we choose the length of $T_\theta$ between $60$ and $100$; for the ReLU network $\psi_\nu$, we set its number of layers equal to $6$ with hidden dimension $20$. We set $M_{\textrm{out}}\sim 50, M_{\textrm{in}}\sim 100$ and choose sample sizes $K_{\textrm{out}}, K_{\textrm{in}}$ according to Remark \ref{rmk:mention large sample}. The total running time is ranged in $20-40$ hours.
    
    We observe that the running time of our algorithm is dominated by the inner loop of Algorithm 4.1, i.e. the part for optimizing over $\psi_\nu$. The cost associated with this part can be estimated as  $O(N\cdot M_{\textrm{out}}\cdot M_{\textrm{in}}\cdot(K_{\textrm{in}}  t_a + t_b) )$, where $t_a$ denotes the time cost of using backpropagation to evaluate the gradient w.r.t. $\nu$ of each $|\nabla\psi_\nu(T_{\theta_0}(\mathbf{X}_k))-(T_\theta(\mathbf{X}_k)-T_{\theta_0}(\mathbf{Y}_k))|^2$ in every inner loop of Algorithm 4.1, and $t_b$ denotes the time for updating $\nu$ by Adam method. Here $t_a, t_b$ both depend on $d,V$ and the sizes of networks $T_\theta,\psi_\nu$. According to our experiences, for most of the cases, $t_a$ is of the order of magnitude around $10^{-5}s$ and $t_b$ is around $10^{-2}s$. 
    
    Although the cost for our current implementation of the train process is still high, we want to remind that there is a distinct advantage in the sampling application, namely that the network training just needs to be done once. The trained network can be reused to generate samples, regardless the sample size, from distribution $\rho_t$ by pushing forward samples from the reference distribution $p$ with negligible additional cost. This is in sharp contrast to the classical MCMC sampling techniques, which requires to solve the SDE associated with Fokker--Planck equation by numerical methods, such as Euler-Maruyama scheme, for every sample.

\section{Discussion}
In this paper, we design and analyze an algorithm for computing the high dimensional Fokker--Planck equations. Our approach is based on transport information geometry with probability formulations arisen in deep learning generative models. We first introduce the parametric Fokker--Planck equations, a set of ODE, to approximate the original Fokker--Planck equation. The ODE can be viewed as the ``spatial'' discretization of the PDE using neural networks. We propose a variational version of the semi-implicit Euler scheme and design a discrete time updating algorithm to compute the solution of the parametric Fokker--Planck equations. Our method is a sampling based approach that is capable to handle high dimensional cases. It can also be viewed as an alternative of the JKO scheme used in conjunction with neural networks. More importantly, we prove the asymptotic convergence and error estimates, both under the Wasserstein metric, for our proposed scheme. 

We hope that our study may shed light on  principally designing deep neural networks 
and other machine learning approaches to 
compute solutions of high dimensional PDEs, and systematically analyzing their error bounds for understandable and trustworthy computations.
Our parametric Fokker--Planck equations are derived by approximating the density function in free energy using neural networks, and then following the rules in calculus of variation to get its Euler-Langrange equation. The energy law and principles in variational framework build a solid foundation for our ``spatial'' discretization that is able to inherit many desirable physical properties shared by the PDEs, such as relative entropy dissipation in a neural network setting. Our numerical scheme provides a systemic mechanism to design sampling efficient algorithms, which are critical for high dimensional problems. One distinction of our method is that, contrary to the data dependent machine learning studies in the literature, our approach does not require any knowledge of the ''data'' from the PDEs. In fact, we generate the ``data" to compute the numerical solutions, just like the traditional numerical schemes do for PDEs. More importantly, we carried out the numerical analysis, using tools such as KL divergence and Wasserstein metric from the transport information geometry, to study the the asymptotic convergence and error estimates in probability space. We emphasize that the Wasserstein metric provides a suitable geometric structure to analyze the convergence behavior in generative models, which are widely used in machine learning field.
For this reason, we believe that our investigations can be adopted to understand many machine learning algorithms, and to design efficient sampling strategies based on pushforward maps that can generate flows of samples in generative models.

We also believe that the approaches in algorithm design and error analysis developed in this study can be extended to other equations, such as porous media equation, Schr\"{o}dinger equation, and Schr\"{o}dinger bridge system, and many more. Those topics are worth to be further investigated in the future.

\section{Acknowledgment}
This work was partially supported by National Science Foundation grants DMS-1620345 and DMS-1830225 and by ONR grant N000141310408. The work of the second author was supported by a start-up fund from the University of South Carolina and NSF grant RTG:2038080. The work of the third author was partially supported by a grant from Shenzhen Research Institute of Big Data.

%%%%%%%%%% Appendix %%%%%%%
\appendix
\section{Proof of Lemma \ref{lemma:local err analys}}\label{pf hodge lemma}
\begin{customlemma}{3.3}
Suppose $\vec{u},\vec{v}$ are two vector fields defined on $\mathbb{R}^d$, suppose $\varphi,\psi$ solves $-\nabla\cdot(\rho\nabla\varphi)=-\nabla\cdot(\rho\vec{u})$ and $-\nabla\cdot(\rho\nabla\psi) = -\nabla\cdot(\rho\vec{v})$, or equivalently, $\textrm{Proj}_{\rho}[\vec{u}]=\nabla\varphi$ and $\textrm{Proj}_{\rho}[\vec{v}]=\nabla\psi$ (cf. Definition \ref{Hodge Decomp}). Then:
\begin{align}
   & \int \vec{u}(x)\cdot \nabla\psi(x)\rho(x)~dx = \int  \nabla\varphi(x)\cdot \nabla\psi(x) \rho(x)~dx; \tag{\ref{loc err analys lemma 1}}\\
  &  \int |\nabla\psi(x)|^2\rho(x)~dx\leq \int |\vec{v}(x)|^2\rho(x)~dx. \tag{\ref{loc err analys lemma 2}}
\end{align}
\end{customlemma}
\begin{proof}[Proof of Lemma \ref{lemma:local err analys}]
For \eqref{loc err analys lemma 1}:
\begin{equation*}
  \int \vec{u}(x) \cdot \nabla\psi(x)\rho(x)~dx = \int -\nabla\cdot(\rho(x)\vec{u}(x))\psi(x)~dx =  \int-\nabla\cdot(\rho(x)\nabla\varphi(x))\psi(x)~dx = \int \nabla\varphi(x)\cdot\nabla\psi(x)\rho(x)~dx.
\end{equation*}
For \eqref{loc err analys lemma 2}:
\begin{align*}
\int  |\vec{v}(x)|^2\rho(x)~dx = & \int ( |\nabla\psi(x)|^2 + 2(\vec{v}(x) -  \nabla\psi(x))\cdot\nabla\psi(x) + |\vec{v}(x)-\nabla\psi(x)|^2)\rho(x)~dx \\
= & \int (|\nabla\psi(x)|^2 + |\vec{v}(x)-\nabla\psi(x)|^2)\rho(x)~dx \geq \int |\nabla\psi(x)|^2\rho(x)~dx.
\end{align*}
The second equality is due to \eqref{loc err analys lemma 1}.
\end{proof}

\section{Proof of Theorem \ref{theorem_submfld} }\label{pf on subM}

\begin{customthm}{3.7}
 Suppose $\{\theta_t\}_{t\geq 0 }$ solves (\ref{wass_grad_flow_on_para_spc}). Then $\{\rho_{\theta_t}\}$ is the gradient flow of $\mathcal{H}$ on probability submanifold $\mathcal{P}_{\Theta}$. Furthermore, at any time $t$, $\dot\rho_{\theta_{t}}=\frac{d}{dt} \rho_{\theta_t} \in \mathcal{T}_{\rho_{\theta_{t}}}\mathcal{P}_\Theta$ is the orthogonal projection of $-\textrm{grad}_W\mathcal{H}(\rho_{\theta_t})\in \mathcal{T}_{\rho_{\theta_t}}\mathcal{P}$ onto the subspace $\mathcal{T}_{\rho_{\theta_t}}\mathcal{P}_\Theta$ with respect to the Wasserstein metric $g^W$.
\end{customthm}
Theorem \ref{theorem_submfld} easily follows from the following two general results about manifold gradient. 
\begin{theorem}\label{general_A}
Suppose $(N,g^N),(M,g^M)$ are Riemannian Manifolds. Suppose $\varphi:N\rightarrow M$ is isometric. Consider $\mathcal{F}\in \mathcal{C}^\infty(M)$, define $F=\mathcal{F}\circ\varphi\in\mathcal{C}^\infty(N)$.
Suppose $\{x_t\}_{t\geq 0}$ is the gradient flow of $F$ on $N$:
\begin{equation*}
\dot{x}=-\textrm{grad}_N F(x).
\end{equation*}
Then $\{y_t=\varphi(x_t)\}_{t\geq 0}$ is the gradient flow of $\mathcal{F}$ on $M$. That is, $\{y_t\}$ satisfies $\dot y = -\textrm{grad}_M\mathcal{F}(y)$.
\end{theorem}
\begin{proof}
Since we always have $\dot y_t = \varphi_* \dot x_t = -\varphi_*\textrm{grad}_N F(x_t)$, we only need to show that $\varphi_*\textrm{grad}_N F(x_t)=\textrm{grad}_{M}\mathcal{F}(\varphi(x_t))$. Fix the time $t$, consider any curve $\{\xi_\tau\}$ on $N$ passing through $x_t$ at $\tau = 0$,
since $\varphi$ is isometry, we have $g^N = \varphi^*g^M$, thus:
\begin{equation*}
    \frac{d}{d\tau} F(\xi_\tau)\Big\vert_{\tau = 0} = g^N(\textrm{grad}_NF(x_t),\dot\xi_0)= \varphi^*g^M(\textrm{grad}_NF(x_t),\dot\xi_0) = g^M(\varphi_*\textrm{grad}_NF(x_t),\varphi_*\dot\xi_0).
\end{equation*}
On the other hand, denote $\eta_\tau = \varphi(\xi_\tau)$, we have:
\begin{equation*}
\frac{d}{d\tau}F(\xi_\tau)\Big\vert_{\tau=0} = \frac{d}{d\tau}\mathcal{F}(\eta_\tau)\Big\vert_{\tau=0}=g^M(\textrm{grad}_M\mathcal{F}(y_t),\dot \eta_0)=g^M(\textrm{grad}_M\mathcal{F}(y_t),\varphi_* \dot \xi_0).
\end{equation*}
As a result, $g^M(\varphi_*\textrm{grad}_NF(x_t)-\textrm{grad}_M\mathcal{F}(y_t),\varphi_*\dot\xi_0)=0$ for all $\dot\xi_0\in T_{x_t}N$.
Since $\varphi_*$ is surjective, we have $\varphi_*\textrm{grad}_N F(x_t)=\textrm{grad}_{M}\mathcal{F}(\varphi(x_t))$.

\end{proof}

\begin{theorem}\label{general_B}
Suppose $(M,g^M)$ is Riemannian manifold, $M_{\textrm{sub}}\subset M$ is the submanifold of $M$. Assume $M_{\textrm{sub}}$ inherits metric $g^M$, i.e. define $\iota:M_{\textrm{sub}}\rightarrow M$ as the inclusion map, which induces a metric tensor on $M_{\textrm{sub}}$ as $g^{M_{\textrm{sub}}}=\iota^*g^M$.
\noindent
For any $\mathcal{F}\in \mathcal{C}^\infty(M)$, denote the restriction of $\mathcal{F}$ on $M_{\textrm{sub}}$ as $\mathcal{F}^{\textrm{sub}}$. Then the gradient $\textrm{grad}_{M_\textrm{sub}}\mathcal{F}^{\textrm{sub}}(x)\in T_x M_\textrm{sub}$ is the orthogonal projection of $\textrm{grad}_M\mathcal{F}(x)\in T_x M$ onto subspace $T_x M_\textrm{sub}$ with respect to the metric $g^M$ for any $x\in M_\textrm{sub}$. 
\end{theorem}

\begin{proof}
For any $x\in M_\textrm{sub}$, consider any curve $\{\gamma_\tau\}$ on $M_\textrm{ sub }$ passing through $x$ at $\tau=0$. We have
\begin{equation*}
  \frac{d}{d\tau}\mathcal{F}^{\textrm{sub}}(\gamma_\tau)\Big\vert_{\tau=0} =  g^{M_\textrm{sub}}(\textrm{grad}_{M_\textrm{sub}}\mathcal{F}^{\textrm{sub}}(x),\dot\gamma_0)=g^M(\iota_*\textrm{grad}_{M_\textrm{sub}}\mathcal{F}^{\textrm{sub}}(x),\iota_*\dot\gamma_0)=g^M(\textrm{grad}_{M_\textrm{sub}}\mathcal{F}^{\textrm{sub}}(x),\dot\gamma_0).
\end{equation*}
The last equality is because $\iota_*$ restricted on $TM_\textrm{sub}$ is identity. On the other hand, $\mathcal{F}^{\textrm{sub}}(\gamma_\tau)=\mathcal{F}(\gamma_\tau)$ for all $\tau$. We also have:
\begin{equation*}
    \frac{d}{d\tau}\mathcal{F}^{\textrm{sub}}(\gamma_\tau)\Big\vert_{\tau = 0}=g^M(\textrm{grad}_M\mathcal{F}(x) , \dot\gamma_0).
\end{equation*}
Combining them we know
\begin{equation*}
    g^M(\textrm{grad}_{M_\textrm{sub}}\mathcal{F}^{\textrm{sub}}(x)-\textrm{grad}_M\mathcal{F}(x), v) = 0 \quad \forall ~ v\in T_x M_\textrm{sub}\Rightarrow \textrm{grad}_{M_\textrm{sub}}\mathcal{F}^{\textrm{sub}}(x)-\textrm{grad}_M\mathcal{F}(x)\perp_{g^M} T_x M_\textrm{sub},
\end{equation*}
which proves this result.
\end{proof}
\begin{proof}(Theorem \ref{theorem_submfld})
To prove the first part of Theorem \ref{theorem_submfld}, we apply Theorem \ref{general_A} with $(N,g^N)=(\Theta,G)$, $M=\mathcal{P}_\Theta$ with its metric inherited from $(\mathcal{P},g^W)$ and $\varphi=T_{(\cdot)\sharp}$. To prove the second part, we apply Theorem \ref{general_B} with $(M,g^M)=(\mathcal{P},g^W)$, $M_{\textrm{sub}}=\mathcal{P}_\Theta$.
\end{proof}

\section{Proof of Lemma \ref{lemma_basics} \ref{thm:Danskin} and \ref{prior est |theta_k+1-theta_k|}}\label{pf section 4}
\begin{customlemma}{4.6}
  Suppose we fix $\theta_0\in \Theta$, for arbitrary $\theta\in\Theta$ and $\nabla\phi\in L^2(\mathbb{R}^d;\mathbb{R}^d,\rho_{\theta_0})$ we consider
    \begin{equation}
     F(\theta, \nabla\phi~|~\theta_0) = \left( \int (2\nabla\phi(x)\cdot(T_\theta-T_{\theta_0})\circ T_{\theta_0}^{-1}(x) - |\nabla\phi(x)|^2) ~\rho_{\theta_0}(x) ~dx \right) + 2h  H(\theta).\tag{\ref{F_theta_phi}}
    \end{equation}
    Then $F(\theta,\nabla\phi~|~\theta_0)<\infty$, furthermore,  $F(\cdot, \nabla\phi~|~\theta_0)\in C^1(\Theta)$. We can compute
    \begin{equation}
      \partial_\theta F(\theta,\nabla\phi~|~\theta_0) =2\left(\int ~ \partial_\theta T_\theta(T_{\theta_0}^{-1}(x))^{\textrm{T}}~\nabla\phi(x)~\rho_{\theta_0}(x)~dx + h~\nabla_\theta H(\theta)\right).  \tag{\ref{compute_partial_theta F}} 
    \end{equation}
\end{customlemma}

\begin{proof}
 To show $F(\theta,\nabla\phi~|\theta_0)<\infty$, we write
 \begin{equation*}
   F(\theta,\nabla~|\theta_0) = \underbrace{\int 2\nabla\phi\cdot T_\theta(T_{\theta_0}^{-1}(x))\rho_{\theta_0}~dx}_{\text{$A$}}-\underbrace{\int2\nabla\phi(T_{\theta_0}(x))\cdot xdp(x)}_{\text{$B$}} - \underbrace{\int|\nabla\phi(x)|^2\rho_{\theta_0}(x)~dx}_{\text{$C$}} + 2hH(\theta).
 \end{equation*}
 By Cauchy–Schwarz inequality, the first two terms can be estimated as
 \begin{equation*}
    |A-B|\leq 2\|\nabla\phi\|_{L^2(\rho_{\theta_0})}\left(\int|T_\theta(x)|^2dp(x) +  \int x^2dp(x)\right).
 \end{equation*}
 Recall \eqref{Theta_condition_A} and $p$ having finite second order moment, we know the first two terms are finite. In addition $C=\|\nabla\phi\|_{L^2(\rho_{\theta_0})}^2<\infty$. We thus have shown $F(\theta,\nabla\phi~|\theta_0)<\infty$. 
 
 To show $F(\cdot,\nabla\phi~|\theta_0)\in C^1(\Theta)$, recall $T_\theta(x)\in C^2(\Theta\times\mathbb{R}^d)$ as mentioned in \ref{3.1}. We know the relative entropy $H(\cdot)\in C^1(\Theta)$, thus we only need to prove for $\tilde{F}(\cdot,\nabla\phi~|\theta_0)=F(\cdot,\nabla\phi~|\theta_0)-2hH(\theta)$. We consider $\xi\in\mathbb{R}^m$ with $|\xi|$ small enough and $\theta+\xi\in \Theta$. Then the difference
 \begin{equation}
   \tilde{F}(\theta+\xi,\nabla\phi~|\theta_0) - \tilde{F}(\theta,\nabla\phi~|\theta_0) = \int 2\nabla\phi(x)\cdot(T_{\theta+\xi}-T_\theta)\circ T_{\theta_0}^{-1}(x) ~\rho_{\theta_0}(x) ~dx \label{lemmabasicF eq 1}
 \end{equation}
 We denote the $i$th component of $T_\theta$ as $T_\theta^{(i)}$, $1\leq i\leq d$. By Taylor expansion (w.r.t. $\theta$), we have $T^{(i)}_{\theta+\xi}(x)-T^{(i)}_\theta(x)=\partial_\theta T_\theta^{(i)}(x)^{\textrm{T}}\xi+\frac{1}{2}\xi^{\textrm{T}}\partial^2_{\theta\theta}T_{\theta+\lambda_i(x)\xi}(x)\xi$ with $\lambda_i(x)\in [0,1],$ then the right hand side of \eqref{lemmabasicF eq 1} is
 \begin{equation}
     \underbrace{\left(\int 2\partial_\theta T_\theta(T_{\theta_0}^{-1}(x))^{\textrm{T}}\nabla\phi(x)\rho_{\theta_0}~dx\right)^{\textrm{T}}\xi}_{\text{Denote as $\mathcal{J}(\theta)^{\textrm{T}} \xi  $}} + \int \left(\sum_{i=1}^d \partial_{x_i} \phi\cdot(\xi^{\textrm{T}}\partial^2_{\theta\theta}T^{(i)}_{\theta+\lambda_i(x)\xi}(T_{\theta_0}^{-1}(x)) \xi) \right)\rho_{\theta_0}~dx  \label{lemmabasicF eq 2}
 \end{equation}
 By Cauchy-Schwarz inequality, the sum in the second term of \eqref{lemmabasicF eq 2} can be estimated as
 \begin{equation*}
    \left(\sum_{i=1}^d |\partial_{x_i}\phi|^2\right)^{\frac{1}{2}}\cdot\left(\sum_{i=1}^d|\xi^{\textrm{T}}\partial^2_{\theta\theta}T_{\theta+\lambda_i(x)\xi}^{(i)}(T_{\theta_0}^{-1}(x)) \xi|^2\right)^{\frac{1}{2}}\leq |\nabla\phi|\cdot\left(\sum_{i=1}^d \|\partial^2_{\theta\theta}T_{\theta+\lambda_i(x)\xi}^{(i)}(T_{\theta_0}^{-1}(x))\|_2^2\right)^{\frac{1}{2}}|\xi|^2
 \end{equation*}
 Let us recall \eqref{notation L,H} and the absolute value of the second term in \eqref{lemmabasicF eq 2} can be upper bounded by
 \begin{equation*}
    \left(\int|\nabla\phi|^2\rho_{\theta_0}~dx\right)^{\frac{1}{2}}\cdot\left(\int \sum_{i=1}^d \|\partial^2_{\theta\theta}T_{\theta+\lambda_i(x)\xi}^{(i)}(x)\|_2^2 dp(x) \right)^{\frac{1}{2}}|\xi|^2 \leq \|\nabla\phi\|^2_{L^2(\rho_{\theta_0})}\cdot \sqrt{H(\theta_0,|\xi|)}|\xi|^2.
 \end{equation*}
 This inequality is due to \eqref{notation L,H}. As a result, we have
 \begin{equation}
  \frac{|\tilde{F}(\theta+\xi,\nabla\phi~|\theta_0) - \tilde{F}(\theta,\nabla~|\theta_0) - \mathcal{J}(\theta)^{\textrm{T}}\xi|}{|\xi|}\leq \|\nabla\phi\|^2_{L^2(\rho_{\theta_0})}\cdot \sqrt{H(\theta_0,|\xi|)}~|\xi|. \label{lemmabasicF eq  3}
 \end{equation}
 Since $H(\theta_0,\epsilon)$ is increasing w.r.t. $\epsilon$, when we send $|\xi|\rightarrow 0$, the upper bound in \eqref{lemmabasicF eq  3} approaches to $0$. This verifies the differentiability of $\tilde{F}(\cdot,\nabla\phi~|\theta_0)$.Thus $F(\cdot,\nabla\phi~|\theta_0)$ is also differentiable and $\partial_\theta F(\theta,\nabla\phi~|\theta_0)=\mathcal{J}(\theta)+2h\nabla_\theta H(\theta)$. At last, to show that $F(\cdot,\nabla\phi~|\theta_0)\in C^1(\Theta)$, we only need to prove the continuity of $\mathcal{J}(\theta)$. One only need to notice that
 \begin{equation*}
 2\partial_\theta T^{(i)}_{\theta'}(T_{\theta_0}^{-1}(x))^{\textrm{T}}\nabla\phi(x) \leq |\partial_{\theta'} T^{(i)}_\theta(T_{\theta_0}^{-1}(x)) |^2+|\nabla\phi(x)|^2\leq L_2(T^{-1}_{\theta_0}(x)|\theta) + |\nabla\phi(x)|^2 
 \quad \forall~\theta',~|\theta'-\theta|<r(\theta).
 \end{equation*}
 The last inequality is due to condition \eqref{Theta_condition_B}. Since $ L_2(T^{-1}_{\theta_0}(x)|\theta) + |\nabla\phi(x)|^2 \in L^1(\rho_{\theta_0})$, then by dominated convergence theorem, we are able to prove the continuity of $\partial_\theta F(\theta, \nabla\phi~|\theta_0)$.
\end{proof}

\begin{customlemma}{4.7}
    Suppose we fix $\theta_0\in \Theta$ and define $J(\theta) = \underset{\nabla\phi\in L^2(\mathbb{R}^d;\mathbb{R}^d,\rho_{\theta_0})}{\sup} F(\theta,\nabla\phi~|~\theta_0) $. Then $J$ is differentiable. If we denote $\hat{\psi}_\theta = \underset{\phi}{\textrm{argmax}} \left\{ F(\theta,\nabla\phi~|~\theta_0) \right\}$, then
    \begin{equation*}
      \nabla_\theta J(\theta) = \partial_\theta F(\theta,\nabla\hat{\psi}_\theta~|~\theta_0) = 2\left(\int ~ \partial_\theta T_\theta(T_{\theta_0}^{-1}(x))^{\textrm{T}}~\nabla\hat{\psi}_\theta(x)~\rho_{\theta_0}(x)~dx + h~\nabla_\theta H(\theta)\right).
    \end{equation*}
\end{customlemma}

\begin{proof}
  Let us denote $\Xi_\theta = (T_\theta-T_{\theta_0})\circ T_{\theta_0}^{-1}$. Then for any $\xi\in\mathbb{R}^m$ such that $\theta+\xi\in\Theta$, we set $\hat{\psi}_{\theta+\xi}=\underset{\phi}{\textrm{argmax}}\{F(\theta+\xi, \nabla\phi~|~\theta_0)\}$. Then according to Definition \ref{Hodge Decomp}, $\hat{\psi}_\theta, \hat{\psi}_{\theta+\xi}$ solves
  \begin{equation}
      -\nabla\cdot(\rho_{\theta_0}\nabla\hat{\psi}_\theta) =  -\nabla\cdot(\rho_{\theta_0}\Xi_{\theta}) \quad -\nabla\cdot(\rho_{\theta_0}\nabla\hat{\psi}_{\theta+\xi}) =  -\nabla\cdot(\rho_{\theta_0}\Xi_{\theta+\xi}).  \label{lemma thm danskins eq  1}
  \end{equation}
  Subtracting the two equations, then multiply $\hat{\psi}_{\theta+\xi}-\hat{\psi}_\theta$ on both sides and integrate yields
  \begin{equation*}
     \int |\nabla\hat{\psi}_{\theta+\xi}-\nabla\hat{\psi}_\theta|^2\rho_{\theta_0} ~dx = \int (\nabla\hat{\psi}_{\theta+\xi} - \nabla\hat{\psi}_\theta)\cdot(\Xi_{\theta+\xi} - \Xi_\theta)\rho_{\theta_0}~dx.
  \end{equation*}
  Then by Cauchy–Schwarz inequality, we derive
  \begin{equation*}
    \int |\nabla\hat{\psi}_{\theta+\xi}-\nabla\hat{\psi}_\theta|^2\rho_{\theta_0} ~dx \leq \int |\Xi_{\theta+\xi} - \Xi_\theta|^2\rho_{\theta_0}~dx.
  \end{equation*}
  Now since $\Xi_{\theta_\xi}(x)-\Xi_{\theta}(x) = (T_{\theta+\xi}-T_{\theta})\circ T_{\theta_0}^{-1}(x)$, by mean value theorem, the $i$th component of $\Xi_{\theta+\xi}(x)-\Xi_\theta(x)$ can be written as $\partial_\theta T^{(i)}_{\theta+\lambda_i(x)\xi}(T_{\theta_0}^{-1}(x))^{\textrm{T}}\xi$ with $\lambda_i(x)\in [0,1]$. Then recall the definition of $L(\theta,\epsilon)$ in \eqref{notation L,H}, we can verify 
  \begin{equation*}
    \int |\Xi_{\theta+\xi}-\Xi_{\theta}|^2\rho_{\theta_0}~dx = \int |T_{\theta+\xi}(x)-T_\theta(x)|dp(x) \leq L(\theta, |\xi|)|\xi|^2.
  \end{equation*}
  Thus we have the following estimation
  \begin{equation}
    \int |\nabla\hat{\psi}_{\theta+\xi}-\nabla\hat{\psi}_\theta|^2\rho_{\theta_0}~dx\leq L(\theta, |\xi|)|\xi|^2  \label{est of difference on nabla psi}
  \end{equation}
  Now let us consider $J(\theta+\xi)-J(\theta)$
  \begin{align}
     J(\theta+\xi) - J(\theta) & = F(\theta+\xi, \nabla\hat{\psi}_{\theta+\xi}~|~\theta_0) - F(\theta, \nabla\hat{\psi}_{\theta}~|~\theta_0) \nonumber \\
     & = \underbrace{F(\theta+\xi, \nabla\hat{\psi}_{\theta+\xi}~|~\theta_0) - F(\theta, \nabla\hat{\psi}_{\theta+\xi}~|~\theta_0)}_{\text{$A$}} + \underbrace{F(\theta, \nabla\hat{\psi}_{\theta+\xi}~|~\theta_0) - F(\theta, \nabla\hat{\psi}_\theta~|~\theta_0)}_{\text{$B$}}. \label{thm danskins J(theta+xi)-J(theta)} 
  \end{align}
  Now according to Lemma \ref{lemma_basics}, $F(\cdot,\nabla\phi~|~\theta_k)\in C^1(\Theta)$. By mean value theorem, term $A$ can be written as
  \begin{align}
  A = & F(\theta+\xi, \nabla\hat{\psi}_{\theta+\xi}~|~\theta_0) - F(\theta, \nabla\hat{\psi}_{\theta+\xi}~|~\theta_0) = \partial_\theta F(\theta+\tau\xi,\nabla\hat{\psi}_{\theta+\xi}~|~\theta_0)\xi \quad\quad \textrm{with}~ \tau\in[0,1]  \nonumber \\
    = & \partial_\theta F(\theta,\nabla\hat{\psi}_\theta~|~\theta_0)^{\textrm{T}}\xi+ (\underbrace{\partial_\theta F(\theta+\tau\xi,\nabla\hat{\psi}_\theta~|~\theta_0) - \partial_\theta F(\theta,\nabla\hat{\psi}_\theta~|~\theta_0)}_{\text{$r_1(\theta,\xi)$}})^{\textrm{T}}\xi \nonumber % \label{lemma thm danskin difference} 
    \\
    & + (\underbrace{\partial_\theta F(\theta+\tau\xi,\nabla\hat{\psi}_{\theta+\xi}~|~\theta_0)-\partial_\theta F(\theta+\tau\xi,\nabla\hat{\psi}_\theta~|~\theta_0)}_{\text{$r_2(\theta,\xi)$}})^{\textrm{T}}\xi. \nonumber
  \end{align}
  
 Term $B$ can be computed as
  \begin{align}
     B & = F(\theta,\nabla \hat{\psi}_{\theta+\xi}~|~\theta_0) - F(\theta,\nabla\hat{\psi}_\theta~|~\theta_0) = \int (2(\nabla\hat{\psi}_{\theta+\xi}-\nabla\hat{\psi}_\theta)\cdot\Xi_\theta - (|\nabla\hat{\psi}_{\theta+\xi}|^2-|\nabla\hat{\psi}_\theta|^2))\rho_{\theta_0}~dx \nonumber\\
      & = 2\int (\nabla\hat{\psi}_{\theta+\xi}-\nabla\hat{\psi}_\theta)\cdot(\Xi_\theta - \nabla\hat{\psi}_\theta)\rho_{\theta_0}~dx - \int |\nabla\hat{\psi}_{\theta+\xi} - \nabla\hat{\psi}_\theta|^2\rho_{\theta_0}~dx = - \int |\nabla\hat{\psi}_{\theta+\xi} - \nabla\hat{\psi}_\theta|^2\rho_{\theta_0}~dx. \nonumber
  \end{align}
  The last equality is due to integration by parts and \eqref{lemma thm danskins eq  1}. 
  
  Now substituting $A$ and $B$ in \eqref{thm danskins J(theta+xi)-J(theta)}  yields
  \begin{equation*}
     J(\theta+\xi) - J(\theta) = \partial_\theta F(\theta,\nabla\hat{\psi}_\theta~|~\theta_0) + r_1(\theta,\xi)^{\textrm{T}}\xi + r_2(\theta,\xi)^{\textrm{T}}\xi -\|\nabla\hat{\psi}_{\theta+\xi} - \nabla\hat{\psi}_\theta\|_{L^2(\rho_{\theta_0})}^2 
  \end{equation*}
  We can estimate
  \begin{equation}
    \frac{\left|J(\theta+\xi)-J(\theta)-\partial_\theta F(\theta,\nabla\hat{\psi}_\theta~|~\theta_0)^{\textrm{T}}\xi\right|}{|\xi|} \leq |r_1(\theta,\xi)|+|r_2(\theta,\xi)| + \frac{1}{|\xi|}\|\nabla\hat{\psi}_{\theta+\xi} - \nabla\hat{\psi}_\theta\|_{L^2(\rho_{\theta_0})}^2  \label{est on pf differentiable J(theta)}
  \end{equation}
  Now we prove the right hand side of \eqref{est on pf differentiable J(theta)} approaches to $0$ as $\xi\rightarrow 0$. Since $\partial_\theta F(\cdot,\nabla\hat{\psi}_\theta~|~\theta_0)\in C^1(\Theta)$, using continuity, we know $\lim_{\xi\rightarrow 0}r_1(\theta,\xi)=0$. For $r_2(\theta,\xi)$, when $|\xi|$ is sufficiently small, we have
  \begin{align*}
     |r_2(\theta,\xi)| =& \left|\int \partial_\theta T_{\theta+\tau\xi}(T_{\theta_0}^{-1}(x))^{\textrm{T}}(\nabla\hat{\psi}_{\theta+\xi}(x)-\nabla\hat{\psi}_{\theta}(x))\rho_{\theta_0}(x)~dx\right|   \\
     \leq & \left( \int \|\partial_\theta T_{\theta+\tau\xi}(x)\|_F^2dp(x) \right)^{\frac{1}{2}}\left(\int |\nabla\hat{\psi}_{\theta+\xi}-\nabla\hat{\psi}_\theta|^2\rho_{\theta_0}~dx\right)^{\frac{1}{2}} \leq \sqrt{\|L_2(\cdot|\theta)\|_{L^1(p)}}\sqrt{L(\theta,|\xi|)}|\xi|
  \end{align*}
  The last inequality is due to \eqref{Theta_condition_B} (when $|\xi|$ is small enough so that $|\xi|<r(\theta)$) and \eqref{est of difference on nabla psi}. Using this we are able to show $\lim_{\xi\rightarrow 0}r_2(\theta,\xi)=0$. Using \eqref{est of difference on nabla psi} again, we can verify $\frac{1}{|\xi|}\|\nabla\hat{\psi}_{\theta+\xi} - \nabla\hat{\psi}_\theta\|_{L^2(\rho_{\theta_0})}^2 \leq L(\theta, |\xi|)|\xi|\rightarrow 0$ as $\xi\rightarrow 0$. Thus $J$ is differentiable at $\theta$ and we know $\nabla_\theta J(\theta) = \partial_\theta F(\theta,\nabla\hat{\psi}_\theta~|~\theta_0)$. We complete the proof by applying \eqref{compute_partial_theta F} of Lemma \eqref{lemma_basics}.

\end{proof}

\begin{customlemma}{4.8}
  Under assumption\eqref{assumption on T_theta positive def}, the optimal solution of \eqref{JKO_A} $\theta_{k+1}$ satisfies, 
  \begin{equation*}
    |\theta_{k+1}-\theta_k|\sim o(1)\quad \textrm{i.e.,}~ ~ \lim_{h\rightarrow 0^+} |\theta_{k+1}-\theta_k|=0.
  \end{equation*}
\end{customlemma}

\begin{proof}[Proof of Lemma \ref{prior est |theta_k+1-theta_k|}]
Recall the function to be minimized in \eqref{JKO_A} is $J(\theta) = \widehat{W}_2^2(\theta,\theta_k)+2hH(\theta)$. If choosing $\theta=\theta_k$ in \eqref{JKO_A}, we have $J(\theta_k)=2hH(\theta_k)$. Thus $J(\theta_{k+1})\leq J(\theta_k)=2hH(\theta_k)$. Since $H(\theta_k)\geq 0$, this leads to $\widehat{W}_2^2(\theta_{k+1},\theta_k)\leq 2hH(\theta_k)$. When $h$ is small enough, $|\theta_{k+1}-\theta_k|\leq l^{-1}(2hH(\theta_k))$, here $l^{-1}$ is the inverse function of $l$ defined on $[0,l(r_0)]$. We know $l^{-1}(0)=0$ and $l^{-1}$ is also continuous and increasing function. This leads to $\lim_{h\rightarrow 0^+}|\theta_{k+1}-\theta_k|\leq\lim_{h\rightarrow 0^+} l^{-1}(2hH(\theta_k))=0$.
\end{proof}

\section{Proofs for Lemma \ref{constant speed of geodesic on P(M)} and \ref{Displacement_convex_entropy}}\label{pf section5}

\begin{customlemma}{5.7}
The geodesic connecting $\rho_0,\rho_1\in\mathcal{P}(M)$ is described by,
\begin{equation}
  \begin{cases}
  \frac{\partial\rho_t}{\partial t}+\nabla\cdot(\rho_t\nabla\psi_t)=0 \\
  \frac{\partial\psi_t}{\partial t}+\frac{1}{2}|\nabla\psi_t|^2=0
  \end{cases} \quad \rho_t|_{t=0}=\rho_0,~\rho_t|_{t=1}=\rho_1.   \tag{\ref{geodesic eq}}
\end{equation}
Using the notation $ \dot \rho_t=\partial_t\rho_t=-\nabla\cdot(\rho_t\nabla\psi_t)\in \mathcal{T}_{\rho_t}\mathcal{P}(M)$, $g^W( \dot \rho_t,  \dot \rho_t)$ is constant for $0\leq t\leq 1$ and $ g^W( \dot \rho_t, \dot \rho_t) = W_2^2(\rho_0,\rho_1) $ for $0\leq t\leq 1$.
\end{customlemma}

\begin{proof}
Recall the definition \eqref{def_metric} of Wasserstein metric $g^W$, $g^W( \dot \rho_t, \dot \rho_t)=\int |\nabla\psi_t|^2\rho_t~dx$. Since $\{\rho_t\}$ is the geodesic on $(\mathcal{P}(M),g^W)$, the speed $g^W(\sigma_t,\sigma_t)$ remains constant. To directly verify this, we compute the time derivative:
\begin{equation*}
  \frac{d}{dt} g^W( \dot \rho_t, \dot \rho_t) = \frac{d}{dt} \left(\int |\nabla\psi_t|^2 \rho_t~dx \right) = \int \frac{\partial}{\partial t}|\nabla\psi_t|^2\rho_t~dx + \int |\nabla\psi_t|^2\partial_t\rho_t~dx.
\end{equation*}
Using the first equation in \eqref{geodesic eq}, we obtain
\begin{equation*}
  \int |\nabla\psi_t|^2\partial_t\rho_t~dx = \int |\nabla\psi_t|^2\cdot(-\nabla\cdot(\rho_t\nabla\psi_t))~dx = \int \nabla(|\nabla\psi_t|^2)\cdot\nabla\psi_t\rho_t~dx,
\end{equation*}
Taking the spatial gradient of the second equation in \eqref{geodesic eq}, we have 
\begin{equation*}
  \partial_t(\nabla\psi_t) = -\nabla(\frac{1}{2}|\nabla\psi_t|^2).
\end{equation*}
Then 
\begin{equation*}
  \int \frac{\partial}{\partial t} |\nabla\psi_t|^2\rho_t~dx = \int 2\partial_t(\nabla\psi_t)\cdot\nabla\psi_t\rho_t~dx = \int -\nabla(|\nabla\psi_t|^2)\cdot\nabla\psi_t\rho_t~dx.
\end{equation*}
Adding them together, we verify $\frac{d}{dt}g^W( \dot \rho_t, \dot \rho_t)=0$, hence $\int_0^1 g^W( \dot \rho_t, \dot \rho_t)~dt=W_2^2(\rho_0,\rho_1)$. Thus we know $g^W( \dot \rho_t, \dot \rho_t)=W_2^2(\rho_0,\rho_1)$ for any $0\leq t\leq 1$.
\end{proof}

\begin{customlemma}{5.8}
  Suppose $\{\rho_t\}$ solves \eqref{geodesic eq}, the relative entropy $\mathcal{H}$ in \eqref{relative entropy} has potential $V$ satisfying $\nabla^2 V\succeq \lambda I$, then we have $\frac{d}{dt}g^W(\textrm{grad}_W\mathcal{H}(\rho_t),\dot\rho_t)\geq \lambda W_2^2(\rho_0,\rho_1)$. Or equivalently,  $\frac{d^2}{dt^2}\mathcal{H}(\rho_t)\geq \lambda W_2^2(\rho_0,\rho_1)$.
\end{customlemma}

\begin{proof}
Let us write:
\begin{equation*}
 g^W(\textrm{grad}_W\mathcal{H}(\rho_t),\dot\rho_t) =  \int\nabla(V+{D}\log\rho_t)\cdot\nabla\psi_t~\rho_t~dx .
\end{equation*}
Then:
\begin{equation*} 
  \frac{d}{dt}g^W(\textrm{grad}_W\mathcal{H}(\rho_t),\dot\rho_t) = \frac{d}{dt}\left( \int \nabla(V+{D}\log\rho_t)\cdot\nabla\psi_t~\rho_t~dx  \right) = \int (\nabla \psi_t^{\textrm{T}}\nabla^2V \nabla\psi_t + \textrm{Tr}( \nabla^2 \psi_t \nabla^2 \psi_t )) ~ \rho_t ~dx.
\end{equation*}
The second equality can be carried out by direct calculations.
One can check \cite{villani2003topics} or \cite{villani2008optimal} for its complete derivation. 
Using $\nabla^2V\succeq\lambda I$, we get
\begin{equation*}
  \frac{d}{dt} g^W(\textrm{grad}_W\mathcal{H}(\rho_t),\dot\rho_t) \geq \int \lambda |\nabla\psi_t|^2\rho_t~dx=\lambda ~ g^W(\dot\rho_t,\dot\rho_t)=\lambda W_2^2(\rho_0,\rho_1).
\end{equation*}
The last equality is due to Lemma \ref{constant speed of geodesic on P(M)}.
By the definition of Wasserstein gradient \eqref{gradflow}, we have $\frac{d}{dt}\mathcal{H}(\rho_t)=g^W(\textrm{grad}_W\mathcal{H}(\rho_t),\dot\rho_t)$, we also proved $\frac{d^2}{dt^2}\mathcal{H}(\rho_t)\geq \lambda W_2^2(\rho_0,\rho_1)$.
\end{proof}

\bibliographystyle{siamplain}
\bibliography{references}
\end{document}